\newcommand{\toup}{\nearrow}
\newcommand{\todown}{\searrow}
\newcommand{\divv}{\mathrm{div}\,}
\newcommand\delb[1]{} 
\newcommand{\hA}{\hat{\mathrm{A}} }
\newcommand{\hV}{\hat{\mathrm{V}} }
\newcommand{\rA}{\mathrm{A} }
\newcommand{\rB}{\mathrm{B}}
\newcommand{\rH}{\mathrm{H} }
\newcommand{\rV}{\mathrm{V} }
\newcommand{\lb}{\langle}
\newcommand{\rb}{\rangle}
\newcommand\delh[1]{} 
\newcommand\del[1]{}
\theoremstyle{definition}
\newtheorem{Def}{Definition}[section]
\newtheorem{Rem}[Def]{Remark}
\newtheorem{example}[Def]{Example}
\theoremstyle{plain}
\newtheorem{assume}[Def]{Assumption}
\newtheorem{Clm}[Def]{Claim}
\newtheorem{Thm}[Def]{Theorem}
\newtheorem{Prop}[Def]{Proposition}
\newtheorem{proposition}[Def]{Proposition}
\newtheorem{Lem}[Def]{Lemma}
\newtheorem{Cc}[Def]{Corollary}
\newcommand{\rh}{\mathrm{H}}
\newcommand{\ve}{\mathrm{V}}
\newcommand{\eps}{\varepsilon\,}
\DeclareMathOperator{\Div}{div}
\numberwithin{equation}{section}
\title[2D Ericksen-Leslie with anisotropic energy ]{On the 2D Ericksen-Leslie equations with anisotropic energy and external forces }
\author[Z. Brze\'zniak]{Zdzis{\l}aw Brze{\'z}niak}
\address{Department of Mathematics \\
	University of York, Heslington, York YO10
	5DD, UK} \email{zdzislaw.brzezniak@york.ac.uk}
\author[
G. Deugou\'e]{Gabriel Deugou\'e}
\address{Department of Mathematics and Computer Science\\ University of Dschang, Dschang, Cameroon} \email{
	agdeugoue@yahoo.fr}
\author[P. Razafimandimby]{Paul Andr\'e  {Razafimandimby}}
\address{Department of Mathematics\\University of York, Heslington, York YO10
	5DD, UK} \email{paul.razafimandimby@york.ac.uk}
\thanks{This article is part of a project that is currently funded by the  European Union's Horizon 2020 research and innovation programme under the Marie Sk\l{}odowska-Curie grant agreement No. 791735 ``SELEs".\newline
		P. Razafimandimby is very grateful to the warm hospitality of the Lehrstuhl f\"ur Angewandte Mathematik, Montanuniversit\"at Leoben, Leoben (Austria), where part of the paper was written during the period 16 December 2019 till 30 January 2020.\newline
	G. Deugou\'e is thankful  to the financial support and the hospitality of the Department of Mathematics at the University of York during his visit in June 2019. He is also very grateful to the financial support from the Marie Sk\l{}odowska-Curie grant (agreement No. 791735) ``SELEs".
}
\begin{document}
\maketitle
	\begin{abstract}
	In this paper we consider the 2D Ericksen-Leslie equations which describes the hydrodynamics of nematic Liquid crystal with external body forces and anisotropic energy modeling the energy of applied external control such as magnetic or electric field. Under general assumptions on the initial data, the external data and the anisotropic energy, we prove the existence and uniqueness of global weak solutions with finitely many singular times. If the initial data and the external forces are sufficiently small, then we establish that the global weak solution does not have any singular times and is regular as long as the data are  regular.
	\end{abstract}
	\tableofcontents
	
	\section{Introduction}
	
	We  consider a  hydrodynamical system modeling the flow of liquid crystal materials with anisotropic energy in a 2D  bounded domain. More precisely, let $T>0$ and $\Omega\subset \mathbb{R}^2$ be a bounded domain with a smooth boundary $\partial \Omega$ and let us  consider
	\begin{subequations} \label{1a}
	\begin{align}
	&\partial_t v+ v\cdot \nabla v-\Delta v +\nabla \mathrm{p}=-\Div~ (\nabla d\odot\nabla d) + f ,\text{in $ [0,T)\times \Omega $}\\
&	\partial_t d + v\cdot \nabla d=-d\times(d\times(\Delta d-\phi^\prime(d))) + d\times g,~\text{in~$[0,T)\times \Omega $},\\
&	\Div v=0,~~\text{in~$[0,T)\times \Omega $},\\
&	v =	{\frac{\partial d}{\partial \nu}}=0,\text{ on~$[0,T)\times \partial \Omega $},\\
&	\lvert d \lvert=1,\text{in~$[0,T)\times \Omega $},\\
&	(v(0),d(0))=(v_{0},d_{0}),\text{in~$ \Omega $},
	\end{align}
	\end{subequations}
	where $v:[0,T)\times \Omega \to\mathbb{R}^{2}$, $d:[0,T)\times \Omega  \to\mathbb{S}^{2}$, where  $\mathbb{S}^2$ is  the unit sphere in $\mathbb{R}^{3}$, and $P:[0,T)\times \Omega  \to\mathbb{R}$ represent the velocity field of the flow, the macroscopic molecular orientation of the liquid crystal material  and the pressure function, respectively. In the system (\ref{1a}), the function $\phi:\mathbb{R}^{3}\to\mathbb{R}^{+}$ is a given map, $f:[0,T)\times \Omega  \to \mathbb{R}^2 $ and $g:[0,T)\times \Omega \to \mathbb{R}^3$ are given external forces. The symbol $\nu(x)$ is a unit outward normal vector at each point $x\in \partial \Omega$. The matrix $\nabla d\odot\nabla d$ is defined by
	\begin{equation*}
	[\nabla d\odot\nabla d]_{ij}= \sum_{k=1}^3 \partial_i d_k \partial_j d_k~ \text{for}~ 1\le i,j\le 3.
	\end{equation*}
	
	 Using the identities \begin{align*}
	  -\Div (\nabla d \odot \nabla d) = -\nabla d \Delta d + \frac12 \nabla \lvert \nabla d \rvert^2, \\
	 a\times (b\times c)=(a\cdot c)b-(a\cdot b)c  \text{ for } a, b, c\in\mathbb{R}^{3},
	 \end{align*}
	  we can rewrite system (\ref{1a}) as follows
	\begin{subequations}\label{1b-0}
	\begin{align}
	& \partial_t v+v\cdot \nabla v-\Delta v+\nabla \tilde{\mathrm{p}}=-\nabla d \Delta d + f,\\
& 	\partial_t d+v\cdot \nabla d=\Delta d+ |\nabla d|^{2}d-\phi^\prime(d)+(\phi^\prime(d)\cdot d)d+d\times g,\\
&	\Div v=0,\\
	&	v_{\lvert_{\partial\Omega}} =	{\frac{\partial d}{\partial \nu}}{\Big\lvert_{\partial\Omega}}=0, \label{Eq:BC-DirNeu}\\
&	\lvert d \rvert=1,\\
&	(v(0),d(0))=(v_{0},d_{0}),
	\end{align}
\end{subequations}
where
\begin{equation*}
\tilde{\mathrm{p}}= \mathrm{p} + \frac12 \lvert \nabla d \rvert^2 .
\end{equation*}
	While we focus our mathematical analysis on the system  \eqref{1b-0} with the Dirichlet and the Neumann boundary conditions \eqref{Eq:BC-DirNeu},  our results remain valid for the case of the periodic boundary conditions.  That is, our results remain true in the  case that $\Omega$ is a 2D torus $\mathbb{T}^2$ and  \eqref{Eq:BC-DirNeu} is replaced by
	\begin{equation*}
	\int_\Omega v(t, x) \;dx=0, \; \forall t \in (0,T].
	\end{equation*}
	
	The model \eqref{1b-0} is an oversimplification of a Ericksen-Leslie model of nematic liquid crystal with a simplified energy density
	\begin{equation*}
	\frac12 \lvert  \nabla d \rvert^2 + \phi(d).
	\end{equation*}
	The term $\frac12 \lvert  \nabla d \rvert^2 $ represents the one-constant simplification of the Frank-Oseen energy density and $\phi(d)$ represents an anisotropic energy density. One example of such anisotropic energy density is the magnetic energy density
	$$ \phi(n) = (n\cdot H)^2,$$
	when the nematic liquid crystal is subjected to the action of a constant magnetic field $H \in \mathbb{R}^3$. We also give different examples of mathematical models of anisotropic energy density later on.   For more details on physical modeling of liquid crystal under the action of external control such as magnetic or electric field we refer to the books  \cite{Gennes} and \cite{Stewart} and the papers \cite{Ericksen} and \cite{Leslie}.
	
	We should note that since \eqref{1b-0} was obtained by neglecting several terms such as the viscous Leslie stress tensor in the equation for $v$(see for instance \cite{Lin-Liu,Lin-Liu2}), the stretching and rotational effects for $d$,  one does not known whether it is  thermodynamically stable or consistent with the laws of thermodynamics. However, this model still  retains many  mathematical and essential features of the hydrodynamic equations for nematic liquid crystals. In recent years, several liquid crystal models which are thermodynamically consistent and stable have been recently developed and analyzed, see for instance the  \cite{Feireisl-2019}, \cite{Feireisl-2012}, \cite{Feiresil-2011}, \cite{MH+JP-2017}, \cite{MH+JP-2018}, \cite{MH et al-2014}, \cite{Sun+Liu}, \cite{Lin+Wang-2014} and references therein.
	
	In the absence of external forcings $f$, $g$ and the anisotropic energy potential $\phi(d)$, the system \eqref{1b-0} has extensively studied and several important results have been obtained. In addition to the papers we cited above we refer, among others, to \cite{Hong,Hong12,LLW,Lin-Liu,
		Lin-Liu2,Lin-Wang,WZZ} for results obtained prior to 2013, and to \cite{YC+SK+YY-2018,Hong14,MCH+YM-2019,Huang14,Huang16,JL+ET+ZX-2016,Lin+Wang16,Wang2014,Wang2016,WZZ15} for results obtained after 2014. For detailed reviews of the literature about the mathematical theory of nematic liquid crystals and other related models, we recommend the review articles \cite{Lin+Wang-2014,MH+JP-2018,Climent} and the recent papers \cite{MCH+YM-2019,Lin+Wang16}.
	
Let us now outline the contributions of our manuscript.
\begin{trivlist}
\item[\;\;\;(1)] 	In Section \ref{Sec:ExistRegularSol}, we prove by using Banach fixed point theorem that if $(v_0,d_0)\in D(\rA^\frac12)\times (D(\hA) \cap \mathcal{M} )$ and $(f,g)\in L^2(0,T; \rH\times D(\hA))$, then there exists a unique local regular solution $(v,d): [0,T_0] \to D(\rA^\frac12)\times D(\hA) \cap \mathcal{M}$ such that $C([0,T_0]; D(\rA^\frac12) \times (D(\hA)\cap \mathcal{M})) \cap L^2(0,T; D(\rA)\times  D(\hA^\frac32)$, see Theorem \ref{th}. Here
\[
\mathcal{M}=\{ d: \Omega \to \mathbb{R}{^3}: \lvert d(x) \rvert=1 \;\;\mathrm{Leb}\text{-a.e.} \},
\]
$\rA$ and $\hA$ are respectively the Stokes operator and the Neumann Laplacian,
see Section \ref{Sec:NotaPrelim-Sec} for the definitions of these operators and the space $\rH$.

\item[\;\;\;(2)] We exploit this result and the  local energy method developed in \cite{Struwe}, \cite{LLW}  and \cite{Hong}  to show in Section \ref{Sec:LocRegSolSmallENERG} that there exists universal constants $\eps_0>0$ and $r_0$ such that the following statement hold.

\textit{ If  $(f,g)\in L^2(0,T; \rh\times D(\hA^{1/2}))$ and $(v_0, d_0)$ belongs to $\ve\times (D(\hA)\cap \mathcal{M}) $  with small energy, i.e., there exists  $R_0\in (0,r_0]$ such that
\begin{equation*}
\sup_{x\in \Omega}\int_{B(x, R_0)} [\lvert v_0(x) \rvert^2+ \lvert \nabla d_0(x) \rvert^2 + \phi(d_0(x))]<\eps_0^2.
\end{equation*}
Then, there exists a time $T_0>0$ a unique $(v,d): [0,T_0] \to D(\rA^\frac12)\times (D(\hA) \cap \mathcal{M})$ such that $C([0,T_0]; D(\rA^\frac12) \times (D(\hA)\cap \mathcal{M})) \cap L^2(0<T; D(\rA)\times  D(\hA^\frac32)$ such that
\begin{equation*}
\frac12 \sup_{0\le t\le T_0 } \sup_{x\in \Omega}\int_{B(x,R_0)} \left(\lvert v(t,y)\rvert^2 + \lvert \nabla d(t,y)  \rvert^2 + 2 \phi(d(t,y))  \right) dy\le 2\eps_1^2.
\end{equation*}
}
We refer to Proposition \ref{Prop:LocalSolwithSmallEnergy} and its proof for more details.

\item[\;\;\;(3)] The two  results  above are exploited in Section \ref{Sec:ExistMaxLocStrongSol} in order to prove the global existence of our problem. This is our main  result. It  holds under weaker assumptions than those listed in  (1) and (2) above,  and is presented  in Theorem \ref{thm-main}.  It  can be summarized as follows.

\begin{Thm}\label{thm-main-intro}
	Let $(v_0,d_0)\in \rh\times (\rH^1\cap \mathcal{M})$. Then, there exist constants $\varrho_0>0 $ and $\eps_0>0 $ such that the following hold.
	If $(f,g)\in L^2(0,T; \rH^{-1} \times L^2)$, then
\begin{enumerate}[(i)]
	\item 	a number $L\in \mathbb{N}$, depending only on the norms of $(v_0,d_0)\in \rH \times \rH^1$ and $(f,g)\in L^2(0,T; \rH^{-1}\times L^2)$,
 a finite sequence   $0=T_0<T_1<\cdots<T_L\leq T$ and,
	\item a function $(\mathbf{u},\mathbf{d})\in C_{w}([0, T]; \mathrm{H} \times \rh^1) \cap L^2(0, T; \ve\times D(\hA) )$
 such that  {for all $t\in [0,T], \mathbf{d}(t)\in \mathcal{M} $} and
	\begin{enumerate}
		\item for every $i\in \{1, \ldots, L\}$, $(\mathbf{u},\mathbf{d})_{\lvert_{[T_{i-1}, T_i)}}\in C([T_{i-1}, T_i); \mathrm{H} \times \rh^1)$ with the left-limit at $T_i$,
which  satisfies  the variational form  of  problem \eqref{1b-0} on the interval $[T_{i-1}, T_i)$  with initial data $(v(T_{i-1}), d(T_{i-1}) )$.
\item  If $T_L<T$, then
$(\mathbf{u},\mathbf{d})_{\lvert_{[T_{L}, T]}}$ belongs to $ C([T_{L}, T]; \mathrm{H} \times \rh^1)$ and  satisfies  the variational form  problem \eqref{1b-0} on the interval $[T_{L}, T]$  with initial data $(v(T_{L}), d(T_{L}) )$.

		\item {There exists $\eps_1\in (0,\eps_0)$ such that for all $i\in \{1, \ldots, L\}$ and all $R\in (0,\varrho_0]$}
		\begin{equation*}
		\lim_{t\toup T_i} \sup_{x\in \Omega} \int_{B(x, R)}\left( \frac12 \lvert \mathbf{u}(t,y)\rvert^2 +  \frac12\lvert \nabla \mathbf{d}(t,y) \rvert^2 + \phi(d(t,y)) \right) dy\ge \eps_1^2.
		\end{equation*}
		\item At every time  $T_i$, $i\in \{1, \ldots, L\}$, there is a loss of energy at least $\eps_1^2\in (0, \eps_0^2)$, \textit{i.e.},
		\begin{equation*}
	\begin{split}
	& \hspace{1truecm}	\int_{\Omega }\left( \frac12 \lvert \mathbf{u}(T_i,y)\rvert^2 +  \frac12 \lvert \nabla \mathbf{d}(T_i,y) \rvert^2 + \phi(d(T_i,y)) \right) dy \\
	& \hspace{1truecm}   \le \int_{\Omega }\left( \frac12 \lvert \mathbf{u}(T_{i-1},y)\rvert^2 +  \frac12 \lvert \nabla \mathbf{d}(T_{i-1},y) \rvert^2 + \phi(d(T_{i-1},y)) \right) dy
		 +\frac12 \int_{T_{i-1}}^{T_1} \left[\lvert f\rvert_{\rH^{-1}}^2 + \lvert g\rvert^2_{L^2}\right] dt -\eps_1^2 .
	\end{split}
		\end{equation*}
	\end{enumerate}
\end{enumerate}
The numbers $T_1,\cdots, T_L$ are called the  \textbf{singular times} of the solution $(\mathbf{u},\mathbf{d})$.
\end{Thm}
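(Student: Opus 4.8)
I would prove Theorem \ref{thm-main-intro} by a continuation/compactness argument that splices together local regular solutions across a finite number of singular times. The starting point is the small-energy local existence result from Section \ref{Sec:LocRegSolSmallENERG} (Proposition \ref{Prop:LocalSolwithSmallEnergy}), together with the global energy inequality. First I would establish the basic \emph{a priori} energy estimate: testing the $v$-equation with $v$ and the $d$-equation with $-(\Delta d - \phi'(d)) $ (using $\lvert d\rvert = 1$ so the $d\times g$ term and the cross-product structure interact correctly), and adding, the highest-order coupling terms $-\nabla d\,\Delta d$ and $\nabla d\odot\nabla d$ cancel, yielding
\begin{equation*}
\frac{d}{dt}\Bigl(\frac12\lvert v\rvert_{L^2}^2 + \frac12\lvert\nabla d\rvert_{L^2}^2 + \int_\Omega\phi(d)\Bigr) + \lvert\nabla v\rvert_{L^2}^2 + \lvert \Delta d-\phi'(d)+(\phi'(d)\cdot d)d\rvert_{L^2}^2 \le C\bigl(\lvert f\rvert_{\rH^{-1}}^2 + \lvert g\rvert_{L^2}^2\bigr),
\end{equation*}
after absorbing the forcing terms by Young's inequality and using boundedness of $\phi$ and $\phi'$ on $\mathbb{S}^2$. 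Gronwall then gives a uniform bound on the total energy $\mathcal{E}(t)$ on $[0,T]$ and an $L^2(0,T)$ bound on the dissipation, depending only on the data norms as claimed.

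\textbf{Construction of the solution and the singular times.} With the energy budget in hand, I would run the following iterative scheme. Fix $\varrho_0 = r_0$ and $\eps_0$ as in Proposition \ref{Prop:LocalSolwithSmallEnergy}. Choose $\eps_1 \in (0,\eps_0)$ (any value strictly below $\eps_0$, to be pinned down by the small-energy threshold). Set $T_0 = 0$. Given $T_{i-1}$ with $(v(T_{i-1}),d(T_{i-1})) \in \rH\times(\rH^1\cap\mathcal{M})$, the concentration radius may fail to exist at $T_{i-1}$ itself, but because the initial datum has finite energy one can find $R_0 \le \varrho_0$ for which the local $R_0$-energy of $(v(T_{i-1}),d(T_{i-1}))$ is below $\eps_0^2$ (this uses absolute continuity of the integral plus the fact that the forcing contribution over a short time is small); then Proposition \ref{Prop:LocalSolwithSmallEnergy} produces a unique solution on $[T_{i-1}, T_{i-1}+\tau_i)$ that is regular on $(T_{i-1}, T_{i-1}+\tau_i)$ and whose local energy stays below $2\eps_1^2$. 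I extend this solution maximally and let $T_i$ be the first time at which the local-energy concentration reaches $\eps_1^2$ at \emph{some} point and \emph{some} radius $R\le\varrho_0$, i.e. item (c); if no such time occurs before $T$, the process terminates with the final (regular) piece as in (i)(b). This gives the family $0 = T_0 < T_1 < \cdots$, each piece lying in $C([T_{i-1},T_i); \rH\times\rH^1)$ with a left limit at $T_i$ obtained from the uniform energy bound and weak-$*$ compactness, and satisfying the variational form of \eqref{1b-0} — this is item (i)(a).

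\textbf{Energy drop and finiteness of $L$.} The key quantitative point is item (d): at each singular time $T_i$ the total energy drops by at least $\eps_1^2$. This follows because as $t\toup T_i$ there is, by item (c), a ball $B(x_i,R)$ carrying local energy $\ge\eps_1^2$; the local energy inequality (the same one used to prove Proposition \ref{Prop:LocalSolwithSmallEnergy}, obtained by testing against $v\chi^2$ and $(\Delta d-\phi'(d)+(\phi'(d)\cdot d)d)\chi^2$ with a cutoff $\chi$ supported in $B(x_i,2R)$) shows that this concentrated energy cannot instantaneously reappear after $T_i$ in the weak limit, so it is lost: combining the global energy inequality over $[T_{i-1},T_i]$ with the bubbling loss $\ge\eps_1^2$ yields exactly the displayed inequality in (d), with the forcing term $\tfrac12\int_{T_{i-1}}^{T_i}[\lvert f\rvert_{\rH^{-1}}^2+\lvert g\rvert_{L^2}^2]\,dt$ on the right (I would correct the apparent typo $T_1$ in the upper limit to $T_i$). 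Since each drop is at least $\eps_1^2 > 0$, and the total energy plus the forcing budget is finite, the number of singular times is bounded by
\begin{equation*}
L \le \frac{1}{\eps_1^2}\Bigl(\mathcal{E}(0) + \frac12\int_0^T\bigl[\lvert f\rvert_{\rH^{-1}}^2 + \lvert g\rvert_{L^2}^2\bigr]\,dt\Bigr),
\end{equation*}
which depends only on the stated norms of $(v_0,d_0)$ and $(f,g)$ — this is item (i). Finally, to obtain the global function $(\mathbf{u},\mathbf{d}) \in C_w([0,T];\rH\times\rH^1)\cap L^2(0,T;\ve\times D(\hA))$ with $\mathbf{d}(t)\in\mathcal{M}$ for all $t$, I would glue the pieces, note that the uniform energy bounds give weak-$*$ continuity at each $T_i$ (strong $L^2$ convergence of a subsequence plus weak lower semicontinuity identifies the limit), and the $\mathcal{M}$-constraint passes to the limit because $\lvert d\rvert=1$ is preserved along each regular piece and $d\in C_w([0,T];\rH^1)\hookrightarrow C([0,T];L^2)$.

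\textbf{Main obstacle.} The hard part is item (d) — proving that energy actually \emph{drops} by a fixed amount $\eps_1^2$ at each singular time, rather than merely that concentration occurs. This requires a careful localized energy inequality with cutoffs, control of the cross terms $v\cdot\nabla v$, $\nabla d\,\Delta d$ and the forcing against the cutoff, and a covering/small-energy-regularity argument (à la Struwe, \cite{Struwe}, and \cite{LLW}, \cite{Hong}) to show the concentrated bubble cannot be recovered in the limit; controlling the anisotropic potential $\phi$ inside these localized estimates — it contributes lower-order but sign-indefinite terms through $\phi'(d)$ and $(\phi'(d)\cdot d)d$ — is the extra technical wrinkle compared to the $\phi\equiv 0$ case, and is where the assumptions on $\phi$ (boundedness of $\phi,\phi',\phi''$ on $\mathbb{S}^2$) will be used repeatedly.
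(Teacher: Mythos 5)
Your overall architecture---global energy inequality, iteration of small-local-energy solutions, an energy drop of at least $\eps_1^2$ at each concentration time, and the resulting bound $L\le \eps_1^{-2}\bigl(\mathcal{E}(v_0,d_0)+\tfrac12\int_0^T[\lvert f\rvert_{\rH^{-1}}^2+\lvert g\rvert_{L^2}^2]\,dt\bigr)$---coincides with the paper's. Your energy-drop mechanism is also essentially the one used there: writing $\mathcal{E}(v(T_i),d(T_i))$ as a limit over $\Omega\setminus B_R(x_0)$, using weak lower semicontinuity away from the concentration point and subtracting the $\limsup$ of the local energy on $B_R(x_0)$, which is at least $\eps_1^2$ by the concentration property at the maximal time.

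There is, however, a genuine gap at the very first step of your iteration. Proposition \ref{Prop:LocalSolwithSmallEnergy} requires $(v_0,d_0)\in \ve\times D(\hA)$, whereas you apply it directly to data $(v(T_{i-1}),d(T_{i-1}))\in \rh\times(\rH^1\cap\mathcal{M})$, and your assertion that the resulting solution is ``regular on the open interval'' is an instantaneous-smoothing claim that nothing in Sections \ref{Sec:ExistRegularSol}--\ref{Sec:LocRegSolSmallENERG} provides. The paper bridges this by Proposition \ref{Ma2}: one approximates $d_0$ in $\rH^1$ by smooth $\mathbb{S}^2$-valued maps (this needs the density theorem of Schoen--Uhlenbeck \cite{Schoen-Uhlenbeck-83}, since mollification destroys the constraint $\lvert d\rvert=1$), approximates $v_0$ by elements of $\mathcal{V}$ and $(f,g)$ by data in $L^2(0,T;L^2\times\rH^1)$, checks that the smallness of $\mathcal{E}_{2\varrho_0}$ survives the approximation, applies Proposition \ref{Prop:LocalSolwithSmallEnergy} to each approximant with a \emph{uniform} lower bound on the existence times, derives uniform bounds on $\Delta d^k$, on the $L^4$ norms and on $(\partial_t v^k,\partial_t d^k)$, and passes to the limit by Aubin--Lions; the delicate points are the convergence $\lvert\nabla d^k\rvert^2 d^k\to\lvert\nabla d\rvert^2 d$ in $L^2(0,T_0;L^2)$ and the preservation of $\lvert d\rvert=1$ in the limit. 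What this yields is only a local \emph{strong} solution in $C([0,T_0];\rH\times\rH^1)\cap L^2(0,T_0;\ve\times D(\hA))$, not a regular one, and maximality is then obtained by a Kuratowski--Zorn argument rather than by an appeal to the regular theory. Without this approximation and compactness layer your construction cannot start from the hypothesis $(v_0,d_0)\in\rh\times(\rH^1\cap\mathcal{M})$, so the proposal as written does not prove the theorem.
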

Because of the presence of the anisotropic energy and the external forces, this result is a generalization  of the global existence of weak solution obtained in \cite{Hong} and \cite{LLW}.
\item[(4)] Finally, in  Section \ref{Sec:RegCompactSmallData} we prove that the set of singular times is empty  when the data 	$(v_0,d_0)\in \rh\times \rH^1$ and $(f,g)\in L^2(0,T; \rH^{-1}\times L^2)$ are sufficiently small.  We also  show that if the data are sufficiently regular and small, \textit{i.e.} $(v_0,d_0)\in D(\rA^\frac12 )\times D(\hA)$ and $(f,g)\in L^2(0,T; \rH \times \rH^1)$, then the weak solution becomes regular for all time. Moreover, for all $t \in [0,T)$ $(u(t),d(t))$ lies in a compact set of  $\rH\times \rH^1$. We refer the reader to Theorems \ref{Thm:NoSingSmall} and \ref{Thm:PrecompactInHH1} for more detail about these results.
\end{trivlist}	
	We close this introduction with the presentation of the  layout of the present paper. In Section \ref{Sec:NotaPrelim-Sec} we fix the frequently used notation in the manuscript. We also state and prove some auxiliary results which are essential to our analysis. Section \ref{Sec:ExistRegularSol} is devoted to the existence and uniqueness of of a regular solution to Problem \eqref{1b}. In Section \ref{Sec:LocRegSolSmallENERG} we prove that one  can find a small number $R_0>0$ and a unique maximal local regular solution $((v,d);T_0)$ such that its energy at any time $t\in [0,T_0)$ does not exceed  twice the supremum of all energies on $B(x,2R_0)$, $x\in \Omega$ of the initial data. This result will play a pivotal role in the proof of the existence of a maximal local strong solution to Problem \eqref{1b} in Section \ref{Sec:ExistMaxLocStrongSol}.

	\section{Notation and preliminaries}\label{Sec:NotaPrelim-Sec}
	Let $\Omega\subset\mathbb{R}^{2}$ be an open and bounded set. We denote by  $\Gamma=\partial\Omega$ the  boundary of $\Omega$. We assume that the closure $\overline{\Omega}$ of the set $\Omega$ is a manifold with $C^\infty$ boundary  $\Gamma:=\partial\Omega$ which   is a
	$1$-dimensional infinitely differentiable manifold being
	locally on one side of $\Omega$.
	
	Throughout this paper  $L^p(\Omega; \mathbb{R}^\ell)$,  $\mathrm{W}^{p,k}(\Omega; \mathbb{R}^\ell)$( $\rH^k(\Omega; \mathbb{R}^\ell)= \mathrm{W}^{2,p}(\Omega; \mathbb{R}^\ell)$), $p\in[1,\infty], k \in \mathbb{N}$,  and  $\rH^\alpha(\Omega; \mathbb{R}^\ell)$, $\alpha \in (0,\infty)$,  denote the  Lebesgue and Sobolev spaces whose elements take values in $\mathbb{R}^\ell$, $\ell=2,3$.  To shorten the notation we will just write $L^p$,   $\mathrm{W}^{p,k}$, $\rH^k$ and $\rH^\alpha$ irrespectively if the elements of these spaces  take values in $\mathbb{R}^2$ or $\mathbb{R}^3$.
	
	\subsection{Notations for the velocity field $v$}\label{subsect-Dirichlet}
	
	The following is  an abridged version of notations and preliminary of the paper \cite{Brz+Cer+F_2015}. The facts we enumerate here can be found in \cite[Section 2]{Brz+Cer+F_2015} and references therein.
	%
	%
	%
	
	Let $\mathcal{D}(\Omega)$ (resp. $\mathcal{D
	}(\overline{\Omega})$) be the set of all $C^\infty$ class vector  fields  $u:\mathbb{R}^2\to \mathbb{R}^2$ with compact support   contained in the set $\Omega$ (resp. $\overline{\Omega}$). Then, let us define
	\begin{eqnarray*}
	E(\Omega) &=& \{ u\in L^2(\Omega,\mathbb{R}^2):\divv u \in\,L^2(\Omega,\mathbb{R}^2)\},\\
	\mathcal{V}&=&\big\{  u\in C_0(\Omega,\mathbb{R}^2): \divv u=0\big\} ,\\
	\rH&=& \mbox{the closure of $\mathcal{V}$ in } L^2(\Omega,\mathbb{R}^2),\\
	\rH_0^1(\Omega,\mathbb{R}^2)&=& \mbox{the closure of $\mathcal{D}(\Omega,\mathbb{R}^2)
		$ in } \rH^1(\Omega,\mathbb{R}^2),\\
	\ve&=& \mbox{the closure of $\mathcal{V}$ in } \rH^1_0(\Omega,\mathbb{R}^2).
	\end{eqnarray*}
	The inner products in all  $L^2$ spaces will be denoted by $\langle \cdot,\cdot\rangle$. The space $E(\Omega)$ is a Hilbert space with a scalar product \begin{equation}\label{Temam_1.13}
	\lb u,v\rb_{E(\Omega)}:=\langle u,v\rangle +\langle \divv u, \divv v\rangle.
	\end{equation}
	We endow the set $\rH$ with the inner product $\rangle \cdot,\cdot\rangle$ and the norm   $\left\vert \cdot\right\vert_{\rH} $ induced by $L^2$.
	
	The space $\rH$ can also be characterized in the following way, see
	\cite[Theorem I.1.4]{Temam_2001},
	\[
	\rH=\{ u \in E(\Omega): \divv u=0 \mbox{ and } u\cdot \nu_{\lvert_{\partial \Omega}} =0\}.
	\]
	Let us denote by $\Pi :L^2(\Omega,\mathbb{R}^2) \rightarrow \mathrm{H}$
	the orthogonal projection called usually the Leray-Helmholtz projection. It is known,
	see \cite[Remark I.1.6]{Temam_2001} that $\Pi$ maps continuously the Sobolev space $\rH^{1}$ into itself.
	
	Observe that $\Omega$ is a Poincar\'e
	domain, \textit{i.e.}   there exists a constant $\lambda_1>0$ such that  the following Poincar\'{e} inequality is satisfied
	\begin{equation}
	\lambda_1\int_{\Omega}\varphi^{2}(x)\,\;dx\leq\int_{\Omega}|\nabla
	\varphi(x)|^{2}\,\;dx,\ \ \ \varphi\in \rH^1_0({\Omega}).
	\label{ineq-Poincare}
	\end{equation}
	Because of this  the norms on the space $\rV$ induced by $\lvert \cdot \rvert_\rH^1= \lvert \cdot \rvert_{L^2} +\lvert \nabla \cdot \rvert_{L^2}$ and by $\lvert \nabla \cdot \rvert_{L^2}$ are equivalent.
	%
	Since the space $\mathrm{V} $ is  densely and continuously
	embedded into $\mathrm{H}$, by   identifying   $\mathrm{H}$
	with its dual $\mathrm{H}^\prime$, we have the following embedding
	\begin{equation}
	\label{eqn:Gelfanf}
	\mathrm{V} \hookrightarrow \mathrm{H}\cong\mathrm{H}^\prime \hookrightarrow
	\mathrm{V}^\prime.
	\end{equation}
	Let us observe here  that, in particular,  the spaces $\mathrm{V}$, $\mathrm{H}$ and
	$\mathrm{V}^\prime$ form a Gelfand triple.
	
	We will denote by  $| \cdot |_{\ve^\ast}$   and  $\left\langle \cdot,\cdot\right\rangle $ the norm in
	$\ve^\ast$ and  the
	duality pairing between $\rV$ and $\ve^\ast$, respectively.
	
	Now, define the bilinear
	form $a:\mathrm{V}\times \mathrm{V} \to \mathbb{R}$ by setting
	\begin{equation}
	\label{form-a}
	a(u,v):=\langle \nabla u,\nabla v\rangle, \quad u,v \in \mathrm{V}.
	\end{equation}
	It is well-known that this bilinear map is
	$\mathrm{V}$-continuous and $\mathrm{V}$-coercive, i.e. there exist some $C_0, C_1>0$ such that
	\[ C_0 \lvert u \rvert^2_{\rV}\le \vert a(u,u) \vert
	\leq C \vert u \vert_\ve^2,\ \ \ \ u \in\,\mathrm{V}.\]
	Hence, by the Riesz Lemma and Lax-Milgram theorem, see for instance
	Temam \cite[Theorem II.2.1]{Temam_2001},
	there exists a unique	isomorphism
	$\mathcal{A}:\mathrm{V} \to \mathrm{V}^\prime$,  such that
	$a(u,v)=\lb \mathcal{A}u,v\rb$, for $u, v \in \mathrm{V}$.
	
	Next we  define an unbounded linear operator
	$\mathrm{A}$ in $\mathrm{H}$ as follows
	\begin{equation}
	\label{def-A} \left\{
	\begin{array}{ll}
	D(\mathrm{A}) &= \{u \in \mathrm{V}: \mathcal{A}u \in
	\mathrm{H}\},\\
	&\vspace{.1cm} \\
	\mathrm{A}u&=\mathcal{A}u, \, u \in
	D(\mathrm{A}).
	\end{array}
	\right.
	\end{equation}
	
	Under our assumption on $\Omega$, $\rA$ and $D(\mathrm{A})$ can be characterized as follows
	\begin{equation}
	\label{eqn:4.3} \left\{
	\begin{array}{l}
	D(\mathrm{A}) = \mathrm{V} \cap \rH^2=\mathrm{H} \cap \rH^1_0\cap \rH^2,\\
	\mathrm{A}u=-\mathrm{P}\Delta u, \quad u\in D(\mathrm{A}).
	\end{array}
	\right.
	\end{equation}
	It is also well-known, see \cite[Section]{Brz+Cer+F_2015} and references therein,  that $\mathrm{A}$ is a positive self adjoint operator in
	$\mathrm{H}$ and
	\[D(\rA^{\alpha/2})=[\rH,D(\rA)]_{\frac{\alpha}{2}},\] where
	$[\cdot,\cdot]_\frac{\alpha}{2}$ is the complex interpolation functor  of
	order $\frac{\alpha}{2}$. Furthermore,   for $\alpha  \in (0, \frac12)$
	\begin{equation}\label{eqn-domains}
	D(\rA^{\alpha/2})= \rH \cap \rH^{\alpha}(\Omega,\mathbb{R}^2).
	\end{equation}
	In particular, $\mathrm{V}=D(\mathrm{A}^{1/2})$ and $\lvert \rA^\frac12 u \rvert^2= \lvert \nabla u \rvert^2=: \lVert u \rVert^2$ for $u \in \ve$.
	The  equality \eqref{eqn-domains}  leads to  the following result which was proved in \cite[Proposition 2.1]{Brz+Cer+F_2015}.
	\begin{proposition}\label{prop-Leray-fractional} Assume that $\alpha  \in (0, \frac12)$.
		Then the  Leray-Helmholtz projection $\Pi$  is a well defined and continuous map  from
		$\rH^{\alpha}$ into  $ D(\rA^{\alpha/2})$.
	\end{proposition}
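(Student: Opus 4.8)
The plan is to obtain the statement by complex interpolation of two elementary mapping properties of $\Pi$ that have already been recalled above, combined with the identification of the fractional domain in \eqref{eqn-domains}. At the two endpoints one knows that $\Pi\colon L^2(\Omega,\mathbb{R}^2)\to L^2(\Omega,\mathbb{R}^2)$ is bounded, being the orthogonal projection onto $\rH$, and that $\Pi\colon \rH^1\to \rH^1$ is bounded (Temam, \cite[Remark I.1.6]{Temam_2001}). Since $\rH^1\hookrightarrow L^2$, these are mapping properties of one and the same operator on the compatible couple $(L^2,\rH^1)$, so $\Pi$ is admissible for interpolation between them.

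Next I would interpolate. Using the standard identification $[L^2,\rH^1]_\alpha=\rH^\alpha$ with equivalent norms for $\alpha\in(0,1)$, the complex interpolation functor applied to the two endpoint bounds yields that $\Pi\colon \rH^\alpha\to \rH^\alpha$ is bounded, with operator norm at most $\lVert\Pi\rVert_{\mathcal{L}(L^2)}^{1-\alpha}\lVert\Pi\rVert_{\mathcal{L}(\rH^1)}^{\alpha}$. In particular, for $u\in\rH^\alpha$ we have $\Pi u\in\rH^\alpha$, while at the same time $\Pi u$ lies in the range $\rH$ of the projection; hence $\Pi u\in\rH\cap\rH^\alpha$. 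Now invoking \eqref{eqn-domains}, which for $\alpha\in(0,\tfrac12)$ asserts $D(\rA^{\alpha/2})=\rH\cap\rH^\alpha$ with equivalent norms, we conclude that $\Pi u\in D(\rA^{\alpha/2})$ together with
\begin{equation*}
\lvert\Pi u\rvert_{D(\rA^{\alpha/2})}\le C\,\lvert\Pi u\rvert_{\rH^\alpha}\le C'\,\lvert u\rvert_{\rH^\alpha},\qquad u\in\rH^\alpha,
\end{equation*}
which is exactly the asserted well-definedness and continuity of $\Pi\colon\rH^\alpha\to D(\rA^{\alpha/2})$.

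The one delicate point — and the reason the hypothesis $\alpha<\tfrac12$ cannot be dropped — is the final step: the identification $D(\rA^{\alpha/2})=\rH\cap\rH^\alpha$ breaks down for $\alpha\ge\tfrac12$, since elements of $D(\rA^{\alpha/2})$ then carry the boundary trace $u\cdot\nu|_{\partial\Omega}=0$ (and, for $\alpha>\tfrac32$, the full condition $u|_{\partial\Omega}=0$), which the plain space $\rH^\alpha$ does not see; below $\tfrac12$ there is no trace map, so no boundary information is lost and the argument runs without obstruction. Alternatively one could argue directly from the Helmholtz decomposition $u=\Pi u+\nabla p$, with $p$ solving the Neumann problem $\Delta p=\diver u$, and apply elliptic regularity for that problem on the fractional Sobolev scale; but the interpolation route above is shorter, and, as noted, the statement is precisely \cite[Proposition 2.1]{Brz+Cer+F_2015}, to which one may simply refer.
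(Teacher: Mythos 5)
Your argument is correct and is essentially the one the paper relies on: the paper itself gives no proof but defers to \cite[Proposition 2.1]{Brz+Cer+F_2015}, whose proof is exactly this interpolation of the two endpoint bounds $\Pi\colon L^2\to L^2$ and $\Pi\colon \rH^1\to\rH^1$ combined with the identification $D(\rA^{\alpha/2})=\rH\cap\rH^{\alpha}$ from \eqref{eqn-domains}, valid precisely because $\alpha<\tfrac12$ avoids any boundary trace condition. Nothing further is needed.
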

	Let us  finally recall that the projection $\Pi$ extends to a
	bounded linear projection in the space $L^q$, for any $q \in\,(1,\infty)$.

	Now, consider the trilinear form $b$ on $V\times V\times V$ given by
	\[
	b(u,v,w)=\sum_{i,j=1}^{2}\int_{\Omega}u_{i}{\frac{\partial v_{j}%
		}{\partial x_{i}}}w_{j}\,\,{d}x,\quad u,v,w\in \rV.
	\]
	Indeed, $b$ is a continuous trilinear form such that
	\begin{equation}
	\label{eqn:b01}
	b(u,v,w)=-b(u,w,v),
	\quad  \, u\in \mathrm{V}, v, w\in
	\rH_0^{1}(\Omega,\mathbb{R}^2),
	\end{equation}
	or a proof see  for instance \cite[Lemma 1.3, p.163]{Temam_2001} .
	
	Define next the bilinear map $B:\rV\times \rV\rightarrow \rV^{\ast}$ by setting
	\[
	{}_{\ve^\ast}\left\langle B(u,v),w\right\rangle_{\ve}=b(u,v,w),\quad u,v,w\in \rV,
	\]
	and the homogenous polynomial of second degree $B:\rV \rightarrow \ve^{\ast}$ by
	
	\[
	B(u)=B(u,u),\; u\in \rV.
	\]
	Let us observe that if $v \in\,D(\rA)$, then $B(u,v) \in\,H$ and the following identity is a direct consequence of \eqref{eqn:b01}.
	\begin{equation}
	\label{eqn-B02}
	{}_{\ve^\ast}\lb  \rB(u,v),v \rb_{\ve} =0,\;\;  u,v\in \rV.
	\end{equation}

	The restriction of the map $\rB$ to the space $D(\rA)\times D(\rA)$ has also the following representation
	\begin{equation}
	\label{eqn-B-using-LH}
	\rB(u,v)= \Pi( u\cdot \nabla v), \;\; u,v\in D(\rA),
	\end{equation}
	where $\Pi$ is the Leray-Helmholtz projection operator and $u\nabla v=\sum_{j=1}^2 u^jD_jv \in L^2(\Omega,\mathbb{R}^2)$.

	\subsection{The Laplacian for the director field $d$}
	 Throughout this section  we still denote $L^2(\Omega; \mathbb{R}^3)$ and $\rH^k(\Omega; \mathbb{R}^3)$, $k \in \mathbb{N},$ by $L^2$ and $\rH^1$, respectively.
	We aim in this subsection to introduce the Laplacian for the director $d:\Omega \to \mathbb{R}^3$ with the Neumann boundary conditions. We can do this by mimicking the way we define the Stokes operator $\rA$.  We define the bilinear map $\hat{a}: \rH^1\times \rH^1 \to \mathbb{R}$ by
	\begin{equation*}
	\hat{a}(d,n) = \int_\Omega (\nabla\; d  \nabla  n ) \;dx=\sum_{i=1}^3\sum_{j=1}^{3}\int_{\Omega}(\partial_i d_j \partial_in_j)\,dx, \; d, n \in \rH^1.
	\end{equation*}
	It is clear that $\hat{a}$ is continuous, and hemce, by Riesz representation lemma, there exists a unique bounded linear operator $\hat{\mathcal{A}}: \rH^1 \to (\rH^1)^\ast$ such that ${}_{(\rH^1)^\ast} \langle\hat{\mathcal{A}} d, n\rangle_{\rH^1} =\hat{a}(d,n) $, for $d,n\in \rH^1$.  	Next, we  define an unbounded linear operator
	$\hA$ in $L^2$ as follows
	\begin{equation}
	\label{def-ANeum} \begin{cases}
	D(\hA) = \{d \in \rH^1: \hA d \in L^2 \}\\
	\hA =\hat{\mathcal{A}}d, \, d \in
	D(\hA).
	\end{cases}
	\end{equation}
	Under our assumption on $\Omega$, it is known, see for instance \cite[Section2, p. 65]{Temam_1997} that $\hA$ and $D(\hA)$ can be characterized by
	\begin{equation}\label{eqn-def-A}
	\begin{cases}
	D(\hA)&:=\{d\in \mathbb{H}^{2}: \frac{\partial d}{\partial \nu }=0 \;\text{ on }\; \partial \Omega \},\\
	\hA d&:=-\Delta d,\quad d\in D(\hA),
	\end{cases}
	\end{equation}
	where $\nu=(\nu_1,\nu_2,\nu_3)$ is the unit outward normal vector field on $\partial \Omega $ and $\frac{\partial d}{\partial \nu}$
	is the directional derivative of $d$ in the direction $\nu$.
	
	Let us recall that the operator $\hA$ is self-adjoint and nonnegative and
	$D\left(\hA^{1/2}\right)$ when endowed with the
	graph norm coincides with $\rH^1$. Moreover, the operator $(I+\hA)^{-1}$ is
	compact. Furthermore, if we denote
	\[
	\hV:= D(\hA^{1/2}),
	\]
	the $(\hV,L^2, \ve^\ast)$ is a Gelfand triple and
	\[
	\fourIdx{}{\hV^\ast}{}{\hV}{\lb d_1,d_2 \rb}= \int_{\Omega} d_1(x) d_2(x)\, \;dx
	\]

	\subsection{An abstract formulation of problem \eqref{1a}}
	With the notations we have introduced above, we can now rewrite  problem \eqref{1b-0} as an abstract equations. In fact by projecting the first equation in \eqref{1b} onto $\rH$ we obtain the following system
	\begin{equation}
	\begin{cases}
	\dot{v}+\rA v =-B(v,v)-\Pi \left(\Div [ \nabla d\odot \nabla  d]\right)+ \Pi f,\\
	\partial_t d+\hA d = |\nabla d|^{2}d-v\cdot \nabla d-\phi^\prime(d)+(\phi^\prime(d),d)d+d\times g,\\
	\lvert d \rvert=1\\
	(v,d)(0)=(v_{0},d_{0}). \label{1b}
	\end{cases}
	\end{equation}

	\section{The existence and the  uniqueness of a regular solution to Problem \eqref{1b}} \label{Sec:ExistRegularSol}

	Throughout the whole section, we fix a  map $\phi:\mathbb{R}^3\to \mathbb{R}^3$ satisfying the following set of conditions.
	\begin{assume}\label{Assum:EnergyPotential}
		The map $\phi:\mathbb{R}^3 \to \mathbb{R}^3$ is of class $C^2$ and there  exist constants  $M_0>0$, $M_1>0$ and $M_2>0$ such that for all $n, d \in \mathbb{R}^3$
		\begin{align}
		\lvert \phi^\prime(n) \rvert \le & M_0(1+ \lvert n\rvert),\label{Eq:LInearGrowthPhiprime}\\
		\lvert \phi^{\prime \prime} (n)-\phi^{\prime \prime} (d)\rvert \le & M_1\lvert n-d\rvert,
		\label{Eq:LipSchitzPhibis}\\
		\lvert \phi^{\prime \prime} (n)\rvert\le & M_2.\label{Eq:BoundednessPhibis}
		\end{align}
	\end{assume}
\begin{example}
	Let $H\in \mathbb{R}^3$ be a constant vector. Then the anisotropy energy potential $\phi$  due to the action of a magnetic or electric is defined by
	$$ \phi(d)= \frac12[ \lvert H\rvert^2- (d\cdot H)^2], d\in \mathbb{R}^3.$$
	This potential $\phi$ satisfies the Assumption \ref{Assum:EnergyPotential}.  In this case $H$ represents a constant magnetic or electric field applied to the nematic liquid crystal.
	
	\noindent Another mathematical example which satisfies Assumption \ref{Assum:EnergyPotential} is the potential defined by
	$$ \phi(d)= \frac12 \lvert d -\xi \rvert^2, d\in \mathbb{R}^3,$$
	where  $\xi \in \mathbb{R}^3$ is a fixed constant vector.
\end{example}

Next, we consider the problem \eqref{1b} on a finite time horizon $[0,T]$. Throughout the paper we put
\[
\mathcal{M}=\{ d: \Omega \to \mathbb{R}{^3}: \lvert d(x) \rvert=1 \;\;\mathrm{Leb}\text{-a.e.} \},
\]

For this section, we
impose the following set of conditions on the data.
	\begin{assume}
		We assume that $({f},g)\in L^2(0,T; L^2 \times D(\hA^{1/2}))$ and $(v_0, d_0)\in \ve\times (D(\hA)\cap \mathcal{M})$.
	\end{assume}
	Under this assumption we will  prove in this section that Problem \eqref{1b} has a unique  local regular solution.
	 Before stating and proving this result
	 we define what we mean by a  maximal local regular  solution.
	
	\begin{Def}\label{Def:Local-Regular-Sol}
		Let $T_0\in (0, T]$. A function $(v,d): [0,T_0] \to \ve\times D(\hA)$ is called   a local regular  solution to Problem   \eqref{1b} with initial data $(v_0,d_0)$ iff
		\begin{enumerate}[(1)]
			\item  $(v,d)\in C([0,T_0];\ve\times D(\hA))\cap L^2(0,T_0; D(\rA)\times D(\hA^{3/2}))$,
			\item for all $t \in [0,T_0]$ the integral equations
			\begin{align}
			v(t) =& v_0 +\int_0^t[-\rA v(s) -B(v(s),v(s)) - \Pi( \Div[\nabla d(s)\odot \nabla  d(s)] )  ] ds + \int_0^t f(s) ds,\label{eq:LocVelo}\\
			d(t)=& d_0 +\int_0^t[-\hA d(s)+\lvert \nabla d(s)\rvert^2 d(s) -v(s)\cdot \nabla d(s) -\phi^\prime(d(s)) +(\phi^\prime(d(s) )\cdot d(s) ) d(s)   ] ds \nonumber \\
			&\qquad \qquad + \int_0^t (d(s)\times g(s)) ds,\label{eq:LocDir}
			\end{align}
			hold in $\rH$ and $D(\hA^{1/2})$, respectively.
			\item For all $t\in [0,T_0]$ $d(t)\in \mathcal{M}$.
			\item  \label{Item4-DefRegularSol}  $(\partial_tv, \partial_td)\in L^2(0,T_0; \rH\times D(\hA^{1/2}))$.
		\end{enumerate}
		Throughout this paper we will denote by $((v,d);T_0)$ a local regular solution defined on $[0,T_0]$.
	\end{Def}
		We also need the definition of a maximal local regular solution.
	\begin{Def}\label{Def:Maximal-Sol}
		
		A pair  $((v,d);T_0)$ with $T_0\in (0,T)$ and $(v,d):[0,T_0) \to \ve \times D(\hA)$ is called a maximal local regular solution to \eqref{1b} with initial data $(v(0), d(0))=(v_0,d_0)$ if
		\begin{enumerate}
			\item 	$((v,d);T_0)$ defined on $[0,T_0)$ is a local regular solution to \eqref{1b},
			\item for any other local regular solution $((\tilde{v}, \tilde{d}); \tilde{T}_0)$  we have
			\begin{equation}
				\tilde{T}_0 \le T_0  \text{ and }  (v,d)_{\lvert_{[0,\tilde{T}_0)}}= (\tilde{v},\tilde{d}).\notag
			\end{equation}
		\end{enumerate}
	\end{Def}
	We state the following important remark.
	\begin{Rem}
		Let
			\begin{align}
		F(v,d):=&  -B(v, v) -\Pi(\Div [\nabla d \odot \nabla  d ]),\label{Eq:NonlinVelo} \\
		\tilde{G}(v,d)=& \lvert \nabla d \rvert^2 d -v\cdot \nabla d -\phi^\prime(d) + (\phi^\prime(d) \cdot d) d, \\
		G(v,d):=&\lvert \nabla d \rvert^2 d -v\cdot \nabla d -\phi^\prime(d) + (\phi^\prime(d) \cdot d) d + d\times g.\label{Eq:NonlinDir}
		\end{align}
		Then,  the condition  \eqref{Item4-DefRegularSol} of Definition \ref{Def:Local-Regular-Sol} is equivalent to the following
		\begin{equation}
		(F(v,d), G(v,d)) \in  L^2(0,T_0; \rH\times D(\hA^{1/2})  ).\notag
		\end{equation}
	
		We will see in Lemma \ref{Lem:RighthandSideinL2VxH1} that if  $(v,d)\in C([0,T_0];\ve\times D(\hA))\cap L^2(0,T_0; D(A)\times D(\hA^{3/2}))$, then  $(F(v,d), G(v,d)) \in L^2(0,T_0; H\times D(\hA^{1/2})) $.
	\end{Rem}
	With the definitions and remark in mind we are now ready to formulate our first result.
	\begin{Thm}\label{th}
		Let $R_1>0$, $R_2>0$ and $g\in L^2(0,T; D(\hA^{1/2}))$. Then, there exist $T_1(g)$ and $T_2(R_1,R_2)>0$ such that the following holds.
		
		\noindent If  $(v_0, d_0)\in \ve\times (D(\hA)\cap \mathcal{M})$ and  $(f,g)\in L^2(0,T; L^2\times D(\hA^{1/2}))$ are such that
		\begin{equation} \label{Eq:DatainBoundedSet}
		 \left[\lvert v_0\rvert^2_{V} + \lvert d_0\rvert^2_{D(\hA)}\right]^\frac12 \le R_1 \text{ and }
		\left[\int_0^T \left( \lvert f(s)  \rvert^2_{L^2} + \lvert g(s) \rvert^2_{D(\hA^{1/2})} \right)\;ds \right]^\frac12 \le R_2,
		\end{equation}
		then the problem \eqref{1b} has a  local regular solution $((v,d);T_0)$ with $T_0=T_1(g)\wedge T_2(R_1,R_2)$
		
		Moreover, if  $((u,n);T_0)$ is another  local regular  solution, then
		$$(u(t),n(t))=(v(t),d(t)) \text{ for all } t\in [0,T_0] . $$
	\end{Thm}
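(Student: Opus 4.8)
The plan is to apply the Banach fixed point theorem in a space carrying parabolic ($L^2$-maximal) regularity. First I would record that $-\rA$ and $-\hA$ generate analytic semigroups on $\rH$ and $L^2$ respectively, each enjoying maximal $L^2$-regularity, and then reformulate \eqref{eq:LocVelo}--\eqref{eq:LocDir} through the variation of constants formula as the fixed point problem $(v,d)=\Phi(v,d)$, where, with $F,G$ as in \eqref{Eq:NonlinVelo}--\eqref{Eq:NonlinDir},
\begin{align*}
\Phi_1(v,d)(t)&=e^{-t\rA}v_0+\int_0^t e^{-(t-s)\rA}\big[F(v,d)(s)+\Pi f(s)\big]\,ds,\\
\Phi_2(v,d)(t)&=e^{-t\hA}d_0+\int_0^t e^{-(t-s)\hA}G(v,d)(s)\,ds,
\end{align*}
posed on
\[
X_{T_0}:=\Big(C([0,T_0];\ve)\cap L^2(0,T_0;D(\rA))\Big)\times\Big(C([0,T_0];D(\hA))\cap L^2(0,T_0;D(\hA^{3/2}))\Big).
\]
Since $v_0\in\ve=D(\rA^{1/2})$ and $d_0\in D(\hA)$, the free terms $e^{-\cdot\rA}v_0$ and $e^{-\cdot\hA}d_0$ already lie in $X_{T_0}$ with $X_{T_0}$-norm controlled by $R_1$; moreover maximal $L^2$-regularity makes $h\mapsto\int_0^\cdot e^{-(\cdot-s)\rA}h(s)\,ds$ bounded $L^2(0,T_0;\rH)\to C([0,T_0];\ve)\cap L^2(0,T_0;D(\rA))$ and $h\mapsto\int_0^\cdot e^{-(\cdot-s)\hA}h(s)\,ds$ bounded $L^2(0,T_0;D(\hA^{1/2}))\to C([0,T_0];D(\hA))\cap L^2(0,T_0;D(\hA^{3/2}))$, with constants uniform for $T_0\le T$.

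\textbf{The fixed point.} By Lemma \ref{Lem:RighthandSideinL2VxH1}, $(F(v,d),G(v,d))\in L^2(0,T_0;\rH\times D(\hA^{1/2}))$ for $(v,d)\in X_{T_0}$; I would refine this to a Lipschitz estimate on a closed ball $\overline B_\rho\subset X_{T_0}$. Using the 2D Ladyzhenskaya and Agmon interpolation inequalities, the embeddings $\rH^2\hookrightarrow L^\infty$ and $D(\hA^{3/2})\hookrightarrow\rH^3\hookrightarrow W^{1,\infty}$, the identity $\Pi\,\Div(\nabla d\odot\nabla d)=\Pi(\nabla d\,\Delta d)$ (the term $\tfrac12\nabla|\nabla d|^2$ being annihilated by the Leray--Helmholtz projection), and Assumption \ref{Assum:EnergyPotential}, one gets for $(v,d),(u,n)\in\overline B_\rho$
\[
\big\|(F(v,d),G(v,d))-(F(u,n),G(u,n))\big\|_{L^2(0,T_0;\rH\times D(\hA^{1/2}))}\le C(\rho)\Big(T_0^{\delta}+\Big(\int_0^{T_0}\|g(s)\|^2_{D(\hA^{1/2})}\,ds\Big)^{1/2}\Big)\big\|(v,d)-(u,n)\big\|_{X_{T_0}}
\]
for some $\delta>0$: the superlinear contributions ($B(v,v)$, $\nabla d\,\Delta d$, $|\nabla d|^2 d$, $v\cdot\nabla d$ and the cubic term in $\phi'$) each yield a positive power of $T_0$ by Hölder in time, the affine part of $\phi'$ yields a factor $T_0$, and $d\times g$ yields the displayed tail of $g$. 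Choosing $T_1=T_1(g)$ so small that $\int_0^{T_1}\|g\|^2_{D(\hA^{1/2})}\,ds$ is small enough (absolute continuity of the integral), and then $T_2=T_2(R_1,R_2)$ so small that $\Phi$ maps the ball of radius $\rho=2(\text{norm of the free terms})$ into itself and contracts it by the factor $\tfrac12$, the Banach fixed point theorem furnishes, with $T_0:=T_1(g)\wedge T_2(R_1,R_2)$, a unique $(v,d)\in\overline B_\rho$ solving \eqref{eq:LocVelo}--\eqref{eq:LocDir}; it satisfies conditions (1) and (2) of Definition \ref{Def:Local-Regular-Sol}, and the condition \eqref{Item4-DefRegularSol} of Definition \ref{Def:Local-Regular-Sol} follows from $\partial_t v=-\rA v+F(v,d)+\Pi f\in L^2(0,T_0;\rH)$ and $\partial_t d=-\hA d+G(v,d)\in L^2(0,T_0;D(\hA^{1/2}))$ (cf. the Remark after Definition \ref{Def:Maximal-Sol}).

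\textbf{The constraint $|d|=1$ and uniqueness.} For condition (3) I would show that $\psi:=|d|^2-1$ vanishes. Since $d\in C([0,T_0];D(\hA))\cap L^2(0,T_0;D(\hA^{3/2}))$, taking the scalar product of the $d$-equation with $d$ and using $d\cdot\Delta d=\tfrac12\Delta|d|^2-|\nabla d|^2$, $d\cdot(v\cdot\nabla d)=\tfrac12 v\cdot\nabla|d|^2$, $d\cdot(d\times g)=0$ and $|d|^2=\psi+1$, one finds that $\psi$ solves the linear parabolic problem
\begin{equation*}
\partial_t\psi-\Delta\psi+v\cdot\nabla\psi=\big(2|\nabla d|^2+2\,\phi'(d)\cdot d\big)\psi,\qquad \frac{\partial\psi}{\partial\nu}\Big|_{\partial\Omega}=0,\qquad \psi(0)=|d_0|^2-1=0,
\end{equation*}
whose drift is divergence free and whose zero-order coefficient lies in $L^1(0,T_0;L^\infty)$ (because $\nabla d\in L^2(0,T_0;L^\infty)$, $d\in C([0,T_0];L^\infty)$ and \eqref{Eq:LInearGrowthPhiprime}); testing with $\psi$, using $\int_\Omega(v\cdot\nabla\psi)\,\psi=0$, and invoking Gronwall's lemma gives $\|\psi(t)\|_{L^2}=0$ for all $t\in[0,T_0]$, i.e. $d(t)\in\mathcal M$. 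For uniqueness, given another local regular solution $((u,n);T_0)$ with the same data I would set $(w,e)=(v-u,d-n)$, subtract the equations, test the $w$-equation with $w$ in $\rH$ and the $e$-equation with $e$ in $L^2$ (so the forcing terms cancel and $\langle e\times g,e\rangle=0$), bound the right-hand side using \eqref{eqn:b01}, the bilinearity of $B$, Assumption \ref{Assum:EnergyPotential} and the $X_{T_0}$-bounds on both solutions by $\tfrac14(\|\nabla w\|^2_{L^2}+\|\nabla e\|^2_{L^2})+\eta(t)(\|w\|^2_{\rH}+\|e\|^2_{L^2})$ with $\eta\in L^1(0,T_0)$, absorb the gradient terms on the left, and conclude $w\equiv e\equiv 0$ by Gronwall's lemma and $w(0)=e(0)=0$.

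\textbf{Main obstacle.} The heart of the argument is the battery of nonlinear estimates of the second step: one must control $F(v,d)$ in $L^2(0,T_0;\rH)$ and $G(v,d)$ in $L^2(0,T_0;D(\hA^{1/2}))$ — which is exactly what forces the $L^2(0,T_0;D(\hA^{3/2}))$-regularity of $d$ into the definition of $X_{T_0}$ — while at the same time extracting a factor that vanishes as $T_0\downarrow 0$ from every superlinear or $g$-dependent term, so that $\Phi$ becomes a contracting self-map of a ball; the precise combinatorics of the 2D interpolation inequalities and of the structural hypotheses on $\phi$ is where the work lies. By comparison, the preservation of $|d|=1$ and the uniqueness are soft energy arguments once the linear equation for $\psi$ and the difference equations have been written down.
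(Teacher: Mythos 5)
Your proposal is correct and follows essentially the same route as the paper: a Banach fixed point argument on the same space $\mathbf{X}_{T_0}$, with the Lipschitz estimates of Lemma \ref{Lem:RighthandSideinL2VxH1} producing the factor $T_0^{\delta}$ from the superlinear terms and the small tail of $g$ giving $T_1(g)$, followed by the derivation of the linear parabolic equation for $|d|^2-1$ to recover the sphere constraint. Your Duhamel/mild formulation is just an equivalent packaging of the paper's maximal-regularity Lemma \ref{Lem:ExistenceStokes+Heat}, and your direct difference-energy argument for uniqueness is a sound substitute for the paper's combination of the contraction property with its separate uniqueness result (Proposition \ref{Prop:Uniq}).
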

	
	In order to prove this theorem we shall introduce the following spaces
	\begin{align}
	\mathbf{X}^1_T=& C([0,T]; \ve  ) \cap L^2(0,T; D(\rA)  ),\notag\\
	\mathbf{X}^2_T=& C([0,T]; D(\hA) ) \cap L^2(0,T; D(\hA^\frac32)  ),\textbf{}\notag\\
	\mathbf{Y}^1_T= & L^2(0,T; \rH  ) \notag\\
	\mathbf{Y}^2_T=& L^2(0,T; D(\hA^\frac12))\notag
	\end{align}
	We also set
	\begin{align}
	\mathbf{X}_T=& \mathbf{X}^1_{T} \times \mathbf{X}^2_{T},\notag\\
	\mathbf{Y}_T=&\mathbf{Y}^1_T \times \mathbf{Y}^2_T.\notag
	\end{align}
	Let $(v,n)\in \mathbf{X}_T$ and consider the following decoupled linear problem
	\begin{equation}\notag
	\begin{pmatrix}
	\partial_t {u}\\
	\partial_t {d}
	\end{pmatrix}
	+ \begin{pmatrix}
	\rA u \\
	\hA d
	\end{pmatrix}
	=
	\begin{pmatrix}
	F(v,n)+f\\
	G(v,n)+d\times g
	\end{pmatrix}.
	\end{equation}
	Before continuing further, let us recall the following result.
	\begin{Lem}\label{Lem:ExistenceStokes+Heat}
		If  $(u_0,d_0)\in \ve \times D(\hA)$, $(\mathfrak{f}, \mathfrak{g})\in L^2(0,T; \rH\times D(\hA^\frac12) )$ and $T>0$, then  the problem
		\begin{equation}\label{Eq:SystemStokesheat}
		\begin{pmatrix}
		\partial_t {u}\\
		\partial_t{d}
		\end{pmatrix}
		+ \begin{pmatrix}
		\rA u \\
		\hA d
		\end{pmatrix}
		=
		\begin{pmatrix}
		\mathfrak{f}\\
		\mathfrak{g}
		\end{pmatrix}
		\end{equation}
		has a unique strong solution $(u,d)\in \mathbf{X}_T$. Moreover, there exists a constant $C>0$, independent of $T$, such that
		\begin{equation}\notag
		\lvert (u,d) \rvert^2_{\mathbf{X}_T} \le C \lvert (u_0,d_0) \rvert^2_{\ve\times D(\hA)} + C \lvert (\mathfrak{f}, \mathfrak{g}) \rvert^2_{L^2(0,T; \rH \times D(\hA^\frac12))}.
		\end{equation}
	\end{Lem}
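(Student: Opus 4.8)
Since \eqref{Eq:SystemStokesheat} is fully decoupled, it suffices to treat the two scalar abstract parabolic equations
\[
\dot u + \rA u = \mathfrak f,\quad u(0)=u_0\in\ve,\qquad \dot d + \hA d = \mathfrak g,\quad d(0)=d_0\in D(\hA),
\]
separately; for each the statement is the classical maximal $L^2$-regularity result for the analytic semigroup generated by a nonnegative self-adjoint operator. The plan is to prove it by the Faedo--Galerkin method (the spectral representation of $\rA$ and $\hA$ would do equally well). First I would fix the orthonormal bases of $\rH$ and of $L^2$ consisting of eigenvectors of $\rA$ and of $\hA$ respectively (these exist since $(I+\rA)^{-1}$ and $(I+\hA)^{-1}$ are compact), denote by $P_n$ the orthogonal projection onto the span of the first $n$ of them, and solve the finite--dimensional linear systems $\dot u_n+\rA u_n=P_n\mathfrak f$, $u_n(0)=P_nu_0$, and $\dot d_n+\hA d_n=P_n\mathfrak g$, $d_n(0)=P_nd_0$; these have $L^2_t$ coefficients and are globally solvable on $[0,T]$.

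The heart of the matter is the a priori estimate. I would test, at the Galerkin level where all manipulations are legitimate, the first equation with $\rA u_n$ and the second with $\hA^2 d_n$. Using the self-adjointness of $\rA$ and $\hA$, the fact that $P_n$ commutes with them, and Young's inequality (for the $d$-equation via $\langle\mathfrak g,\hA^2 d_n\rangle=\langle\hA^{1/2}\mathfrak g,\hA^{3/2}d_n\rangle\le\frac12|\hA^{1/2}\mathfrak g|^2+\frac12|\hA^{3/2}d_n|^2$), one gets
\[
\tfrac{d}{dt}|\rA^{1/2}u_n|^2+|\rA u_n|^2\le|\mathfrak f|^2,\qquad \tfrac{d}{dt}|\hA d_n|^2+|\hA^{3/2}d_n|^2\le|\hA^{1/2}\mathfrak g|^2 .
\]
Integrating over $[0,t]$ and using $|\rA^{1/2}\cdot|\simeq\|\cdot\|_\ve$, $|\rA\cdot|\simeq|\cdot|_{D(\rA)}$, $|\hA^{1/2}\cdot|_{L^2}\le|\cdot|_{D(\hA^{1/2})}$ yields bounds on $(u_n)$ in $\mathbf{X}^1_T$ and on $(d_n)$ in $\mathbf{X}^2_T$ that are uniform in $n$ and, crucially, independent of $T$; the $T$-independence comes from the coercivity of $\rA$ (Poincaré, $\lambda_1>0$) and of $\hA$ on $(\ker\hA)^\perp$. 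The one point that is not completely automatic is the $L^2(\Omega)$-component of the $D(\hA)$-graph norm of $d_n$: splitting $d_n=P_0 d_n+(I-P_0)d_n$ with $P_0$ the projection onto $\ker\hA$ (the constants), the second summand is controlled $T$-uniformly by Poincaré on $(\ker\hA)^\perp$, while $\tfrac{d}{dt}P_0 d_n=P_0\mathfrak g$ gives the first summand by an elementary integration, contributing only a harmless lower-order term. From the equations one also reads off uniform bounds for $\dot u_n$ in $L^2(0,T;\rH)$ and $\dot d_n$ in $L^2(0,T;D(\hA^{1/2}))$.

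Because the problem is linear, weak-$\ast$ compactness now suffices: along a subsequence $u_n\rightharpoonup u$ in $L^2(0,T;D(\rA))$ with $\dot u_n\rightharpoonup\dot u$ in $L^2(0,T;\rH)$, and similarly for $d_n$; passing to the limit in the (linear) Galerkin identities shows that $(u,d)$ solves \eqref{Eq:SystemStokesheat} in $\rH$ and in $D(\hA^{1/2})$ for a.e.\ $t$, and the estimates pass to the limit by lower semicontinuity, giving the asserted inequality with $C$ independent of $T$. The continuity statements $u\in C([0,T];\ve)$, $d\in C([0,T];D(\hA))$ then follow from the Lions--Magenes trace theorem applied to the pairs $(D(\rA),\rH)$ and $(D(\hA^{3/2}),D(\hA^{1/2}))$, using $(D(\rA),\rH)_{1/2,2}=D(\rA^{1/2})=\ve$ and $(D(\hA^{3/2}),D(\hA^{1/2}))_{1/2,2}=D(\hA)$. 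Finally, uniqueness is immediate: the difference of two solutions solves \eqref{Eq:SystemStokesheat} with zero data, and testing the velocity component with $u$ and the director component with $d$ gives $\tfrac{d}{dt}|u|^2+2|\rA^{1/2}u|^2=0$ and $\tfrac{d}{dt}|d|^2+2|\hA^{1/2}d|^2=0$ with vanishing initial values, whence $u\equiv0$, $d\equiv0$. I do not anticipate a serious obstacle: the entire content lies in choosing the test functions $\rA u_n$ and $\hA^2 d_n$ (i.e.\ the correct fractional powers) so that the forcing cross-term closes against the top-order dissipation, together with the minor separate handling of $\ker\hA$ needed to keep the constant independent of $T$.
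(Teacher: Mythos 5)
Your proposal is correct, and in fact it supplies a proof where the paper gives none: Lemma \ref{Lem:ExistenceStokes+Heat} is merely ``recalled'' as a classical maximal $L^2$-regularity statement for the decoupled Stokes/Neumann-heat system, so there is no argument in the paper to compare against. Your Galerkin scheme, the choice of test functions $\rA u_n$ and $\hA^2 d_n$ so that the forcing closes against the top-order dissipation, the weak compactness passage, the Lions--Magenes interpolation $(D(\rA),\rH)_{1/2,2}=\ve$ and $(D(\hA^{3/2}),D(\hA^{1/2}))_{1/2,2}=D(\hA)$ for the continuity statements, and the energy-identity uniqueness are exactly the standard route and all the individual steps check out.

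One caveat, which you half-identify yourself: the $\ker\hA$ component does \emph{not} come out $T$-independent. From $\tfrac{d}{dt}P_0 d_n=P_0\mathfrak g$ you get $\lvert P_0 d_n(t)\rvert\le\lvert P_0 d_0\rvert+t^{1/2}\lvert P_0\mathfrak g\rvert_{L^2(0,t;L^2)}$, so if $D(\hA)$ carries the full graph norm the constant in the final estimate necessarily grows like $T$ (take $\mathfrak g$ a nonzero constant: then $d(t)\approx d_0+t\mathfrak g$ and $\sup_t\lvert d(t)\rvert_{L^2}^2\sim T\lvert\mathfrak g\rvert^2_{L^2(0,T;L^2)}$). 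So the literal ``$C$ independent of $T$'' holds only for the homogeneous parts $\lvert\rA^{1/2}u\rvert$, $\lvert\rA u\rvert$, $\lvert\hA d\rvert$, $\lvert\hA^{3/2}d\rvert$; the mean of $d$ picks up a factor $T^{1/2}$. This is really an imprecision of the lemma as stated rather than of your proof, and it is harmless in the only place the lemma is used (the fixed-point argument of Theorem \ref{th}, where $T$ is bounded above by the fixed horizon and only non-degeneracy as $T\to0$ matters), but you should state the estimate for the kernel component honestly rather than folding it into the ``$T$-independent'' claim.
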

	Now we state the following lemma whose proof will be given in the appendix.
	\begin{Lem}\label{Lem:RighthandSideinL2VxH1}
		There exists a constant $C_0>0$, independent of $T$, such that for all $v_i\in \mathbf{X}^1_T, d_i\in \mathbf{X}^2_T$, $i=1,2$, the following inequalities  hold
		\begin{align}
		\lvert F(v_1,n_1)-F(v_2, n_2) \rvert^2_{L^2(0,T; \rH)} \le & C_0 T^\frac12 \lvert (v_1,n_1) - (v_2,n_2)\rvert^2_{\mathbf{X}_T} [\lvert (v_1,n_1) \rvert^2_{\mathbf{X}_T}+ \lvert (v_2,n_2)\rvert^2_{\mathbf{X}_T}]\label{Eq:FPT-ForcingVelo} \\
		\lvert \tilde{G}(v_1, n_1) - \tilde{G}(v_2,n_2) \rvert^2_{L^2(0,T; D(\hA^\frac12))}\le  & C_0 (T\vee T^\frac12) \lvert (v_1,n_1) - (v_2,n_2)\rvert^2_{\mathbf{X}_T} \Bigl[ 1+\sum_{ i=1}^{2} \lvert (v_i, n_i) \rvert^{6} _{\mathbf{X}_T} \Bigr].   \label{Eq:FPT-ForcingOptDir-1}\\
		\lvert n_1\times g - n_2 \times g \rvert^2_{L^2(0,T; \rH^1)}\le & C_0  \lvert n_1-n_2 \rvert^2_{\mathbf{X}_T^2} \lvert g \rvert^2_{L^2(0,T; \rH^1)}. \label{Eq:FPT-ForcingOptDir-2}
		\end{align}
	\end{Lem}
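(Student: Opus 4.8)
The plan is to estimate each of the three nonlinear expressions termwise, using only (i) the Ladyzhenskaya/Gagliardo--Nirenberg inequality in two dimensions, $\lvert u\rvert_{L^4}\le C\lvert u\rvert_{L^2}^{1/2}\lvert \nabla u\rvert_{L^2}^{1/2}$, together with the embeddings $D(\rA)\hookrightarrow \rH^2\hookrightarrow W^{1,\infty}\cap L^\infty$ and $D(\hA^{3/2})\hookrightarrow \rH^3\hookrightarrow W^{2,p}$ for every finite $p$ in dimension two; (ii) the interpolation inequalities that make the norms $\lvert\cdot\rvert_{\mathbf{X}^i_T}$ control the relevant time-integrated $L^p$ norms; and (iii) the elementary bilinearity/trilinearity of $B$, $\nabla d\odot\nabla d$ and the cubic term $\lvert\nabla d\rvert^2 d$, which lets us write each difference $F(v_1,n_1)-F(v_2,n_2)$ as a sum of terms in each of which exactly one factor is a difference $v_1-v_2$ or $n_1-n_2$ and the remaining factors are one of the $v_i$, $n_i$. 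The gain of a positive power of $T$ in \eqref{Eq:FPT-ForcingVelo} and \eqref{Eq:FPT-ForcingOptDir-1} comes from using H\"older in time: after bounding the integrand pointwise in $t$ by a product of $\mathbf{X}_T$-type norms, one of the factors is only in $L^\infty_t$ (the $C([0,T];\cdot)$ part), so integrating $1\,dt$ over $[0,T]$ produces a factor $T$, and interpolating between the $L^\infty_t$ and $L^2_t$ parts produces the power $T^{1/2}$.

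For \eqref{Eq:FPT-ForcingVelo}: write $F(v_1,n_1)-F(v_2,n_2)=-[B(v_1,v_1)-B(v_2,v_2)]-\Pi\,\Div[\nabla n_1\odot\nabla n_1-\nabla n_2\odot\nabla n_2]$. For the Navier--Stokes part use $B(v_1,v_1)-B(v_2,v_2)=B(v_1-v_2,v_1)+B(v_2,v_1-v_2)$ and, since $\Pi$ is bounded on $L^2$, estimate $\lvert B(a,b)\rvert_{L^2}\le \lvert a\rvert_{L^4}\lvert\nabla b\rvert_{L^4}$, then apply Ladyzhenskaya to each factor and interpolate in time: $\lvert a\rvert_{L^\infty_t L^2}\,\lvert a\rvert_{L^2_t \dot H^1}^{1/2}\cdots$, which after Cauchy--Schwarz in $t$ yields the $T^{1/2}$ factor times $\lvert(v_1,n_1)-(v_2,n_2)\rvert_{\mathbf{X}_T}^2\big(\lvert(v_1,n_1)\rvert_{\mathbf{X}_T}^2+\lvert(v_2,n_2)\rvert_{\mathbf{X}_T}^2\big)$. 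For the stress term, $\Div(\nabla n\odot\nabla n)$ is a sum of products $\partial_i\partial_j n_k\,\partial_j n_k$ and $\partial_j n_k\,\partial_i\partial_j n_k$; write the difference of two such bilinear expressions again as two terms with one difference factor, bound $\lvert(\nabla^2 a)(\nabla b)\rvert_{L^2}\le\lvert\nabla^2 a\rvert_{L^4}\lvert\nabla b\rvert_{L^4}$, use $\lvert\nabla^2 a\rvert_{L^4}\le C\lvert\nabla^2 a\rvert_{L^2}^{1/2}\lvert a\rvert_{D(\hA^{3/2})}^{1/2}$ and $\lvert\nabla b\rvert_{L^4}\le C\lvert \nabla b\rvert_{L^2}^{1/2}\lvert\nabla^2 b\rvert_{L^2}^{1/2}$, and again interpolate the $L^\infty_t$-bounded factors against the $L^2_t$ ones to extract $T^{1/2}$.

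For \eqref{Eq:FPT-ForcingOptDir-1}: $\tilde G$ has four terms. The transport term $v\cdot\nabla n$: its $D(\hA^{1/2})=\rH^1$ norm requires controlling $\nabla(v\cdot\nabla n)$ in $L^2$, i.e. products $\nabla v\,\nabla n$ and $v\,\nabla^2 n$; bound these by $\lvert\nabla v\rvert_{L^4}\lvert\nabla n\rvert_{L^4}$ and $\lvert v\rvert_{L^\infty}\lvert\nabla^2 n\rvert_{L^2}$ with $\lvert v\rvert_{L^\infty}\le C\lvert v\rvert_{\rH^2}^{1/2}\lvert v\rvert_{\rH^1}^{1/2}$ and proceed as above, getting at worst a quadratic dependence, absorbed into $1+\sum\lvert(v_i,n_i)\rvert_{\mathbf{X}_T}^6$. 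The cubic term $\lvert\nabla n\rvert^2 n$: differentiate, getting terms $\nabla^2 n\,\nabla n\,n$ and $\lvert\nabla n\rvert^2\nabla n$; splitting a difference of two cubics into three terms with one difference factor each, estimate e.g. $\lvert\nabla^2 a\,\nabla b\,c\rvert_{L^2}\le\lvert\nabla^2 a\rvert_{L^2}\lvert\nabla b\rvert_{L^\infty}\lvert c\rvert_{L^\infty}$ or by an $L^4$--$L^4$--$L^\infty$ split, using $D(\hA)\hookrightarrow W^{1,\infty}$ for $n$ and $\lvert c\rvert_{L^\infty}\le C$ when $c$ is an $n_i$ (since $\lvert n_i(x)\rvert=1$, though one does not even need this — $\lvert n_i\rvert_{L^\infty}\le C\lvert n_i\rvert_{D(\hA)}$ suffices); tracking the worst power of the $\mathbf{X}_T$-norms across the cubic-times-one-derivative structure gives the exponent $6$ claimed. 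The $\phi'$ terms $-\phi'(n)+(\phi'(n)\cdot n)n$: by \eqref{Eq:LInearGrowthPhiprime}--\eqref{Eq:BoundednessPhibis}, $\phi'$ is Lipschitz with linear growth and $\phi''$ is bounded and Lipschitz, so $\nabla\big(\phi'(n_1)-\phi'(n_2)\big)=\phi''(n_1)\nabla(n_1-n_2)+(\phi''(n_1)-\phi''(n_2))\nabla n_2$ is bounded in $L^2$ by $M_2\lvert\nabla(n_1-n_2)\rvert_{L^2}+M_1\lvert n_1-n_2\rvert_{L^4}\lvert\nabla n_2\rvert_{L^4}$, which fits the claimed form; the cubic $\phi'$ correction is handled the same way using boundedness of $n_i$ and $\phi'(n_i)$. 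Collecting the time exponents, the transport term produces $T^{1/2}$ and the cubic $\phi'$-free terms produce $T$ after the $L^\infty_t$-against-$L^2_t$ interpolations, hence the prefactor $T\vee T^{1/2}$.

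For \eqref{Eq:FPT-ForcingOptDir-2}: this is the easiest. Since $g$ is fixed, $n_1\times g-n_2\times g=(n_1-n_2)\times g$, and $\lvert(n_1-n_2)\times g\rvert_{\rH^1}^2\le C\big(\lvert(n_1-n_2)g\rvert_{L^2}^2+\lvert\nabla(n_1-n_2)\,g\rvert_{L^2}^2+\lvert(n_1-n_2)\nabla g\rvert_{L^2}^2\big)$; bound the first two by $\lvert n_1-n_2\rvert_{L^\infty}^2\lvert g\rvert_{\rH^1}^2$ using $D(\hA)\hookrightarrow L^\infty$ (or $\rH^1\hookrightarrow L^4$ against $g\in\rH^1\hookrightarrow L^4$), and the third by $\lvert n_1-n_2\rvert_{L^\infty}^2\lvert\nabla g\rvert_{L^2}^2$; integrating in $t$ and using $\lvert n_1-n_2\rvert_{L^\infty_t D(\hA)}\le\lvert n_1-n_2\rvert_{\mathbf{X}^2_T}$ gives exactly \eqref{Eq:FPT-ForcingOptDir-2}.

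The only genuine obstacle is bookkeeping: one must make the $L^p_x$--$L^q_x$ Gagliardo--Nirenberg splits and the subsequent $L^2_t$/$L^\infty_t$ interpolations simultaneously and uniformly, and verify that no term forces a worse power of $T$ than $T\vee T^{1/2}$ or a worse polynomial degree in the $\mathbf{X}_T$-norms than the sixth; the cubic term $\lvert\nabla d\rvert^2 d$ differentiated once is the term where the exponent $6$ (three quadratic-in-$\mathbf{X}_T$ blocks, minus the one that is the difference) is tight, so one must be careful there. No macros beyond those already in the paper are needed.
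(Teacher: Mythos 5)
Your proposal is correct and follows essentially the same route as the paper's proof in the appendix: termwise decomposition of each nonlinearity into pieces containing exactly one difference factor, Ladyzhenskaya/Gagliardo--Nirenberg splits in space together with the product rule \eqref{Eq:ProductH1Linfty} and the embeddings $D(\hA)\hookrightarrow L^\infty$, $D(\hA^{3/2})\hookrightarrow \rH^3$, and then H\"older in time to convert the $C([0,T];\cdot)$ factors into the powers $T$ and $T^{1/2}$; the only organizational difference is that the paper imports the pointwise-in-time estimates for $F(v_1,n_1)-F(v_2,n_2)$ and for $u_1\cdot\nabla n_1-u_2\cdot\nabla n_2$ from \cite[Lemmas 2.5 and 2.6]{ZB+EH+PR-RIMS} instead of rederiving them as you do. One small correction to your bookkeeping: the exponent $6$ is not saturated by the term $\lvert \nabla n\rvert^2 n$, whose difference only produces degree $4$ in the non-difference $\mathbf{X}_T$-norms (cf.\ \eqref{Eq:GradientNonlinearity-Lipschitz}), but by the anisotropy term $(\phi^\prime(n)\cdot n)n$, because $\lvert \phi^\prime(n_1)-\phi^\prime(n_2)\rvert_{\rH^1}$ already carries a factor $\lvert n_1\rvert_{\rH^2}$ which, multiplied by the two remaining copies of $n_1$ and squared, yields $\lvert n_1-n_2\rvert^2_{\mathbf{X}^2_T}\lvert n_1\rvert^6_{\mathbf{X}^2_T}$; this misattribution does not affect the validity of your argument, since every term you produce is dominated by the claimed right-hand side.
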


	Now, we will give the proof of Theorem \ref{th}.
	\begin{proof}[Proof of Theorem \ref{th}]
		Let $\Psi:\mathbf{X}_T \to \mathbf{X}_T$ be the map defined as follows.
		If $(v,n)\in \mathbf{X}_T $, then $\Psi(v,n)=(u,d)$ iff  $(u,d)$ is the unique regular  solution to \eqref{Eq:SystemStokesheat} with right hand side of the form
		\begin{equation}\label{Eq:nonlinear-linearized}
		\begin{pmatrix}
		\mathfrak{f}\\
		\mathfrak{g}
		\end{pmatrix}
		=\begin{pmatrix}
		F(v,n) + f \\
		\tilde{G}(v,n) +n\times g
		\end{pmatrix}.
		\end{equation}
		Let us observe that by  Lemma \ref{Lem:RighthandSideinL2VxH1}
		the term $(\mathfrak{f}, \mathfrak{g}) $ defined  in \eqref{Eq:nonlinear-linearized}  belongs to $L^2(0,T; \rH\times D(\hA^\frac12))$. Hence, by Lemma \ref{Lem:ExistenceStokes+Heat} $(u,d)\in \mathbf{X}_T$, and so the map $\Psi$ is well-defined.
		
		Now, let $R_1>0$, $R_2>0$, $(f,g)\in L^2(0,T; H\times D(\hA^{1/2}))$  and $(v_0, d_0)\in \ve\times (D(\hA)\cap \mathcal{M})$.
		Let
		\begin{equation}
		\mathbf{K}_{R_1,R_2}= \Bigl\{ (v,n)\in \mathbf{X}_T:  \lvert (v, n) \rvert^2_{\mathbf{X}_T}\le R_1^2 + R_2^2   \Bigr\}.
		\end{equation}
		Let $(v_i,n_i)\in \mathbf{K}_{R_1,R_2}$ and $(u_i,d_i)=\Psi(v_i,n_i)$, $i=1,2$. Put  $(u,d)=(u_1-u_2, d_1-d_2)$. Then, it is easy to check that $(u,d)$ solves the following problem
		\begin{equation}\label{Eq:SystemStokesheat-1}
		\begin{pmatrix}
		\partial_t {u}\\
		\partial_t{d}
		\end{pmatrix}
		+ \begin{pmatrix}
		\rA u \\
		\hA d
		\end{pmatrix}
		=
		\begin{pmatrix}
		F(v_1,n_1)-F(v_2,n_2)\\
		\tilde{G}(v_1,n_1)-\tilde{G}(v_1,n_2)+ (n_1-n_2)\times g
		\end{pmatrix}
		.
		\end{equation}
		Hence, by Lemma \ref{Lem:ExistenceStokes+Heat} there exists a constant $C>0$, independent of $T$, such that 	
		\begin{equation}\notag
		\begin{split}
		\lvert (u,d)\rvert^2_{\mathbf{X}_T}\le & C \bigl(\lvert 	F(v_1,n_1)-F(v_2,n_2)\rvert^2_{L^2(0,T;\rH)} +  \lvert 	 \tilde{G}(v_1,n_1)-\tilde{G}(v_2,n_2)\rvert^2_{L^2(0,T;\rH^1)} \\& + \lvert (n_1-n_2)\times g\rvert^2_{L^2(0,T;\rH^1)}  \bigr).
		\end{split}
		\end{equation}
		Then, by plugging \eqref{Eq:FPT-ForcingVelo}, \eqref{Eq:FPT-ForcingOptDir-1} and  \eqref{Eq:FPT-ForcingOptDir-2} in the above inequality and performing elementary calculations imply that there exists a constant $C_1>0$, independent of $T$, such that for all $(v_i,n_i)\in \mathbf{K}_{R_1,R_2}$, $i=1,2$,
		\begin{equation}\notag
		\begin{split}
		\lvert \Psi(v_1,n_1)-\Psi(v_2,n_2) \rvert^2_{\mathbf{X}_T}\le C_1 \lvert (v_1,n_1)-(v_2,n_2) \rvert^2_{\mathbf{X}_T}\Biggl( \Bigl[1+R_1^{6} +R_2^{6}  \Bigr](T\vee T^\frac12) + \int_0^T \lvert g(s)\rvert^2_{\rH^1} ds  \Biggr).
		\end{split}
		\end{equation}
		Since $g\in L^2(0,T; \rH^1) $, for any $\eps>0$ there exists $T_1\in (0,T)$ such that
		\begin{equation}\notag
		C_1 \int_0^{T_1} \lvert g(s)\rvert^2_{\rH^1}\le \eps.
		\end{equation}
		Next, we choose a number $T_2$ such that
		\begin{equation}\notag
		C_1 \Bigl[1+R_1^{6} +R_2^{6}  \Bigr] (T_2 \vee T_2^\frac12) \le \frac14.
		\end{equation}
		Hence, by choosing $\eps=\frac14$ and setting $T_0=T_1\wedge T_2$  we infer that
		for all $(v_i,n_i)\in \mathbf{K}_{R_1,R_2}$, $i=1,2$,
		\begin{equation}\notag
		\begin{split}
		\lvert \Psi(v_1,n_1)-\Psi(v_2,n_2) \rvert^2_{\mathbf{X}_T}\le  \frac12.
		\end{split}
		\end{equation}
		Hence, $\Psi$ has a unique fixed point $(u,d)\in \mathbf{X}_{T_0}$ satisfying
		\begin{equation}\label{Eq:SystemStokesheat-A}
		\begin{pmatrix}
		\partial_t{u}\\
		\partial_t {d}
		\end{pmatrix}
		+ \begin{pmatrix}
		\rA u \\
		\hA d
		\end{pmatrix}
		=
		\begin{pmatrix}
		F(u,d) + f \\
		G(u,d) +d\times g
		\end{pmatrix}.
		\end{equation}
		
		Thus, in order to prove Theorem \ref{th} it remains to prove that
		\begin{equation}\label{Eq:tu5}
		d(t) \in \mathcal{M} \text{ for all } t\in [0,T_0].
		\end{equation}
		For this purpose, let
		$$z(t) =\lvert d(t) \rvert^2-1,\; t\in [0,T_0].$$
	We  recall that there exists a constant $C>0$ such that for all $n\in \rH^3$
	\begin{equation}\label{Eq:InterpolationofH52}
	\lvert n \rvert_{\rh^{\frac52}}\le  C \lvert n \vert^\frac12_{\rh^2} \lvert n \rvert^\frac12_{\rH^3}.
	\end{equation}
Hence, since $d\in \mathbf{X}^2_{T_0}:=C([0,T_0]; D(\hA))  \cap L^2(0,T; D(\hA^\frac32))$, $D(\hA^{\theta})\subset \rh^{2\theta}$ and $\rH^{2\theta}, \theta>\frac12$ is an algebra,  by using the interpolation inequality \eqref{Eq:InterpolationofH52} we easily show that
\begin{equation}\label{Eq:Regularity-of-z-1}
z\in C([0,T_0]; \rh^2) \cap L^2(0,T_0; \rh^{\frac52}).
\end{equation}
Also, since $(u,d)\in X_{T_0}$ we infer from Lemma \ref{Lem:RighthandSideinL2VxH1} that
\begin{equation}\label{Eq:TimeDrivative-of-d}
\partial_t d= -\hA d + \tilde{G}(u,d)+ d\times g \in L^2(0,T_0;\rH^1).
\end{equation}
Using this and $d\in \mathbf{X}^2_{T_0}$ we easily prove that
\begin{equation}\label{Eq:Regularity-of-z-2}
\partial_tz \in L^2(0,T_0; \rH^1).
\end{equation}
Now we will claim that $z$ satisfies the weak form of the following problem
\begin{equation}\label{Eq:ViscousTransport-z}
\begin{cases}
\partial_{t}z-\Delta z +u\cdot \nabla z=2|\nabla d|^{2}z-2(\phi^\prime(d).d)z,\\
{\frac{\partial z}{\partial \nu}}{\Big\lvert_{\partial\Omega}}=0,\\
z(0)=0.
\end{cases}
\end{equation}
Towards this end let $\varphi\in H^{1}(\Omega;\mathbb{R})$ and fix $t\in [0,T_0]$.
		Since $d\in C([0,T_0]; D(\hA))  \cap L^2(0,T; D(\hA^\frac32))$ and  $D(\hA)\subset L^\infty$ we easily prove that $\varphi d \in C([0,T_0]; \rH^1)\subset L^2(0,T_0; \rH^1)$.
		Also, since $(u,d)\in X_{T_0}$ we infer from Lemma \ref{Lem:RighthandSideinL2VxH1} that
	$$\partial_t d= -\hA d + \tilde{G}(u,d)+ d\times g \in L^2(0,T_0;\rH^1).$$
		Hence, in view of the Lions-Magenes lemma, see \cite[Lemma III.1.2]{Temam_2001},  we have
		\begin{align}
		\frac{1}{2}\frac{d}{dt}\int_{\Omega}\varphi(x)|d(t,x))|^{2} \;dx= & \langle \partial_t d(t), \varphi d(t)\rangle \nonumber \\
		=&  -\int_{\Omega}\varphi(x)\rA d(t,x)\cdot d(t,x) \;dx-\int_{\Omega}\varphi(x) [u(t,x)\cdot \nabla d(t,x)]\cdot d(t,x) \;dx \nonumber \\& + \int_{\Omega}\varphi(x) |\nabla d(t,x)|^{2}|d(t,x)|^{2}\;dx  -\int_{\Omega}\varphi(|d(t,x)|^{2}-1)(\phi^\prime(d(t,x))\cdot d(t,x))\;dx, \label{Eq:Weakformz-1}
		\end{align}
		where we used the fact that $\varphi d \perp_{\mathbb{R}^3} d\times g$.
		Since $d(t)\in D(\hA)$ and $\varphi d(t)\in \rH^1$  for all $t \in [0,T_0]$, by using \cite[Equation (2.6)]{ZB+BG+TJ}  we infer that
		\[
		-\int_\Omega 	\hA d(t,x)\cdot \varphi(x)  d(t,x) \;dx= - \int_\Omega (\nabla d(t,x) ) (\nabla[\varphi(x)d(t,x)]) \;dx.
		\]
		Thus, straightforward calculation yields
		\[
		-\int_\Omega 	\hA d(t,x)\cdot \varphi(x)  d(t,x) \;dx= - \int_\Omega \varphi(x) \lvert \nabla d(t,x) \rvert^2  \;dx- \frac12\int_\Omega \varphi(x) \nabla \lvert d(t,x) \rvert^2 \;dx.
		\]
		Hence, recalling the definition of $z$ and using the last identity in \eqref{Eq:Weakformz-1} implies
		\begin{equation}\label{Eq:ProofSphere-1}
		\begin{split}
		\frac{1}{2}\int_{\Omega}\partial_{t}z(t,x) \varphi(x) \;dx+\frac{1}{2}\int_{\Omega}\nabla z(t,x)\nabla\varphi(x) \;dx+\frac{1}{2}\int_{\Omega} u(t,x)\nabla z(t,x)  \varphi(x)\; \;dx\\
		=\int_{\Omega}|\nabla d(t,x)|^{2}z(t,x) \varphi(x) \;dx-\int_{\Omega} z(t,x)(\phi^\prime(d(t,x))\cdot d(t,x))\varphi(x)\;\;dx.
		\end{split}
		\end{equation}
		This is exactly the weak form of \eqref{Eq:ViscousTransport-z}.
		
		By Proposition \ref{tu4} $z$ is the unique solution of \eqref{Eq:ViscousTransport-z} and satisfies
		\begin{equation}\notag
		\sup_{0\le t\le T}|z(t)|^{2}_{L^{2}}+\int_{0}^{T}|\nabla z(t)|^{2}dt\le
		|z(0)|^{2}_{L^{2}}e^{c\int_{0}^{T}\left[|\nabla d|^{4}_{L^{4}}+(1+|d|^{2}_{H^{2}})\right]dt}.
	\end{equation}
	Since $z(0)=0$, we infer that
		\begin{equation}
		\sup_{0\le t\le T}|z(t)|^{2}_{L^{2}}=\sup_{0\le t \le T}\int_{\Omega}(|d|^{2}-1)^{2}\;dx=0,\notag
		\end{equation}
		which implies that $d(t) \in \mathcal{M}$ for all $t\in [0,T_0]$.
			This completes the proof of \eqref{Eq:tu5}. This also completes the proof of Theorem \ref{th}.
	\end{proof}

	\section{The existence and the uniqueness  of a maximal local regular solution to \eqref{1b} } \label{Sec:LocRegSolSmallENERG}\
	The aim of this section is to prove that Problem \eqref{1b} has a unique maximal local regular solution when the initial data has small energy. The main result of the section is Proposition \ref{Prop:LocalSolwithSmallEnergy} and it is a generalization of
	\cite[Lemma 5.2]{LLW}. Before proceeding to a precise statement and a detailed proof of the result let us introduce few notations. For $R>0$ and $(u,n)\in \rH\times \rh^1$ we set
	\begin{equation}\label{eqn-energy-loc}
	\mathcal{E}_{R}(u,n):= \frac12\sup_{x\in \Omega} \int_{B(x, 2R)} \left(\lvert u(y)\rvert^2 + \lvert \nabla n(y)  \rvert^2 + 2 \phi(n(y))  \right) dy,
	\end{equation}
	and
	\begin{equation}\label{eqn-energy-glob}
	\mathcal{E}(u,n):=\frac12 (\lvert u\rvert^2_{\rh} + \lvert \nabla n\rvert^2_{L^2}  ) + \int_\Omega \phi(n(x)) \;dx.
	\end{equation}
		We also  recall the following important lemma, see \cite[Lemma 3.1\& 3.2]{Struwe}.
	\begin{Lem}\label{Lem:Struwe}
		There exist $c_1>0$ and $r_0>0$  such that for every $h\in L^\infty(0,T; L^2)\cap L^2(0,T; \rH^1)$ we have
		\begin{equation}\label{Eq:Ladyzhenskaya-Struwe}
		\int_0^T \lvert h(t) \rvert^4_{L^4}dt \le c_1 \left(\sup_{(t,x) \in [0,T]\times \Omega  } \int_{B(x,r_0)} \lvert h(t,y) \rvert^2 dy \right) \left(\int_0^T \lvert \nabla h(t)\rvert^2_{L^2} dt + \frac{1}{r_0^2} \int_0^T \lvert h(t)\rvert^2_{L^2} dt   \right).
		\end{equation}
	\end{Lem}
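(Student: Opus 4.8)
The plan is to prove this by a partition-of-unity argument combined with the two-dimensional Ladyzhenskaya inequality applied on balls. First I would fix the radius $r_0>0$ small enough that, by compactness of $\overline\Omega$, there is a finite cover of $\Omega$ by balls $\{B(x_k, r_0)\}_{k=1}^{N}$ with the property that each point of $\Omega$ lies in at most a fixed number $N_0$ of the doubled balls $B(x_k, 2r_0)$ — here $N_0$ depends only on the dimension (so on nothing in the problem). Attached to this cover I take a smooth partition of unity $\{\chi_k\}$ subordinate to $\{B(x_k,2r_0)\}$ with $\sum_k \chi_k^2 = 1$ on $\Omega$ (or $\sum_k\chi_k=1$; squaring is a harmless adjustment) and $\lvert \nabla \chi_k\rvert \le C/r_0$. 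The point of the covering constant $N_0$ is that summing local quantities over $k$ costs only a factor $N_0$, not $N$, and hence the final constant $c_1$ is universal.

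Next, for each fixed $t$ and each $k$, I would apply the two-dimensional Gagliardo–Nirenberg/Ladyzhenskaya inequality to the function $\chi_k h(t,\cdot)$, which is supported in $B(x_k,2r_0)$:
\begin{equation*}
\lvert \chi_k h(t) \rvert_{L^4}^{4} \le C \lvert \chi_k h(t)\rvert_{L^2}^{2}\,\lvert \nabla(\chi_k h(t))\rvert_{L^2}^{2}
\le C \lvert \chi_k h(t)\rvert_{L^2}^{2}\left( \lvert \chi_k \nabla h(t)\rvert_{L^2}^{2} + \frac{C}{r_0^2}\lvert \chi_k h(t)\rvert_{L^2}^{2}\right).
\end{equation*}
In the first factor I bound $\lvert \chi_k h(t)\rvert_{L^2}^{2} \le \int_{B(x_k,2r_0)}\lvert h(t,y)\rvert^2\,dy \le \sup_{(t,x)}\int_{B(x,r_0)}\lvert h(t,y)\rvert^2\,dy$ — strictly speaking one should either use $r_0$ on the left-hand side of the cover and $2r_0$ in the partition, or note that $B(x_k,2r_0)$ can itself be covered by boundedly many balls of radius $r_0$, which again only changes the universal constant; I would adopt the latter. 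Call that supremum $K$. Then I sum over $k$: using $\sum_k \chi_k^2 = 1$ and pointwise $\chi_k\le 1$, Hölder/convexity gives $\lvert h(t)\rvert_{L^4}^4 \le \sum_k \lvert \chi_k h(t)\rvert_{L^4}^4$ up to the finite overlap constant, while $\sum_k \lvert \chi_k\nabla h(t)\rvert_{L^2}^2 \le N_0 \lvert \nabla h(t)\rvert_{L^2}^2$ and similarly $\sum_k \lvert \chi_k h(t)\rvert_{L^2}^2 \le N_0\lvert h(t)\rvert_{L^2}^2$. This yields, for a.e.\ $t$,
\begin{equation*}
\lvert h(t)\rvert_{L^4}^4 \le c_1 K \left( \lvert \nabla h(t)\rvert_{L^2}^2 + \frac{1}{r_0^2}\lvert h(t)\rvert_{L^2}^2\right),
\end{equation*}
with $c_1$ depending only on $N_0$ and the Ladyzhenskaya constant.

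Finally I would integrate this inequality in $t$ over $[0,T]$; since $K$ does not depend on $t$ it comes out of the integral, and the right-hand side becomes $c_1 K\bigl(\int_0^T\lvert\nabla h(t)\rvert_{L^2}^2\,dt + r_0^{-2}\int_0^T\lvert h(t)\rvert_{L^2}^2\,dt\bigr)$, which is exactly \eqref{Eq:Ladyzhenskaya-Struwe}. The hypotheses $h\in L^\infty(0,T;L^2)\cap L^2(0,T;\rH^1)$ are precisely what is needed: the $L^\infty_t L^2_x$ bound makes $K$ finite, and the $L^2_t\rH^1_x$ bound makes the right-hand side finite, so the manipulations are legitimate. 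The only genuinely delicate point — and the one I would be most careful about — is keeping the constant $c_1$ and the radius $r_0$ \emph{independent of $h$, $T$, and the specific solution}, which forces the use of the bounded-overlap property of the cover rather than a naive sum over all $N$ balls; everything else is a routine application of the scaled 2D interpolation inequality on a ball.
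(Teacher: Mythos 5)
Your argument is correct and is essentially the standard proof of this estimate — the finite cover with bounded overlap, a partition of unity with $\lvert\nabla\chi_k\rvert\le C/r_0$, the local 2D Ladyzhenskaya inequality, and resummation — which is exactly the route of \cite[Lemmas 3.1 and 3.2]{Struwe} that the paper cites without reproving. The one point you should make explicit is that for balls $B(x_k,2r_0)$ meeting $\partial\Omega$ the function $\chi_k h$ need not vanish on $\partial\Omega$, so there one must use the $\rH^1(\Omega)$ form of the Gagliardo--Nirenberg inequality (or a bounded extension operator, available since $\partial\Omega$ is smooth) rather than the $\rH^1_0$ version; this only affects the universal constant.
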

	\begin{Rem}
		Let $r_0>0$ be as in Lemma \ref{Lem:Struwe}.
		In view of \ref{Eq:Ladyzhenskaya-Struwe} and \cite[Theorem 3.4]{Simader}, we infer  that there exists $c_2>0$ such that for all  $h\in L^\infty(0,T;\rH^1)\cap L^2(0,T; D(\hA))$ we have
		\begin{equation}\label{Eq:Ladyzhenskaya-Struwe-2}
		\int_0^T \lvert \nabla  h(t) \rvert^4_{L^4} \le c_2 \left(\sup_{(t,x) \in [0,T]\times \Omega  } \int_{B(x,r_0)} \lvert \nabla h(t,y) \rvert^2 dy \right) \left(\int_0^T \lvert \Delta h(t)\rvert^2_{L^2} dt + \frac{1}{r_0^2} \int_0^T \lvert \nabla h(t)\rvert^2_{L^2} dt   \right).
		\end{equation}
	\end{Rem}
	We state and prove the following important result.
	\begin{Prop}\label{Prop:LocalSolwithSmallEnergy}
		There exist a constant $\varepsilon_0>0$ and  a function 	
		\[\theta_0: (0,\eps_0)\times (0,\infty) \to (0,\infty),,
		\]
		which is non-increasing w.r.t. the  second variable and nondecreasing  w.r.t. the first one,
		such that the following holds: \\
		Let $r_0>0$ be as in Lemma \ref{Lem:Struwe}.
		Let $(f,g)\in L^2(0,T; \rh\times D(\hA^{1/2}))$, $(v_0, d_0)\in\ve\times D(\hA) $ and $R_0\in (0, r_0]$  are  such that
		\begin{equation}\label{Eq:AssumSmallInitialNRJ}
		\mathcal{E}_{2R_0}(v_0,d_0)<  \eps_0^2.
		\end{equation}
		Then,   there exists a unique maximal local regular solution $((v,d);T_0)$ to problem \eqref{1b} satisfying
		\begin{align}\label{eqn-t_o}
		T_0 \ge  \frac{R^2_0}{(R_0^\frac12 + 1)^4} \theta_0(\eps_1,E_0),\\
		\sup_{0\le t\le T_0 } \mathcal{E}_{R_0}(v(t),d(t)) \le 2\eps_1^2,\label{eqn-small estimates}
		\end{align}
		where $E_0:= \mathcal{E}(v_0,d_0)$ and  	$\eps_1^2=\mathcal{E}_{2R_0}(v_0,d_0)$.
		
	\end{Prop}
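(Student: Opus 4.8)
\medskip

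The plan is to combine the local existence result of Theorem \ref{th} with a bootstrap / continuation argument driven by a localized energy inequality. First, I would apply Theorem \ref{th} with $R_1$ governed by $\lvert v_0\rvert_V^2 + \lvert d_0\rvert_{D(\hA)}^2$ and $R_2$ governed by $\int_0^T(\lvert f\rvert_{L^2}^2 + \lvert g\rvert_{D(\hA^{1/2})}^2)$, to obtain a local regular solution $((v,d);T_\ast)$ on some interval $[0,T_\ast]$, which by the uniqueness statement in Theorem \ref{th} can be extended to a maximal one $((v,d);T_0)$ in the sense of Definition \ref{Def:Maximal-Sol}. The real content is the quantitative lower bound \eqref{eqn-t_o} on $T_0$ and the smallness propagation \eqref{eqn-small estimates}, and these must come from energy estimates that are \emph{local in space}, since the hypothesis \eqref{Eq:AssumSmallInitialNRJ} is only a smallness of the local energy $\mathcal{E}_{2R_0}(v_0,d_0)$, not of the global energy $E_0$.

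\medskip

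The main step is to derive a localized energy inequality. I would fix $x_0\in\Omega$ and a cutoff function $\chi\in C_c^\infty$ supported in $B(x_0,2R_0)$ with $\chi\equiv 1$ on $B(x_0,R_0)$ and $\lvert\nabla\chi\rvert\lesssim R_0^{-1}$; test the $v$-equation in \eqref{1b} against $\chi^2 v$ and the $d$-equation against $\chi^2(\hA d + \lvert\nabla d\rvert^2 d - \phi'(d) + (\phi'(d)\cdot d)d)$ (equivalently $-\chi^2\Delta d$ plus the lower order terms, using $\lvert d\rvert=1$ from Theorem \ref{th}), add, and integrate by parts. The quadratic gradient terms $\int\chi^2(\lvert\nabla v\rvert^2 + \lvert\Delta d\rvert^2)$ appear with a good sign; the troublesome terms are (a) the cutoff-commutator terms involving $\nabla\chi$, which are controlled by $R_0^{-2}$ times the energy and contribute the factor $R_0^2/(R_0^{1/2}+1)^4$ and the $\theta_0$ dependence on $R_0$; (b) the transport terms $\int v\cdot\nabla(\cdot)\,\chi^2(\cdot)$ and the cubic term $\int\lvert\nabla d\rvert^2 d\cdot(\cdot)\chi^2$, which after Hölder are estimated by $\lVert\nabla d\rVert_{L^4}^2$ and $\lVert v\rVert_{L^4}^2$ on the support of $\chi$; (c) the forcing and the $\phi'$ terms, absorbed by Gronwall using Assumption \ref{Assum:EnergyPotential} (linear growth of $\phi'$, boundedness of $\phi''$) and the $L^2$ integrability of $(f,g)$. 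For the terms in (b) the key tool is the Struwe–Ladyzhenskaya inequality of Lemma \ref{Lem:Struwe} and its consequence \eqref{Eq:Ladyzhenskaya-Struwe-2}: applied to $\chi v$ and $\nabla(\chi d)$ they give $\int_0^t\lVert\chi v\rVert_{L^4}^4 + \lVert\nabla(\chi d)\rVert_{L^4}^4 \lesssim \bigl(\sup_{s\le t}\mathcal{E}_{R_0}(v(s),d(s))\bigr)\cdot\bigl(\text{dissipation} + R_0^{-2}\text{energy}\bigr)$, so that if the local energy stays below, say, $3\eps_1^2$, a fraction of the dissipation can be absorbed into the left-hand side. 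This produces an estimate of the shape
\[
\sup_{0\le s\le t}\mathcal{E}_{R_0}(v(s),d(s)) + \tfrac12\int_0^t\!\!\int\chi^2(\lvert\nabla v\rvert^2+\lvert\Delta d\rvert^2) \;\le\; \eps_1^2 + C\,\frac{(R_0^{1/2}+1)^4}{R_0^2}\,t\,\Phi(E_0) + C\!\int_0^t(\lvert f\rvert^2+\lvert g\rvert^2)\,ds,
\]
where I have suppressed Gronwall factors depending on $E_0$; taking the supremum over $x_0$ and choosing $\eps_0$ small and $t$ small closes the bound.

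\medskip

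The conclusion is then a standard continuation argument. Define $T_\star = \sup\{t\in[0,T_0): \sup_{s\le t}\mathcal{E}_{R_0}(v(s),d(s)) \le 2\eps_1^2\}$, which is positive by continuity of the local energy (a consequence of $(v,d)\in C([0,T_0];\ve\times D(\hA))$) and because $\mathcal{E}_{R_0}(v_0,d_0)\le\mathcal{E}_{2R_0}(v_0,d_0)=\eps_1^2$. On $[0,T_\star)$ the displayed inequality is valid with $\sup\mathcal{E}_{R_0}\le 2\eps_1^2$, hence for $t$ less than a time $\tau$ of the form $\tfrac{R_0^2}{(R_0^{1/2}+1)^4}\theta_0(\eps_1,E_0)$ — where $\theta_0$ is built from the constants $C$, $\Phi(E_0)$ and the required bound on $\int(\lvert f\rvert^2+\lvert g\rvert^2)$ over short intervals — one gets $\sup_{s\le t}\mathcal{E}_{R_0}(v(s),d(s)) < 2\eps_1^2$ strictly, so $T_\star\ge\tau$, and simultaneously $T_0\ge\tau$ since the local regular solution cannot cease to exist while its (local, hence global by covering and finite energy) norms remain controlled; this gives \eqref{eqn-t_o} and \eqref{eqn-small estimates}. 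The monotonicity of $\theta_0$ — nonincreasing in $E_0$, nondecreasing in $\eps_1$ — is read off from the structure of the Gronwall constant (larger $E_0$ forces smaller time) and of the absorption threshold (larger permissible $\eps_1$ allows larger time). I expect the main obstacle to be the careful handling of the boundary: since $x_0$ may lie near $\partial\Omega$ the cutoff ball $B(x_0,2R_0)$ need not be interior, so the integration by parts in the $d$-equation must use the Neumann condition $\partial d/\partial\nu=0$ (and the Dirichlet condition $v\rvert_{\partial\Omega}=0$ for $v$), and one must check that no uncontrolled boundary integrals survive — this is where the reflection-type estimates behind \eqref{Eq:Ladyzhenskaya-Struwe-2} and the identity \cite[Equation (2.6)]{ZB+BG+TJ} already used in the proof of Theorem \ref{th} are essential.
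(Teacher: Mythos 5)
Your overall strategy is the same as the paper's: obtain a maximal regular solution from Theorem \ref{th}, derive a localized energy inequality with a cutoff $\varphi$ supported in $B(x,2R_0)$, control the cubic and transport terms through Lemma \ref{Lem:Struwe} and its consequence \eqref{Eq:Ladyzhenskaya-Struwe-2} under the standing assumption that the local energy stays below $2\eps_1^2$, and close with a stopping-time/continuation argument (the paper formalizes this via $T(\eps,R)$ in \eqref{Eq:DefStoppingTime}, Corollary \ref{Cor:SmallNRJBeforemaxTime}, and the dichotomy $T_0>R_0^2$ versus $T_0\le R_0^2$ combined with Lemma \ref{Lem:LocalNRJ-Ball}). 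However, there is one genuine gap in your plan: the pressure. When you test the velocity equation against $\chi^2 v$, the test function is not divergence-free, so the pressure gradient does not disappear from the localized identity; a term of the form $\int_s^t\int_\Omega (\mathrm{p}-\mathrm{p}_\Omega)\, v\cdot\nabla\chi^2$ survives and must be estimated. This is not a routine commutator term: it requires first establishing that a pressure $\mathrm{p}$ with $\nabla\mathrm{p}\in L^{4/3}(0,t;L^{4/3})$ exists for the weak formulation (the paper's Lemma \ref{Lem:EstPressure}, which invokes \cite[Lemma 4.4]{LLW}), and then bounding $\lvert\nabla\mathrm{p}\rvert_{L^{4/3}_{t,x}}$ by $\lvert v\cdot\nabla v\rvert_{L^{4/3}_{t,x}}+\lvert\nabla d\,\Delta d\rvert_{L^{4/3}_{t,x}}$ and hence, via the $L^4$ estimates of Lemma \ref{Lem:EstimateofDeltaD+L4norms}, by $\eps^{1/2}[E_0+\Psi(t)+(1+2\eps^2/R^2)t]^{3/4}$. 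Your proposal never mentions the pressure, and without this step the localized velocity inequality cannot be closed; this is precisely the point where the local energy method for Navier--Stokes-type systems departs from the harmonic map heat flow case.

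Two smaller remarks. First, the right-hand side of your displayed localized inequality grows linearly in $t$; after H\"older in time the cutoff terms actually produce a $t^{1/4}$ factor (each factor $\lvert v\rvert_{L^4}^4$, $\lvert\nabla d\rvert_{L^4}^4$, $\lvert\nabla\varphi\rvert_{L^4}^4$ is raised to the power $1/4$), which is what yields the exponent structure in \eqref{Eq:EstNRJonBalls} and hence the precise form of $\theta_0$; this does not change the shape of the conclusion $T_0\gtrsim R_0^2\theta_0(\eps_1,E_0)$ but you would need to track it to get the monotonicity claims on $\theta_0$. Second, your assertion that the solution "cannot cease to exist while its norms remain controlled" needs the higher-order a priori bounds in $\ve\times D(\hA)$ up to the stopping time (the paper's Lemma \ref{Lem:EstinHigherorderNorms} and Corollary \ref{Cor:EstinHigherorderNorms-CC}), since Definition \ref{Def:Local-Regular-Sol} requires regularity at that level before Theorem \ref{th} can be reapplied to extend; the energy-level control alone is not enough to restart the fixed-point argument.
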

	
	\begin{Rem}
	In this theorem, 	the length $T_0$  is not the length of the existence interval but the length  of the existence interval as long as   the condition \eqref{eqn-small estimates} is satisfied.
	
	Note also that \eqref{eqn-small estimates} is equivalent to
	\begin{equation*}\label{eqn-small estimates-B}
	\frac12 \sup_{0\le t\le T_0 } \sup_{x\in \Omega}\int_{B(x,R_0)} \left(\lvert v(t,y)\rvert^2 + \lvert \nabla d(t,y)  \rvert^2 + 2 \phi(d(t,y))  \right) dy\le 2\eps_1^2.
	\end{equation*}
	\end{Rem}

	In order to prove the above proposition  we need several  results. For $n\in \mathbb{R}^3$
	\begin{equation*}
	\alpha(n)= \phi^\prime(n)\cdot n.
	\end{equation*}
	We state and prove  the following elementary results.
	\begin{Clm}\label{Claim:perp}
		Let $u\in \rH$,  $n\in D(\hA)\cap \mathcal{M}$ and $m\in C([0,T_\ast); (D(\hA)\cap \mathcal{M}))$ such that $\partial_t m \in L^2(0,T_\ast; L^2)$. Then,
		\begin{align}
		&\langle u\cdot \nabla n, \lvert \nabla n\rvert^2 n  -\phi^\prime(n) +\alpha(n)n\rangle=0,\notag\\
		&	\langle \partial_t m,  \lvert \nabla m \rvert^2 m -\alpha(m) m \rangle=0.\notag
		\end{align}
	\end{Clm}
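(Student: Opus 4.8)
The plan is to verify each of the two orthogonality relations pointwise in $x$ (and, where needed, in $t$), using only elementary vector algebra in $\mathbb{R}^3$ together with the constraint $\lvert n\rvert=1$. The underlying principle is that if $w(x)\in\mathbb{R}^3$ is a vector field that is pointwise parallel to $n(x)$, then $w$ is pointwise orthogonal to any vector field tangent to the unit sphere at $n(x)$; both inner products in the claim are instances of this.

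First I would establish the two algebraic facts. (a) Since $\lvert n(x)\rvert^2\equiv 1$, differentiating in the $x_i$-direction gives $n\cdot\partial_i n=0$ for each $i$, hence $n\cdot\Delta n=-\lvert\nabla n\rvert^2$ is not needed here, but more to the point $n\cdot (u\cdot\nabla n)=\sum_i u_i\,(n\cdot\partial_i n)=0$, i.e. $u\cdot\nabla n$ is pointwise orthogonal to $n$. Therefore, for the first identity, writing $w:=\lvert\nabla n\rvert^2 n-\phi'(n)+\alpha(n)n$ with $\alpha(n)=\phi'(n)\cdot n$, I compute the pointwise inner product
\[
(u\cdot\nabla n)\cdot w=\lvert\nabla n\rvert^2\,\big((u\cdot\nabla n)\cdot n\big)-(u\cdot\nabla n)\cdot\phi'(n)+\alpha(n)\,\big((u\cdot\nabla n)\cdot n\big).
\]
Hmm — the first and third terms vanish by $n\cdot(u\cdot\nabla n)=0$, but the middle term $-(u\cdot\nabla n)\cdot\phi'(n)$ does not obviously vanish. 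The correct grouping is instead to note that $-\phi'(n)+\alpha(n)n=-\big(\phi'(n)-(\phi'(n)\cdot n)n\big)$ is \emph{minus the tangential component} of $\phi'(n)$, and combined with $\lvert\nabla n\rvert^2 n$ (which is normal) the whole vector $w$ decomposes as $w=\lvert\nabla n\rvert^2 n-\phi'(n)_{\mathrm{tan}}$. Pairing with $u\cdot\nabla n$, the normal part contributes $0$, so one is left with $-(u\cdot\nabla n)\cdot\phi'(n)_{\mathrm{tan}}$, which is \emph{not} zero in general. So the identity as literally a pointwise statement fails; what saves it is that the claim asserts the \emph{integrated} (inner-product-in-$\rH$) identity, and the missing term integrates to zero: $\int_\Omega (u\cdot\nabla n)\cdot\phi'(n)\,dx=\int_\Omega u\cdot\nabla\big(\Phi(n)\big)\,dx=-\int_\Omega(\operatorname{div}u)\,\Phi(n)\,dx=0$ where $\Phi$ is a scalar potential with $\nabla_n\Phi=\phi'$ (e.g. $\Phi=\phi$ if $\phi$ is the energy, but in any case $\phi'(n)\cdot(u\cdot\nabla n)=u\cdot\nabla(\phi(n))$ by the chain rule), using $\operatorname{div}u=0$ and $u\cdot\nu|_{\partial\Omega}=0$ since $u\in\rH$. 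So the first identity is: the two normal-direction terms vanish pointwise by $n\cdot\partial_i n=0$, and the anisotropy term vanishes after integration by parts using incompressibility. I would write this as a short chain of three displayed equalities.

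Second, for $\langle\partial_t m,\lvert\nabla m\rvert^2 m-\alpha(m)m\rangle=0$: since $m(t,x)\in\mathcal{M}$ for all $t$, differentiating $\lvert m\rvert^2\equiv1$ in $t$ gives $m\cdot\partial_t m=0$ pointwise (this is where the hypothesis $\partial_t m\in L^2(0,T_\ast;L^2)$ and $m\in C([0,T_\ast);D(\hA)\cap\mathcal{M})$ is used, to justify that $t\mapsto\lvert m(t,x)\rvert^2$ is differentiable with the expected derivative — alternatively via the Lions–Magenes lemma as done elsewhere in the paper). The vector $\lvert\nabla m\rvert^2 m-\alpha(m)m=\big(\lvert\nabla m\rvert^2-\alpha(m)\big)m$ is pointwise a scalar multiple of $m$, hence pointwise orthogonal to $\partial_t m$, and the integral over $\Omega$ vanishes; no integration by parts is needed here. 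The main (only) obstacle is the bookkeeping in the first identity — being careful that the genuinely cancelling pieces are the $n$-parallel terms while the $\phi'(n)$ term cancels only through the divergence-free integration by parts — and checking that all the terms involved are integrable, which follows from the stated regularity $u\in\rH$, $n\in D(\hA)\subset L^\infty$ with $\nabla n\in\rH^1\hookrightarrow L^4$, and $\lvert\phi'(n)\rvert\le M_0(1+\lvert n\rvert)$ from Assumption \ref{Assum:EnergyPotential}.
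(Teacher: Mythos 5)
Your proposal is correct and follows essentially the same route as the paper: the $n$-parallel terms vanish pointwise because $\lvert n\rvert^2\equiv 1$ forces $n\cdot(u\cdot\nabla n)=\tfrac12 u\cdot\nabla\lvert n\rvert^2=0$ (and likewise $m\cdot\partial_t m=0$), while the $\phi'(n)$ term is rewritten via the chain rule as $u\cdot\nabla(\phi(n))$ and killed by the divergence-free/no-flux structure of $\rH$. The mid-proof detour where you first test pointwise cancellation and then correct course is harmless; the final argument matches the paper's.
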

	\begin{proof}
		Let us fix $u\in \ve$, $n\in D(\hA)$ and $m: [0,T_\ast)\to  D(\hA)\cap \mathcal{M}$ satisfying the assumptions of Claim \ref{Claim:perp}. Then, since  $\Div u=0$ and the fact $n(x) \in \mathbb{S}^2$ $x$-a.e. we get
		\begin{equation*}
	\begin{split}
		\langle u\cdot \nabla n, \lvert \nabla n\rvert^2 n  -\phi^\prime(n) +\alpha(n)n\rangle = \frac12 \int_\Omega  u(x)\cdot \nabla \lvert n(x) \rvert^2_{\mathbb{R}^3}( \lvert \nabla n(x) \rvert^2 + \alpha(n(x)) ) \;dx \\- \int_\Omega u(x)\cdot \nabla \phi(n(x)) \;dx =0.
	\end{split}
		\end{equation*}
		Since $ m\in C([0,T_\ast); D(\hA)\cap \mathcal{M}) $ we infer that for all $t\in [0,T_\ast)$
		\begin{align*}
		\langle \partial_t m(t),  \lvert \nabla m(t) \rvert^2 m(t) -\alpha(m(t)) m(t) \rangle= \frac12 \int_\Omega \partial_t \lvert m (t,x)\rvert^2_{\mathbb{R}^3} ( \lvert \nabla m(t,x) \rvert^2 - \alpha(m(t,x)) ) \;dx  =0,
		\end{align*}
		which completes the proof of the claim.
	\end{proof}
	
	We also recall the following result, see \cite[Eq. (2.10)]{ZB+EH+PR-SPDE_2019}.
	\begin{Clm}\label{Claim:Dissip}
		For any $u\in \rh \cap L^4$ and $n \in D(\hA)$
		\begin{equation}\label{Eq:Dissip}
		-\langle\Div (\nabla n \odot \nabla n ), u\rangle + \langle u\cdot \nabla n, \Delta n \rangle =0.
		\end{equation}
	\end{Clm}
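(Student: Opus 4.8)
The plan is to reduce \eqref{Eq:Dissip} to the elementary (Leibniz rule) identity
\[
\Div(\nabla n\odot\nabla n)=\nabla n\,\Delta n+\tfrac12\nabla\lvert\nabla n\rvert^2,\qquad [\nabla n\,\Delta n]_i:=\sum_{k=1}^3(\partial_i n_k)(\Delta n_k),
\]
combined with the fact that the full‑gradient term $\nabla\lvert\nabla n\rvert^2$ is annihilated by the pairing with any element of $\rH$. First I would fix the regularity needed to make all the terms meaningful: since $n\in D(\hA)\subset\rH^2$ and $\Omega\subset\mathbb{R}^2$, the Sobolev embedding gives $\nabla n\in\rH^1\hookrightarrow L^p$ for every $p<\infty$, while $D^2n\in L^2$; hence each entry of $\nabla n\odot\nabla n$ lies in $\mathrm{W}^{1,4/3}$, so $\nabla n\,\Delta n$ and $\nabla\lvert\nabla n\rvert^2$ lie in $L^{4/3}(\Omega;\mathbb{R}^2)$. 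Since moreover $u\in L^4$, the product $u\cdot\nabla n$ belongs to $L^2$, so $\langle u\cdot\nabla n,\Delta n\rangle$ is a genuine $L^2$ pairing and $\langle\Div(\nabla n\odot\nabla n),u\rangle$ is understood as the $L^{4/3}$–$L^4$ duality.

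Next I would establish the pointwise identity: because $[\nabla n\odot\nabla n]_{ij}=\sum_k\partial_i n_k\,\partial_j n_k$ is a sum of products of $\rH^1$ functions, the Leibniz rule gives, in $L^{4/3}$,
\[
[\Div(\nabla n\odot\nabla n)]_i=\sum_{j,k}(\partial_i\partial_j n_k)(\partial_j n_k)+\sum_{j,k}(\partial_i n_k)(\partial_{jj}n_k)=\tfrac12\partial_i\lvert\nabla n\rvert^2+[\nabla n\,\Delta n]_i,
\]
whence $\langle\Div(\nabla n\odot\nabla n),u\rangle=\langle\nabla n\,\Delta n,u\rangle+\tfrac12\langle\nabla\lvert\nabla n\rvert^2,u\rangle$. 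The last term vanishes: with $\psi:=\lvert\nabla n\rvert^2\in\mathrm{W}^{1,4/3}$ and $u\in\rH\cap L^4$, one uses the $L^q$ theory of the Leray–Helmholtz projection $\Pi$ (which extends to a bounded projection on each $L^q$, $1<q<\infty$, with the restrictions to $L^{4/3}$ and $L^4$ mutually adjoint, $\Pi u=u$, and $\Pi(\nabla\psi)=0$) to get $\langle\nabla\psi,u\rangle=\langle\nabla\psi,\Pi u\rangle=\langle\Pi(\nabla\psi),u\rangle=0$; equivalently one approximates $u$ by elements of $\mathcal{V}$ in $L^4$ and integrates by parts.

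Finally, the second term of \eqref{Eq:Dissip} follows by pure algebra, with no integration by parts: since $[u\cdot\nabla n]_k=\sum_i u_i\,\partial_i n_k$,
\[
\langle u\cdot\nabla n,\Delta n\rangle=\sum_{i,k}\int_\Omega u_i\,(\partial_i n_k)(\Delta n_k)\,dx=\int_\Omega\sum_i u_i\,[\nabla n\,\Delta n]_i\,dx=\langle\nabla n\,\Delta n,u\rangle.
\]
Combining the three displays yields $-\langle\Div(\nabla n\odot\nabla n),u\rangle+\langle u\cdot\nabla n,\Delta n\rangle=-\langle\nabla n\,\Delta n,u\rangle+\langle\nabla n\,\Delta n,u\rangle=0$, which is \eqref{Eq:Dissip}. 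The only step that is not purely formal is the vanishing of $\langle\nabla\lvert\nabla n\rvert^2,u\rangle$: since $\nabla\lvert\nabla n\rvert^2$ is merely in $L^{4/3}$ and not in $L^2$, the orthogonality cannot be read off directly from $u\in\rH$ but really uses the $L^q$ mapping properties of $\Pi$ (or the density argument); the rest is routine bookkeeping, and the identity may in any case simply be quoted from \cite[Eq. (2.10)]{ZB+EH+PR-SPDE_2019}.
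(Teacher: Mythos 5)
Your argument is correct, but note that the paper itself offers no proof of Claim \ref{Claim:Dissip}: it simply quotes the identity from \cite[Eq.\ (2.10)]{ZB+EH+PR-SPDE_2019}, exactly as you suggest one could do in your closing remark. Your self-contained derivation is sound and is the natural one: the distributional Leibniz identity $\Div(\nabla n\odot\nabla n)=\nabla n\,\Delta n+\tfrac12\nabla\lvert\nabla n\rvert^2$ (valid in $L^{q}$ for $q<2$ since the entries of $\nabla n\odot\nabla n$ are products of $\rH^1$ functions in two dimensions), the purely algebraic identification $\langle u\cdot\nabla n,\Delta n\rangle=\langle\nabla n\,\Delta n,u\rangle$, and the annihilation of the exact-gradient term by the solenoidal, tangential $u$. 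You correctly isolate the only non-formal step — $\langle\nabla\lvert\nabla n\rvert^2,u\rangle=0$ with $\nabla\lvert\nabla n\rvert^2$ merely in $L^{4/3}$ — and both of your proposed justifications work: the $L^q$ Helmholtz projection argument, and the density argument (which is legitimate here because for a bounded smooth domain the $L^4$-closure of $\mathcal{V}$ coincides with $\{u\in L^4:\Div u=0,\ u\cdot\nu\vert_{\partial\Omega}=0\}\supset \rH\cap L^4$, so $u$ is indeed an $L^4$-limit of test fields). It is also worth remarking, as your computation makes transparent, that no boundary condition on $n$ is used; only the constraints encoded in $u\in\rH$ enter.
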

	
	By Theorem \ref{th}, Problem \eqref{1b} has  a unique maximal local regular solution $(v,d) \in X_{T_\ast} $ provided that $(v_0,d_0)\in \ve\times D(\hA)$ and $(f,g)\in L^2(0,T; L^2 \times D(\hA^{1/2}))$.
	Hereafter, we fix such a maximal local regular solution and we set,
	\begin{equation}\notag
	\mathcal{E}(v(t),d(t)) = \frac12\left(\lvert v(t) \rvert^2_{L^2} + \lvert \nabla d(t) \rvert^2_{L^2}     \right) + \int_\Omega \phi(d(t,x)) \;dx,\text{ for $t \in [0,T_\ast]$,}.
	\end{equation}
	
	We then prove the following important a' priori  estimates.
	\begin{Lem}
			Let $(v_0,d_0)\in \ve \times D(\hA)$, $(f,g)\in L^2(0,T;L^2\times \rH^1)$ and  $(v,d) \in X_{T_\ast} $ be a maximal local regular solution to the problem \eqref{1b}.
		Then  for all  $s,t\in [0,T_\ast] $ with $s\le t$, the following inequality holds
		\begin{equation}\label{Eq:NRJ-Inequality}
		\mathcal{E}(v(t), d(t) ) +\frac12 \int_s^t\left( \lvert \nabla v(r) \rvert^2_{L^2} +  \lvert R(d(r) ) \rvert^2_{L^2} \right) dr \le \mathcal{E}(v(s), d(s)) + \frac12 \int_s^t \left(\lvert f(r) \rvert^2_{\rH^{-1}}+ \lvert g(r) \rvert^2_{L^2}  \right) dr,
		\end{equation}
		where, for $ n \in D(\hA)$, we put
		\begin{align*}
		&\alpha(n) = \phi^\prime(n)\cdot n,\\
		&R(n)= \Delta n + \lvert \nabla n \rvert^2 n -\phi^\prime(n) + \alpha(n).
		\end{align*}

	\end{Lem}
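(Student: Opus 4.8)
The plan is to derive the energy inequality by testing the abstract system \eqref{Eq:SystemStokesheat-A} (equivalently \eqref{1b}) with the natural energy multipliers. Since $(v,d)\in X_{T_\ast}$ is a regular solution, $\partial_t v\in L^2(0,T_\ast;\rH)$ and $\partial_t d\in L^2(0,T_\ast;\rH^1)$ by Lemma \ref{Lem:RighthandSideinL2VxH1}, so all the pairings below are legitimate and the Lions--Magenes lemma applies. First I would pair the velocity equation with $v(t)$ in $\rH$: using \eqref{eqn-B02} the nonlinear term $\langle B(v,v),v\rangle$ drops, $\langle \rA v,v\rangle=\lvert\nabla v\rvert^2_{L^2}$, and $\langle \Pi f,v\rangle=\langle f,v\rangle\le \tfrac12\lvert f\rvert^2_{\rH^{-1}}+\tfrac12\lvert\nabla v\rvert^2_{L^2}$ after Young's inequality. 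This yields
\begin{equation*}
\frac12\frac{d}{dt}\lvert v(t)\rvert^2_{L^2} + \lvert \nabla v(t)\rvert^2_{L^2} = -\langle \Div(\nabla d\odot\nabla d),v\rangle + \langle f,v\rangle.
\end{equation*}

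Next I would pair the director equation with $-R(d(t))=-(\Delta d+\lvert\nabla d\rvert^2 d-\phi'(d)+\alpha(d)d)$ in $L^2$. The key algebraic facts are: $\langle \partial_t d,-\Delta d\rangle = \tfrac12\tfrac{d}{dt}\lvert\nabla d\rvert^2_{L^2}$ (integration by parts, using the Neumann condition built into $D(\hA)$), and $\langle \partial_t d,\phi'(d)\rangle = \tfrac{d}{dt}\int_\Omega\phi(d)\,dx$ by the chain rule. By Claim \ref{Claim:perp} the cross-terms $\langle v\cdot\nabla d,\lvert\nabla d\rvert^2 d-\phi'(d)+\alpha(d)d\rangle$ and $\langle\partial_t d,\lvert\nabla d\rvert^2 d-\alpha(d)d\rangle$ both vanish (here one uses $\lvert d(x)\rvert=1$ a.e., i.e. $d(t)\in\mathcal M$, which holds by Theorem \ref{th}). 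The forcing term $\langle d\times g,-R(d)\rangle$: since $d\times g\perp_{\mathbb R^3} d$ pointwise, the components $\lvert\nabla d\rvert^2 d$, $-\phi'(d)+\alpha(d)d$ (the latter being $\phi'(d)$ minus its projection onto $d$, hence... actually $-\phi'(d)+\alpha(d)d = -(\phi'(d)-(\phi'(d)\cdot d)d)$, the negative tangential part) pair to zero with $d\times g$; what survives is $-\langle d\times g,\Delta d\rangle$, which one bounds by $\tfrac12\lvert g\rvert^2_{L^2}+\tfrac12\lvert R(d)\rvert^2_{L^2}$ after absorbing, using that $\lvert d\times g\rvert_{L^2}\le\lvert g\rvert_{L^2}$. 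Assembling, one gets
\begin{equation*}
\frac12\frac{d}{dt}\Big(\lvert\nabla d\rvert^2_{L^2}+2\!\int_\Omega\!\phi(d)\,dx\Big) + \lvert R(d)\rvert^2_{L^2} = \langle v\cdot\nabla d,\Delta d\rangle + (\text{terms}\le \tfrac12\lvert g\rvert^2_{L^2}+\tfrac12\lvert R(d)\rvert^2_{L^2}).
\end{equation*}

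Now I would add the two identities: the coupling terms $-\langle\Div(\nabla d\odot\nabla d),v\rangle$ and $\langle v\cdot\nabla d,\Delta d\rangle$ cancel exactly by Claim \ref{Claim:Dissip} (here $v(t)\in\ve\hookrightarrow L^4$ in 2D and $d(t)\in D(\hA)$, so the claim applies). Writing $\mathcal E(v,d)=\tfrac12(\lvert v\rvert^2_{L^2}+\lvert\nabla d\rvert^2_{L^2})+\int_\Omega\phi(d)\,dx$, this produces the differential inequality
\begin{equation*}
\frac{d}{dt}\mathcal E(v(t),d(t)) + \frac12\lvert\nabla v(t)\rvert^2_{L^2} + \frac12\lvert R(d(t))\rvert^2_{L^2} \le \frac12\lvert f(t)\rvert^2_{\rH^{-1}} + \frac12\lvert g(t)\rvert^2_{L^2}.
\end{equation*}
Integrating from $s$ to $t$ gives \eqref{Eq:NRJ-Inequality}. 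The only genuinely delicate point is the careful bookkeeping of the $\phi'$ terms when pairing with $-R(d)$ and verifying that the three multiplier cancellations (Claim \ref{Claim:perp} twice, Claim \ref{Claim:Dissip} once) combine to leave exactly the dissipation $\lvert R(d)\rvert^2_{L^2}$ plus the forcing; every step is a pointwise identity on $\mathbb S^2$ plus an integration by parts, with the regularity of the solution guaranteeing integrability. I would also remark that $\phi\ge 0$ (Assumption, $\phi:\mathbb R^3\to\mathbb R^+$) is what makes $\mathcal E$ a genuine energy, though it is not strictly needed for the inequality itself.
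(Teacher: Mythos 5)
Your proposal follows the paper's argument almost verbatim: pair the velocity equation with $v$ and the director equation with $-R(d)$, use Claim \ref{Claim:perp} to annihilate the cross terms, Claim \ref{Claim:Dissip} to cancel $-\langle\Div(\nabla d\odot\nabla d),v\rangle$ against $\langle v\cdot\nabla d,\Delta d\rangle$, and Young's inequality to absorb half of each dissipation term. One intermediate claim is, however, wrong: $d\times g$ is pointwise orthogonal to $d$, so it does kill $\lvert\nabla d\rvert^2 d$ and $\alpha(d)d$, but it is \emph{not} orthogonal to the tangential part $\phi'(d)-(\phi'(d)\cdot d)d$ --- both are tangent vectors to $\mathbb{S}^2$ at $d$, and two tangent vectors at the same point have no reason to be orthogonal to each other. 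Hence the surviving term is $-\langle d\times g,\Delta d-\phi'(d)\rangle$ rather than $-\langle d\times g,\Delta d\rangle$, and the latter alone cannot be bounded by $\tfrac12\lvert g\rvert^2_{L^2}+\tfrac12\lvert R(d)\rvert^2_{L^2}$, since $\lvert\Delta d\rvert_{L^2}$ is not controlled by $\lvert R(d)\rvert_{L^2}$ at this stage (that control is precisely the content of the later Lemma \ref{Lem:EstimateofDeltaD+L4norms} and needs the small-energy argument). The fix is exactly what the paper does: do not decompose the pairing at all, keep $\langle d\times g,R(d)\rangle$ intact, bound it by $\lvert d\times g\rvert_{L^2}\lvert R(d)\rvert_{L^2}\le\lvert g\rvert_{L^2}\lvert R(d)\rvert_{L^2}$ using $\lvert d\rvert=1$, and then apply Young. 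With that one correction the rest of your argument is sound and coincides with the paper's proof.
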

	\begin{proof}
		Since the maximal smooth solution $(v,d)$ satisfies part (1)  of Definition \ref{Def:Local-Regular-Sol}, it is not difficult to show that $( v(t),R(d(t)))\in L^2(0,T_\ast; D(\rA) \times D(\hA^\frac12))$. We also have  $(\partial_t v,\partial_t d) \in  L^2(0,T_\ast; \rH \times D(\hA^\frac12))$ because $(v,d)$ satisfies. Hence, by applying the  Lions-Magenes Lemma (\cite[Lemma III.1.2]{Temam_2001}) and the Claims \ref{Claim:perp} and \ref{Claim:Dissip} we infer that for all $t\in [0,T_\ast)$
		\begin{align*}
		-\langle \partial_t d(t), R(d(t)) \rangle - \langle  \partial_t v(t), \Delta v\rangle  =& \frac{d}{dt} \mathcal{E}(v(t),d(t)) + \lvert \nabla v(t) \rvert^2_{L^2} + \lvert R(d(t) ) \rvert^2_{L^2} \\
		= & \langle  f(t), v(t) \rangle  +  \langle d(t)\times g(t) , R(d(t) ) \rangle.
		\end{align*}
		From the Cauchy-Schwarz and the Young inequalities and,  the fact $d(t)\in \mathcal{M}, t\in [0, T_\ast)$, which is part (4) of Definition \ref{Def:Local-Regular-Sol}, we infer that there exists a constant $C>0$ such that
		\begin{align*}
		\frac{d}{dt} \mathcal{E}(v(t),d(t) ) + \lvert \nabla v(t) \rvert^2_{L^2} + \lvert R(d(t) ) \rvert^2_{L^2} \le C \vert f(t) \rvert_{\rH^{-1}} \lvert \nabla  v(t)  \rvert_{L^2} + C \vert d(t)\times g(t)  \rvert_{L^2}
		\lvert R(d(t) ) \rvert_{L^2}\\
		\le \frac12 \left(\lvert \nabla v(t) \rvert^2_{L^2} + \lvert R(d(t) ) \rvert^2_{L^2}\right) + \frac12 \left( \lvert f(t) \rvert^2_{\rH^{-1}}+ \lvert g(t) \rvert^2_{L^2}\right).
		\end{align*}
		Absorbing the first term on the right hand side of the last inequality into the left hand side and integrating over $[s,t] \subset [0,T_\ast)$ completes the proof of the lemma.
	\end{proof}

	For any $\eps>0$ and $R>0$ we define the time
	\begin{equation}\label{Eq:DefStoppingTime}
	T(\eps, R)= \inf\left\{t\in [0,T_\ast):  \mathcal{E}_{R}(v(t),d(t)) > 2 \eps^2 \right \}\wedge T_\ast.
	 \end{equation}
	 \begin{Rem}
	 	Let $\eps>0$ and $R>0$. Then, for any $t\in [0,T(\eps, R)]$
	 	\begin{equation}\label{Eq:BelowStoppingTime}
	 	 \mathcal{E}_{R}(v(t),d(t)) \le  2 \eps^2.
	 	\end{equation}
	 \end{Rem}
We state and prove the following lemma.
	\begin{Lem}\label{Lem:EstimateofDeltaD+L4norms}
			Let $(v_0,d_0)\in \ve \times D(\hA)$ and $(f,g)\in L^2(0,T;L^2\times \rH^1)$. 
		There exist  $\eps_0>0$ and $K_0>0$ such that  if $(v,d) \in X_{T_\ast} $ is a maximal local regular solution to the problem \eqref{1b}, then for all	$\eps\in (0, \eps_0)$, $R\in (0,r_0]$, where $r_0>0$ is the constant from Lemma  \ref{Lem:Struwe}, and for all $t\in [0,T(\eps, R)]$
		\begin{align}
		\int_0^{t} \lvert \Delta d(r) \rvert^2_{L^2} dr \le K_0\left[ E_0  + \frac12 \int_0^{t}\left(\lvert f (s) \rvert^2_{\rH^{-1}}+ \lvert g(s) \rvert^2_{L^2}\right) ds + \left(1+\frac{2\eps^2}{R^2}\right)t \right],\label{Eq:EstDeltad}\\
		\int_0^{t } \left(\lvert v(s) \rvert^4_{L^4} + \lvert \nabla d (s) \rvert^4_{L^4}    \right) ds \le K_0\eps^2\left[ E_0  + \frac12 \int_0^{t}\left(\lvert f (s) \rvert^2_{\rH^{-1}}+ \lvert g(s) \rvert^2_{L^2}\right) ds + \left(1+\frac{2\eps^2}{R^2}\right)t \right].\label{Eq:EstL4normsofU+NablaD}
		\end{align}
	\end{Lem}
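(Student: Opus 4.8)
The plan is to combine the energy inequality \eqref{Eq:NRJ-Inequality} with the Ladyzhenskaya--Struwe estimates \eqref{Eq:Ladyzhenskaya-Struwe} and \eqref{Eq:Ladyzhenskaya-Struwe-2}, exploiting the fact that on $[0,T(\eps,R)]$ the localized energy $\mathcal{E}_R(v(t),d(t))$ is controlled by $2\eps^2$, see \eqref{Eq:BelowStoppingTime}. First I would extract from \eqref{Eq:NRJ-Inequality} with $s=0$ the two bounds
\[
\sup_{0\le s\le t}\mathcal{E}(v(s),d(s)) \le E_0 + \tfrac12\int_0^t\bigl(\lvert f(s)\rvert^2_{\rH^{-1}}+\lvert g(s)\rvert^2_{L^2}\bigr)\,ds =: \mathcal{F}(t),
\]
and
\[
\tfrac12\int_0^t\bigl(\lvert\nabla v(s)\rvert^2_{L^2}+\lvert R(d(s))\rvert^2_{L^2}\bigr)\,ds \le \mathcal{F}(t).
\]
Here I use that $\phi\ge 0$ (so $\mathcal{E}$ dominates $\tfrac12\lvert v\rvert^2_{L^2}+\tfrac12\lvert\nabla d\rvert^2_{L^2}$) and that the linear growth \eqref{Eq:LInearGrowthPhiprime} together with $\lvert d\rvert=1$ gives $\int_\Omega\phi(d(s,x))\,dx\le C\lvert\Omega\rvert$, a harmless additive constant which I absorb into the $t$-term.

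Next I would pass from the bound on $\lvert R(d(s))\rvert_{L^2}$ to a bound on $\lvert\Delta d(s)\rvert_{L^2}$. Writing $\Delta d = R(d) - \lvert\nabla d\rvert^2 d + \phi'(d) - \alpha(d)d$ and using $\lvert d\rvert=1$ and \eqref{Eq:LInearGrowthPhiprime}, the troublesome term is $\lvert\nabla d\rvert^2 d$, whose $L^2$-norm is $\lvert\nabla d\rvert^2_{L^4}$. This is exactly where the localized smallness enters: apply \eqref{Eq:Ladyzhenskaya-Struwe-2} with $h=d$ on the interval $[0,t]$ with $t\le T(\eps,R)$, so that $\sup_{(s,x)}\int_{B(x,r_0)}\lvert\nabla d(s,y)\rvert^2\,dy \le 2\,\mathcal{E}_R(v(s),d(s)) \le 4\eps^2$ once $R\le r_0$ — I should note that $\mathcal{E}_R$ is defined with balls of radius $2R$, so for $R\le r_0$ the ball $B(x,r_0)$ is contained in $B(x,2R)$ only if $r_0\le 2R$; the cleaner route is to observe $\int_{B(x,r_0)}\lvert\nabla d\rvert^2 \le$ finitely many translates bound, or simply to run \eqref{Eq:Ladyzhenskaya-Struwe-2} with the radius $R$ in place of $r_0$ (the proof of the Remark works verbatim for any radius $\le r_0$), giving
\[
\int_0^t\lvert\nabla d(s)\rvert^4_{L^4}\,ds \le c_2\cdot 4\eps^2\Bigl(\int_0^t\lvert\Delta d(s)\rvert^2_{L^2}\,ds + \tfrac{1}{R^2}\int_0^t\lvert\nabla d(s)\rvert^2_{L^2}\,ds\Bigr).
\]
Then $\int_0^t\lvert\Delta d(s)\rvert^2_{L^2}\,ds \le C\bigl(\int_0^t\lvert R(d(s))\rvert^2_{L^2}\,ds + \int_0^t\lvert\nabla d(s)\rvert^4_{L^4}\,ds + \int_0^t(1+\lvert\nabla d\rvert^2_{L^2})\,ds\bigr)$; substituting the displayed inequality and using $\int_0^t\lvert\nabla d\rvert^2_{L^2}\,ds \le 2\mathcal{F}(t)\cdot t$ gives, after choosing $\eps_0$ small enough that $4c_2 C\eps_0^2\le\tfrac12$, an absorption of the $\int_0^t\lvert\Delta d\rvert^2_{L^2}$ term into the left side, yielding \eqref{Eq:EstDeltad} with a universal $K_0$ (keeping the $(1+2\eps^2/R^2)t$ shape because $\tfrac{1}{R^2}\int_0^t\lvert\nabla d\rvert^2\,ds$ contributes a term $\lesssim \tfrac{\mathcal{F}(t)}{R^2}t$ and $\mathcal{F}(t)\le E_0 + \tfrac12\int(\ldots) =: \mathcal{F}(t)$ is exactly the bracket).

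For \eqref{Eq:EstL4normsofU+NablaD} I would apply \eqref{Eq:Ladyzhenskaya-Struwe} with $h=v$ and \eqref{Eq:Ladyzhenskaya-Struwe-2} with $h=d$ (again with radius $R$), so that
\[
\int_0^t\lvert v(s)\rvert^4_{L^4}\,ds \le c_1\cdot 4\eps^2\Bigl(\int_0^t\lvert\nabla v(s)\rvert^2_{L^2}\,ds + \tfrac{1}{R^2}\int_0^t\lvert v(s)\rvert^2_{L^2}\,ds\Bigr),
\]
and similarly for $\lvert\nabla d\rvert^4_{L^4}$ with $c_2$ and $\lvert\Delta d\rvert^2_{L^2}$. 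Both bracketed quantities are already controlled: $\int_0^t\lvert\nabla v\rvert^2_{L^2}\,ds\le 2\mathcal{F}(t)$, $\int_0^t\lvert v\rvert^2_{L^2}\,ds\le 2\mathcal{F}(t)t$, and $\int_0^t\lvert\Delta d\rvert^2_{L^2}\,ds$ is bounded by the right side of \eqref{Eq:EstDeltad} just proved, while $\int_0^t\lvert\nabla d\rvert^2_{L^2}\,ds\le 2\mathcal{F}(t)t$. Collecting and bounding $\mathcal{F}(t)$ and its $t$-multiples by the single bracket $\bigl[E_0 + \tfrac12\int_0^t(\lvert f\rvert^2_{\rH^{-1}}+\lvert g\rvert^2_{L^2})\,ds + (1+2\eps^2/R^2)t\bigr]$ gives \eqref{Eq:EstL4normsofU+NablaD} with the explicit $\eps^2$ prefactor coming from the $4\eps^2$ in the Ladyzhenskaya--Struwe constants. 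The main obstacle is the bootstrap/absorption for $\int_0^t\lvert\Delta d\rvert^2_{L^2}\,ds$: one must be careful that the constant multiplying this term after substituting the $L^4$-estimate is genuinely $\le\tfrac12$, which forces the choice of $\eps_0$ and requires tracking that all other constants ($c_1,c_2,M_0,M_1,M_2,\lvert\Omega\rvert$, embedding constants) are independent of $t$, $R$, and the solution — a point already flagged in the statements of Lemma \ref{Lem:Struwe} and the Remark.
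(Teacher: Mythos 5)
Your proposal is correct and follows essentially the same route as the paper: decompose $\Delta d$ via $R(d)$ plus the (bounded on $\mathbb{S}^2$) anisotropy terms and the term $\lvert\nabla d\rvert^2 d$ of $L^2$-norm $\lvert\nabla d\rvert^2_{L^4}$, invoke the global energy inequality \eqref{Eq:NRJ-Inequality} for $\int_0^t\lvert R(d)\rvert^2_{L^2}$ and $\int_0^t\lvert\nabla v\rvert^2_{L^2}$, apply the Ladyzhenskaya--Struwe estimates with radius $R$ together with the smallness \eqref{Eq:BelowStoppingTime} on $[0,T(\eps,R)]$, and absorb $\int_0^t\lvert\Delta d\rvert^2_{L^2}$ by choosing $\eps_0$ with $8c_2\eps_0^2\le\tfrac12$; the $L^4$ bounds then follow by feeding \eqref{Eq:EstDeltad} back into the Struwe inequalities exactly as in the paper. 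Your remark on running the Struwe lemma with radius $R$ in place of $r_0$ is precisely how the paper handles that point.
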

	\begin{proof}
	Let 	$r_0>0$ be the constant from Lemma  \ref{Lem:Struwe}, $R\in [0,r_0]$ and  $\eps\in (0,\eps_0)$ where $\eps_0$ is number to be chosen later.
	We set
	$$E_0= \mathcal{E}(v_0,d_0).$$
		Since $\phi$ is twice continuously differentiable and the 2-sphere $\mathbb{S}^2$ is compact, we can and will assume throughout that for some constant $M>0$
		\begin{equation}
		2 \lvert \phi^\prime(n) - \alpha(n) n \rvert^2 \le M, \; n \in \mathbb{S}^2.\notag
		\end{equation}
		
		From this observation we infer that for all $t\in [0,T(\eps, R)]$
		\begin{align*}
		\int_0^{t} \lvert \Delta d \rvert^2_{L^2} ds \le 2 \int_0^{t} \lvert \Delta d - \phi^\prime(d) + \alpha(d) d \rvert^2_{L^2} ds + M t\\
		\le 4  \int_0^t \lvert R(d) \rvert^2_{L^2}  ds +  4 \int_0^{t} \lvert \nabla d \rvert^4_{L^4} ds + M t.
		\end{align*}
	The last line of the above inequalities, \eqref{Eq:Ladyzhenskaya-Struwe-2}, \eqref{Eq:NRJ-Inequality} and \eqref{Eq:BelowStoppingTime}  imply that
		\begin{align}
			\int_0^{t} \lvert \Delta d \rvert^2_{L^2} ds\le &  4 c_2 \left(\sup_{(s,x) \in [0,t]\times \Omega  } \int_{B(x,R)} \lvert \nabla d(s,y) \rvert^2 dy \right) \left(\int_0^t \lvert \Delta d(t)\rvert^2_{L^2} dt + \frac{1}{R^2} \int_0^t \lvert \nabla d(s)\rvert^2_{L^2} ds   \right)\nonumber \\
			& + 4  \int_0^t \lvert R(d) \rvert^2_{L^2}  ds \nonumber \\
			\le & 4 \left[E_0 + \frac12  \int_0^{t}\left(\lvert f (s) \rvert^2_{\rH^{-1}}+ \lvert g(s) \rvert^2_{L^2}\right) ds  \right]  + 8 c_2 \eps^2 \left(\int_0^{t} \lvert \Delta d \rvert^2_{L^2}ds  + \frac{2 \eps^2 }{R^2}  t \right)+M t.\notag
		\end{align}
		Now choosing $\eps_0>0$ so that $1-8c_2\eps_0^2\ge \frac12$, we infer that $1-8c_2\eps^2>\frac12$ and for all $t\in [0,T(\eps, R)]$
		\begin{equation}
		\begin{split}
		\int_0^{t} \lvert \Delta d \rvert^2_{L^2} ds \le 8 \left[E_0 + \frac12  \int_0^{t}\left(\lvert f (s) \rvert^2_{\rH^{-1}}+ \lvert g(s) \rvert^2_{L^2}\right) ds  \right]  \\
		+ 16 c_2 \left(\frac{2 \eps^2 }{R^2} +M\right)t,\notag
		\end{split}
		\end{equation}
		which completes the proof of \eqref{Eq:EstDeltad}.
		
		We now proceed to the proof of \eqref{Eq:EstL4normsofU+NablaD}. For this we observe that by Lemma \ref{Lem:Struwe} and  \eqref{Eq:EstDeltad} we infer that for all $t\in [0,T(\eps, R)]$
		\begin{align}
		\int_0^{t} \lvert \nabla d \rvert^4_{L^4} ds \le 4 c_2\eps^2 \left( \int_0^{t} \lvert \Delta d \rvert^2_{L^2}  ds + \frac{1}{R^2} \int_0^{t} \lvert \nabla d \rvert^2_{L^2}  ds \right)\nonumber \\
		\le 4 c_2\eps^2 \left(K_0\left[ E_0  + \frac12 \int_0^{t}\left(\lvert f (s) \rvert^2_{\rH^{-1}}+ \lvert g(s) \rvert^2_{L^2}\right) ds + \left(1+\frac{2\eps^2}{R^2}\right)t \right] +  \frac{2\eps^2}{R^2} t \right).\label{Est:L4NablaD}
		\end{align}
		In a similar way,  we prove that for all $t\in [0,T(\eps, R)]$
		\begin{align}
		\int_0^{t} \lvert v \rvert^4_{L^4} ds \le 4 c_2\eps^2 \left( \int_0^{t} \lvert \nabla v \rvert^2_{L^2}  ds + \frac{1}{r_0^2} \int_0^{t} \lvert v \rvert^2_{L^2}  ds \right)\nonumber \\
		\le 4 c_2\eps^2 \left(K_0\left[ E_0  + \frac12 \int_0^{t}\left(\lvert f (s) \rvert^2_{\rH^{-1}}+ \lvert g(s) \rvert^2_{L^2}\right) ds  \right] +  \frac{2\eps^2}{R^2} t \right),\label{Eq:EstL4u}
		\end{align}
		which altogether with \eqref{Est:L4NablaD} imply \eqref{Eq:EstL4normsofU+NablaD}.
	\end{proof}
	We will need the following estimates which  will be proved in the Appendix \ref{App:EstHighOrder}.
	\begin{Clm}\label{Clm:EstimateHigherorder}
		There exists a constant $K_1>0$ such that for all $v\in D(A)$ and $n \in D(\hA^{3/2})$ we have
		\begin{align}
	&	\lvert \langle A v, -\Pi[\Div(\nabla n \odot \nabla n) ]  \rangle \lvert \le \frac1{12}\left(\lvert Av \rvert^2_{L^2}+ \lvert \nabla \Delta n \rvert^2_{L^2}\right) + K_1 \lvert \nabla n \rvert^4_{L^4} \lvert \Delta n \rvert^2_{L^2},\label{Eq:EstHighOrd-1}\\
&	\lvert {}	_{(\rH^1)^\ast}\langle \hA^2 n,  (v\cdot \nabla n )\rangle_{\rH^1} \lvert \le  	\frac1{12} \left(\lvert \nabla \Delta n\rvert^2_{L^2} +\lvert \rA v\rvert^2_{L^2}\right)+ K_1  \lvert \nabla v\rvert^2_{L^2 } \rvert(\nabla n) \rvert^2_{L^4}+K_1[\lvert v \rvert^2_{L^4}+\lvert v\rvert^4_{L^4} ]\lvert \Delta n\rvert_{L^2},\label{Eq:EstHighOrd-2}\\
&		\lvert {}_{(\rH^1)^\ast}\langle \hA^2 n, (\lvert \nabla n \rvert^2 n ) \rangle_{\rH^1} \lvert \le  \frac1{12} \lvert \nabla \Delta n \rvert^2_{L^2} + K_1 \Big(  [\lvert \nabla n \rvert^4_{L^4} +\lvert \nabla n\rvert^2_{L^4} ]\lvert \Delta n  \rvert^2_{L^2}+  \lvert \nabla n  \rvert^4_{L^4}(\lvert \nabla n\rvert^2_{L^2} + \lvert \Delta n \rvert^2_{L^2})\Big).\label{Eq:EstHighOrd-3}
		\end{align}
	\end{Clm}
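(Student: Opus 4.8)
The plan is to prove \eqref{Eq:EstHighOrd-1}--\eqref{Eq:EstHighOrd-3} by one and the same recipe. First I would rewrite the duality pairing on the left--hand side as a genuine $L^2$ scalar product, by invoking the definitions of $\rA$ and $\hA$ together with the self--adjointness of the Leray--Helmholtz projection $\Pi$ on $L^2$. Then I would estimate that scalar product by H\"older's inequality combined with the two--dimensional Gagliardo--Nirenberg (Ladyzhenskaya) inequalities $\lvert w\rvert_{L^4}^4\le c\lvert w\rvert_{L^2}^2\lvert\nabla w\rvert_{L^2}^2$ and $\lvert w\rvert_{L^6}^3\le c\lvert w\rvert_{L^2}\lvert\nabla w\rvert_{L^2}^2$, the elliptic regularity estimates $\lvert\nabla^2 n\rvert_{L^2}\le C(\lvert\Delta n\rvert_{L^2}+\lvert\nabla n\rvert_{L^2})$ and $\lvert\Delta n\rvert_{\rH^1}\le C(\lvert\nabla\Delta n\rvert_{L^2}+\lvert\Delta n\rvert_{L^2}+\lvert\nabla n\rvert_{L^2})$, and $\lvert\nabla^2 v\rvert_{L^2}\le C\lvert\rA v\rvert_{L^2}$. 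Finally I would apply Young's inequality with a triple of exponents chosen so that the two top--order quantities $\lvert\rA v\rvert_{L^2}^2$ and $\lvert\nabla\Delta n\rvert_{L^2}^2$ appear with the prescribed coefficient $\tfrac1{12}$, everything else being of strictly lower order. Since $v\in D(\rA)\subset\rH^2$ and $n\in D(\hA^{3/2})\subset\rH^3$ on a $2$--dimensional domain, all the products that occur are honest $L^2$ functions.

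For \eqref{Eq:EstHighOrd-1}: because $\rA v\in\rH$ and $\Pi$ is the orthogonal projection of $L^2$ onto $\rH$, one has $\langle\rA v,-\Pi[\Div(\nabla n\odot\nabla n)]\rangle=-\langle\rA v,\Div(\nabla n\odot\nabla n)\rangle$; by the identity $-\Div(\nabla n\odot\nabla n)=-(\nabla n)\Delta n+\tfrac12\nabla\lvert\nabla n\rvert^2$ from the Introduction and the $L^2$--orthogonality of $\rA v$ to every gradient, this equals $-\langle\rA v,(\nabla n)\Delta n\rangle$. Hence, by H\"older, the Ladyzhenskaya inequality applied to $\Delta n$, and the elliptic estimate for $\lvert\Delta n\rvert_{\rH^1}$,
\[
\bigl|\langle\rA v,-\Pi[\Div(\nabla n\odot\nabla n)]\rangle\bigr|\le\lvert\rA v\rvert_{L^2}\lvert\nabla n\rvert_{L^4}\lvert\Delta n\rvert_{L^4}\le C\,\lvert\rA v\rvert_{L^2}\,\lvert\nabla n\rvert_{L^4}\,\lvert\Delta n\rvert_{L^2}^{1/2}\bigl(\lvert\nabla\Delta n\rvert_{L^2}+\lvert\Delta n\rvert_{L^2}\bigr)^{1/2},
\]
and Young's inequality with exponents $(2,4,4)$ yields \eqref{Eq:EstHighOrd-1}.

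For \eqref{Eq:EstHighOrd-2} and \eqref{Eq:EstHighOrd-3}: for every $w\in\rH^1$, the definitions of $\hat{\mathcal A}$ and $\hA$ (recall $\hA n=-\Delta n\in D(\hA^{1/2})=\rH^1$) give ${}_{(\rH^1)^\ast}\langle\hA^2 n,w\rangle_{\rH^1}=\hat a(\hA n,w)=\langle\nabla\hA n,\nabla w\rangle=-\langle\nabla\Delta n,\nabla w\rangle$, so it suffices to bound $\lvert\langle\nabla\Delta n,\nabla w\rangle\rvert\le\lvert\nabla\Delta n\rvert_{L^2}\lvert\nabla w\rvert_{L^2}$ after expanding $\nabla w$ by the Leibniz rule. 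For \eqref{Eq:EstHighOrd-3} take $w=\lvert\nabla n\rvert^2 n$, so that $\nabla w=2(\nabla^2 n\!\cdot\!\nabla n)n+\lvert\nabla n\rvert^2\nabla n$ and $\lvert n\rvert=1$ a.e.; the first summand is bounded by $\lvert\nabla\Delta n\rvert_{L^2}\lvert\nabla^2 n\rvert_{L^4}\lvert\nabla n\rvert_{L^4}$ and the second by $\lvert\nabla\Delta n\rvert_{L^2}\lvert\nabla n\rvert_{L^6}^3$, and inserting $\lvert\nabla^2 n\rvert_{L^4}\le C(\lvert\Delta n\rvert_{L^2}^{1/2}\lvert\nabla\Delta n\rvert_{L^2}^{1/2}+\lvert\Delta n\rvert_{L^2}+\lvert\nabla n\rvert_{L^2})$ and $\lvert\nabla n\rvert_{L^6}^3\le C\lvert\nabla n\rvert_{L^2}(\lvert\Delta n\rvert_{L^2}^2+\lvert\nabla n\rvert_{L^2}^2)$ and using Young's inequality reproduces, after collecting terms, the right--hand side of \eqref{Eq:EstHighOrd-3}. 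For \eqref{Eq:EstHighOrd-2} take $w=v\cdot\nabla n$, which lies in $\rH^1$ because $v\in\rH^2$ and $n\in\rH^3$, so that $\nabla w=(\nabla v)\nabla n+v\cdot\nabla^2 n$; here $\lvert(\nabla v)\nabla n\rvert_{L^2}\le\lvert\nabla v\rvert_{L^4}\lvert\nabla n\rvert_{L^4}\le C\lvert\nabla v\rvert_{L^2}^{1/2}\lvert\rA v\rvert_{L^2}^{1/2}\lvert\nabla n\rvert_{L^4}$ and $\lvert v\cdot\nabla^2 n\rvert_{L^2}\le\lvert v\rvert_{L^4}\lvert\nabla^2 n\rvert_{L^4}$, and Young's inequality applied separately to the two resulting products isolates $\tfrac1{12}\lvert\nabla\Delta n\rvert_{L^2}^2$ and $\tfrac1{12}\lvert\rA v\rvert_{L^2}^2$ and leaves the lower--order remainders displayed in \eqref{Eq:EstHighOrd-2}.

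The main obstacle will be the bookkeeping of exponents: one must pick the interpolation parameters in the Gagliardo--Nirenberg inequalities and the exponents in each application of Young's inequality so that the \emph{precise} lower--order terms of \eqref{Eq:EstHighOrd-1}--\eqref{Eq:EstHighOrd-3} are recovered, in particular so that every appearance of $\lvert\nabla\Delta n\rvert_{L^2}$ or $\lvert\rA v\rvert_{L^2}$ with an exponent below $2$ is re--balanced against the remaining factors, and so that the additive lower--order remainders generated by the elliptic estimates are absorbed into the terms on the right. Apart from this, the only points to verify are the routine functional--analytic facts that $v\cdot\nabla n\in\rH^1$ and that $\hA^2 n$ is a well--defined element of $(\rH^1)^\ast$ represented by $\hat a(\hA n,\cdot)$ — which is exactly where the hypotheses $v\in D(\rA)$ and $n\in D(\hA^{3/2})$ are used. (Alternatively, one may carry out a further integration by parts in the pairings $\langle\nabla\Delta n,\nabla w\rangle$, in which case some boundary integrals arise but vanish owing to the no--slip condition $v_{\lvert_{\partial\Omega}}=0$ and the Neumann condition $\partial n/\partial\nu=0$ on $\partial\Omega$.)
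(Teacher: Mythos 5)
Your proposal follows essentially the same route as the paper's appendix proof: rewrite each pairing as $\langle\nabla\Delta n,\nabla w\rangle$ (resp.\ use $-\Div(\nabla n\odot\nabla n)=-\nabla n\,\Delta n+\tfrac12\nabla\lvert\nabla n\rvert^2$ together with the orthogonality of $\rA v\in\rH$ to gradients), expand $\nabla w$ by Leibniz, and close with H\"older, Gagliardo--Nirenberg, the Simader estimate $\lvert\nabla^2 n\rvert_{L^4}\le C\lvert\Delta n\rvert_{L^4}$, and Young with the weights producing the $\tfrac1{12}$ coefficients. One exponent choice does need correcting in your treatment of \eqref{Eq:EstHighOrd-3}: the cubic term $\lvert\nabla\Delta n\rvert_{L^2}\lvert\nabla n\rvert_{L^6}^3$ must be handled via the interpolation $\lvert\nabla n\rvert_{L^6}^6\le C\lvert\nabla n\rvert_{L^4}^4\lvert\nabla n\rvert_{\rH^1}^2$ (which is what the paper uses), not via $\lvert\nabla n\rvert_{L^6}^3\le C\lvert\nabla n\rvert_{L^2}\lvert\nabla^2 n\rvert_{L^2}^2$: squaring the latter after Young produces a remainder containing $\lvert\nabla n\rvert_{L^2}^2\lvert\Delta n\rvert_{L^2}^4$, which is not dominated by the stated term $\lvert\nabla n\rvert_{L^4}^4(\lvert\nabla n\rvert_{L^2}^2+\lvert\Delta n\rvert_{L^2}^2)$ and, being quadratic in $\lvert\Delta n\rvert_{L^2}^2$, would also break the Gronwall argument in which the claim is later used. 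With that single substitution your argument coincides with the paper's.
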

	We will also need the following results.
	\begin{Clm}\label{Clm:EstimateDeltaAnisotropy}
		There exists a constant $K_2>0$ such that for all  $n \in D(\hA^{3/2})$ we have
		\begin{align}
		\lvert {}_{ (\rH^1)^\ast }\langle \hA ^2 n, \alpha(n)n-\phi^\prime(n) \rangle_{\rH^1} \lvert \le \frac1{12} \lvert \nabla \Delta n \rvert^2_{L^2}+ K_2 \lvert \nabla n \rvert^2_{L^2}.\notag
		\end{align}
	\end{Clm}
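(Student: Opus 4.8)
The plan is to imitate the approach used for the three estimates in Claim \ref{Clm:EstimateHigherorder}: first move one copy of $\hA$ to the other side of the duality pairing so that we only need to control an $L^2$-inner product of $\nabla \Delta n$ against $\nabla\bigl(\alpha(n)n-\phi^\prime(n)\bigr)$, then use the pointwise bounds on $\phi^\prime,\phi^{\prime\prime}$ from Assumption \ref{Assum:EnergyPotential} together with the constraint $n(x)\in\mathbb S^2$ (which holds here since $n\in D(\hA^{3/2})$ will be evaluated along the regular solution, so $n(x)\in\mathcal M$), and finally absorb the top-order term by Young's inequality.

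Concretely, I would write
\[
{}_{(\rH^1)^\ast}\langle \hA^2 n,\ \alpha(n)n-\phi^\prime(n)\rangle_{\rH^1}
= \int_\Omega \nabla \Delta n \cdot \nabla\bigl(\alpha(n)n-\phi^\prime(n)\bigr)\,dx ,
\]
using the self-adjointness/integration-by-parts identity already invoked for \eqref{Eq:EstHighOrd-1}--\eqref{Eq:EstHighOrd-3} (the Neumann boundary conditions make the boundary term vanish). Next I would compute the gradient: $\nabla\bigl(\alpha(n)n-\phi^\prime(n)\bigr)$ is a sum of terms each of which is a bounded (on $\mathbb S^2$) coefficient times $\nabla n$; indeed $\alpha(n)=\phi^\prime(n)\cdot n$ so $\nabla(\alpha(n)n) = \bigl[(\phi^{\prime\prime}(n)\nabla n)\cdot n + \phi^\prime(n)\cdot\nabla n\bigr]n + \alpha(n)\nabla n$, and $\nabla\phi^\prime(n)=\phi^{\prime\prime}(n)\nabla n$. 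By \eqref{Eq:LInearGrowthPhiprime} and \eqref{Eq:BoundednessPhibis}, and since $|n(x)|=1$, every coefficient appearing is bounded by a universal constant, so pointwise $|\nabla(\alpha(n)n-\phi^\prime(n))| \le C\,|\nabla n|$ for a constant $C$ depending only on $M_0,M_2$. Therefore
\[
\Bigl| \int_\Omega \nabla \Delta n \cdot \nabla\bigl(\alpha(n)n-\phi^\prime(n)\bigr)\,dx \Bigr|
\le C\,\lvert \nabla \Delta n\rvert_{L^2}\,\lvert \nabla n\rvert_{L^2}
\le \frac{1}{12}\lvert \nabla \Delta n\rvert^2_{L^2} + K_2\,\lvert \nabla n\rvert^2_{L^2},
\]
with $K_2 = 3C^2$, which is exactly the asserted bound.

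The only mildly delicate point — and the step I would flag as the main thing to get right — is justifying that $n(x)\in\mathbb S^2$ a.e.\ may be used here, and consequently that the coefficients built from $\phi^\prime(n),\phi^{\prime\prime}(n)$ are genuinely bounded rather than merely of linear growth: without the sphere constraint one would only get $|\nabla(\alpha(n)n-\phi^\prime(n))|\lesssim (1+|n|^2)|\nabla n|$ and would need $L^\infty$ control of $n$ (which on a regular solution follows from $D(\hA)\hookrightarrow L^\infty$ in 2D anyway). Since the claim is applied to the maximal regular solution for which $d(t)\in\mathcal M$ by Definition \ref{Def:Local-Regular-Sol}(3), one may as well state and use the bound under the hypothesis $n\in\mathbb S^2$ pointwise, exactly as the auxiliary bound $2\lvert\phi^\prime(n)-\alpha(n)n\rvert^2\le M$ on $\mathbb S^2$ was used in the proof of Lemma \ref{Lem:EstimateofDeltaD+L4norms}; the differentiated version is obtained in the same way. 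All remaining manipulations (product rule, Cauchy--Schwarz, Young) are routine.
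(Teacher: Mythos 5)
Your proposal is correct and follows essentially the same route as the paper: integrate by parts to reduce the pairing to $\langle \nabla\Delta n, \nabla(\alpha(n)n-\phi^\prime(n))\rangle$, expand the gradient by the product rule, bound all coefficients by the uniform bounds on $\phi^\prime,\phi^{\prime\prime}$ over $\mathbb{S}^2$ to obtain $C\lvert\nabla\Delta n\rvert_{L^2}\lvert\nabla n\rvert_{L^2}$, and conclude with Young's inequality. Your remark that the sphere constraint $\lvert n\rvert=1$ must be invoked (and is only implicit in the claim's hypotheses) is accurate --- the paper's proof uses it silently in exactly the same way.
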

	\begin{proof}
		Using the Cauchy-Schwarz inequality and the fact that there exists a constant $M>0$ such that
		\begin{equation*}
	\lvert \phi^\prime(n) \rvert + \lvert \phi^{\prime\prime}(n) \rvert \le M,  n \in \mathbb{S}^2,
		\end{equation*}
		we infer that there exists a constant $C>0$ such that
		\begin{align*}
		\lvert {}_{(\rH^1)^\ast}\langle \hA^2 n, \alpha(n)n-\phi^\prime(n) \rangle_{\rH^1} \lvert\le \lvert \nabla \Delta n \rvert_{L^2}\left( \lvert \nabla (\alpha(n)n) \rvert_{L^2}+ \lvert \phi^{\prime\prime}(n)\nabla n \rvert_{L^2} \right)\\
		\le C \lvert \nabla \Delta n \rvert_{L^2}\left( \lvert \lvert \nabla n\rvert \; \rvert\phi^\prime(n)\rvert\;  \lvert n\rvert\;  \rvert_{L^2}+\lvert \lvert \nabla n\rvert \; \lvert \phi^{\prime\prime}(n) \rvert\; \lvert n \rvert^2   \rvert_{L^2} + \lvert \phi^{\prime\prime}(n)\nabla n \rvert_{L^2} \right)\\
		\le C \lvert \nabla \Delta n \rvert_{L^2}\lvert \nabla n \rvert_{L^2}.
		\end{align*}
		We now complete the proof using the Young inequality in the last line.
	\end{proof}
Hereafter, we put for all $s,t \in [0,T]$ with $s\le T$
\begin{align}
	\Psi(s,t)=&\frac12 \int_s^t \left(\lvert f \rvert^2_{\rH^{-1}}+ \lvert g \rvert^2_{L^2} \right)dr \text{ and } \Psi(t)=\Psi(0,t), \label{Eq:DefPsi}\\
	\Xi(s,t)=&\frac12 \int_s^t \left(\lvert f \rvert^2_{L^2} + \lvert g \rvert^2_{\rH^1} \right)dr \text{ and } \Xi(t)=\Xi(0,t) \;\; \forall s\le t\in [0,T]. \label{Eq:DefXi}
\end{align}
\begin{Lem}\label{Lem:EstinHigherorderNorms}
	Let $(v_0,d_0)\in \ve \times D(\hA)$, $(f,g)\in L^2(0,T;L^2\times \rH^1)$ and  $(v,d) \in X_{T_\ast} $ be a maximal local regular solution to the problem \eqref{1b}.
	Let  us put
	\begin{equation}\label{Eq:DefSigma}
		\Sigma_0 =[E_0 + \lvert \Delta d_0\rvert^2_{L^2} + \lvert \nabla v_0\rvert^2_{L^2}+ \Psi(T)+ \Xi(T)+ (E_0+1)\Xi(T)]<\infty.
	\end{equation}
	Then, there exist constants  $K_3>0$ and $K_4>0$ such that for all $\tau\in [0,T_\ast]$ we have
	\begin{equation}\label{Eq:EstinHigherorderNorms}
		\begin{split}
			\sup_{0\le s\le \tau} \left(\lvert \nabla v(s) \rvert^2_{L^2} + \lvert \Delta d(s) \rvert^2_{L^2} \right) + 2 \int_0^{\tau} \left( \lvert \nabla \Delta d(s) \rvert^2_{L^2} + \lvert A v(s) \rvert^2_{L^2}\right)ds \\
			\le K_3\Sigma_0 e^{K_4\Xi(T)} e^{K_4 \int_0^{\tau}[\lvert \nabla d(r) \rvert^4_{L^4}+ \lvert v(r) \rvert^4_{L^4} +\lvert \nabla d(r) \rvert^2_{L^4}+ \lvert v(r)\rvert^2_{L^4} ]dr   }.
		\end{split}
	\end{equation}
\end{Lem}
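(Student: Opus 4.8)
The plan is to derive a differential inequality for the higher-order energy
$\Phi(s):=\lvert \nabla v(s)\rvert^2_{L^2}+\lvert \Delta d(s)\rvert^2_{L^2}$
and then close it with Gronwall's lemma. First I would test the first equation of \eqref{1b} with $\rA v$ and the second equation with $\hA^2 d$ (equivalently, differentiate $\tfrac12\lvert\nabla v\rvert^2_{L^2}$ and $\tfrac12\lvert\Delta d\rvert^2_{L^2}$ in time, which is legitimate because $(v,d)\in\mathbf{X}_{T_\ast}$ gives exactly the regularity $v\in L^2(0,T_\ast;D(\rA))$, $d\in L^2(0,T_\ast;D(\hA^{3/2}))$ and $(\partial_t v,\partial_t d)\in L^2(0,T_\ast;\rH\times D(\hA^{1/2}))$ needed to apply the Lions--Magenes lemma \cite[Lemma III.1.2]{Temam_2001}). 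This produces
\begin{equation*}
\frac12\frac{d}{dt}\Phi(t)+\lvert \rA v\rvert^2_{L^2}+\lvert\nabla\Delta d\rvert^2_{L^2}
=\langle \rA v,\,F(v,d)+f\rangle + {}_{(\rH^1)^\ast}\langle \hA^2 d,\,G(v,d)+d\times g\rangle_{\rH^1},
\end{equation*}
where the linear dissipative terms $\lvert\rA v\rvert^2$ and $\lvert\nabla\Delta d\rvert^2$ come out of the left-hand side with the right sign.

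**Estimating the right-hand side.**
Next I would bound every term on the right. The forcing terms $\langle \rA v,f\rangle$ and the $d\times g$ contribution are handled by Cauchy--Schwarz and Young's inequality, absorbing $\tfrac1{12}(\lvert\rA v\rvert^2+\lvert\nabla\Delta d\rvert^2)$ into the left and leaving a term controlled by $\lvert f\rvert^2_{L^2}+\lvert g\rvert^2_{\rH^1}$, i.e. $\tfrac{d}{dt}\Xi$, plus (for the $\hA^2(d\times g)$ piece, after integrating by parts so only one derivative lands on $d\times g$) a term like $\lvert g\rvert^2_{\rH^1}\lvert\Delta d\rvert^2_{L^2}$ — this is the source of the $e^{K_4\Xi(T)}$ factor. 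The genuinely nonlinear terms are exactly those estimated in Claim \ref{Clm:EstimateHigherorder} and Claim \ref{Clm:EstimateDeltaAnisotropy}: $\langle\rA v,-\Pi[\Div(\nabla d\odot\nabla d)]\rangle$ by \eqref{Eq:EstHighOrd-1}, ${}_{(\rH^1)^\ast}\langle\hA^2 d,v\cdot\nabla d\rangle_{\rH^1}$ by \eqref{Eq:EstHighOrd-2}, ${}_{(\rH^1)^\ast}\langle\hA^2 d,\lvert\nabla d\rvert^2 d\rangle_{\rH^1}$ by \eqref{Eq:EstHighOrd-3}, and the anisotropy term ${}_{(\rH^1)^\ast}\langle\hA^2 d,\alpha(d)d-\phi'(d)\rangle_{\rH^1}$ by Claim \ref{Clm:EstimateDeltaAnisotropy}; for $\langle \rA v,-B(v,v)\rangle$ one uses the standard 2D estimate $\lvert b(v,v,\rA v)\rvert\le C\lvert v\rvert_{L^4}\lvert\nabla v\rvert_{L^4}\lvert\rA v\rvert_{L^2}\le \tfrac{1}{12}\lvert\rA v\rvert^2+C\lvert v\rvert^2_{L^4}\lvert\nabla v\rvert^2_{L^2}$ (interpolating $\lvert\nabla v\rvert_{L^4}\lesssim\lvert\nabla v\rvert_{L^2}^{1/2}\lvert\rA v\rvert_{L^2}^{1/2}$). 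After collecting all the $\tfrac1{12}$-terms (there are at most twelve, by design) they sum to at most $\tfrac12(\lvert\rA v\rvert^2+\lvert\nabla\Delta d\rvert^2)$, which is absorbed.

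**Closing the argument.**
What remains on the right is, schematically,
$\big(K_4\lvert\nabla d\rvert^4_{L^4}+K_4\lvert v\rvert^4_{L^4}+K_4\lvert\nabla d\rvert^2_{L^4}+K_4\lvert v\rvert^2_{L^4}+K_4\lvert g\rvert^2_{\rH^1}\big)\,\Phi(t)$
plus a non-multiplied remainder of the form
$K_4\big(\lvert\nabla d\rvert^4_{L^4}+\lvert\nabla d\rvert^2_{L^4}+\lvert\nabla d\rvert^2_{L^2}+\tfrac{d}{dt}\Xi+\tfrac{d}{dt}\Psi\big)$,
where the bare $\lvert\nabla d\rvert^2_{L^2}$ and the energy flux are controlled by the basic energy inequality \eqref{Eq:NRJ-Inequality}, giving at most $K_3(E_0+\Psi(T)+\Xi(T)+1)$ after time integration; the $L^4$-in-spacetime quantities integrate to finite constants because $(v,d)\in\mathbf{X}_{T_\ast}$ (the $L^4L^4$ norms are finite on $[0,T_\ast]$). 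Then Gronwall's lemma applied on $[0,\tau]$ yields
\begin{equation*}
\Phi(\tau)+2\int_0^\tau(\lvert\rA v\rvert^2_{L^2}+\lvert\nabla\Delta d\rvert^2_{L^2})\,ds
\le \big(\Phi(0)+K_3(\cdots)\big)\,e^{K_4\int_0^\tau[\lvert\nabla d\rvert^4_{L^4}+\lvert v\rvert^4_{L^4}+\lvert\nabla d\rvert^2_{L^4}+\lvert v\rvert^2_{L^4}]dr}\,e^{K_4\Xi(T)},
\end{equation*}
and recognizing $\Phi(0)=\lvert\nabla v_0\rvert^2_{L^2}+\lvert\Delta d_0\rvert^2_{L^2}$ together with the definition \eqref{Eq:DefSigma} of $\Sigma_0$ gives precisely \eqref{Eq:EstinHigherorderNorms}. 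The main obstacle is purely bookkeeping: one must split each dissipative square into enough pieces that the $\tfrac1{12}$-coefficients add up correctly, and one must be careful to separate, in the nonlinear bounds of Claims \ref{Clm:EstimateHigherorder}–\ref{Clm:EstimateDeltaAnisotropy}, the terms that multiply $\Phi(t)$ (and hence feed the Gronwall exponent) from those that do not; no new analytic difficulty arises beyond what those claims already encapsulate.
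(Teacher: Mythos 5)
Your proposal is correct and follows essentially the same route as the paper: the Lions--Magenes pairing of the equations with $\rA v$ and $\hA^2 d$, the absorption of the dissipative squares via Young's inequality, the use of Claims \ref{Clm:EstimateHigherorder} and \ref{Clm:EstimateDeltaAnisotropy} for the nonlinear terms, the $\lvert g\rvert^2_{\rH^1}\lvert\Delta d\rvert^2_{L^2}$ contribution producing the $e^{K_4\Xi(T)}$ factor, and Gronwall combined with the basic energy inequality \eqref{Eq:NRJ-Inequality} to control the remainder. The only cosmetic difference is that the paper recovers the dissipation integral on the left-hand side by integrating the differential inequality a second time after the Gronwall step, a bookkeeping detail your write-up glosses over but which poses no difficulty.
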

\begin{proof}
	Throughout this proof $C>0$ will denote an universal constant which may change from one term to the other. Let $(v_0,d_0)\in \rH\times D(\hA)$ and $((v,d); T_\ast) $ be a  local regular solution to the problem \eqref{1b}.
	
	By part (1) and (4) of Definition \ref{Def:Local-Regular-Sol} we have $(v,d)\in L^2(0,T_\ast; D(\rA) \times D(\hA^\frac32))$ and $(\partial_t v, \partial d) \in L^2(0,T_\ast; \rH  \times D(\hA^\frac12)) \subset L^2(0,T_\ast; \ve^\ast \times D(\hA))$. Hence, by the Lions-Magenes Lemma (\cite[Lemma III.1.2]{Temam_2001}) we infer that
	\begin{align*}
		\frac12 \frac{d}{dt}\left( \lvert \Delta d(t) \rvert^2_{L^2} + \lvert \nabla v(t) \rvert^2_{L^2} \right)
		&=  \langle \partial_t \Delta d(t), \Delta d(t) \rangle+ \langle \partial_t \nabla v(t), \nabla v(t)\rangle\\
		= & {}_{\rH^{1}}\langle \partial_t  d(t), \hA^2 d(t)\rangle_{(\rH^{1})^\ast} +  (\partial_tv(t) , A v(t) ).
	\end{align*}
	Hence,
	\begin{align}\label{Eq:Derivativehigheroerdernorms}
		\frac12 \frac{d}{dt}\left( \lvert \Delta d(t) \rvert^2_{L^2} + \lvert \nabla v(t) \rvert^2_{L^2} \right) = & -\lvert \nabla \Delta d(t) \rvert^2_{L^2} - \lvert \rA v(t) \rvert^2_{L^2} + \langle f(t), Av(t) \rangle-\langle \nabla (g(t)\times d(t)), \nabla \Delta d(t)\rangle \nonumber \\
		&+ {}_{(\rH^{1})^\ast}\langle  \hA^2 d(t),\lvert \nabla d(t) \rvert^2 d(t)+\alpha(d(t))d(t)
		-\phi^\prime(d(t))-v(t)\cdot \nabla d(t)    \rangle_{\rH^{1}}\nonumber  \\
		&- \langle \Div(\nabla d(t) \odot \nabla d(t))+ v(t)\cdot \nabla v(t), A v(t)\rangle.
	\end{align}
	By using the Cauchy-Schwarz, the Young inequalities  and the Ladyzhenskaya inequality (\cite[Lemma III.3.3]{Temam_2001})
	we obtain
	\begin{align*}
		\langle v\cdot \nabla v, \rA v \rangle \le C  \lvert Av \rvert_{L^2} \lvert v \rvert_{L^4} \lvert \nabla v \rvert_{L^4} \\
		\le C \lvert \rA v \rvert^\frac32 \lvert v \rvert_{L^4} \lvert \nabla v \rvert^\frac12_{L^2}\\
		\le \frac14 \lvert \rA v \rvert^2+ C \lvert v \rvert^4_{L^4} \lvert \nabla v \rvert^2_{L^2}.
	\end{align*}
	

	\noindent Using the H\"older and Young inequalities, ,and the Sobolev embedding $\rH^1 \hookrightarrow L^4$ we also have
	\begin{align*}
		{}_{(\rH^{1})^\ast}\langle \hA^2 d, g\times d \rangle_{\rH^{1}} = & ( \nabla \Delta d , \nabla (g\times d) )\\
		\le & C \lvert \nabla \Delta d \rvert_{L^2} \left(\lvert \nabla g \rvert_{L^2}+ \lvert g \rvert_{L^4} \lvert \nabla d \rvert_{L^4} \right)\\
		\le & C \lvert \nabla \Delta d \rvert_{L^2} \left(\lvert \nabla g \rvert_{L^2}+ \lvert g \rvert_{L^4} \lvert \nabla d \rvert_{L^2}^\frac12 \lvert \Delta d \rvert_{L^2}^\frac12 \right)\\
		\le & \frac{1}{2} \lvert \nabla \Delta d \rvert^2_{L^2} + \frac12 \lvert g \rvert^2_{\rH^1} + C \lvert g\rvert^2_{\rH^1}\left(\lvert \nabla d \rvert^2_{L^2}+\lvert \Delta d \rvert^2_{L^2}\right).
	\end{align*}

	Plugging these estimates and the ones in Claims \ref{Clm:EstimateHigherorder}-\ref{Clm:EstimateDeltaAnisotropy} into \eqref{Eq:Derivativehigheroerdernorms} yield
	\begin{equation}\label{Eq:Derivativehigheroerdernorms-Fin}
		\begin{split}
			\frac12 \frac{d}{dt}\left( \lvert \Delta d\rvert^2_{L^2} + \lvert \nabla v \rvert^2_{L^2} \right) + \frac12 \left(\lvert \rA v \rvert^2_{L^2} + \lvert \nabla \Delta d \rvert^2_{L^2}\right) - C \lvert v \rvert^4_{L^4} \lvert \nabla v \rvert^2_{L^2} \\
			\le  C \left(1+
			\lvert \nabla d \rvert^4_{L^4} + \lvert g \rvert^2_{\rH^1}   \right) \lvert \Delta d \rvert^2_{L^2}
			+ \frac12 \left(\lvert f \rvert^2_{L^2}+ \lvert g \rvert^2_{\rH^1}+ C \lvert g \rvert^2_{\rH^1} \lvert \nabla d \rvert^2_{L^2}    \right)\\
			+ C \lvert \nabla d \rvert^2 \lvert \nabla d \rvert^4_{L^4}.
		\end{split}
	\end{equation}
	Hence,
	\begin{equation}\label{Eq:Derivativehigheroerdernorms-Fin-A}
		\begin{split}
			\frac12 \frac{d}{dt}\left( \lvert \Delta d(t) \rvert^2_{L^2} + \lvert \nabla v(t) \rvert^2_{L^2} \right)  \le  C \left(1+
			\lvert \nabla d(t)\rvert^4_{L^4} + \lvert g(t)\rvert^2_{\rH^1} + \lvert  v(t)\rvert^4_{L^4}  \right) \left( \lvert \Delta d(t) \rvert^2_{L^2} + \lvert \nabla v(t) \rvert^2_{L^2} \right) \\ + \frac12 \left(\lvert f(t) \rvert^2_{L^2}+ \lvert g(t) \rvert^2_{\rH^1}+ C \lvert g(t) \rvert^2_{\rH^1} \lvert \nabla d(t)\rvert^2_{L^2}    \right)+ C \lvert \nabla d(t) \rvert^2 \lvert \nabla d(t)\rvert^4_{L^4}.
		\end{split}
	\end{equation}
	Let us put
	\begin{equation*}
		\Theta(t):=e^{2 C \left( \int_0^t[
			\lvert \nabla d(r)\rvert^4_{L^4} + 	\lvert \nabla d(r)\rvert^4_{L^4} + \lvert g(r) \rvert^2_{\rH^1}  + \lvert v(r) \rvert^2_{L^4} + \lvert v(r) \rvert^4_{L^4}]\,dr  \right) }ds,\; t\in [0,T_\ast).
	\end{equation*}
	Thus, by the Gronwall Lemma, we obtain	
	\begin{equation}\label{Eq:Derivativehigheroerdernorms-Fin-B}
		\begin{split}
			& \lvert \Delta d (t) \rvert^2_{L^2} + \lvert \nabla u (t) \rvert^2_{L^2}   -  \left( \lvert \Delta d_0 \rvert^2_{L^2} + \lvert \nabla v_0 \rvert^2_{L^2} \right)  \Theta(t) \\
			&\le \left(\int_0^t  \frac12 \lvert f(s) \rvert^2_{L^2} ds+ \left(C \sup_{s\in [0,\tau]} \lvert \nabla d (s) \rvert^2_{L^2} +\frac12\right)  \int_0^t [\lvert g (s) \rvert^2_{\rH^1} +\lvert \nabla d(s) \rvert^4_{L^4}] ds \right) \Theta(t),
		\end{split}
	\end{equation}
	which along with the inequality \eqref{Eq:NRJ-Inequality} the fact $\theta \le e^\theta,\; \theta\ge 0$ implies that
	\begin{equation}\label{Eq:EstinHigherorderNorms-0}
		\begin{split}
			\sup_{0\le s\le \tau} \left(\lvert \nabla v(s) \rvert^2_{L^2} + \lvert \Delta d(s) \rvert^2_{L^2} \right)
			\le C \Sigma_0 e^{C \Xi(T)} e^{C \int_0^{\tau}[\lvert \nabla d(r) \rvert^4_{L^4}+ \lvert v(r) \rvert^4_{L^4}+\lvert \nabla d(r) \rvert^2_{L^4}+ \lvert v(r) \rvert^2_{L^4} ]dr   }.
		\end{split}
	\end{equation}
	
	Integrating \eqref{Eq:Derivativehigheroerdernorms-Fin}, using \eqref{Eq:EstinHigherorderNorms-0} and the fact $\theta \le e^\theta,\; \theta\ge 0$ yield
	\begin{align}
			\int_0^{\tau} \left(\lvert \rA v(s) \rvert^2_{L^2} + \lvert \nabla \Delta d(r) \rvert^2 \right) ds\le&    \sup_{s\in [0,\tau]} \lvert \Delta d(s) \rvert^2_{L^2}  C \int_0^{\tau}
			\lvert \nabla d(s)\rvert^4_{L^4} + \lvert g(s) \rvert^2_{\rH^1}  \,ds \nonumber  \\
			& + \int_0^\tau \left(\lvert f(s) \rvert^2_{L^2}+ \lvert g(s)  \rvert^2_{\rH^1}\right) ds +\lvert \nabla v_0\rvert^2_{L^2} +  \lvert \Delta d_0\rvert^2_{L^2} \nonumber \\
			& + C \sup_{s\in [0,\tau)}  \lvert \nabla d(s) \rvert^2_{L^2} \int_0^\tau \lvert g(s)\rvert^2_{\rH^1} ds \nonumber  \\
			\le & C \Sigma_0 e^{C \Xi(T)} e^{C \int_0^{\tau}[\lvert \nabla d(r) \rvert^4_{L^4}+ \lvert v(r) \rvert^4_{L^4} +\lvert \nabla d(r) \rvert^2_{L^4}+ \lvert v(r) \rvert^2_{L^4}]dr   } \label{Eq:EstinHigherorderNorms-1}
	\end{align}
	
	We easily infer from \eqref{Eq:EstinHigherorderNorms-0} and \eqref{Eq:EstinHigherorderNorms-1} that \eqref{Eq:EstinHigherorderNorms} holds. This completes the proof of the lemma.
\end{proof}
We have the following consequence of the above lemma.
\begin{Cc}\label{Cor:EstinHigherorderNorms-CC}
	Let $(v_0,d_0)\in \ve \times D(\hA)$, $(f,g)\in L^2(0,T;L^2\times \rH^1)$, $r_0$ and $\eps_0$ be as in Lemma  \ref{Lem:Struwe} and Lemma \ref{Lem:EstimateofDeltaD+L4norms}.
	Let   $(v,d) \in X_{T_\ast} $ be a maximal local regular solution to the problem \eqref{1b}.
	Let $\Sigma_0$ be defined as in \eqref{Eq:DefSigma}.
	
	Then, there exists a constant $K_3>0$ such that for any $R\in (0, r_0]$, $\eps\in (0, \eps_0)$  and $t\in [0,T(\eps,R)]$,
	\begin{equation}\label{Eq:EstinHigherorderNorms-CC}
		\begin{split}
			\left(\lvert \nabla v(t) \rvert^2_{L^2} + \lvert \Delta d(t) \rvert^2_{L^2} \right) + 2 \int_0^{t } \left( \lvert \nabla \Delta d \rvert^2_{L^2} + \lvert \rA v \rvert^2_{L^2}\right)ds
			\le K_3 \Sigma_0 e^{K_3 [\Xi(T)+\eps^2 (E_0 + \Psi(T)+(1+\frac{2\eps^2}{R^2}) t) ]}.
		\end{split}
	\end{equation}
\end{Cc}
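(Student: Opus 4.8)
\emph{Strategy.} Corollary~\ref{Cor:EstinHigherorderNorms-CC} is obtained by feeding the $L^{4}$-bounds of Lemma~\ref{Lem:EstimateofDeltaD+L4norms} into the Gronwall-type estimate of Lemma~\ref{Lem:EstinHigherorderNorms}. The latter already controls $\lvert\nabla v(t)\rvert_{L^{2}}^{2}+\lvert\Delta d(t)\rvert_{L^{2}}^{2}$ together with the dissipation integral $\int_{0}^{t}(\lvert\nabla\Delta d\rvert_{L^{2}}^{2}+\lvert\rA v\rvert_{L^{2}}^{2})\,ds$, but in an exponent that contains the a priori uncontrolled quantity $\int_{0}^{\tau}[\lvert\nabla d\rvert_{L^{4}}^{4}+\lvert v\rvert_{L^{4}}^{4}+\lvert\nabla d\rvert_{L^{4}}^{2}+\lvert v\rvert_{L^{4}}^{2}]\,dr$. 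Lemma~\ref{Lem:EstimateofDeltaD+L4norms} shows that this quantity is \emph{small}, namely $O(\eps^{2})$ times the explicit expression $E_{0}+\Psi(T)+(1+\tfrac{2\eps^{2}}{R^{2}})t$, precisely as long as $t\le T(\eps,R)$, i.e.\ as long as the localized energy stays $\le 2\eps^{2}$, cf.\ \eqref{Eq:BelowStoppingTime}. Restricting $t$ to $[0,T(\eps,R)]$ is therefore exactly what turns the uncontrolled exponent of Lemma~\ref{Lem:EstinHigherorderNorms} into the explicit right-hand side of \eqref{Eq:EstinHigherorderNorms-CC}.

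\emph{Steps.} First I would fix $R\in(0,r_{0}]$, $\eps\in(0,\eps_{0})$ and $t\in[0,T(\eps,R)]$ and apply \eqref{Eq:EstinHigherorderNorms} with $\tau=t$, which gives
\begin{equation*}
\bigl(\lvert\nabla v(t)\rvert_{L^{2}}^{2}+\lvert\Delta d(t)\rvert_{L^{2}}^{2}\bigr)+2\int_{0}^{t}\bigl(\lvert\nabla\Delta d\rvert_{L^{2}}^{2}+\lvert\rA v\rvert_{L^{2}}^{2}\bigr)\,ds\le K_{3}\Sigma_{0}\,e^{K_{4}\Xi(T)}\,e^{K_{4}\mathcal I(t)},
\end{equation*}
where $\mathcal I(t):=\int_{0}^{t}[\lvert\nabla d\rvert_{L^{4}}^{4}+\lvert v\rvert_{L^{4}}^{4}+\lvert\nabla d\rvert_{L^{4}}^{2}+\lvert v\rvert_{L^{4}}^{2}]\,dr$. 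The second step is to estimate $\mathcal I(t)$ on $[0,T(\eps,R)]$. For the fourth-power contribution I would invoke \eqref{Eq:EstL4normsofU+NablaD} together with $\Psi(t)\le\Psi(T)$ to get $\int_{0}^{t}(\lvert v\rvert_{L^{4}}^{4}+\lvert\nabla d\rvert_{L^{4}}^{4})\,ds\le K_{0}\eps^{2}[E_{0}+\Psi(T)+(1+\tfrac{2\eps^{2}}{R^{2}})t]$. For the second-power contribution I would use the elementary bound $2x^{2}\le 1+x^{4}$ to reduce it to the fourth-power one up to the additive term $\int_{0}^{t}1\,ds=t$; keeping \eqref{Eq:EstDeltad} and the energy inequality \eqref{Eq:NRJ-Inequality} at hand in case a sharper treatment of $\int_{0}^{t}(\lvert\nabla d\rvert_{L^{4}}^{2}+\lvert v\rvert_{L^{4}}^{2})\,ds$ is wanted. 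Collecting the pieces puts the exponent into the form $K\,[\Xi(T)+\eps^{2}(E_{0}+\Psi(T)+(1+\tfrac{2\eps^{2}}{R^{2}})t)]$, and after redefining the constant to be a large enough multiple of $K_{3}$, $K_{4}$ and $K$ one arrives at \eqref{Eq:EstinHigherorderNorms-CC}.

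\emph{Main difficulty.} I do not expect a genuine obstacle: the analytic substance is entirely contained in Lemmas~\ref{Lem:EstinHigherorderNorms} and~\ref{Lem:EstimateofDeltaD+L4norms}, and the proof of the corollary is essentially substitution and constant-chasing. The one point that needs care is that \emph{every} invocation of Lemma~\ref{Lem:EstimateofDeltaD+L4norms} must be made on a subinterval $[0,t]\subseteq[0,T(\eps,R)]$ — this is where \eqref{Eq:BelowStoppingTime} is used and where the gain of the small factor $\eps^{2}$ comes from — and that the quadratic $L^{4}$-terms in the Gronwall exponent of Lemma~\ref{Lem:EstinHigherorderNorms}, which are not literally covered by \eqref{Eq:EstL4normsofU+NablaD}, have to be reduced to the quartic ones as above (via Young, or via the 2D Gagliardo--Nirenberg inequality $\lvert h\rvert_{L^{4}}^{2}\le C\lvert h\rvert_{L^{2}}\lvert h\rvert_{\rH^{1}}$ combined with \eqref{Eq:EstDeltad}), so that no term heavier than allowed by \eqref{Eq:EstinHigherorderNorms-CC} is introduced into the exponent.
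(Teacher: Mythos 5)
Your proposal follows exactly the paper's route: apply Lemma \ref{Lem:EstinHigherorderNorms} on $[0,t]\subseteq[0,T(\eps,R)]$ and then control the resulting exponential factor via \eqref{Eq:EstL4normsofU+NablaD}, exactly as in \eqref{Eq:EstExpo}. You are in fact slightly more explicit than the paper, which passes over the quadratic $L^4$-terms in the exponent in silence; your reduction of them to the quartic ones (at the cost of an additive $t\le T$ that is absorbed into the multiplicative constant) supplies that missing detail.
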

\begin{proof}
	Let   $(v,d) \in X_{T_\ast} $ be a maximal local regular solution to the problem \eqref{1b} with initial data $(v_0,d_0)\in \ve\times D(\hA)$.
	Let us fix $R\in (0,r_0]$ and $\eps\in (0,\eps_0)$.   Since $(v,d)\in X_{T(\eps, R)}$ then we can apply Lemma \ref{Lem:EstinHigherorderNorms}  and infer that \eqref{Eq:EstinHigherorderNorms} holds for $\tau=T(\eps, R)$. Hence, in order to complete the proof of the corollary we need to estimate the exponential term in the right-hand side of \eqref{Eq:EstinHigherorderNorms}. For this purpose, we use   \eqref{Eq:EstL4normsofU+NablaD} and infer that there exists a universal constant $K>0$ such that for any $R\in (0,r_0]$, $\eps(0,\eps_0)$ and $t\in [0,T(\eps,R)]$
	\begin{equation}\label{Eq:EstExpo}
		e^{ K_4\int_0^t\left(
			\lvert \nabla d(s)\rvert^4_{L^4} + \lvert v(s) \rvert^4_{L^4} +\lvert \nabla d(r) \rvert^2_{L^4}+ \lvert v(r) \rvert^2_{L^4}  \right)ds }\le   K  e^{K[\Xi(T)+\eps^2 (E_0 + \Psi(T)+(1+\frac{2\eps^2}{R^2}) t) ] }.
	\end{equation}
	This completes the proof of the corollary.
\end{proof}

\begin{Cc}\label{Cor:SmallNRJBeforemaxTime}
	Let $(v_0,d_0)\in \ve \times D(\hA)$, $(f,g)\in L^2(0,T;L^2\times \rH^1)$ and  $(v,d) \in X_{T_\ast} $ be a maximal local regular solution to the problem \eqref{1b}.
	Let $r_0>0$ and $\eps_0>0$ be as in Lemma \ref{Lem:Struwe} and Lemma \ref{Lem:EstimateofDeltaD+L4norms}, respectively.  Then, for all $\eps\in (0,\eps_0)$ and $R\in (0,r_0]$ we have
	$$ T(\eps, R) < T_\ast .$$
\end{Cc}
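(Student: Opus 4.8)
The plan is to argue by contradiction. Fix $\eps\in(0,\eps_0)$ and $R\in(0,r_0]$ and suppose, contrary to the claim, that $T(\eps,R)=T_\ast$. By the Remark following \eqref{Eq:DefStoppingTime} this means $\mathcal{E}_R(v(t),d(t))\le 2\eps^2$ for every $t\in[0,T_\ast)$, so Corollary \ref{Cor:EstinHigherorderNorms-CC} is available on the whole interval. Since $T_\ast<T<\infty$ and $\Psi(T)+\Xi(T)<\infty$, the right-hand side of \eqref{Eq:EstinHigherorderNorms-CC} is bounded, uniformly in $t\in[0,T_\ast)$, by the finite constant
\[
C_\ast:=K_3\,\Sigma_0\, e^{K_3\left[\Xi(T)+\eps^2\left(E_0+\Psi(T)+\left(1+\frac{2\eps^2}{R^2}\right)T\right)\right]}.
\]
Letting $t\nearrow T_\ast$ we therefore obtain
\[
\sup_{0\le t<T_\ast}\left(\lvert\nabla v(t)\rvert^2_{L^2}+\lvert\Delta d(t)\rvert^2_{L^2}\right)+2\int_0^{T_\ast}\left(\lvert\nabla\Delta d(s)\rvert^2_{L^2}+\lvert\rA v(s)\rvert^2_{L^2}\right)ds\le C_\ast,
\]
so that $v\in L^\infty(0,T_\ast;\ve)\cap L^2(0,T_\ast;D(\rA))$ and $d\in L^\infty(0,T_\ast;D(\hA))\cap L^2(0,T_\ast;D(\hA^{3/2}))$.

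Next I would turn these bounds into control of the time derivatives. Applying Lemma \ref{Lem:RighthandSideinL2VxH1} with second argument equal to $0$ (noting $F(0,0)=0$ and $\tilde G(0,0)=-\phi^\prime(0)\in\rH^1$), together with the uniform bounds just obtained and the hypothesis $(f,g)\in L^2(0,T;L^2\times\rH^1)$, one gets $F(v,d)\in L^2(0,T_\ast;\rH)$, $\tilde G(v,d)\in L^2(0,T_\ast;\rH^1)$ and, by \eqref{Eq:FPT-ForcingOptDir-2}, $d\times g\in L^2(0,T_\ast;\rH^1)$. Hence, from \eqref{eq:LocVelo}--\eqref{eq:LocDir},
\[
\partial_t v=-\rA v+F(v,d)+f\in L^2(0,T_\ast;\rH),\qquad \partial_t d=-\hA d+\tilde G(v,d)+d\times g\in L^2(0,T_\ast;\rH^1).
\]
Thus $v\in L^2(0,T_\ast;D(\rA))$ with $\partial_t v\in L^2(0,T_\ast;\rH)$, and $d\in L^2(0,T_\ast;D(\hA^{3/2}))$ with $\partial_t d\in L^2(0,T_\ast;D(\hA^{1/2}))$. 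By a standard trace/interpolation theorem, and since $\ve=[D(\rA),\rH]_{1/2}$ and $D(\hA)=[D(\hA^{3/2}),D(\hA^{1/2})]_{1/2}$, this forces $v\in C([0,T_\ast];\ve)$ and $d\in C([0,T_\ast];D(\hA))$. In particular the limits $v_\ast:=\lim_{t\nearrow T_\ast}v(t)$ in $\ve$ and $d_\ast:=\lim_{t\nearrow T_\ast}d(t)$ in $D(\hA)$ exist; since $D(\hA)\hookrightarrow L^\infty$, the constraint $\lvert d(t)\rvert=1$ passes to the limit and $d_\ast\in D(\hA)\cap\mathcal{M}$.

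Finally I would extend the solution past $T_\ast$. Recalling that $T_\ast<T$ by Definition \ref{Def:Maximal-Sol}, apply Theorem \ref{th} with initial data $(v_\ast,d_\ast)\in\ve\times(D(\hA)\cap\mathcal{M})$ and forces $(f,g)_{\lvert_{[T_\ast,T]}}\in L^2(T_\ast,T;L^2\times D(\hA^{1/2}))$ to obtain $\delta\in(0,T-T_\ast]$ and a local regular solution $(\tilde v,\tilde d)$ on $[T_\ast,T_\ast+\delta]$ with $(\tilde v(T_\ast),\tilde d(T_\ast))=(v_\ast,d_\ast)$. Setting $(\bar v,\bar d):=(v,d)$ on $[0,T_\ast]$ and $(\bar v,\bar d):=(\tilde v,\tilde d)$ on $[T_\ast,T_\ast+\delta]$, the two pieces agree at $T_\ast$, their regularity classes glue, and splitting the integrals in \eqref{eq:LocVelo}--\eqref{eq:LocDir} at $T_\ast$ shows that $(\bar v,\bar d)$ is a local regular solution on $[0,T_\ast+\delta]$ strictly extending $((v,d);T_\ast)$, contradicting its maximality. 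Therefore $T(\eps,R)<T_\ast$.

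The step requiring most care is the passage $t\nearrow T_\ast$: one must be certain that the constant in \eqref{Eq:EstinHigherorderNorms-CC} is genuinely $t$-independent (it is, precisely because $T_\ast<T$ keeps all data norms and the time variable bounded by fixed quantities), and that the resulting $L^\infty\cap L^2$ bounds, fed back through the equations via Lemma \ref{Lem:RighthandSideinL2VxH1}, are strong enough to yield convergence of $(v(t),d(t))$ in $\ve\times D(\hA)$ rather than mere boundedness. Once the limiting datum $(v_\ast,d_\ast)$ is secured, the continuation via Theorem \ref{th} and the gluing argument are routine.
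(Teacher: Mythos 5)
Your proof is correct and shares the paper's overall skeleton --- argue by contradiction, extract the uniform $\ve\times D(\hA)$ bound from Corollary \ref{Cor:EstinHigherorderNorms-CC} on all of $[0,T_\ast)$, and then continue the solution past $T_\ast$ via Theorem \ref{th} to contradict maximality --- but the continuation step is implemented differently. The paper never constructs the value of the solution at $T_\ast$: it uses the fact that the local existence time furnished by Theorem \ref{th} depends only on the (now uniformly bounded) norms of the data and of $(f,g)$, restarts the solution from the earlier time $T_\ast-\tfrac{T_0}{2}$, and observes that the restarted solution survives until $T_\ast+\tfrac{T_0}{2}>T_\ast$. You instead upgrade the a priori bounds to $\partial_t v\in L^2(0,T_\ast;\rH)$ and $\partial_t d\in L^2(0,T_\ast;D(\hA^{1/2}))$ via Lemma \ref{Lem:RighthandSideinL2VxH1}, invoke the trace/interpolation theorem to conclude $(v,d)\in C([0,T_\ast];\ve\times D(\hA))$, and restart exactly at $T_\ast$ from the limiting datum $(v_\ast,d_\ast)$. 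Both routes are sound. The paper's version is shorter because it sidesteps the time-derivative estimates and the interpolation identities $\ve=[\rH,D(\rA)]_{1/2}$ and $D(\hA)=[D(\hA^{1/2}),D(\hA^{3/2})]_{1/2}$; yours costs slightly more work (including the small technicality that Lemma \ref{Lem:RighthandSideinL2VxH1} should be applied on $[0,T']$ for $T'<T_\ast$ and the constants passed to the limit, since membership in $\mathbf{X}_{T_\ast}$ is only known a posteriori), but it yields the stronger by-product that the maximal regular solution extends continuously to the closed interval $[0,T_\ast]$ with values in $\ve\times(D(\hA)\cap\mathcal{M})$.
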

\begin{proof}
	We argue by contradiction. Assume that there exists $\eps\in (0,\eps_0)$ and $R\in (0,r_0]$ such that $T(\eps, R) = T_\ast$. 	 	Let us put
	$$ R_2=\int_0^{T_\ast} (\lvert f(r) \rvert^2_{L^2}+\lvert g(r) \rvert^2_{\rH^1}) dr.$$ By Corollary   \ref{Cor:EstinHigherorderNorms-CC}  and \eqref{Eq:NRJ-Inequality} we infer that  there exists a constant $\tilde{K}_3>0$ such that for all $t\in [0,T_\ast)$
	\begin{equation*}
		\lvert (v(t), d(t) \rvert^2_{\ve\times D(\hA) }\le \tilde{K}_3.
	\end{equation*}
	Let $T_0= T_1(\tilde{K}_3, R_2) \wedge T_2(g)\wedge T_\ast >0$ be the time given by Theorem \ref{th}. Let $T_1= \frac{T_0}{2}$.  By Theorem \ref{th} the  problem \eqref{1b} with initial data $(v(T_1), d(T_1))$ has a unique local regular solution $(\tilde{v}(t), \tilde{d}(t) )$  defined on $[T_\ast- T_1, T_\ast-T_1 + T_0 ]$.
	We then define $(\bar{v}, \bar{d}):[0,T_\ast+ T_1]\to \ve\times D(\hA)$ by
	\begin{equation*}
		(\bar{v}(t),\bar{d}(t))=
		\begin{cases}
			(v(t), d(t)  ) \text{ if } t\in [0,T_\ast-T_1]\\
			(\tilde{v}(t), \tilde{d}(t) ) \text{ if } t\in [T_\ast-T_1, T_\ast + T_1].
		\end{cases}
	\end{equation*}
	It is easily seen that  $((\bar{v}, \bar{d});T_1)$ is a local regular solution to \eqref{1b} with initial data $(v_0, d_0)$ and time of existence $T_\ast+T_1>T_\ast$.
	This contradicts the fact that $((v,d);T_\ast)$, with $T_\ast=T(\eps, R)$,  is a maximal smooth solution  to \eqref{1b}. This completes the proof of the corollary.
\end{proof}

	We now state and prove a local energy inequality which will play an important role in the proof of  Proposition \ref{Prop:LocalSolwithSmallEnergy}.
		\begin{Lem}\label{Lem:EstPressure}
		Let $(v_0,d_0)\in \ve \times D(\hA)$, $(f,g)\in L^2(0,T;L^2\times \rH^1)$, $r_0$ and $\eps_0$ be as in Lemma \ref{Lem:EstimateofDeltaD+L4norms}. Also, let $R\in (0, r_0]$, $\eps\in (0, \eps_0)$ and $(v,d) \in X_{T_\ast} $ be a  local regular solution to the problem \eqref{1b}.
		Then, there exists a function $\mathrm{p}:[0,T_\ast )\to L^1$ such that $\mathrm{p}\in L^\frac43(0,T_\ast; L^4)$,  $\nabla \mathrm{p} \in L^\frac43(0,T_\ast ; L^\frac43 ) $ and
		\begin{equation}\label{Eq:pressure}
		v(t) + \int_0^t [v(s) \cdot \nabla v(s) + \nabla \mathrm{p}(s)]\;ds=v_0 + \int_0^t [\Delta v(s) - \nabla d(s)\Delta d(s) +f(s)]\; ds, t\in [0,T_\ast].
		\end{equation}
		Moreover, there exists a constant $K_5>0$, which may depend on the norms of $(v_0,d_0)\in \ve \times D(\hA)$ and $(f,g)\in L^2(0,T;L^2\times \rH^1)$, such that for any $t\in [0,T_\ast)$
		\begin{equation}
		\lvert \nabla \mathrm{p} \rvert_{L^\frac43(0,t;  L^\frac43 )} \le K_5 \eps^\frac12_1 \left[E_0  + \Psi(t) + (1+ \frac{2\eps^2}{R^2}) t\right]^\frac{3}{4}.\notag
		\end{equation}
	\end{Lem}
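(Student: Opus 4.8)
The plan is to recover the pressure purely from the Helmholtz--Leray decomposition of the momentum equation, and then to read off the integrability of $\mathrm p$ and the quantitative bound from the $L^q$-boundedness of the Leray projection together with the a priori estimates of Lemma \ref{Lem:EstimateofDeltaD+L4norms} and Corollary \ref{Cor:EstinHigherorderNorms-CC}.

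\emph{Recovering the pressure.} For almost every $s\in[0,T_\ast)$ put
$$ h(s):=\Delta v(s)-v(s)\cdot\nabla v(s)-\nabla d(s)\,\Delta d(s)+f(s), $$
and define $\nabla\mathrm p(s):=(I-\Pi)h(s)$. Since $\Pi$ extends to a bounded projection on $L^{4/3}$ whose complementary range is the space of gradients of mean-zero $\mathrm W^{1,4/3}(\Omega)$ functions, this prescribes a unique mean-zero $\mathrm p(s)\in\mathrm W^{1,4/3}(\Omega)$ depending measurably on $s$. To check \eqref{Eq:pressure} I would subtract from the claimed identity the integral equation \eqref{eq:LocVelo} satisfied by the regular solution: using $\rA v=-\Pi\Delta v$ for $v\in D(\rA)$, $B(v,v)=\Pi(v\cdot\nabla v)$, the identity $\Pi(\Div[\nabla d\odot\nabla d])=\Pi(\nabla d\,\Delta d)$ (which holds because $\Pi\nabla(\tfrac12|\nabla d|^2)=0$), and $(I-\Pi)v_0=(I-\Pi)v(t)=0$, the difference collapses to $\int_0^t[(I-\Pi)h(s)-\nabla\mathrm p(s)]\,ds=0$, which is true by construction. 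Thus \eqref{Eq:pressure} is nothing but the ``gradient part'' of the equation, its ``solenoidal part'' being the already-known \eqref{eq:LocVelo}.

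\emph{Integrability and the bound.} From $v\in C([0,T_\ast);\ve)\cap L^2(0,T_\ast;D(\rA))$, $d\in C([0,T_\ast);D(\hA))\cap L^2(0,T_\ast;D(\hA^{3/2}))$ and the two-dimensional embeddings $\rH^1\hookrightarrow L^4$, $D(\rA),D(\hA)\hookrightarrow\rH^2$, Hölder's inequality gives $v\cdot\nabla v\in L^2(0,T_\ast;L^{4/3})$, $\nabla d\,\Delta d\in L^\infty(0,T_\ast;L^{4/3})$ and $\Delta v,f\in L^2(0,T_\ast;L^{4/3})$, hence $h\in L^2(0,T_\ast;L^{4/3})$; boundedness of $I-\Pi$ on $L^{4/3}$ then yields $\nabla\mathrm p\in L^{4/3}(0,T_\ast;L^{4/3})$, and the Sobolev--Poincaré inequality $\|\mathrm p(s)\|_{L^4}\le C\|\nabla\mathrm p(s)\|_{L^{4/3}}$ (valid in dimension two for mean-zero functions) gives $\mathrm p\in L^{4/3}(0,T_\ast;L^4)$, in particular $\mathrm p(s)\in L^1$. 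For the quantitative estimate, fix $R\in(0,r_0]$, $\eps\in(0,\eps_0)$ and $t\le T(\eps,R)$ (the range in which the bounds of Lemma \ref{Lem:EstimateofDeltaD+L4norms} hold), abbreviate $\mathcal B:=E_0+\Psi(t)+(1+\tfrac{2\eps^2}{R^2})t$, and use boundedness of $I-\Pi$ on $L^{4/3}$ to get $\|\nabla\mathrm p(s)\|_{L^{4/3}}\lesssim\|v\cdot\nabla v\|_{L^{4/3}}+\|\nabla d\,\Delta d\|_{L^{4/3}}+\|\Delta v\|_{L^{4/3}}+\|f\|_{L^{4/3}}$ at time $s$. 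For the two genuinely nonlinear terms I would use $\|v\cdot\nabla v\|_{L^{4/3}}\le\|v\|_{L^4}\|\nabla v\|_{L^2}$ and $\|\nabla d\,\Delta d\|_{L^{4/3}}\le\|\nabla d\|_{L^4}\|\Delta d\|_{L^2}$, raise to the power $\tfrac43$, integrate over $[0,t]$, and apply Hölder with exponents $(3,\tfrac32)$; the first factor is then $\int_0^t(\|v\|_{L^4}^4+\|\nabla d\|_{L^4}^4)\,ds\le K_0\eps^2\mathcal B$ by \eqref{Eq:EstL4normsofU+NablaD}, while the second is $\int_0^t\|\nabla v\|_{L^2}^2\,ds\lesssim\mathcal B$ by \eqref{Eq:NRJ-Inequality} and $\int_0^t\|\Delta d\|_{L^2}^2\,ds\le K_0\mathcal B$ by \eqref{Eq:EstDeltad}, so after taking the power $\tfrac34$ these terms contribute $\le C\eps^{1/2}\mathcal B^{3/4}$. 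For the two remaining terms I would bound $\|\cdot\|_{L^{4/3}}\le|\Omega|^{1/4}\|\cdot\|_{L^2}$ and use Hölder in time to obtain $\le Ct^{1/4}(\int_0^t\|\Delta v\|_{L^2}^2)^{1/2}+Ct^{1/4}\Xi(t)^{1/2}$; by Corollary \ref{Cor:EstinHigherorderNorms-CC} the first integral is controlled by the (fixed) norms of $(v_0,d_0)$ and $(f,g)$, and since $\mathcal B\ge E_0>0$ and $t\le T$ one has $t^{1/4}\le T^{1/4}E_0^{-3/4}\mathcal B^{3/4}$, so the whole remainder is bounded by a constant depending only on the data times $\mathcal B^{3/4}$. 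Collecting the contributions and enlarging the constant gives $\|\nabla\mathrm p\|_{L^{4/3}(0,t;L^{4/3})}\le K_5\,\eps_1^{1/2}\,\mathcal B^{3/4}$.

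\emph{Main obstacle.} The delicate point is precisely that, unlike on the torus or on the whole plane, the Dirichlet condition $v|_{\partial\Omega}=0$ forces the ``Stokes/boundary'' part $(I-\Pi)\Delta v$ of the pressure to be nonzero, and the non-solenoidal part $(I-\Pi)f$ is present as well; neither of these carries the $\eps_1^{1/2}$-smallness enjoyed by the quadratic terms, so one has to absorb them into the data-dependent constant $K_5$ by using the higher-order a priori bound of Corollary \ref{Cor:EstinHigherorderNorms-CC} together with the elementary observation $\mathcal B\ge E_0>0$. A secondary technical point is to ensure that the Helmholtz decomposition $s\mapsto\nabla\mathrm p(s)$ is strongly measurable with the claimed integrability, which follows from applying the bounded operator $I-\Pi$ on $L^{4/3}$ to the $L^2(0,T_\ast;L^{4/3})$-valued map $h$.
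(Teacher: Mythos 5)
Your construction of the pressure via the Helmholtz--Leray decomposition, $\nabla\mathrm p(s):=(I-\Pi)h(s)$ with $h=\Delta v-v\cdot\nabla v-\nabla d\,\Delta d+f$, is a legitimate route to the identity \eqref{Eq:pressure}, and your verification that it is exactly the ``gradient part'' of \eqref{eq:LocVelo} is correct. The paper proceeds differently: it invokes \cite[Lemma 4.4]{LLW} as a black box, which delivers both the existence of $\mathrm p\in L^{4/3}(0,T_\ast;L^4)$ with $\nabla\mathrm p\in L^{4/3}(0,T_\ast;L^{4/3})$ and, crucially, the a priori bound
\begin{equation*}
\lvert \nabla \mathrm{p} \rvert_{L^{4/3}(0,t;L^{4/3})}\le \lvert f\rvert_{L^{4/3}(0,t;L^{4/3})}+\lvert v\cdot\nabla v\rvert_{L^{4/3}(0,t;L^{4/3})}+\lvert \nabla d\,\Delta d\rvert_{L^{4/3}(0,t;L^{4/3})},
\end{equation*}
in which \emph{no} $\Delta v$ term appears; the two quadratic terms are then handled exactly as you handle them, by H\"older together with \eqref{Eq:EstL4normsofU+NablaD}, \eqref{Eq:NRJ-Inequality} and \eqref{Eq:EstDeltad}, and this is the sole source of the factor $\eps^{1/2}$.

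The genuine gap is in your quantitative estimate. Because of the Dirichlet condition, $(I-\Pi)\Delta v\neq0$ (only $\divv\Delta v=0$ holds, not the vanishing of the normal trace), and your only control of this term is through $\lvert\rA v\rvert_{L^2}$ integrated in time via Corollary \ref{Cor:EstinHigherorderNorms-CC}. That produces a contribution of size $Ct^{1/4}\bigl(\Sigma_0e^{\cdots}\bigr)^{1/2}$ which (i) carries no factor $\eps^{1/2}$ and (ii) depends on $\Sigma_0$, i.e.\ on the higher-order norms $\lvert\nabla v_0\rvert_{L^2}$, $\lvert\Delta d_0\rvert_{L^2}$. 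Writing $C\,\mathcal B^{3/4}\le K_5\,\eps^{1/2}\mathcal B^{3/4}$ (with your $\mathcal B=E_0+\Psi(t)+(1+\tfrac{2\eps^2}{R^2})t$) forces $K_5\gtrsim\eps^{-1/2}$, so ``enlarging the constant'' does not yield the stated inequality. This is not cosmetic: the $\eps^{1/2}$ smallness is what Lemma \ref{Lem:LocalNRJ-Ball} and Proposition \ref{Prop:LocalSolwithSmallEnergy} use to produce a lower bound $T_0\ge\theta_0(\eps_1,E_0)R_0^2$ depending only on $\eps_1$ and the \emph{energy} $E_0$, and in Proposition \ref{Ma2} that lower bound must be uniform along a smooth approximation $(v_0^k,d_0^k)$ of rough data whose $\ve\times D(\hA)$ norms are unbounded; a pressure bound contaminated by $\Sigma_0$ or by an $\eps$-independent remainder breaks both steps. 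The fix is the one implicit in \cite[Lemma 4.4]{LLW}: since $\Delta v$ is divergence free, the pressure must be recovered from the elliptic/Stokes problem driven by $v\cdot\nabla v+\Div(\nabla d\odot\nabla d)-f$ alone, so that $\Delta v$ never enters the $\nabla\mathrm p$ estimate, rather than from the raw projection $(I-\Pi)$ of the full right-hand side. (Your concern about $(I-\Pi)f$ is real but shared with the paper, whose proof also leaves the $f$ term in the displayed bound unestimated; at least that term involves only the data, not higher norms of the solution.)
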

	\begin{proof}
		Let us fix $\eps\in (0,\eps_0)$ and $R\in (0,r_0]$. We fix $t\in [0,T_\ast)$. From \eqref{Eq:NRJ-Inequality} and \eqref{Eq:EstDeltad} we have $(v,d)\in C([0,t]; \rh\times \rH^1)\cap L^2(0,t; \ve\times D(\hA))$. Hence, one can apply \cite[Lemma 4.4]{LLW}  and infer that there exists function  $\mathrm{p}:[0,T_\ast)\to L^1$ such that $\mathrm{p}\in L^\frac43(0,T_\ast; L^4)$,  $\nabla \mathrm{p} \in L^\frac43(0,T_\ast ; L^\frac43 ) $ and the identity \eqref{Eq:pressure} holds. Moreover,
		\begin{align}\label{Eq:EstPressure-0}
		\lvert \nabla \mathrm{p} \rvert_{L^\frac43(0,t; L^\frac43 )}  \le \lvert f\rvert_{L^\frac43(0,t; L^\frac43 )} + \lvert v\cdot \nabla v\rvert_{L^\frac43(0,t; L^\frac43 )} + \lvert \nabla d\Delta d \rvert_{L^\frac43(0,t; L^\frac43 )}.
		\end{align}
		From the H\"older inequality, \eqref{Eq:NRJ-Inequality} and \eqref{Eq:EstL4normsofU+NablaD} we infer that
		\begin{align}
		\lvert v\cdot \nabla v\rvert_{L^\frac43(0,t; L^\frac43 )} \le \left(\int_0^{t}  \lvert v(r) \rvert^4_{L^4} dr \right)^\frac14 \left(\int_0^{t}\lvert \nabla v(r) \rvert^2_{L^2} dr \right)^\frac12\nonumber \\
		\le C \eps^\frac12 \left[E_0  + \Psi(t) + (1+ \frac{2\eps^2}{R^2}) t\right]^\frac{3}{4}.\label{Eq:EstPressure-1}
		\end{align}
		In a similar way,
		\begin{align}
	 \lvert \nabla d \Delta d \rvert_{L^\frac43(0,t; L^\frac43) } \le \left(\int_0^{t}  \lvert \nabla d(r)  \rvert^4_{L^4} dr \right)^\frac14 \left(\int_0^{t}\lvert \Delta d(r)  \rvert^2_{L^2} dr \right)^\frac12\nonumber \\
		\le C \eps^\frac12 \left[E_0  + \Psi(t) + (1+ \frac{2\eps^2}{R^2}) t\right]^\frac{3}{4}.\label{Eq:EstPressure-2}
		\end{align}
		Plugging \eqref{Eq:EstPressure-1} and \eqref{Eq:EstPressure-2} into \eqref{Eq:EstPressure-0} completes the proof of Lemma \ref{Lem:EstPressure}.
	\end{proof}
We now continue with some estimates of local energy. Hereafter, in order to save space we simply write $ \int_\mathcal{O} \Xi\, d\mu $ instead of  $ \int_\mathcal{O} \Xi(y)\, d\mu(y) $  for an integrable function $\Xi$ defined on a measure space $(\mathcal{O}, \mathcal{A}, \mu)$.
	\begin{Lem}
			Let $(v_0,d_0)\in \ve \times D(\hA)$, $(f,g)\in L^2(0,T;L^2\times \rH^1)$ and $(v,d) \in X_{T_\ast} $ be a maximal local regular solution to the problem \eqref{1b}.
			
		Let $\varphi \in C^\infty_c(\Omega; \mathbb{R})$ and put
		\begin{equation}
		\mathcal{E}_\varphi(v(t),d(t))=\frac12 \int_\Omega \varphi(x) \left( \lvert v(t,x) \rvert^2 + \lvert \nabla d(t,x) \rvert^2 + 2 \phi(d(t,x)) \right)\;dx, \;\; t\in [0,T_\ast) .\notag
		\end{equation}
		We also set
		$$ \mathrm{p}_\Omega(t)= \frac1{\lvert \Omega\rvert} \int_\Omega \mathrm{p}(t,x) \;dx, \; t\in [0,T_\ast).$$
		Then, for any $s,t\in [0,T_\ast)$ with $s\le t$ we have
		\begin{equation}\label{Eq:LocalNRJ-DU}
		\begin{split}
		\mathcal{E}_\varphi(v(t), d(t)) -	\mathcal{E}_\varphi(v(s), d(s)) +\int_s^t \int_\Omega \varphi \left(\lvert \nabla v \rvert^2 + \lvert R(d) \rvert^2 \right) \;dxdr \\
		\le
		\int_s^t \int_\Omega \lvert \nabla \varphi \rvert \Bigl( \frac12 \lvert \nabla d \rvert^2 \lvert v \rvert + \lvert \partial_t \rvert \lvert \nabla d \rvert+ \phi(d) \lvert v \rvert + \lvert v \rvert^3 \Bigr) \;dxdr\\
		+ \int_s^t \int_\Omega \lvert \nabla \varphi\rvert \Bigl(\lvert \nabla v \rvert \lvert v \rvert + \lvert \mathrm{p}- \mathrm{p}_\Omega \rvert \lvert v \rvert + \lvert  g \rvert \lvert \nabla d\rvert    \Bigr) \;dxdr\\
		+ \int_s^t \int_\Omega \lvert \varphi \rvert \Bigl(\lvert d \rvert \lvert \nabla g \rvert \lvert \nabla d \rvert + \lvert d  \rvert \lvert g \rvert \lvert \phi^\prime (d) \rvert  + \lvert f \rvert \lvert v \rvert \Bigr) \;dxdr.
		\end{split}
		\end{equation}
	\end{Lem}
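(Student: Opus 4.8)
The plan is to derive \eqref{Eq:LocalNRJ-DU} by testing the two equations of \eqref{1b} against the localised fields $\varphi v$ and $-\varphi R(d)$, integrating by parts, and collecting every term in which a derivative falls on $\varphi$. All of this is legitimate for the regular solution $(v,d)\in X_{T_\ast}$: parts (1) and (4) of Definition \ref{Def:Local-Regular-Sol} give $(v,d)\in L^2(0,T_\ast;D(\rA)\times D(\hA^{3/2}))$ and $(\partial_t v,\partial_t d)\in L^2(0,T_\ast;\rH\times D(\hA^{1/2}))$, so $R(d)(t)\in\rH^1$ for a.e.\ $t$, $\varphi v(t)\in D(\rA)$ and $\varphi R(d)(t)\in\rH^1$, and the Lions--Magenes lemma \cite[Lemma III.1.2]{Temam_2001} applies to each bracket $\tfrac12\tfrac{d}{dt}\int_\Omega\varphi(\cdots)\,dx$. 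The pressure is handled through \eqref{Eq:pressure} and the integrability of $\mathrm{p}$ furnished by Lemma \ref{Lem:EstPressure}; since $\varphi$ has compact support, $\int_\Omega v\cdot\nabla\varphi\,dx=\int_\Omega\Div(\varphi v)\,dx=0$, so the spatial mean $\mathrm{p}_\Omega$ may be subtracted freely.

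Testing the velocity equation (written with the Ericksen stress $\Div(\nabla d\odot\nabla d)$ and a pressure $\mathrm{p}$, cf.\ \eqref{Eq:pressure}) against $\varphi v$ and using $\Div v=0$, integration by parts in the Laplacian, in $v\cdot\nabla v$, in $\nabla\mathrm{p}$ and in $\Div(\nabla d\odot\nabla d)$ yields
\[
\tfrac12\tfrac{d}{dt}\int_\Omega\varphi\lvert v\rvert^2+\int_\Omega\varphi\lvert\nabla v\rvert^2=-\langle v\cdot\nabla d,\varphi\Delta d\rangle+\langle f,\varphi v\rangle+I_v,
\]
where $I_v$ is the sum of $\int_\Omega\nabla v:(v\otimes\nabla\varphi)$, $\tfrac12\int_\Omega\lvert v\rvert^2\,v\cdot\nabla\varphi$, $\int_\Omega(\mathrm{p}-\mathrm{p}_\Omega)\,v\cdot\nabla\varphi$ and the localised remainder of Claim \ref{Claim:Dissip}, namely $\pm\tfrac12\int_\Omega\lvert\nabla d\rvert^2\,v\cdot\nabla\varphi$ (coming from the $\tfrac12\nabla\lvert\nabla d\rvert^2$ hidden in the stress), so that $\lvert I_v\rvert\le\int_\Omega\lvert\nabla\varphi\rvert\bigl(\lvert\nabla v\rvert\lvert v\rvert+\lvert v\rvert^3+\lvert\mathrm{p}-\mathrm{p}_\Omega\rvert\lvert v\rvert+\tfrac12\lvert\nabla d\rvert^2\lvert v\rvert\bigr)$. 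For the director, rewrite \eqref{1b} as $\partial_t d+v\cdot\nabla d=R(d)+d\times g$ with $R(d)=\Delta d+\lvert\nabla d\rvert^2 d-\phi'(d)+\alpha(d)d$ and test against $-\varphi R(d)$. Since $\lvert d\rvert=1$ a.e., $\langle\partial_t d,\psi d\rangle=\langle v\cdot\nabla d,\psi d\rangle=0$ for every scalar $\psi$ (the localised form of Claim \ref{Claim:perp}) and $d\times g\perp d$ pointwise, so the $\lvert\nabla d\rvert^2 d$ and $\alpha(d)d$ contributions drop wherever they meet $\partial_t d$, $v\cdot\nabla d$ or $d\times g$; integrating by parts in $\langle\partial_t d,\varphi\Delta d\rangle$, in $\langle v\cdot\nabla d,\varphi\phi'(d)\rangle$ (whence $\langle v\cdot\nabla d,-\varphi\phi'(d)\rangle=\int_\Omega\phi(d)\,v\cdot\nabla\varphi$), and in $\langle d\times g,\varphi\Delta d\rangle$ (using $(\partial_i d\times g)\cdot\partial_i d=0$) gives
\[
\tfrac12\tfrac{d}{dt}\int_\Omega\varphi\bigl(\lvert\nabla d\rvert^2+2\phi(d)\bigr)+\int_\Omega\varphi\lvert R(d)\rvert^2=\langle v\cdot\nabla d,\varphi\Delta d\rangle+\langle d\times g,\varphi\phi'(d)\rangle+I_d,
\]
with $I_d=\int_\Omega\phi(d)\,v\cdot\nabla\varphi-\int_\Omega\nabla\varphi\cdot\sum_k\partial_t d_k\nabla d_k+\int_\Omega\varphi\sum_i(d\times\partial_i g)\cdot\partial_i d+\int_\Omega\nabla\varphi\cdot\sum_k(d\times g)_k\nabla d_k$.

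Adding the two identities, the opposite-sign terms $\langle v\cdot\nabla d,\varphi\Delta d\rangle$ cancel and $\int_\Omega\varphi(\lvert\nabla v\rvert^2+\lvert R(d)\rvert^2)$ stays on the left. On the right, $\langle f,\varphi v\rangle$, $\langle d\times g,\varphi\phi'(d)\rangle$ and $\int_\Omega\varphi\sum_i(d\times\partial_i g)\cdot\partial_i d$ give, after pointwise Cauchy--Schwarz and $\lvert d\rvert=1$, precisely the three $\lvert\varphi\rvert$-integrands of \eqref{Eq:LocalNRJ-DU}, while the $\nabla\varphi$-remainders collected in $I_v$ and $I_d$ give precisely its $\lvert\nabla\varphi\rvert$-integrands; integrating over $[s,t]\subset[0,T_\ast)$ produces \eqref{Eq:LocalNRJ-DU}. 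I expect the only genuinely delicate point to be the rigorous justification of the localised testing — that $R(d)(t)\in\rH^1$, so that $\langle\partial_t d,\varphi R(d)\rangle$ and $\langle d\times g,\varphi R(d)\rangle$ make sense, and that the pressure term can be treated via the regularity of $\mathrm{p}$ provided by Lemma \ref{Lem:EstPressure}; once the test functions are admissible, what remains is careful but routine bookkeeping of the integration-by-parts remainders.
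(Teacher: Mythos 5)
Your proposal is correct and follows essentially the same route as the paper: test the momentum equation (in its pressure form) against $\varphi v$ and the director equation against $-\varphi R(d)$, use the pointwise orthogonalities of Claims \ref{Claim:perp}--\ref{Claim:Dissip} and $d\times g\perp d$ to kill the $\lvert\nabla d\rvert^2 d+\alpha(d)d$ contributions, cancel the Ericksen-stress/transport cross terms, and collect all integration-by-parts remainders carrying $\nabla\varphi$ before integrating in time. The only cosmetic slip is the assertion that $\varphi v(t)\in D(\rA)$ (it is not divergence-free), but the pairing only requires $\varphi v\in L^2$, so nothing in the argument is affected.
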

	\begin{proof}
		We fix $\varphi \in C_c^\infty(\Omega;\mathbb{R})$ and $s\le t\in [0,T_\ast)$. We firstly observe that because of the fact $ d(t)\in \mathcal{M}$ for all $t\in [0,T_\ast)$  we have
		\begin{align}\label{Eq:pointwisePerp}
		(\partial_t d + v\cdot \nabla d)\cdot(\lvert \nabla d \rvert^2 d + \alpha(d) d)=0,
		\end{align}
		for all $t\in [0,T_\ast)$ and a.e. $x\in \Omega$.  Multiplying $\partial_t d + v\cdot \nabla d$  by $-\varphi R(d)$ in $L^2$ and using the pointwise orthogonality yields
		\begin{equation}\label{Eq:LocalNRJD-0}
		\begin{split}
		A+B:= & -\int_\Omega \varphi (\partial_td + v\cdot \nabla d)\cdot \Delta d  \;dx + \int_\Omega (\partial_t d + v\cdot \nabla d)\cdot  \phi^\prime(d)  \;dx\\
		= & -\int_\Omega \varphi \lvert R(d) \rvert^2 \;dx - \int_\Omega \varphi(d\times g)\cdot R(d) \;dx .
		\end{split}
		\end{equation}
		Using integration by parts and \cite[Eq. (4.16)]{LLW} we obtain
		\begin{align}
		A=& \frac12 \frac{d}{dt}\int_\Omega \lvert \nabla d \rvert^2 \varphi \;dx + \int_\Omega \partial_t d\cdot (\nabla d \nabla \varphi) \;dx -\int_\Omega \varphi (v\cdot \nabla d)\cdot  \Delta d \;dx\nonumber \\
		=& \frac12 \frac{d}{dt}\int_\Omega \lvert \nabla d \rvert^2 \varphi \;dx + \int_\Omega \partial_t d\cdot (\nabla d \nabla \varphi) \;dx - \int_\Omega \frac12 \lvert \nabla d\rvert^2 v\cdot \nabla \varphi \;dx + \int_\Omega \varphi (\nabla d\odot \nabla d)\nabla u  \;dx \nonumber \\
		&\qquad -\int_\Omega (v\cdot \nabla d)\cdot (\nabla d \nabla \varphi) \;dx.\label{Eq:LocalNRJD-1}
		\end{align}
		For the term $B$ it is easy to show that
		\begin{align}
		B= & \int_\Omega (\partial_t d \cdot  \phi^\prime(d) )\varphi \;dx + \int_\Omega \varphi (v\cdot \nabla d) \cdot \phi^\prime (d) \;dx\nonumber  \\
		=& \frac{d}{dt} \int_\Omega \phi(d) \varphi \;dx + \int_\Omega v\cdot \nabla \phi(d) \varphi \;dx\nonumber  \\
		=& \frac{d}{dt} \int_\Omega \phi(d) \varphi \;dx -\int_\Omega v\cdot \nabla \varphi \phi(d) \;dx.\label{Eq:LocalNRJD-2}
		\end{align}
		Note also that for all $t\in [0,T_\ast)$ and a.e. $x\in \Omega$.
		\begin{equation}
		(d\times g)\cdot (\lvert \nabla d\rvert^2 d + \alpha(d) d)=0.\notag
		\end{equation}
		Hence, using integration by parts and the Cauchy-Schwarz inequality we obtain
		\begin{align}
		-\int_\Omega \varphi (d\times g) \cdot R(d) \;dx= & -\int_\Omega \varphi (d\times g)\cdot (\Delta d- \phi^\prime(d) ) \;dx \nonumber \\
		\le& \int_\Omega \lvert \varphi \rvert  \left(  \lvert d \rvert   \lvert \nabla d\rvert  \nabla g \rvert  \right) \;dx + \int_\Omega \lvert g \rvert \nabla d \rvert \lvert \nabla \varphi \rvert \;dx + \int_\Omega \lvert g \rvert \lvert d \rvert \lvert \phi^\prime(d)\rvert \lvert \varphi \rvert \;dx . \label{Eq:LocalNRJD-3}
		\end{align}
		Plugging \eqref{Eq:LocalNRJD-1},  \eqref{Eq:LocalNRJD-2} and \eqref{Eq:LocalNRJD-3} into \eqref{Eq:LocalNRJD-0} yields
		\begin{equation}
		\begin{split}
	&	\frac12 \frac{d}{dt}\int_\Omega \left( \lvert \nabla d \rvert^2 + 2 \phi(d)\right)\varphi \;dx +\int_\Omega \varphi \lvert R(d) \rvert^2 \;dx \\
	&	\le   \int_\Omega \frac12 \lvert \nabla d\rvert^2 v\cdot \nabla \varphi \;dx+ \int_\Omega (v\cdot \nabla d)\cdot (\nabla d \nabla \varphi) \;dx-\int_\Omega \partial_t d\cdot (\nabla d \nabla \varphi) \;dx\\
		& \qquad - \int_\Omega \varphi (\nabla d\odot \nabla d)\nabla u  \;dx
		 + \int_\Omega \lvert \varphi \rvert  \left(   \lvert d \rvert  \lvert \nabla d\rvert  \nabla g \rvert  \right) \;dx + \int_\Omega \lvert g \rvert \nabla d \rvert \lvert \nabla \varphi \rvert \;dx \\
		&\qquad + \int_\Omega \lvert g \rvert \lvert d \rvert \lvert \phi^\prime(d)\rvert \lvert \varphi \rvert \;dx .
		\end{split}
		\end{equation}
		We can follow the same calculation in \cite{LLW} to derive the following local inequality for the velocity $v$
		\begin{equation}\notag
		\begin{split}
		\frac12 \frac{d}{dt} \int_\Omega \lvert v \rvert^2 \varphi \;dx + \int_\Omega \le \frac14 \int_\Omega \lvert v\rvert^2 v\cdot \nabla \varphi \;dx -\int_\Omega (\nabla u) v\cdot \nabla \varphi \;dx +
		\int_\Omega (\mathrm{p}-\mathrm{p}_\Omega) v\cdot \nabla \varphi \;dx \\
		+ \int_\Omega \varphi (\nabla d \odot \nabla d) \nabla u \;dx + \int_\Omega (  \nabla d) \cdot \nabla d\nabla \varphi \;dx + \int_\Omega \lvert f\rvert \lvert v \rvert \lvert \varphi \rvert \;dx.
		\end{split}
		\end{equation}
		Adding up the last inequalities side by side and using the Cauchy-Schwarz inequality and integrating over $[s,t]$ yield the sought estimate \eqref{Eq:LocalNRJ-DU}.
	\end{proof}
	The following lemma is also important for our analysis.
	\begin{Lem}\label{Lem:LocalNRJ-Ball}
			Let $(v_0,d_0)\in \ve \times D(\hA)$, $(f,g)\in L^2(0,T;L^2\times \rH^1)$, $r_0$ and $\eps_0$ be as in Lemma \ref{Lem:EstimateofDeltaD+L4norms}. Also, let $(v,d) \in X_{T_\ast} $ be a maximal local regular solution to the problem \eqref{1b}.
	
	 Then, there exists a constant $K_4>0$ such that for all $\eps\in (0,\eps_0)$, $R\in (0,r_0]$ and $t\in [0,T(\eps,R)] $
		\begin{equation}\label{Eq:EstNRJonBalls}
		\begin{split}
		\frac12	\int_{B(x,R)} \left( \lvert v(t) \rvert^2 + \lvert \nabla d (t) \rvert^2 + 2 \phi(d(t))\right) dy - \frac12 \int_{B(x,2R)} \left( \lvert v_0 \rvert^2 + \lvert \nabla d_0 \rvert^2 + 2\phi(d_0) \right) dy
		\\
		\le K_4 t^\frac14 \left(1+ E_0 + \frac12 \int_0^t [\lvert f \rvert^2_{\rH^{-1}}+ \lvert g \rvert^2_{L^2}] ds +(1+ \frac{2\eps^2}{R^2}) t \right)^{\frac{5}{4}}\left(  R^{-\frac12} (\eps^\frac32 + \eps^\frac12)+ \eps^\frac12 \right) .
		\end{split}
		\end{equation}
	\end{Lem}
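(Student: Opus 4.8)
The plan is to obtain \eqref{Eq:EstNRJonBalls} by testing the local energy inequality \eqref{Eq:LocalNRJ-DU} against a cut-off adapted to the concentric balls $B(x,R)\subset B(x,2R)$ and then estimating the resulting right-hand side term by term. First I would fix $x\in\Omega$ and choose $\varphi=\varphi_{x,R}\in C^\infty_c(\Omega;\mathbb{R})$ with $\mathbf{1}_{B(x,R)}\le\varphi\le\mathbf{1}_{B(x,2R)}$ and $\lvert\nabla\varphi\rvert\le C/R$, so that $\nabla\varphi$ is supported in the annulus $A_R:=B(x,2R)\setminus B(x,R)$, whose planar Lebesgue measure is $\le CR^2$. (When $B(x,2R)\not\subset\Omega$ one replaces \eqref{Eq:LocalNRJ-DU} by its boundary version, which has the same form since the Dirichlet condition $v=0$ and the Neumann condition $\partial d/\partial\nu=0$ cancel the boundary contributions in the integrations by parts behind \eqref{Eq:LocalNRJ-DU}.) Taking $s=0$ and $t\in[0,T(\varepsilon,R)]$ in \eqref{Eq:LocalNRJ-DU}, discarding the nonnegative dissipation term $\int_0^t\int_\Omega\varphi(\lvert\nabla v\rvert^2+\lvert R(d)\rvert^2)$, and using $\varphi\equiv1$ on $B(x,R)$ with $\phi\ge0$ to bound $\mathcal{E}_\varphi(v(t),d(t))$ from below and $\varphi\le\mathbf{1}_{B(x,2R)}$ to bound $\mathcal{E}_\varphi(v_0,d_0)$ from above, reduces the claim to estimating the right-hand side of \eqref{Eq:LocalNRJ-DU} by the right-hand side of \eqref{Eq:EstNRJonBalls}.

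For the terms carrying $\lvert\nabla\varphi\rvert$, which are supported in $A_R$, I would bound $\lvert\nabla\varphi\rvert$ by $C/R$ and use H\"older in space with a factor $\lvert A_R\rvert^{1/4}\le CR^{1/2}$ from the annulus, so that the dangerous power collapses to $R^{-1}\cdot R^{1/2}=R^{-1/2}$, placing $v,\nabla d$ in $L^4(\Omega)$ and $\nabla v, R(d),\partial_t d$ in $L^2(\Omega)$; then H\"older in time to extract a factor $t^{1/4}$ (with room to spare); and finally bound the remaining time integrals using the energy inequality \eqref{Eq:NRJ-Inequality} (which gives $\sup_{r\le t}\mathcal{E}(v(r),d(r))\le E_0+\Psi(t)$ and $\int_0^t(\lvert\nabla v\rvert_{L^2}^2+\lvert R(d)\rvert_{L^2}^2)\le 2(E_0+\Psi(t))$), the identity $\partial_t d=R(d)+d\times g-v\cdot\nabla d$, valid a.e.\ since $(v,d)$ is a regular solution and $\lvert d\rvert=1$, from which $\int_0^t\lvert\partial_t d\rvert_{L^2}^2\lesssim E_0+\Psi(t)+\int_0^t\lvert v\rvert_{L^4}^2\lvert\nabla d\rvert_{L^4}^2$, and Lemma \ref{Lem:EstimateofDeltaD+L4norms}, namely \eqref{Eq:EstL4normsofU+NablaD}, for $\int_0^t(\lvert v\rvert_{L^4}^4+\lvert\nabla d\rvert_{L^4}^4)$. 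Since \eqref{Eq:EstL4normsofU+NablaD} carries a prefactor $\varepsilon^2$, the cubic/quadratic terms $\tfrac12\lvert\nabla d\rvert^2\lvert v\rvert$ and $\lvert v\rvert^3$ contribute at order $R^{-1/2}\varepsilon^{3/2}$, the bilinear terms $\lvert\nabla v\rvert\lvert v\rvert$, $\lvert g\rvert\lvert\nabla d\rvert$, $\lvert\partial_t d\rvert\lvert\nabla d\rvert$ at order $R^{-1/2}\varepsilon^{1/2}$, and the leftover powers combine, using that the bracket $1+E_0+\Psi(t)+(1+2\varepsilon^2/R^2)t$ is $\ge1$, into its $5/4$ power.

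The genuinely delicate term is the pressure term $\int_0^t\int_{A_R}\lvert\nabla\varphi\rvert\,\lvert\mathrm{p}-\mathrm{p}_\Omega\rvert\,\lvert v\rvert$. Here I would invoke Lemma \ref{Lem:EstPressure}, which yields $\nabla\mathrm{p}\in L^{4/3}(0,t;L^{4/3})$ together with the stated bound of order $\varepsilon^{1/2}(E_0+\Psi(t)+(1+2\varepsilon^2/R^2)t)^{3/4}$, and combine it with the Sobolev--Poincar\'e inequality $\lvert\mathrm{p}-\mathrm{p}_\Omega\rvert_{L^4(\Omega)}\le C\lvert\nabla\mathrm{p}\rvert_{L^{4/3}(\Omega)}$, valid because $\Omega$ is two-dimensional, to obtain $\lvert\mathrm{p}-\mathrm{p}_\Omega\rvert_{L^{4/3}(0,t;L^4)}$ of the same order; then, crucially, I would pair this with $v\in L^\infty(0,t;L^2)$ (not with $v\in L^4(0,t;L^4)$, which would leave no slack) and the annulus factor $\lvert A_R\rvert^{1/4}$, so that after a H\"older in time placing $\lvert\mathrm{p}-\mathrm{p}_\Omega\rvert_{L^4}$ in $L^{4/3}(0,t)$ this term contributes at order $R^{-1/2}t^{1/4}\varepsilon^{1/2}(\cdots)^{5/4}$. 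The remaining, $\lvert\varphi\rvert$-weighted terms $\lvert d\rvert\lvert\nabla g\rvert\lvert\nabla d\rvert$, $\lvert d\rvert\lvert g\rvert\lvert\phi^\prime(d)\rvert$, $\lvert f\rvert\lvert v\rvert$, supported in $B(x,2R)$ of measure $\le CR^2$, I would bound directly using $\lvert d\rvert=1$, the boundedness of $\phi^\prime$ on $\mathbb{S}^2$ (Assumption \ref{Assum:EnergyPotential}), the data bound $(f,g)\in L^2(0,T;L^2\times\rH^1)$, and \eqref{Eq:EstL4normsofU+NablaD} wherever an $L^4$-norm of $v$ or $\nabla d$ occurs; these produce the $\varepsilon^{1/2}$ (without $R^{-1/2}$) part of the bound, the powers of $r_0$, $T$ and the data norms being absorbed into $K_4$. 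Summing all contributions gives \eqref{Eq:EstNRJonBalls}.

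The main obstacle I expect is precisely this term-by-term bookkeeping: for each of the integrals on the right of \eqref{Eq:LocalNRJ-DU}, the H\"older exponents in space and in time must be chosen so that (i) the power of $R$ produced by $\lvert\nabla\varphi\rvert\sim R^{-1}$ and by $\lvert A_R\rvert$ collapses to $R^{-1/2}$ (or a harmless positive power of $R$), (ii) the $\varepsilon^2$-smallness of \eqref{Eq:EstL4normsofU+NablaD} is matched, and (iii) a single factor $t^{1/4}$ survives the time integration. The pressure term is the tightest, since the natural time exponents $3/4$ (for $\mathrm{p}-\mathrm{p}_\Omega$) and $1/4$ (for $v$ in $L^4_tL^4_x$) already saturate H\"older; using $v\in L^\infty_tL^2_x$ instead and extracting the needed $t^{1/4}$ from $\lvert\nabla\mathrm{p}\rvert_{L^{4/3}(0,t;L^{4/3})}$ is where both the exact exponent $4/3$ of Lemma \ref{Lem:EstPressure} and the two-dimensionality of $\Omega$ enter in an essential way.
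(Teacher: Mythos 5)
Your proposal is correct and follows essentially the same route as the paper: test the local energy inequality \eqref{Eq:LocalNRJ-DU} with a cut-off satisfying \eqref{Eq:Condvarphi}, drop the dissipation, and estimate each term via H\"older combined with \eqref{Eq:NRJ-Inequality}, \eqref{Eq:EstL4normsofU+NablaD} and Lemma \ref{Lem:EstPressure}, including the key choice of pairing $\mathrm{p}-\mathrm{p}_\Omega\in L^{4/3}_tL^4_x$ with $v\in L^\infty_tL^2_x$ for the pressure term. Your exponent bookkeeping matches the paper's (and is in places more carefully stated), so no gap.
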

	\begin{proof}
		Let us fix $\eps\in (0,\eps_0)$, $R\in (0,r_0]$, $x\in \Omega$  and $t\in [0,T(\eps,R)]$. We also fix $\varphi \in C^\infty_c(\Omega; [0,1])$ such that
		\begin{equation}\label{Eq:Condvarphi}
		\mathds{1}_{B(x,R)} \le \varphi \le \mathds{1}_{B(x,2R)} \text{ and } 	\lvert \nabla \varphi \rvert\le \frac{c_4}{R},
		\end{equation}
		for some constant $c_4>0$.
		For such particular $\varphi$ we will estimate each term on the right hand side of \eqref{Eq:LocalNRJ-DU}. To start this quest we observe that
		\begin{equation*}
		\left(\int_0^t \int_{B(x,2R)} \lvert \nabla \varphi \rvert \;dx ds\right)^\frac14 \le \frac{c_4}{R} \lvert B(x,2R) \rvert^\frac14 t^\frac14\le c_5 \left(\frac{t}{R^2}\right)^\frac14.
		\end{equation*}
		We will use this inequality below without further notice.
		
		Using the H\"older inequality and \eqref{Eq:EstL4normsofU+NablaD} we get
		\begin{align}
		\int_0^t \int_\Omega \frac{\lvert \nabla d\rvert^2}{2} \lvert v\rvert \lvert \nabla \varphi\rvert \;dx dr \le \left( \int_0^t \lvert \nabla d \rvert^4_{L^4}\right)^\frac14 \left( \int_0^t \lvert v \rvert^4_{L^4}\right)^\frac14  \left( \int_0^t \lvert \nabla \varphi \rvert^4_{L^4}\right)^\frac14\nonumber \\
		\le C \eps^\frac32  \left(\frac{t}{R^2}\right)^\frac14 \left[E_0  + \Psi(t) + (1+ \frac{2\eps^2}{R^2}) t\right]^\frac{3}{4}.\label{Eq:LocalNRj-Start}
		\end{align}
		In a similar way we get
		\begin{equation}\notag
		\int_0^t \int_\Omega \lvert \nabla \varphi \rvert \phi(d) \lvert v \rvert \;dx dr \le C \eps^\frac12   \left(\frac{t}{R^2}\right)^\frac14 \left[E_0  + \Psi(t) + (1+ \frac{2\eps^2}{R^2}) t\right]^\frac{3}{4}.
		\end{equation}
		Here we used the boundedness of $\phi$ on $\mathbb{S}^2$.
		Similarly,
		\begin{align*}
		\int_0^t \int_\Omega \lvert \nabla \varphi \rvert \lvert v \rvert^3 \;dx dr \le C \eps^\frac32  \left(\frac{t}{R^2}\right)^\frac14 \left[E_0  + \Psi(t) + (1+ \frac{2\eps^2}{R^2}) t\right]^\frac{3}{4}\\
		\int_0\int_\Omega \lvert \nabla \varphi \rvert \lvert g \rvert \lvert \nabla d \rvert \;dx dr \le C \eps^\frac12  \left(\frac{t}{R^2}\right)^\frac14 \left[E_0  + \Psi(t) + (1+ \frac{2\eps^2}{R^2}) t\right]^\frac{3}{4}.
		\end{align*}
		Using the H\"older inequality, \eqref{Eq:EstL4normsofU+NablaD} and \eqref{Eq:NRJ-Inequality}
		\begin{align*}
		\int_0^t \int_\Omega \lvert \nabla \varphi \rvert \lvert \nabla v \rvert \lvert v \rvert \;dx dr \le & c_5 \left(\frac{t}{R^2}\right)^\frac14 \left(\int_0^t \lvert v \rvert^4_{L^4} dr \right)^\frac14 \left(\int_0^t \lvert \nabla v \rvert^2_{L^2} dr \right)^\frac12\nonumber \\
		\le & C \eps^\frac12  \left(\frac{t}{R^2}\right)^\frac14 \left[E_0  + \Psi(t) + (1+ \frac{2\eps^2}{R^2}) t\right]^\frac{3}{4}.
		\end{align*}
		To deal with the term containing $\partial_t d$ we argue as follows
		\begin{align*}
		\int_t\int_\Omega \lvert \partial_t d \rvert \lvert \nabla d\rvert \lvert \nabla \varphi \rvert \;dx dr \le c_5 \left(\frac{t}{R^2}\right)^\frac14 \left(\int_0^t \lvert \nabla d \rvert^4_{L^4}  dr \right)^\frac14
		\left(\int_0^t \lvert \partial_t d \rvert^2_{L^2}  dr \right)^\frac12\\
		\le c_5 \left(\frac{t}{R^2}\right)^\frac14 \left(\int_0^t \lvert \nabla d \rvert^4_{L^4}  dr \right)^\frac14
		\left(\int_0^t \left[\lvert R(d) \rvert^2_{L^2} + \lvert g \rvert^2_{L^2}\right] dr \right)^\frac12.
		\end{align*}
		Now, using \eqref{Eq:EstL4normsofU+NablaD} and \eqref{Eq:NRJ-Inequality}  we obtain
		\begin{align*}
		\int_t\int_\Omega \lvert \partial_t  d\rvert \lvert \nabla d\rvert \lvert \nabla \varphi \rvert \;dx dr \le
		C \eps^\frac12  \left(\frac{t}{R^2}\right)^\frac14 \left[E_0  + \Psi(t) + (1+ \frac{2\eps^2}{R^2}) t\right]^\frac{3}{4}.
		\end{align*}
		We now deal with term containing the pressure $\mathrm{p}$. First by the
		Using the H\"older and Poincar\'e inequalities and the estimates \eqref{Eq:NRJ-Inequality} we obtain
		\begin{align*}
		\int_0^t \int_\Omega \lvert \mathrm{p}- \mathrm{p}_\Omega \rvert \lvert v \rvert \lvert \nabla \varphi \rvert \;dx dr \le \sup_{0\le t < T_\ast} \lvert v(t) \rvert_{L^2} \left(\int_0^t \lvert \mathrm{p}-\mathrm{p}_\Omega \rvert_{L^4}^\frac4{2} dr  \right)^\frac34 \left( \int_0^t  \lvert \nabla \varphi \rvert^4_{L^4} \right)^\frac14 \\
		\le C \left(\frac{t}{R^2}\right)^\frac14  \left[E_0 + \Psi(t) \right]^\frac12 \left(\int_0^t \lvert \mathrm{p}-\mathrm{p}_\Omega \rvert_{L^4}^\frac4{3} dr  \right)^\frac34 \\
		\le C \left(\frac{t}{R^2}\right)^\frac14  \left[E_0 + \Psi(t) \right]^\frac12 \left(\int_0^t \lvert \nabla \mathrm{p} \rvert_{L^\frac43}^\frac4{3} dr  \right)^\frac34.
		\end{align*}
		From the last line and Lemma \ref{Lem:EstPressure} we infer that
		\begin{align*}
		\int_0^t \int_\Omega \lvert \mathrm{p}- \mathrm{p}_\Omega \rvert \lvert v \rvert \lvert \nabla \varphi \rvert \;dx dr \le  C \eps^\frac12  \left(\frac{t}{R^2}\right)^\frac14 \left[E_0  + \Psi(t) + (1+ \frac{2\eps^2}{R^2}) t\right]^\frac{5}{4}.
		\end{align*}
		We now deal with the terms containing $\lvert \varphi\rvert$. Applying the H\"older inequality and \eqref{Eq:EstL4normsofU+NablaD} yields
		\begin{align*}
		\int_0^t \int_\Omega \lvert \varphi \rvert \lvert \nabla g \rvert \lvert \nabla d \rvert \;dx dr \le &C \left(\int_0^t \int_\Omega \varphi^4 \;dx dr  \right)^\frac14 \left(\int_0^t \int_\Omega \lvert \nabla g \rvert^2 \;dx dr  \right)^\frac12 \left(\int_0^t \int_\Omega \lvert  \nabla d \rvert^4 \;dx dr  \right)^\frac14\nonumber \\
		\le& C t^\frac14 \eps^\frac12 \left[E_0  + \Psi(t) + (1+ \frac{2\eps^2}{R^2}) t\right]^\frac{3}{4}.
		\end{align*}
		In a similar way,
		\begin{align*}
		\int_0^t \int_\Omega \lvert \varphi \rvert \lvert  g \rvert \lvert \phi^\prime (d) \rvert \;dx dr \le C \left(\int_0^t \int_\Omega \varphi^4 \;dx dr  \right)^\frac14 \left(\int_0^t \int_\Omega \lvert  g \rvert^2 \;dx dr  \right)^\frac12 \left(\int_0^t \int_\Omega \lvert   d \rvert^4 \;dx dr  \right)^\frac14\nonumber\\
		\le C \left(\int_0^t \int_\Omega \varphi^4 \;dx dr  \right)^\frac14 \left(\int_0^t \int_\Omega \lvert  g \rvert^2 \;dx dr  \right)^\frac12 \left(1+ \sup_{r\in [0, t]} \lvert \nabla d (r)\rvert_{L^2} \right)\nonumber \\
		\le C t^\frac14 \eps^\frac12 \left[1+ E_0  + \Psi(t) + (1+ \frac{2\eps^2}{R^2}) t\right].
		\end{align*}
		We also have
		\begin{equation}
		\int_0^t \int_\Omega \lvert f \rvert \lvert v \rvert \lvert \varphi \rvert \;dx dr \le C t^\frac14 \eps^\frac12 \left[ E_0  + \Psi(t) + (1+ \frac{2\eps^2}{R^2}) t\right]^\frac34 .\label{Eq:LocalNRJ-Final}
		\end{equation}
		
		Taking $s=0$, dropping out the positive term $\int_0^t \int_\Omega \varphi \lvert R(d) \rvert^2 \;dx dr $, plugging the inequalities \eqref{Eq:LocalNRj-Start}-\eqref{Eq:LocalNRJ-Final} and using the first fact in \eqref{Eq:Condvarphi} in \eqref{Eq:LocalNRJ-DU} completes the proof of the Lemma \ref{Lem:LocalNRJ-Ball}
	\end{proof}
	We are now ready to give the proof of Proposition \ref{Prop:LocalSolwithSmallEnergy}.
	\begin{proof}[Proof of Proposition \ref{Prop:LocalSolwithSmallEnergy}]
We recall that under the assumption of Proposition \ref{Prop:LocalSolwithSmallEnergy} there exists a unique solution $(v,d) \in X_{T_\ast} $ to the problem \eqref{1b}, see Theorem \ref{th}. We can assume that $T_\ast>0$ is the maximal time of existence of  $(v,d)$. Let $r_0>0$ and $\eps_0>0$ be as in Lemma \ref{Lem:Struwe} and Lemma \ref{Lem:EstimateofDeltaD+L4norms}, respectively.   Let $R_0 \in (0,r_0)$ be chosen such that
\[
\eps_1^2= \mathcal{E}_{2R_0}(v_0,d_0)=\frac12 \sup_{x\in\Omega}\int_{\Omega\cap B_{2{R_{0}}}}\left(|v_{0}|^{2}+|\nabla d_{0}|^{2}+\phi(d_{0})\right)\;dx< \eps_0^2.
\]
Let us observe that since $\mathcal{E}(v_0,d_0)<\infty$ and
$\mu(A)=\int_{\Omega\cap A}\left[|v_{0}|^{2}+|\nabla d_{0}|^{2}+\phi(d_{0})\right]\;dx$ is absolutely continuous, then  it is possible to choose such $R_0$.
We also observe that  $\eps^2_1< E_0.$
	Now, 	let
		\[\theta_0(\eps_1, E_0):= \min\left\{\frac{K_4^{-4} \eps_1^6(1+r_0+\eps_1)^{-4}} {[1+ 2E_0 +\Psi(T) +T]^5}, \frac{3}{4}\right\}.
		\]
		and $T_0=T(\eps_1, R_0)$.	By Corollary \ref{Cor:SmallNRJBeforemaxTime} $T_0< T_\ast$.
		We will  now distinguish two cases.
		\begin{itemize}
			\item
			If  $T_0>R_0^2$, then because $\theta_0(\eps_1, E_0) \in (0, \frac34]$,   \[ T_0\ge  \theta_0(\eps_1, E_0)R_0^2,\].
			
			\item
			If  $T_0\le R_0^2$, then by Corollary \ref{Cor:SmallNRJBeforemaxTime},  the definition of $T_0$ and the continuity of $(v,d)$ at $t=T_0$ we infer that
			\begin{align}
			\mathcal{E}_{R_0}(v(T_0), d(T_0)) -\mathcal{E}_{2R_0}(v_0,d_0)= 2\eps_1^2 - \eps_1^2=\eps_1^2\nonumber
		\end{align}	
		Hence, by using the inequality \eqref{Eq:EstNRJonBalls} with $t=T_0$ and the fact $\eps^2_1< E_0$, we infer the existence of a universal constant  $K_7>0$ such that
		\begin{align*}
			\eps_1^2 \le & K_7 T_0^\frac14 \left(1+ E_0 + \Psi(T) + T+ \frac{T_0E_0}{R_0^2}  \right)^{\frac{5}{4}}\left(  R_0^{-\frac12} (\eps_1^\frac32 + \eps_1^\frac12)+ \eps_1^\frac12 \right) \\
			\le &  K_7 \eps_1^\frac12 (1+R_0^{-\frac12}[1+\eps_1]) T_0^\frac14  \left[1+2E_0  + \Psi(T) + T   \right]^\frac{5}{4},
			\end{align*}
			where $\Psi$ is defined in \eqref{Eq:DefPsi}. Since $R_0\le r_0$ we have $$ R_0^\frac12 +\eps_1+1\le 1+r_0^\frac12+\eps_1. $$
			Hence,
			\begin{align*}
			\eps_1^2 \le K_4 \eps_1^\frac12 R_0^{-\frac12} (1+r_0^\frac12+\eps_1) T_0^\frac14  \left[1+2E_0  + \Psi(T) + T   \right]^\frac{5}{4},
			\end{align*}
			from which we deduce   that
			\begin{equation*}
			T_0\ge  \frac{K_4^{-4} \eps_1^6(1+r_0^\frac12+\eps_1)^{-4}  } {[1+ 2 E_0 +\Psi(T) +T]^5} R_0^2 =\theta_0(\eps_1, E_0) R_0^2 .
			\end{equation*}
		\end{itemize}
		By the definition of $T_0=T(\eps_1, R_0)$, see \eqref{Eq:DefStoppingTime}, and the fact $T_0<T_\ast$ we automatically obtain \eqref{eqn-small estimates-B}.
		Thus, the proof of Proposition \ref{Prop:LocalSolwithSmallEnergy} is complete.
	\end{proof}
	
	\section{The existence  and the uniqueness  of  a global weak solution} \label{Sec:ExistMaxLocStrongSol}
	In this section we will prove global existence of a weak solutions to problem \eqref{1b}. Before we state and prove this  result let us define the concept of a weak  solution.
	\begin{Def}
		A global weak solution to \eqref{1b} is a pair of functions $(v,d):[0,T)\to \rH\times \rH^1$ such that
		\begin{enumerate}
			\item $(v,d) \in L^\infty(0,T; \rH\times \rH^1)$
			\item for all $t \in [0,T)$ the following integral equations
			\begin{align}
			v(t) =& v_0 +\int_0^t[Av(s) -B(v(s)) - \Pi(\Div [\nabla d(s)\odot  \nabla  d(s)] )  ] ds + \int_0^t \Pi f(s) ds,\nonumber\\
			d(t)=& d_0 +\int_0^t[\Delta d(s)+\lvert \nabla d(s)\rvert^2 d(s) -v(s)\cdot \nabla d(s) -\phi^\prime(d(s)) +(\phi^\prime(d(s) )\cdot d(s) ) d(s)   ] ds \nonumber \\
			&\qquad \qquad + \int_0^t (d(s)\times g(s)) ds,\nonumber 
			\end{align}
			hold in $D(A^{-\frac32})$ and $\rH^{-2}$, respectively.
			\item For all $t\in [0,T)$  $d(t) \in \mathcal{M}$,
			\item and $(\partial_tv, \partial_td)\in L^2(0,T_0; D(A^{-\frac32})\times \rH^{-2})$.
		\end{enumerate}
	\end{Def}
We also introduce the notion of local strong solution which will be needed to prove the existence of a global weak solution to our problem.
	\begin{Def}\label{Def:Strong-Sol}
		Let  $T_0\in (0, T]$. 
		A  function $(v,d):[0,T_0]\to H\times \rH^1 $ is a local strong solution to \eqref{1b} with initial data $(v(0), d(0))=(v_0,d_0)$ iff
		\begin{enumerate}
			\item $(v,d)\in C([0,T_0]; \rH\times \rH^1) \cap L^2(0,T_0; \ve\times D(\hA))$,
			\item for all $t \in [0,T_0]$ the equations \eqref{eq:LocVelo} and \eqref{eq:LocDir} hold in $\ve^\ast$ and $L^2$, respectively.
			\item For all $t\in [0,T_0]$\;  $d(t)\in \mathcal{M}$,
			\item and $(\partial_tv, \partial_td)\in L^2(0,T_0; \ve^\ast\times L^2)$.
			
		\end{enumerate}
		As usual we denote by $((v,d);T_0)$ a local strong solution defined on $[0,T_0]$.
		
		Similarly to Definition \ref{Def:Maximal-Sol}, one  we can also define the notion of a maximal local strong solution.
	\end{Def}

	We state the following important remark.
	\begin{Rem}
		From the definition it is clear that a maximal local solution $(v,d)$ defined on $[0,T_0)$ is a local solution on the  open interval $[0,T_0)$.
		
		In the definitions above, the condition $(\partial_tv, \partial_t d)\in L^2(0,T_0; D(A^{-\frac{j+1}{2}}) \times \rH^{j-2}) $, $j=0,2$, is equivalent to
		\begin{align*}
		F(v,d)=&  -\Pi(v\cdot \nabla v) -\Pi(\Div[\nabla d \odot \nabla d]) \in L^2(0,T_0; D(A^{-\frac{j+1}{2}}) ),\; j=0,2,\\
		G(v,d)=&\lvert \nabla d \rvert^2 d -v\cdot \nabla d -\phi^\prime(d) + (\phi^\prime(d) \cdot d) d \in L^2(0,T_0;  H^{j-2}) , j=0,2.
		\end{align*}
	\end{Rem}
	Let us now state the standing assumptions of this section.
	\begin{assume}\label{AssumptionMain}
		Let $T>0$ and assume that $(f,g)\in L^2(0,T; \rH^{-1}\times L^2)$.
		We also assume that $(v_0,d_0)\in H\times \rH^1$ satisfies
		
		\begin{align*}
		&	E_0=\mathcal{E}(v_0,d_0)=\int_\Omega (\lvert u_0\rvert^2 + \lvert \nabla d_0\rvert^2+ \phi(d_0)) dy <\infty,\\
		&	d_0 \in \mathcal{M}.
		\end{align*}
	\end{assume}
	The first main result of this section is the following uniqueness result.
	\begin{Prop}\label{Prop:Uniq}
		Let $(v_i, d_i)\in C([0,T]; \rh\times \rH^1 )\cap L^2(0,T; D(\rA^\frac12)\times D(\hA^\frac32 )$, $i=1,2$,  be two strong  solutions to \eqref{1b} defined on $[0,T]$.
		Then,
		\begin{equation}
		(v_1, d_1)= (v_2, d_2).\notag
		\end{equation}	
	\end{Prop}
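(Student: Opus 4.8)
Set $\bar v:=v_1-v_2$ and $\bar d:=d_1-d_2$. Since the two solutions share the same data, $\bar v(0)=0$, $\bar d(0)=0$, the forcing $f$ cancels in the equation for $\bar v$, and the coupling with $g$ leaves only the term $\bar d\times g$, which is linear in $\bar d$. Using bilinearity of $B$ and of $\nabla d\odot\nabla d$, together with $|\nabla d_1|^2d_1-|\nabla d_2|^2d_2=\big(\nabla(d_1+d_2):\nabla\bar d\big)d_1+|\nabla d_2|^2\bar d$ and $v_1\cdot\nabla d_1-v_2\cdot\nabla d_2=\bar v\cdot\nabla d_1+v_2\cdot\nabla\bar d$, one writes out the system for $(\bar v,\bar d)$ explicitly. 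From the stated regularity one reads off $\bar v\in C([0,T];\rh)\cap L^2(0,T;\ve)$, $\bar d\in C([0,T];\rH^1)\cap L^2(0,T;D(\hA^{3/2}))$, and $\partial_t\bar v\in L^2(0,T;\ve^\ast)$, $\partial_t\bar d\in L^2(0,T;L^2)$ directly from the equations; two-dimensional interpolation between the endpoint spaces puts $t\mapsto\|\nabla d_i(t)\|_{L^\infty}$, $\|\nabla d_i(t)\|_{L^4}$, $\|\Delta d_i(t)\|_{L^2}$, $\|\nabla v_i(t)\|_{L^2}$ all in $L^2(0,T)$. This time-integrability of norms of the given solutions is precisely the role of the assumed regularity $d_i\in L^2(0,T;D(\hA^{3/2}))$ and is what will let the Gronwall argument close despite the weak regularity of the velocity fields.

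The plan is to test the $\bar v$-equation with $\bar v$ in $\rh$ and the $\bar d$-equation with $(\hA+I)\bar d=-\Delta\bar d+\bar d$ in $L^2$ --- both legitimate by the Lions--Magenes lemma \cite[Lemma III.1.2]{Temam_2001} given the time-regularity above and $\partial_\nu\bar d|_{\partial\Omega}=0$ --- and to add the two identities. Writing $Y(t):=\tfrac12\|\bar v(t)\|_{L^2}^2+\tfrac12\|\bar d(t)\|_{\rH^1}^2$, this yields
\[
\frac{d}{dt}Y+\|\nabla\bar v\|_{L^2}^2+\|\Delta\bar d\|_{L^2}^2+\|\nabla\bar d\|_{L^2}^2=(\text{nonlinear terms}),
\]
after the cancellations $b(v_2,\bar v,\bar v)=0$, $\langle v_2\cdot\nabla\bar d,\bar d\rangle=0$ (both from $\Div v_i=0$) and $\langle\bar d\times g,\bar d\rangle=0$ (pointwise orthogonality of the cross product). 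The essential structural point is the $\|\Delta\bar d\|_{L^2}^2$ dissipation produced on the left: it is what makes the coupling terms between $\bar v$ and $\bar d$ --- the delicate part, since $v_i$ only enjoys $L^2(0,T;\ve)$ regularity --- absorbable, and it is why one tests the director equation with $(\hA+I)\bar d$ and not merely with $\bar d$.

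It then remains to dominate each term on the right by $\tfrac12\big(\|\nabla\bar v\|^2+\|\Delta\bar d\|^2+\|\nabla\bar d\|^2\big)+C(t)Y(t)$ with $C\in L^1(0,T)$. I would organise this as follows. (i) The Navier--Stokes term $b(\bar v,v_1,\bar v)$ is handled by the two-dimensional Ladyzhenskaya inequality \cite[Lemma III.3.3]{Temam_2001}: $\le\delta\|\nabla\bar v\|^2+C\|\nabla v_1\|_{L^2}^2\|\bar v\|_{L^2}^2$. (ii) For the coupling terms, combine the stress term of the $\bar v$-equation --- via $\langle\nabla e\,\Delta f,\bar v\rangle=\langle\bar v\cdot\nabla e,\Delta f\rangle$ and the fact that $\Pi$ kills the gradient part against $\bar v$ --- with the transport term of the $\bar d$-equation to obtain the four pieces $-\langle\bar v\cdot\nabla\bar d,\Delta d_1\rangle$, $-\langle\bar v\cdot\nabla d_2,\Delta\bar d\rangle$, $\langle\bar v\cdot\nabla d_1,\Delta\bar d\rangle$ and $\langle v_2\cdot\nabla\bar d,\Delta\bar d\rangle$; each is treated by Hölder, the interpolation $\|\nabla\bar d\|_{L^4}^2\le C\|\nabla\bar d\|_{L^2}(\|\Delta\bar d\|_{L^2}+\|\bar d\|_{L^2})$ and Young (for $\langle v_2\cdot\nabla\bar d,\Delta\bar d\rangle$ one first integrates by parts once to shift a derivative off $\bar d$, the top-order part dropping by $\Div v_2=0$), the residual coefficients being $\|\Delta d_i\|_{L^2}^2$, $\|\nabla d_i\|_{L^4}^4$ or $\|\nabla v_2\|_{L^2}^2$, all in $L^1(0,T)$. (iii) The director self-interaction $\big(\nabla(d_1+d_2):\nabla\bar d\big)d_1+|\nabla d_2|^2\bar d$ against $-\Delta\bar d$ is bounded using $|d_1|=1$ a.e., $\|\bar d\|_{L^4}\le C\|\bar d\|_{\rH^1}$ and the higher space-integrability of $\nabla d_i$. (iv) The anisotropy terms $\phi^\prime(d_1)-\phi^\prime(d_2)$ and $\alpha(d_1)d_1-\alpha(d_2)d_2$ are pointwise $O(|\bar d|)$ because $d_i(x)\in\mathbb S^2$ and $\phi^\prime,\phi^{\prime\prime}$ are bounded on $\mathbb S^2$ (Assumption \ref{Assum:EnergyPotential}), so they contribute $\le\delta\|\Delta\bar d\|^2+C\|\bar d\|_{L^2}^2$. (v) The forcing term $\langle\bar d\times g,\Delta\bar d\rangle$: at this regularity level $g\in L^2(0,T;\rH^1)\hookrightarrow L^2(0,T;L^4)$, so it is $\le\|\bar d\|_{L^4}\|g\|_{L^4}\|\Delta\bar d\|_{L^2}\le\delta\|\Delta\bar d\|^2+C\|g\|_{L^4}^2\|\bar d\|_{\rH^1}^2$.

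Collecting everything gives $\frac{d}{dt}Y(t)\le C(t)Y(t)$ on $[0,T]$ with $C\in L^1(0,T)$ and $Y(0)=0$, whence $Y\equiv0$ by Gronwall, that is $(v_1,d_1)=(v_2,d_2)$. I expect the main obstacle to be exactly step (ii): because the velocity fields live only in $L^2(0,T;\ve)$, the coupling between $\bar v$ and $\bar d$ cannot be closed using just the $\|\nabla\bar v\|^2$ and $\|\nabla\bar d\|^2$ dissipations, and one must genuinely exploit the stronger $\|\Delta\bar d\|_{L^2}^2$ dissipation together with the two-dimensional interpolation inequalities; keeping track of which norm of $(v_i,d_i)$ plays the role of the integrable Gronwall coefficient in each of these terms is the bookkeeping-heavy technical heart of the argument.
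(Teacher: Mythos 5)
Your argument is correct, but it runs at a different energy level from the paper's. The paper follows Li--Titi--Xin: it sets $w=\rA^{-1}\bar v$ and performs the estimate for $\lvert \rA^{1/2}w\rvert_{L^2}^2+\lvert \bar d\rvert_{L^2}^2$, so the dissipation terms are $\lvert \rA w\rvert_{L^2}^2=\lvert \bar v\rvert_{L^2}^2$ and $\lvert\nabla\bar d\rvert_{L^2}^2$, and the Gronwall coefficient is $1+\sum_i(\lvert v_i\rvert_{L^4}^4+\lvert\nabla d_i\rvert_{L^4}^4)$, which is integrable already for $(v_i,d_i)\in C([0,T];\rh\times\rH^1)\cap L^2(0,T;\ve\times D(\hA))$. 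You instead test with $\bar v$ and $(\hA+I)\bar d$, i.e.\ you run the estimate one derivative higher, with dissipations $\lvert\nabla\bar v\rvert_{L^2}^2$ and $\lvert\Delta\bar d\rvert_{L^2}^2$. Your scheme does close: the interpolations $\lvert\nabla\bar d\rvert_{L^4}^2\lesssim\lvert\nabla\bar d\rvert_{L^2}(\lvert\Delta\bar d\rvert_{L^2}+\lvert\bar d\rvert_{L^2})$ and $\lvert\bar v\rvert_{L^4}^2\lesssim\lvert\bar v\rvert_{L^2}\lvert\nabla\bar v\rvert_{L^2}$ reduce every coupling term to $\delta(\text{dissipation})+C(t)Y(t)$ with $C\in L^1(0,T)$ built from $\lvert\nabla v_i\rvert_{L^2}^2$, $\lvert\nabla d_i\rvert_{L^4}^4$ and $\lvert\Delta d_i\rvert_{L^2}^2$, all integrable under the stated hypotheses; the self-interaction term $\lvert\nabla d_2\rvert^2\bar d$ against $\Delta\bar d$ is the one place where you genuinely need the extra $L^2(0,T;D(\hA^{3/2}))$ regularity (via $\nabla d_2\in L^2(0,T;L^\infty)$ or an $L^8$ interpolation), whereas the paper's proof never uses more than $L^2(0,T;D(\hA))$ on the directors. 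The trade-offs are: the paper's lower-order test makes the forcing disappear identically, since $\langle \bar d\times g,\bar d\rangle=0$ pointwise, so uniqueness holds under the section's standing assumption $g\in L^2(0,T;L^2)$ and in fact in the larger strong-solution class; your test produces $\langle\bar d\times g,\Delta\bar d\rangle$, which, as you correctly flag, forces you to assume $g\in L^2(0,T;L^4)$ (e.g.\ $g\in L^2(0,T;\rH^1)$) to keep the Gronwall coefficient integrable. That extra hypothesis on $g$ is available wherever the proposition is invoked in the paper, so your proof suffices for its applications, but it is strictly stronger than what the paper's argument requires; if you want the statement under $g\in L^2(0,T;L^2)$ only, you should switch to the negative-order velocity norm and the $L^2$ director norm as the paper does.
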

	\begin{proof}
		In order to prove this result we closely follow the approach of \cite{JL+ET+ZX-2016}.
		
		Let  $(v_i,d_i)$ be a  two strong solutions to \eqref{1b}, $v=v_1-v_2$ and $d=d_1-d_2$. Hence,  $v$  satisfies the equation
		\begin{equation}
		\frac{d v}{dt} + \rA v + B(v, v_1)+B(v_2,v) = -\Pi\left(\Div[\nabla d \odot \nabla d_1 + \nabla d_2 \odot \nabla d  ]\right).\notag
		\end{equation}
		Let $w=\rA^{-1}v$. It is not difficult to show that $w$ satisfies
			\begin{equation}
		\frac{dw }{dt} +\rA w + A^{-1} \left(B(v, v_1 ) + B(v_2,v)\right) =-\rA^{-1} \Pi \left(\Div[\nabla d \odot \nabla d_1 + \nabla d_2 \odot \nabla d  ]\right).\notag
		\end{equation}
		 By parts (1)  and (4) of Definition 5.2,  we have $w \in  L^2(0,T; D(\rA^\frac32)) $ and $\partial_tw= A^{-1}\partial_t v\in L^2(0,T; \ve)\subset L^2(0,T; D(\rA)^\ast)$. Then, by applying the Lions-Magenes Lemma (\cite[Lemma III.1.2]{Temam_2001})  and  using the  facts that  $\rA$ is self-adjoint and $\Div w=0$  we infer that
		\begin{align}
		\frac12 \frac{d}{dt} \lvert \rA^\frac12 w \rvert^2_{L^2} + \lvert \rA w\rvert^2_{L^2}=- \langle B(v, v_1) + B(v_1, v), w  \rangle
		-\langle \Pi \left(\Div[\nabla d \odot \nabla d_1 + \nabla d_2 \odot \nabla d  ]\right), w\rangle \notag\\
		= - \langle B(v, v_1) + B(v_1, v), w  \rangle  -\langle \nabla d \odot \nabla d_1 + \nabla d_2 \odot \nabla d, \nabla w \rangle.\notag
		\end{align}
	We also used the integration by parts to obtain the second line. Let us now estimate the terms on the right hand side of the last line of the chain of identities above.
	
	 Hereafter we fix $\eps,\gamma>0$, the symbols $C_\eps, C_{\eps,\gamma}$ denote two positive constants depending only on $\eps$ and $\gamma$.
	
		Firstly, by using the H\"older, the Young inequalities and the Ladyzhenskaya inequality (\cite[Lemma III.3.3]{Temam_2001}) we infer  that
		\begin{align*}
		-\langle B(v, v_1), w\rangle =& \langle B(v, w), v_1\rangle\\
		\le & \lvert v \rvert_{L^2} \lvert \nabla w \rvert_{L^4} \lvert v_1\rvert_{L^4}\\
		\le & \eps \lvert v \rvert^2_{L^2} + C_{\eps}\lvert \nabla w\rvert_{L^2} \lvert \nabla^2 w\rvert_{L^2} \lvert v_1\rvert^2_{L^4}\\
		\le & \eps \lvert v \rvert^2_{L^2}+C_{\eps} \lvert \rA^\frac12 w\rvert_{L^2} \lvert \rA w\rvert_{L^2} \lvert v_1\rvert^2_{L^4}\\
		\le & \eps \lvert v \rvert^2_{L^2}+\eps \lvert \rA w \rvert^2_{L^2} + C_\eps \lvert \rA^\frac12 w \rvert^2_{L^2} \lvert v_1 \rvert^2_{L^4}.
		\end{align*}
		Observe that $\lvert v\rvert^2_{L^2} = \lvert \rA w \rvert^2_{L^2}$. Thus,
		\begin{align*}
		-\langle B(v, v_1), w\rangle
		\le 2 \eps \lvert \rA w \rvert^2_{L^2} + C_\eps \lvert \rA^\frac12 w \rvert^2_{L^2} \lvert v_1 \rvert^4_{L^4}.
		\end{align*}
		In a similar way, we can prove that
		\begin{align}
		-\langle B(v_2, v), w\rangle
		\le 2 \eps \lvert \rA w \rvert^2_{L^2} + {C_\eps} \lvert \rA^\frac12 w \rvert^2_{L^2} \lvert v_2 \rvert^4_{L^4}.\notag
		\end{align}
		
		 Secondly, making use of the Ladyzhenskaya inequality (\cite[Lemma III.3.3]{Temam_2001}), the H\"older and the Young inequalities we obtain
		\begin{align*}
		-\langle \nabla d \odot \nabla d_1 , \nabla w \rangle &\le \lvert \nabla d \rvert_{L^2} \lvert \nabla d_1 \rvert_{L^4} \lvert \nabla w \rvert_{L^4} \\
		&\le \gamma \lvert \nabla d \rvert^2_{L^2} + C_\gamma \lvert \nabla d_1 \rvert^2_{L^4} \lvert \nabla w \rvert_{L^2} \lvert \nabla^2 w\rvert_{L^2}\\
		&\le \gamma \lvert \nabla d \rvert^2_{L^2} + C_\gamma \lvert \nabla d_1 \rvert^2_{L^4} \lvert \rA^\frac12 w \rvert_{L^2} \lvert \rA  w\rvert_{L^2}\\
		& \le \gamma \lvert \nabla d \rvert^2_{L^2} + \eps \lvert \rA w \rvert^2_{L^2} + C_{\gamma, \eps} \lvert \rA^\frac12 w \rvert^2_{L^2} \lvert \nabla d_1 \rvert^4_{L^4}.
		\end{align*}
		Similarly,
		\begin{align*}
		-\langle \nabla d_2 \odot \nabla d , \nabla w \rangle
		\le \gamma \lvert \nabla d \rvert^2_{L^2} + \eps \lvert \rA w \rvert^2_{L^2} + C_{\gamma, \eps} \lvert \rA^\frac12 w \rvert^2_{L^2} \lvert \nabla d_2 \rvert^4_{L^4}.
		\end{align*}
		Collecting all these inequalities we obtain
		\begin{align}\label{Eq:DifferenceVelo-Fin}
		\frac12 \frac{d}{dt} \lvert \rA^\frac12 w \rvert^2_{L^2} + \lvert \rA w\rvert^2_{L^2} &\le \eps \lvert \rA w \rvert^2_{L^2} + \gamma \lvert \nabla d \rvert^2_{L^2}
\\ &+ C_{\eps,\gamma} \lvert \rA^\frac12 w \rvert^2_{L^2}\left(\lvert v_1 \rvert^4_{L^4} + \lvert v_2 \rvert^4_{L^4} + \lvert \nabla d_1 \rvert^4_{L^4} +  \lvert \nabla d_2 \rvert^4_{L^4}  \right).
		\nonumber \end{align}

		Let us turn our attention to the function $d=d_1-d_2$. We notice that $d$ satisfies
		\begin{equation*}
		\begin{split}
		\frac{d}{dt}d  +\hA d+ v\cdot \nabla d_1 + v_2\cdot \nabla d =\left( \lvert \nabla d_1 \rvert^2- \lvert \nabla d_2\rvert^2\right)d_1+ \lvert \nabla d_2 \rvert^2 d -[\phi^\prime(d_1) - \phi^\prime(d_2)]
		\\  + [\alpha(d_1)-\alpha(d_2)]d_1 + \alpha(d_2)d + d\times g\\
		=\left( \nabla d_1 -\nabla d_2 : \nabla d_1 + \nabla d_2   \right)d_1 + \lvert \nabla d_2 \rvert d  -[\phi^\prime(d_1) - \phi^\prime(d_2)]
		\\  + [\alpha(d_1)-\alpha(d_2)]d_1 + \alpha(d_2)d + d\times g\\
		=\left( \nabla d : \nabla d_1 + \nabla d_2   \right)d_1 + \lvert \nabla d_2 \rvert d -[\phi^\prime(d_1) - \phi^\prime(d_2)]
		\\  + [\alpha(d_1)-\alpha(d_2)]d_1 + \alpha(d_2)d + d\times g.
		\end{split}
		\end{equation*}
		Since $d_1,d_2\in  L^2(0,T; D(\hA))$, and $\partial_t d_1, \partial_t d_2 \in L^2(0,T; L^2)$, we infer by applying Lions-Magenes Lemma (\cite[Lemma III.1.2]{Temam_2001}),  and the facts $\langle v_2 \cdot \nabla d, d\rangle=0$ and $ \langle d\times g, d\rangle =0$ (because  $d\times g \perp_{\mathbb{R}^3} d$) that
		\begin{align}
\nonumber
		\frac12 \frac{d}{dt} \lvert d \rvert^2_{L^2} + \lvert \nabla d \rvert^2_{L^2}&= -\langle v \cdot \nabla d_1+[\nabla d : \nabla(d_1+d_2)]d_1 + \lvert \nabla d_2 \rvert^2 d -[\phi^\prime(d_1) \phi^\prime(d_2)]_+\alpha(d_2) d, d \rangle \\
		&+ \langle [\alpha(d_1)-\alpha(d_2)]d_1, d\rangle.
\label{Eq:DifferenceDinL2}		\end{align}
		Let us  estimate the terms in the right hand side of \eqref{Eq:DifferenceDinL2}. First, by using the H\"older, the Young inequalities and  the Gagliardo-Nirenberg  inequality (\cite[Section 9.8, Example C.3]{Brezis})
		we show that
		\begin{align*}
		-\langle v \cdot \nabla d_1 , d \rangle \le &\lvert v \rvert_{L^2} \lvert \nabla d_1 \rvert_{L^4} \lvert d \rvert_{L^4} \\
		\le &\eps \lvert v \rvert^2_{L^2} + C_\eps \lvert \nabla d_1\rvert^2_{L^4} \lvert d \rvert_{L^2}(\lvert d \rvert_{L^2} + \lvert \nabla d \rvert_{L^2} )\\
		\le& \eps \lvert v \rvert^2_{L^2} + \gamma \lvert \nabla d \rvert^2_{L^2}+ C_\eps \lvert \nabla d_1 \rvert^2_{L^4} \lvert d\rvert^2 +  \lvert d \rvert^2_{L^2} \lvert \nabla d_1\rvert^4_{L^4}\\
		\le & \eps \lvert \rA w \rvert^2_{L^2} + \gamma \lvert \nabla d \rvert^2_{L^2}+ C_{\eps, \gamma}  \  \lvert d \rvert^2_{L^2}\left( 1+ \lvert \nabla d_1\rvert^4_{L^4}\right).
		\end{align*}
		With the same idea,  we prove  that
		\begin{align*}
		\langle [\nabla d : \nabla (d_1 +d_2) ]d_1, d \rangle \le & \lvert \nabla d \rvert_{L^2} \lvert d\rvert_{L^4} [\lvert \nabla d_1 \rvert_{L^4}+ \lvert \nabla d_2\rvert_{L^4} ] \lvert d_1 \rvert_{L^\infty}\\
		\le& \gamma \lvert \nabla d \rvert^2_{L^2} +C_\gamma  \lvert  d \rvert_{L^2} [\lvert d\rvert_{L^2} + \lvert \nabla d \rvert_{L^2} ] [ \lvert \nabla d_1 \rvert^2_{L^4} + \lvert \nabla d_2 \rvert^2_{L^4}  ]\lvert d_1 \rvert^2_{L^\infty}\\
		\le & 2 \gamma \lvert \nabla d  \rvert^2_{L^2} + C_\gamma \lvert d \rvert^2_{L^2} [  \lvert \nabla d_1 \rvert^2_{L^4} + \lvert \nabla d_2 \rvert^2_{L^4}  + \lvert \nabla d_1 \rvert^4_{L^4} + \lvert \nabla d_2 \rvert^4_{L^4}]\\
		\le  & 2 \gamma \lvert \nabla d  \rvert^2_{L^2} + C_\gamma \lvert d \rvert^2_{L^2} [ 1+  \lvert \nabla d_1 \rvert^4_{L^4} + \lvert \nabla d_2 \rvert^4_{L^4}]
		\end{align*}
		In the last line we used the fact that $\lvert d_1 \rvert_{L^\infty} \le 1$.
		
		\noindent Utilizing the H\"older, the Young inequalities and the Gagliardo-Nirenberg inequality  (\cite[Section 9.8, Example C.3]{Brezis})  we obtain
		\begin{align*}
		\langle \lvert \nabla d_2 \rvert^2 d, d \rangle \le & \lvert \nabla d_2 \rvert^2_{L^4} \lvert d \rvert^2_{L^4}\\
		\le & \lvert \nabla d_2 \rvert^2_{L^4} \lvert d \rvert_{L^2}(  \lvert d \rvert_{L^2} + \lvert \nabla d \rvert_{L^2})\\
		\le & \gamma \lvert \nabla d \rvert^2_{L^2} +C_\gamma \lvert d \rvert^2_{L^2} \left( 1+ \lvert \nabla d_2 \rvert^4_{L^4} \right).
		\end{align*}
		Since $\lvert \phi^{\prime\prime}\rvert\le M $, the map $\phi^\prime: \mathbb{R}^3 \to \mathbb{R}^3$ is Lipschitz and
		\begin{equation}
		- \langle \phi^\prime(d_1) - \phi^\prime(d_2), d\rangle\le M \lvert d \rvert^2_{L^2}. \label{Eq:DiffPhiprime}
		\end{equation}
		Using the definition of $\alpha(d_2)=(\phi^\prime(d_2) \cdot d_2)$, the fact $\lvert d_2 \rvert=1$ and \eqref{Eq:LInearGrowthPhiprime}  we have
		\begin{align}
		\langle \alpha(d_2) d, d \rangle = \langle (\phi^\prime(d_2) \cdot d_2) d, d\rangle
		\le 2 M\lvert d\rvert^2_{L^2}. \label{Eq:PhiprimeD}
		\end{align}
		Using again the definition of $\alpha(d_1)$ and $\alpha(d_2)$ we obtain
		\begin{align*}
		\langle [\alpha(d_1)-\alpha(d_2)]d_1, d \rangle = &\langle [\phi^\prime(d_1)\cdot d_1 -\phi^\prime(d_2)\cdot d_2]d_1, d\rangle \\
		=& \langle \left([\phi^\prime(d_1)-\phi^\prime(d_2) ]\cdot d_1 + \phi^\prime(d_2)\cdot d \right)d_1, d\rangle.
		\end{align*}
		Since $\phi^\prime$ is Lipschitz,  $d_i(t)\in \mathcal{M}$ for all $t\in [0,T]$,  we show with the same ideas as used in \eqref{Eq:DiffPhiprime} and \eqref{Eq:PhiprimeD} that
		\begin{align*}
		\langle [\alpha(d_1)-\alpha(d_2)]d_1, d \rangle \le 3 M \lvert d \rvert^2_{L^2}.
		\end{align*}
		Hence, collecting all these inequalities related to the terms in the right hand side of the equation \eqref{Eq:DifferenceDinL2} we obtain
		\begin{equation}\label{Eq:DifferenceOptDir-fin}
		\frac12 \frac{d}{dt} \lvert d \rvert^2_{L^2} + \lvert \nabla d \rvert^2_{L^2}\le \eps \lvert \rA w \rvert^2_{L^2}+ \gamma \lvert \nabla d \rvert^2_{L^2} + C_\gamma \lvert d \rvert^2_{L^2} \left(1 + \lvert \nabla d_1 \rvert^4_{L^4} + \lvert \nabla d_2 \rvert^4_{L^4}\right).
		\end{equation}
		Thus, summing \eqref{Eq:DifferenceVelo-Fin} and \eqref{Eq:DifferenceOptDir-fin} up, we have
		\begin{align}\nonumber
\label{Eq:DifferenceVeloOptDir}
			\frac12 \frac{d}{dt} \left(\lvert d \rvert^2_{L^2} + \lvert \rA^\frac12 w\rvert^2_{L^2} \right)+\lvert \rA w\rvert^2_{L^2} + \lvert \nabla d \rvert^2_{L^2} &\le \eps \lvert \rA w \rvert^2_{L^2}+ \gamma \lvert \nabla d \rvert^2_{L^2} + C_{\gamma, \eps} \lvert d \rvert^2_{L^2} \left(1 + \lvert \nabla d_1 \rvert^4_{L^4} + \lvert \nabla d_2 \rvert^4_{L^4}\right) \\
		&\hspace{-1truecm}+ C_{\eps,\gamma}
		\lvert \rA^\frac12 w\rvert^2_{L^2}\left( \lvert \nabla d_1 \rvert^4_{L^4} + \lvert \nabla d_2 \rvert^4_{L^4}+\lvert v_1 \rvert^4_{L^4} + \lvert v_2 \rvert^4_{L^4}\right).
		\end{align}
	Let choose $\eps=\gamma=\frac12$  and put
		\begin{align*}
		y(t)= &\lvert d(t) \rvert^2_{L^2} + \lvert \rA^\frac 12 w(t)\rvert^2_{L^2}, t\in [0,T], \\
		\Phi(t)=& 2C_{\frac12,\frac12} \left(  1+\lvert \nabla d_1(t) \rvert^4_{L^4} + \lvert \nabla d_2(t) \rvert^4_{L^4}+\lvert v_1(t) \rvert^4_{L^4} + \lvert v_2 (t) \rvert^4_{L^4}  \right), t\in [0,T].
		\end{align*}
		Then, we see from \eqref{Eq:DifferenceVeloOptDir} that $y$ satisfies
		\begin{equation}\label{Eq:NeedGornwall}
		\dot{y}(t) \le \Phi(t) y(t), t\in [0,T].
		\end{equation}
		Observe that since $(v_i,d_i)\in C([0,T], \rH\times \rH)\cap L^2(0,T; D(\rA^\frac12) \times D(\hA))$ we infer that
		\begin{equation*}
		\int_0^T \Phi(s) ds<\infty.
		\end{equation*}
		Thus, one can apply the Gronwall inequality to  \eqref{Eq:NeedGornwall} and deduce that
		\begin{equation*}
		y(t) \le y(0)e^{\int_0^t \Phi(s) ds }\le y(0)e^{\int_0^T\Phi(s) ds}, \; t\in [0,T].
		\end{equation*}
		Since $$y(0)=\lvert d_1(0) -d_2(0)\rvert^2_{L^2} + \lvert \rA^{-1\frac12} v_1(0)-\rA^{-\frac12} v_2(0)\rvert^2_{L^2}= 0,$$
		we have
		\begin{equation*}
		y(t)= \lvert d_1(t)-d_2(t)\rvert^2_{L^2} + \lvert v_1(t) -v_2(t) \rvert^2_{D(\rA^{-\frac12})}=0, \; t\in [0,T].
		\end{equation*}
		This completes the proof of the Proposition \ref{Prop:Uniq}.
	\end{proof}
	
	The second  main result of this paper is  the following theorem.
	\begin{Thm}\label{thm-main}
		Let $(v_0,d_0)\in \rh\times \rH^1$, $r_0>0$ and $\eps_0>0$ be the constants from Lemma \ref{Lem:Struwe} and \ref{Lem:EstimateofDeltaD+L4norms}, respectively. Then, there exist constants $\varrho_0\in (0, r_0]$ and $\eps_1\in (0,\eps_0)$ such that the following hold.
		
		If Assumption \ref{AssumptionMain} holds, then there are a number $L\in \mathbb{N}$, depending only on the norms of $(v_0,d_0)\in H\times \rH^1$ and $(f,g)\in L^2(0,T; \rH^{-1} \times L^2)$, a collection of times $0<T_1<\cdots<T_L\le T$ and  a global weak solution $(\mathbf{u},\mathbf{d})\in C_{w}([0, T]; \mathrm{H} \times \rh^1) \cap L^2(0, T; \ve\times D(\hA) )$ to \eqref{1b-0} such that
		\begin{enumerate}
			\item for each $i\in \{1, \ldots, L\}$, $(\mathbf{u},\mathbf{d})_{\lvert_{[T_{i-1}, T_i)}}\in C([T_{i-1}, T_i); \mathrm{H} \times (\rH^1\cap \mathcal{M})) $ with the left-limit at $T_i$,
  is a maximal local regular solution to \eqref{1b} with initial data $(v(T_{i-1}), d(T_{i-1}) )$.  Here we understand that $T_0=0$.
\item  If $T_L<T$, then
$(\mathbf{u},\mathbf{d})_{\lvert_{[T_{L}, T]}}$ belongs to $ C([T_{L}, T]; \mathrm{H} \times \rh^1)$ and  satisfies  the variational form  problem \eqref{1b-0} on the interval $[T_{L}, T]$  with initial data $(v(T_{L}), d(T_{L}) )$.
			\item For each $i\in \{1, \ldots, L\}$
			\begin{equation*}
			\lim_{t\toup T_i} \mathcal{E}_{R}(\mathbf{u}(t), \mathbf{d}(t)) \ge \eps_1^2,
			\end{equation*}
			for all $R\in (0,\varrho_0]$.
			\item At each $T_i$ there is a loss of energy at least $\eps_1^2\in (0, \eps_0^2)$, \textit{i.e.},
			\begin{equation*}
			\mathcal{E}(\mathbf{u}(T_i), \mathbf{d}(T_i)) \le \mathcal{E}(\mathbf{u}(T_{i-1}), \mathbf{d}(T_{i-1})) +\frac12 \int_{T_{i-1}}^{T_1} \left[\lvert f\rvert_{\rH^{-1}}^2 + \lvert g\rvert^2_{L^2}\right] dt -\eps_1^2.
			\end{equation*}
		\end{enumerate}
		
	\end{Thm}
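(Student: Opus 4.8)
The plan is to construct the global weak solution by iterating the local existence theory, using the small–energy local existence result (Proposition \ref{Prop:LocalSolwithSmallEnergy}) together with the a priori energy inequality \eqref{Eq:NRJ-Inequality} to control how many singular times can occur. First I would choose the universal constants: let $\eps_0>0$ be as in Lemma \ref{Lem:EstimateofDeltaD+L4norms}, pick $\eps_1\in(0,\eps_0)$, and let $\varrho_0\in(0,r_0]$ be a radius to be fixed (it must be small enough that the absolute continuity of the measure $\mu(A)=\int_{\Omega\cap A}(|v_0|^2+|\nabla d_0|^2+\phi(d_0))\,dy$ makes $\mathcal{E}_{2\varrho_0}(v_0,d_0)$ small; since $(v_0,d_0)$ only lies in $\rH\times\rH^1$ one should first smooth it slightly or, better, run the argument on the smooth solution obtained after one passes to the limit — see below). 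The iteration is: set $T_0=0$ and $(v(T_0),d(T_0))=(v_0,d_0)$. Given $(v(T_{i-1}),d(T_{i-1}))$, if its localized energy $\mathcal{E}_{2\varrho_0}$ at scale $\varrho_0$ is $<\eps_1^2$, apply Proposition \ref{Prop:LocalSolwithSmallEnergy} (after regularizing the datum if necessary) to get a maximal local regular solution on $[T_{i-1},T_i)$ with $\mathcal{E}_{\varrho_0}\le 2\eps_1^2$ on the whole interval; define $T_i$ as the first time that $\lim_{t\uparrow T_i}\mathcal{E}_{\varrho_0}(v(t),d(t))\ge\eps_1^2$ at every scale $R\in(0,\varrho_0]$, i.e. the first ``energy concentration'' time, or $T_i=T$ if no such time exists before $T$.

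The second step is to show the iteration terminates after finitely many steps, producing $L\in\mathbb N$. This is where the global energy inequality \eqref{Eq:NRJ-Inequality} enters: at each singular time $T_i$ a quantum $\eps_1^2$ of energy is lost. Concretely, from \eqref{Eq:NRJ-Inequality} applied on $[T_{i-1},T_i)$ together with the local-to-global comparison one shows
\[
\mathcal{E}(\mathbf u(T_i),\mathbf d(T_i))\le \mathcal{E}(\mathbf u(T_{i-1}),\mathbf d(T_{i-1}))+\tfrac12\int_{T_{i-1}}^{T_i}\bigl[\lvert f\rvert_{\rH^{-1}}^2+\lvert g\rvert_{L^2}^2\bigr]\,dt-\eps_1^2,
\]
which is item (4). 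Summing over $i=1,\dots,L$ and using $\mathcal{E}\ge 0$ gives
\[
0\le E_0+\tfrac12\int_0^T\bigl[\lvert f\rvert_{\rH^{-1}}^2+\lvert g\rvert_{L^2}^2\bigr]\,dt-L\,\eps_1^2,
\]
hence $L\le \eps_1^{-2}\bigl(E_0+\tfrac12\|(f,g)\|^2_{L^2(0,T;\rH^{-1}\times L^2)}\bigr)$, which bounds $L$ in terms of the data only. The heart of item (4) is the claim that at the concentration time $T_i$ the energy genuinely drops by $\eps_1^2$: this follows because if the energy did not drop, the localized energy $\mathcal{E}_{2R}(\mathbf u(T_i),\mathbf d(T_i))$ at some small scale would remain $<\eps_1^2$ (by absolute continuity of the limiting energy density and the fact that only a compact, finite amount of energy can concentrate), and then Proposition \ref{Prop:LocalSolwithSmallEnergy} would continue the regular solution past $T_i$, contradicting maximality; quantifying this ``no concentration without energy loss'' via the local energy inequality \eqref{Eq:EstNRJonBalls} is the delicate point.

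The third step is to pass to the limit / patch the pieces into a single weak solution on $[0,T]$. On each $[T_{i-1},T_i)$ one has a regular solution; the uniform energy bound $\mathbf u\in L^\infty(0,T;\rH)$, $\mathbf d\in L^\infty(0,T;\rH^1)$ and $\nabla v, R(d)\in L^2(0,T;L^2)$ from \eqref{Eq:NRJ-Inequality} give enough compactness (Aubin–Lions, plus $|\mathbf d|=1$ passing to the limit) to define $(\mathbf u,\mathbf d)$ at the singular times as weak left-limits and to verify that $(\mathbf u,\mathbf d)\in C_w([0,T];\rH\times\rH^1)\cap L^2(0,T;\ve\times D(\hA))$ solves the integral equations in the weak (variational) sense across the $T_i$; membership $L^2(0,T;\ve\times D(\hA))$ uses $\int_0^{T_i}|\Delta d|^2_{L^2}$ controlled via Lemma \ref{Lem:EstimateofDeltaD+L4norms}. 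Items (1) and (2) are then just the statement that on each subinterval the solution is the maximal local regular solution from Proposition \ref{Prop:LocalSolwithSmallEnergy}, and on the last piece (if $T_L<T$) there is no further concentration so the solution stays regular on $[T_L,T]$; item (3) is the definition of $T_i$. Uniqueness on each piece where the solution is regular follows from Proposition \ref{Prop:Uniq}.

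\textbf{Main obstacle.} The genuinely hard step is item (4) — proving that an \emph{actual} loss of energy of size at least $\eps_1^2$ occurs at each $T_i$, i.e. converting ``localized energy concentrates'' into ``global energy decreases by a fixed quantum.'' This requires the sharp local energy inequality \eqref{Eq:EstNRJonBalls}, the absolute continuity of the weak-limit energy density, and a careful argument (in the spirit of \cite{Struwe}, \cite{LLW}, \cite{Hong}) that the only obstruction to extending the regular solution is a bubble carrying $\ge\eps_1^2$ of energy; the presence of the anisotropic potential $\phi$ and the forcings $f,g$ means one must check that the extra terms in \eqref{Eq:EstNRJonBalls} (those involving $\phi(d)$, $g$, $\nabla g$, $f$) are lower order as $t\to 0$ and do not spoil the small–energy continuation criterion. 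A secondary technical point is the low regularity $(v_0,d_0)\in\rH\times\rH^1$ of the very first datum, which forces an approximation argument at the initial step before Proposition \ref{Prop:LocalSolwithSmallEnergy} (stated for $\ve\times D(\hA)$ data) can be invoked.
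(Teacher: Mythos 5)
Your proposal is correct and follows essentially the same route as the paper: approximate the rough datum $(v_0,d_0)\in\rH\times\rH^1$ by smooth data, invoke the small-energy local existence result with the uniform lower bound on the existence time, iterate past each concentration time, and bound $L$ by summing the energy-loss inequality against the total energy budget $E_0+\frac12\|(f,g)\|^2_{L^2(0,T;\rH^{-1}\times L^2)}$. One small clarification: the contradiction-with-maximality argument you sketch for item (4) is really the proof of item (3) (concentration of at least $\eps_1^2$ at $T_i$); item (4) is then deduced in the paper by writing $\mathcal{E}(\mathbf u(T_i),\mathbf d(T_i))=\lim_{R\downarrow0}\int_{\Omega\setminus B_R(x_0)}$ and using weak lower semicontinuity of the energy on $\Omega\setminus B_R(x_0)$ together with the $\limsup$ of the concentrated energy on $B_R(x_0)$, which is exactly the ``delicate point'' you flag.
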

	The proof of this theorem is established in several steps. The first of such steps is the proof of the existence of a maximal local strong solution for the Ericksen-Leslie system \eqref{1b} with data satisfying Assumption \ref{AssumptionMain}.
	
	\begin{Thm}\label{Ma1}
		Let $(u_0,d_0)\in \rh\times(\rH^1\cap \mathcal{M})$, $r_0>0$ and $\eps_0>0$ be the constants from Lemma \ref{Lem:Struwe} and \ref{Lem:EstimateofDeltaD+L4norms}, respectively.
		Then, there exist $\varrho_0\in (0, r_0]$ such that
		\begin{align}
		\eps_1^2=\mathcal{E}_{2\varrho_0}(u_0,d_0)<  \eps_0^2,\label{Eq:AssumSmallInitialNRJ-1-0},
		\end{align}
		and a maximal local strong solution  $((v,d); T_\ast)$ satisfying
		\begin{align*}
		\limsup_{t\toup T_{\ast}}\mathcal{E}_{R}(v(t), d(t)) \ge\varepsilon^{2}_{1},
		\end{align*}
		for all $R\in (0, \varrho_0]$.
	\end{Thm}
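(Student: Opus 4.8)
The plan is to bootstrap from the small-energy regular solution of Proposition \ref{Prop:LocalSolwithSmallEnergy} to a \emph{maximal local strong solution} by a density/approximation argument, and then to extract the blow-up criterion for the local energy at the maximal time $T_\ast$. First I would choose $\varrho_0\in(0,r_0]$ so that $\eps_1^2:=\mathcal{E}_{2\varrho_0}(u_0,d_0)<\eps_0^2$; this is possible because $\mathcal{E}(u_0,d_0)<\infty$ and the measure $A\mapsto\int_{\Omega\cap A}(\lvert u_0\rvert^2+\lvert\nabla d_0\rvert^2+\phi(d_0))\,dy$ is absolutely continuous, so its supremum over small balls can be made arbitrarily close to any prescribed positive number (exactly the argument already used inside the proof of Proposition \ref{Prop:LocalSolwithSmallEnergy}). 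Since $(u_0,d_0)\in\rH\times(\rH^1\cap\mathcal{M})$ need not lie in $\ve\times D(\hA)$, I would approximate: pick $(u_0^k,d_0^k)\in\ve\times(D(\hA)\cap\mathcal{M})$ with $(u_0^k,d_0^k)\to(u_0,d_0)$ in $\rH\times\rH^1$ (for the director, project onto $\mathbb{S}^2$ after a standard smoothing, which keeps $d_0^k\in\mathcal{M}$ and converges in $\rH^1$). For $k$ large one still has $\mathcal{E}_{2\varrho_0}(u_0^k,d_0^k)<\eps_0^2$, so Proposition \ref{Prop:LocalSolwithSmallEnergy} yields maximal local regular solutions $((v^k,d^k);T_0^k)$ with the uniform lower bound $T_0^k\ge\frac{\varrho_0^2}{(\varrho_0^{1/2}+1)^4}\theta_0(\eps_1^k,E_0^k)$ and $\sup_{[0,T_0^k]}\mathcal{E}_{\varrho_0}(v^k,d^k)\le 2(\eps_1^k)^2$.

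Next I would pass to the limit $k\to\infty$ on a common interval $[0,T_0]$, where $T_0$ is the uniform lower bound coming from $\theta_0$ evaluated at the limiting data (using monotonicity of $\theta_0$ in each variable). The uniform bounds are: the global energy inequality \eqref{Eq:NRJ-Inequality}, which gives $(v^k,d^k)$ bounded in $L^\infty(0,T_0;\rH\times\rH^1)\cap L^2(0,T_0;\ve\times D(\hA))$ after combining with \eqref{Eq:EstDeltad}; the Struwe-type estimates \eqref{Eq:Ladyzhenskaya-Struwe}, \eqref{Eq:Ladyzhenskaya-Struwe-2} together with the small-local-energy control, which bound $\int_0^{T_0}(\lvert v^k\rvert^4_{L^4}+\lvert\nabla d^k\rvert^4_{L^4})$; and, from the equations, $(\partial_t v^k,\partial_t d^k)$ bounded in $L^2(0,T_0;\ve^\ast\times L^2)$. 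By Banach--Alaoglu and the Aubin--Lions lemma I extract a subsequence converging weakly-$\ast$ in these spaces and strongly in $L^2(0,T_0;\rH\times\rH^1)$, enough to pass to the limit in all nonlinear terms (the quadratic-gradient terms are handled with the uniform $L^4_{t,x}$ bounds; $d^k\to d$ in $L^2(0,T_0;\rH^1)$ and pointwise a.e.\ preserves $\lvert d\rvert=1$). This produces a local strong solution $((v,d);T_0)$ in the sense of Definition \ref{Def:Strong-Sol}, with $\mathbf{d}(t)\in\mathcal{M}$ for all $t$ and the local-energy bound $\sup_{[0,T_0]}\mathcal{E}_{\varrho_0}(v(t),d(t))\le 2\eps_1^2$ inherited by weak lower semicontinuity. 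Uniqueness of such a strong solution follows from Proposition \ref{Prop:Uniq}.

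Then I would extend to a maximal solution by a standard continuation argument: let $T_\ast$ be the supremum of all $T_0$ for which a strong solution on $[0,T_0]$ with the properties above exists; uniqueness makes the solutions compatible, so they glue to a solution on $[0,T_\ast)$. It remains to prove the blow-up alternative $\limsup_{t\toup T_\ast}\mathcal{E}_R(v(t),d(t))\ge\eps_1^2$ for every $R\in(0,\varrho_0]$. I argue by contradiction: if for some such $R$ we had $\limsup_{t\toup T_\ast}\mathcal{E}_R(v(t),d(t))<\eps_1^2$, then there is $t_0<T_\ast$ with $\mathcal{E}_{2R'}(v(t_0),d(t_0))<\eps_0^2$ for some $R'\le R$; here I would use that $\mathcal{E}_{2R'}(v(t_0),d(t_0))\le\mathcal{E}_{R}(v(t_0),d(t_0))$ up to harmless overlap (or simply take $R'$ small and use absolute continuity of the energy density of $v(t_0),d(t_0)$, which lie in $\rH\times\rH^1$). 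Applying Proposition \ref{Prop:LocalSolwithSmallEnergy} with initial time $t_0$ gives a regular (hence strong) solution on $[t_0,t_0+\tau]$ with $\tau>0$ depending only on $R'$, $\eps$ and $E(v(t_0),d(t_0))\le E_0+\Psi(T)$; by uniqueness it agrees with $(v,d)$ and thus extends it past $T_\ast$, contradicting maximality. The same argument as in Corollary \ref{Cor:SmallNRJBeforemaxTime} (via the higher-order estimates of Corollary \ref{Cor:EstinHigherorderNorms-CC}) guarantees the extended solution is again strong, closing the loop.

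The main obstacle I anticipate is the approximation/limit step: one must verify that the universal constants $\eps_0,r_0,K_j$ and the function $\theta_0$ truly do not degenerate as $(u_0^k,d_0^k)\to(u_0,d_0)$ — i.e.\ that the existence time $T_0$ stays uniformly bounded below — and that the limit object genuinely satisfies the \emph{strong} formulation (equations holding in $\ve^\ast\times L^2$, not merely weakly), which requires the $L^4_{t,x}$ control on $v$ and $\nabla d$ to survive the limit and to be used precisely where \eqref{Eq:EstinHigherorderNorms} and \eqref{Eq:EstPressure-0} are invoked. The contradiction argument for the blow-up criterion is comparatively routine given Proposition \ref{Prop:LocalSolwithSmallEnergy} and Corollary \ref{Cor:SmallNRJBeforemaxTime}.
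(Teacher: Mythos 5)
Your proposal follows essentially the same route as the paper: approximate the rough data by regular data in $\ve\times(D(\hA)\cap\mathcal{M})$, apply Proposition \ref{Prop:LocalSolwithSmallEnergy} to obtain uniformly bounded regular solutions on a common interval, pass to the limit by Aubin--Lions to get a local strong solution (this is exactly the paper's Proposition \ref{Ma2}), then maximalize (the paper uses the Kuratowski--Zorn lemma where you take a supremum of existence times, which amounts to the same thing) and prove the blow-up criterion by the same contradiction argument based on the monotonicity of $\theta_0$. The one small correction: at a time $t_0$ near $T_\ast$ the data $(v(t_0),d(t_0))$ lie only in $\rH\times\rH^1$, so the restart must invoke the rough-data local existence result (Proposition \ref{Ma2}) rather than Proposition \ref{Prop:LocalSolwithSmallEnergy} directly.
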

	For the proof of this theorem, we first prove the existence of a local strong solution which is given by the following proposition.
	\begin{Prop}\label{Ma2}
		There exist $\eps_0>0$ and a function 	
		\[\theta_0: (0,\eps_0)\times (0,\infty) \to (0,\frac34]
		\]
		which is non-increasing w.r.t. the  second variable and nondecreasing  w.r.t. the first one
		such that the following holds. \\
		Let $r_0>0$ be the constant from Lemma \ref{Lem:Struwe} and assume that  the initial data $(v_0,d_0)$ and the forcing $(f,g)$ satisfies Assumption \ref{AssumptionMain}.
		
		 Then, there exists $\varrho_0\in (0, r_0]$ such that
		\begin{equation}\label{Eq:AssumSmallInitialNRJ-1}
		\eps_1^2=2\mathcal{E}_{2\varrho_0}(u_0,d_0)<  \eps_0^2.
		\end{equation}
		Moreover, there exists a local strong solution $((v,d);T_0)$ satisfying
		\begin{equation}\label{eqn-t_o-2}
		\begin{split}
		&T_0 \ge  \frac{\varrho^2_0}{(\varrho_0^\frac12 + 1)^4} \theta_0(\eps_1,E_0)\\
		&\sup_{t\in [0,T_0]}\mathcal{E}_{\varrho_0}(v(t), d(t)) \le 2\eps_1^2.
		\end{split}
		\end{equation}
		Furthermore, there exists a constant $K_5>0$
		\begin{align}
		\sup_{t\in [0,T_0]} \mathcal{E}(v(t), d(t)) + \int_0^{T_0}  \lvert \nabla v (r) \rvert^2_{L^2} dr\le E_0 +\frac12 \int_0^T \left( \lvert f(s) \rvert^2_{\rH^{-1} } + \lvert g(s\rvert^2_{L^2}\right) ds \label{Eq:UnifEstStrongSol-1}\\
		\int_0^{T_0} \lvert \Delta d(t) \rvert^2_{L^2} dt \le K_5 \left[ E_0  + \frac12 \int_0^{T}\left(\lvert f (s) \rvert^2_{\rH^{-1}}+ \lvert g(s) \rvert^2_{L^2}\right) ds + \left(1+\frac{E_0}{R^2}\right)T \right], \;\; R \in (0,\varrho_0].\label{Eq:UnifEstStrongSol-2}
		\end{align}
		where $E_0:= \mathcal{E}(u_0,d_0)$.
		%
		%
		
		\delb{and $(v,d)\in C([0,T_{0}];H\times \rH^{1}_{w})\cap L^{2}(0,T_{0}; V\times \rH^{2})$.
			Also for all $t\in [0,T_0]$, a.e. $x\in \Omega$
			$|d(t,x)|=1$.}
		
	\end{Prop}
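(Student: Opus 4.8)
The plan is to recover $(v,d)$ as a limit of the regular solutions constructed in Section \ref{Sec:LocRegSolSmallENERG}: regularise the data, record uniform a priori bounds for the regular solutions on a common time interval, extract a convergent subsequence, and finally transfer the local energy control and the energy estimates by lower semicontinuity. I would first fix $\varrho_0$: since $\mathcal{E}(v_0,d_0)=E_0<\infty$, the measure $A\mapsto\int_{\Omega\cap A}(\lvert v_0\rvert^2+\lvert\nabla d_0\rvert^2+\phi(d_0))\,dy$ is finite and absolutely continuous, so one can choose $\varrho_0\in(0,r_0]$ with $\eps_1^2:=2\mathcal{E}_{2\varrho_0}(v_0,d_0)<\eps_0^2$, which is \eqref{Eq:AssumSmallInitialNRJ-1}; here $r_0$ and $\eps_0$ are the constants of Lemma \ref{Lem:Struwe} and Lemma \ref{Lem:EstimateofDeltaD+L4norms}. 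Next I would regularise the data: mollify $v_0$ and apply the Leray projection, and mollify $d_0$ then normalise pointwise (licit since $\lvert d_0\rvert=1$ a.e.), producing $(v_0^n,d_0^n)\in\ve\times(D(\hA)\cap\mathcal{M})$ with $(v_0^n,d_0^n)\to(v_0,d_0)$ in $\rH\times\rH^1$, hence $\mathcal{E}(v_0^n,d_0^n)\to E_0$ and $\mathcal{E}_{2\varrho_0}(v_0^n,d_0^n)\to\tfrac12\eps_1^2$; likewise take $(f^n,g^n)\in L^2(0,T;\rH\times\rH^1)$ with $(f^n,g^n)\to(f,g)$ in $L^2(0,T;\rH^{-1}\times L^2)$. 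For $n$ large $(\eps_1^n)^2:=\mathcal{E}_{2\varrho_0}(v_0^n,d_0^n)<\eps_0^2$ and $E_0^n:=\mathcal{E}(v_0^n,d_0^n)\le 2E_0$, so Proposition \ref{Prop:LocalSolwithSmallEnergy} supplies maximal local regular solutions $((v^n,d^n);T^n_\ast)$ with $T(\eps_1^n,\varrho_0)\ge\frac{\varrho_0^2}{(\varrho_0^{1/2}+1)^4}\theta_0(\eps_1^n,E_0^n)$ and $\sup_{[0,T(\eps_1^n,\varrho_0)]}\mathcal{E}_{\varrho_0}(v^n,d^n)\le 2(\eps_1^n)^2$. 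By the monotonicity of $\theta_0$ and the convergence of $(\eps_1^n,E_0^n)$ these lower bounds do not degenerate, so after relabelling $\theta_0$ all $(v^n,d^n)$ are defined on a fixed interval $[0,T_0]$ with $T_0\ge\frac{\varrho_0^2}{(\varrho_0^{1/2}+1)^4}\theta_0(\eps_1,E_0)$.

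On $[0,T_0]$ I would then harvest uniform bounds: the energy inequality \eqref{Eq:NRJ-Inequality} bounds $(v^n,d^n)$ in $L^\infty(0,T_0;\rH\times\rH^1)$ and $v^n$ in $L^2(0,T_0;\ve)$, while \eqref{Eq:EstDeltad}--\eqref{Eq:EstL4normsofU+NablaD} of Lemma \ref{Lem:EstimateofDeltaD+L4norms} (applicable since $[0,T_0]\subset[0,T(\eps_1^n,\varrho_0)]$) bound $d^n$ in $L^2(0,T_0;D(\hA))$ and $(v^n,\nabla d^n)$ in $L^4((0,T_0)\times\Omega)$. Feeding these into \eqref{1b} and estimating each nonlinear term in $\ve^\ast\times L^2$ (for instance $\lvert\nabla d^n\rvert^2 d^n\in L^2(0,T_0;L^2)$ because $\nabla d^n\in L^4$ and $\lvert d^n\rvert=1$, $v^n\cdot\nabla d^n\in L^2(0,T_0;L^2)$ by H\"older, and $B(v^n,v^n)$ and $\Pi\,\Div(\nabla d^n\odot\nabla d^n)$ in $L^2(0,T_0;\ve^\ast)$) gives $(\partial_t v^n,\partial_t d^n)$ bounded in $L^2(0,T_0;\ve^\ast\times L^2)$. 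By Banach--Alaoglu and the Aubin--Lions--Simon lemma a subsequence converges weakly-$\ast$ in $L^\infty$, weakly in $L^2(0,T_0;\ve\times D(\hA))$, with the time derivatives weakly convergent in $L^2(0,T_0;\ve^\ast\times L^2)$ and $(v^n,d^n)\to(v,d)$ strongly in $L^2(0,T_0;\rH\times\rH^1)$ and a.e.; the strong convergence together with the $L^4$ bounds and $\lvert d^n\rvert=1$ lets me pass to the limit in every nonlinear term of \eqref{1b}, so $(v,d)$ satisfies \eqref{eq:LocVelo}--\eqref{eq:LocDir} in $\ve^\ast$ and $L^2$, the continuity in time required by Definition \ref{Def:Strong-Sol} follows from the Lions--Magenes lemma \cite[Lemma III.1.2]{Temam_2001}, and $d(t)\in\mathcal{M}$ for all $t$ from the a.e.\ limit and continuity in $L^2$.

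It remains to transfer the quantitative estimates by lower semicontinuity. For a.e.\ $t\in[0,T_0]$ one has $v^n(t)\rightharpoonup v(t)$ in $\rH$, $d^n(t)\rightharpoonup d(t)$ in $\rH^1$ and $d^n(t)\to d(t)$ in $L^2$; since the maps $u\mapsto\int_{B(x,2\varrho_0)}\lvert u\rvert^2$ and $n\mapsto\int_{B(x,2\varrho_0)}\lvert\nabla n\rvert^2$ are weakly lower semicontinuous and $\phi\ge0$ is continuous, $\mathcal{E}_{\varrho_0}(v(t),d(t))\le\liminf_n 2(\eps_1^n)^2\le\eps_1^2$, which upgrades to all $t\in[0,T_0]$ by continuity and yields the second line of \eqref{eqn-t_o-2}; the estimates \eqref{Eq:UnifEstStrongSol-1} and \eqref{Eq:UnifEstStrongSol-2} follow by passing to the limit in \eqref{Eq:NRJ-Inequality} (with $s=0$, after dropping the nonnegative $R(d^n)$ term) and in \eqref{Eq:EstDeltad}, using weak lower semicontinuity of the norms and $\mathcal{E}(v_0^n,d_0^n)\to E_0$, $(f^n,g^n)\to(f,g)$. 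I expect the main obstacles to be the construction of the \emph{normalised} regular approximations $d_0^n\in D(\hA)\cap\mathcal{M}$ converging to $d_0$ in $\rH^1$ with $\mathcal{E}_{2\varrho_0}(v_0^n,d_0^n)\to\mathcal{E}_{2\varrho_0}(v_0,d_0)$, and the uniform-in-$n$ compactness --- in particular keeping the lower bound for $T(\eps_1^n,\varrho_0)$ bounded away from $0$ --- both of which are handled by the monotonicity of $\theta_0$ and the absolute continuity of the energy measure noted above.
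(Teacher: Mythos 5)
Your proposal is correct and follows essentially the same route as the paper: approximate the data by regular data, invoke Proposition \ref{Prop:LocalSolwithSmallEnergy} on a common time interval guaranteed by the monotonicity of $\theta_0$ and the convergence of the approximate energies, extract a limit via Banach--Alaoglu and Aubin--Lions, pass to the limit in the nonlinear terms using the strong $L^2(0,T_0;\rH^1)$ convergence and the constraint $\lvert d^n\rvert=1$, and transfer \eqref{eqn-t_o-2}--\eqref{Eq:UnifEstStrongSol-2} by weak lower semicontinuity. The one step to tighten is the approximation of $d_0$: ``mollify and normalise'' requires the mollified map to be bounded away from $0$, which is precisely what the Schoen--Uhlenbeck density theorem for $\rH^1(\Omega;\mathbb{S}^2)$ in two dimensions (cited by the paper) provides, so your flagged obstacle is resolved by replacing that ad hoc construction with the citation.
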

	
	\begin{proof}[Proof of Proposition \ref{Ma2}]
		There exists a sequence $\{d_{0}^{k}\}_{k=1}^{\infty}\subset C^{\infty}(\Omega;\mathbb{S}^{2})$ such that
		\[
		\lim_{k\to\infty}\|d_{0}^{k}-d_{0}\|_{H^{1}}=0,
		\]
		see \cite[Section 4]{Schoen-Uhlenbeck-83}.
		By definition,  $v_{0}\in \rH$ can be approximated by a sequence $\{v_{0}^{k}\}_{k=1}^{\infty}\subset \mathcal{V}$ such that
		\[
		\lim_{k\to\infty}|v_{0}^{k}-v_{0}|_{L^{2}}=0.
		\]
		Since embedding $L^2 \hookrightarrow \rH^{-1}$ and $\rH^1\subset L^2$ are dense, then one can also approximate $(f,g)\in L^2(0,T; \rh^{-1} \times L^2)$ by a sequence $((f_k, g_k))_{k \in \mathbb{N}} \subset L^2(0,T; L^2 \times \rh^1)$ in the following sense
		\begin{equation}
		(f_k, g_k) \rightarrow (f,g) \text{ strong in }  L^2(0,T; \rh^{-1} \times L^2).
		\end{equation}
		
		Let $\eps_0>0$ and $R_0\in (0,r_0]$ be the constants from Proposition \ref{Prop:LocalSolwithSmallEnergy}.  Since $\mathcal{E}(v_0,d_0)<\infty$ and
		$\mu(A)=\int_{\Omega\cap A}\left[|v_{0}|^{2}+|\nabla d_{0}|^{2}+\phi(d_{0})\right]\;dx$ is absolutely continuous, then  there exists $\tilde{R_{0}}>0$ such that
		\[
		\eps^2_1:=  \sup_{x\in\Omega}\int_{\Omega\cap B_{2\tilde{R_{0}}}}\left(|v_{0}|^{2}+|\nabla d_{0}|^{2}+\phi(d_{0})\right)\;dx< \eps_0^2.
		\]
		Choosing $\varrho_0=\tilde{R}_0 \wedge R_0$ yields  \eqref{Eq:AssumSmallInitialNRJ-1}.
		%
		Let $\eps>0$ be an arbitrary real number.  Since $(v_{0}^{k}, d_{0}^{k})$ strongly converges to $(v_{0},d_{0})$ in $ L^{2}\times \rH^{1}$,  there exists a number $k_0\in \mathbb{N}$  such that for all $k\ge k_0$
		\begin{align}
		\frac12\int_{\Omega\cap B_{2\varrho_0}(x)}|v_{0}^{k}|^{2}\;dx
		&=\frac12  \int_{\Omega\cap B_{2\varrho_0}(x)}|v_{0}^{k}-v_{0}+v_{0}|^{2}\notag\\
		&\le \int_{\Omega\cap B_{2\varrho_0(x)}}|v_{0}^{k}-v_{0}|^{2}\;dx+  \int_{\Omega\cap B_{2\varrho_0(x)}}|v_{0}|^{2}\;dx\notag\\
		&\le \frac\eps3+  \int_{\Omega\cap B_{2\varrho_0(x)}}|v_{0}|^{2}\;dx.\notag
		\end{align}
		In a similar way one can prove that,
		\[
		\int_{\Omega\cap B_{2\varrho_0}(x)}|\nabla d_{0}^{k}|^{2}\;dx\le \frac\eps3+ \int_{\Omega\cap B_{2\varrho_0(x)}}|\nabla d_{0}|^{2}\;dx.
		\]
		Observe that
		\begin{align}
	\int_{\Omega\cap B_{2\varrho_0}(x)}\phi(d_{0}^{k})\;dx
		&= \int_{\Omega\cap B_{2\varrho_0}(x)}\left(\phi(d_{0}^{k})-\phi(d_{0})+\phi(d_{0})\right)\;dx\notag\\
		&\le \int_{\Omega\cap B_{2\varrho_0}(x)}|\phi(d_{0}^{k})-\phi(d_{0})|\;dx +\int_{\Omega\cap B_{2\varrho_0}(x)}\phi(d_{0})\;dx\notag\\
		&\le \int_{\Omega\cap B_{2\varrho_0}(x)}|\phi(d_{0}^{k})-\phi(d_{0})|\;dx + \int_{\Omega\cap B_{2\varrho_0}(x)}\phi(d_{0})\;dx\notag\\
		&\le  \int_{\Omega\cap B_{2\varrho_0}(x)}|\phi(d_{0}^{k})-\phi(d_{0})|\;dx + \int_{\Omega\cap B_{2\varrho_0}(x)}\phi(d_{0})\;dx.\notag
		\end{align}
		Since $|d_{0}^{k}|=|d_{0}|=1$, $$ \sup_{n \in \mathbb{S}^2 }  \lvert\phi^\prime(n) \rvert\le M $$ and $d_0^k \to d_0$ in $\rH^1$  we deduce that there exists $k_0\in \mathbb{N}$  such that for all $k\ge k_0$
		\begin{align}
		\int_{\Omega\cap B_{2\varrho_0}(x)}\phi(d_{0}^{k})\;dx
		&\le M \int_{\Omega} \lvert d_0^k - d_0\rvert \;dx +  \int_{\Omega\cap B_{2\varrho_0}(x)}\phi(d_{0})\;dx\nonumber\\
		&\le \frac\eps3 +  \int_{\Omega\cap B_{2\varrho_0}(x)}\phi(d_{0})\;dx.\notag
		\end{align}
		Hence,  there exists a constant $k_0\in \mathbb{N}$ such that for all $k\ge k_0$
		\begin{equation}
		\mathcal{E}_{2\varrho_0}(v_0^k, d_0^k) \le \eps +  2 \mathcal{E}_{2\varrho_0}(v_0, d_0).\notag
		\end{equation}
		Since $\eps$ is arbitrary we infer that
		\begin{equation}
		\mathcal{E}_{2\varrho_0}(v_0^k, d_0^k) \le  \eps_1^2 < \eps_0^2.\notag
		\end{equation}
		Without loss of generality, we will assume that for all $k\ge 1$
		\[
		\mathcal{E}_{2\varrho_0}(v_0^k, d_0^k)\le \varepsilon_{1}^{2}.
		\]
		
		By Proposition \ref{Prop:LocalSolwithSmallEnergy} there exist a function $\theta_0:(0,\eps_0) \times [0, \infty)\to (0,\frac34]$ satisfying the properties stated in Proposition \ref{Ma2}, a sequence of time $T_0^k$ satisfying \eqref{eqn-t_o-2} and a sequence of regular solutions $(v^{k},d^{k}):[0,T_{0}^{k}]\to \ve \times \rH^{2}$, with initial condition
		\[
		(v^{k}_{0}(0),d^{k}_{0}(0))=(v_{0}^{k},d_{0}^{k}).
		\]
		Moreover,
		\begin{equation}\label{Eq:NRGSmall}
		\sup_{t\in [0,T_{0}^{k}]}\mathcal{E}_{\varrho_0}(v^k(t), d^k(t))\le  2\varepsilon_{1}^{2}.
		\end{equation}
		We recall that for all $k\ge 1$ and $t\in [0,T_{0}^{k})$, we have
		\begin{align}
		&\sup_{t\le T^{k}_{0}}\mathcal{E}(v^{k}(t), d^{k}(t)) + \int_{0}^{T^{k}_{0}}\left[|\nabla v^{k}|^{2}_{L^{2}}+|\Delta d^{k}-\phi^\prime(d^{k})+\alpha(d^{k})d^{k}+|\nabla d^{k}|^{2}d^{k}|^{2}_{L^{2}}\right]dt\notag\\
		&\le \mathcal{E}(v_{0}^{k},d^{k}_{0})+ \frac12 \int_0^{T^{k}_{0} } (\lvert f_k  \rvert^2_{\rH^{-1}} + \lvert g_k\rvert^2_{L^2}  ) \,dt\le E_{0} + \frac12 \int_0^{T^{k}_{0}} (\lvert f  \rvert^2_{\rH^{-1}} + \lvert g\rvert^2_{L^2}  ) \,dt .\notag
		\end{align}
		Hereafter, we put
		\begin{equation}
		\Xi_0=E_{0} + \frac12 \int_0^{T^{k}_{0}} (\lvert f  \rvert^2_{\rH^{-1}} + \lvert g\rvert^2_{L^2}  ) \,dt.\notag
		\end{equation}
		We also recall that there exists $C>0$ such that  for all $k\in \mathbb{N}$ and $R\in (0,\varrho_0]$
		\begin{align}
		&\int_{0}^{T^{k}_{0}}|\Delta d^{k}|^{2}_{L^{2}}dt\le C \left[ E_0  + \frac12 \int_0^{T}\left(\lvert f (s) \rvert^2_{\rH^{-1}}+ \lvert g(s) \rvert^2_{L^2}\right) ds + \left(1+\frac{E_0}{R^2}\right)T \right],\label{Eq:UnifEstLaplace}\\
		&\int_{0}^{T^{k}_{0}}\left(|v^{k}|^{4}_{L^{4}}+|\nabla d^{k}|^{4}_{L^{4}}\right)dt\le  C \eps_1^2\left[ E_0  + \frac12 \int_0^{T}\left(\lvert f (s) \rvert^2_{\rH^{-1}}+ \lvert g(s) \rvert^2_{L^2}\right) ds + \left(1+\frac{E_0}{R^2}\right)T \right],\label{Eq:UnifEstL4}\\
		&\int_{0}^{T^{k}_{0}}|\nabla v^{k}|^{2}_{L^{2}}dt\le E_{0}+\frac12 \int_0^{T}\left(\lvert f (s) \rvert^2_{\rH^{-1}}+ \lvert g(s) \rvert^2_{L^2}\right) ds, \label{Eq:UnifEstL2}
		\end{align}
		and
		\begin{equation}\label{Eq:UnifEstNRG}
		\sup_{t\le T^{k}_{0}}\left[|v^{k}(t)|^{2}_{L^{2}}+|\nabla d^{k}(t)|^{2}_{L^{2}}+\int_{\Omega}\phi(d^{k}(t))\;dx\right]\le  E_{0}+\frac12 \int_0^{T}\left(\lvert f (s) \rvert^2_{\rH^{-1}}+ \lvert g(s) \rvert^2_{L^2}\right) ds.
		\end{equation}
		We now estimate the time derivatives. Let us put
		\begin{equation}
		\mathcal{E}(d(s)) =\frac12\lvert \nabla d(s) \rvert^2_{L^2}+ \int_\Omega \phi(d(s,x)) \;dx.\notag
		\end{equation}
		Then, we  deduce from Claim \ref{Claim:perp} that
		\begin{align}
		\int_{0}^{T^{k}_{0}}|\partial_sd^{k}(s)|^{2}_{L^{2}}ds =&\int_0^{T^k_0} \langle R(d(s)), \partial_s d(s)\rangle ds -\int_0^{T^k_0} \langle v^k(s)\cdot \nabla d^k(s), \partial_s d^k(s) \rangle ds\notag
		\\
		=&\int_{0}^{T^{k}_{0}}\frac{d}{ds}\mathcal{E}(d^{k}(s))ds-\int_{0}^{T^{k}_{0}}(v^{k}(s)\cdot \nabla d^{k}(s),\partial_sd^{k}(s) )ds.\notag
		\end{align}
		Hence
		\begin{align}
		\int_{0}^{T^{k}_{0}}|\partial_sd^{k}(s)|^{2}_{L^{2}} ds
		&\le \sup_{t\le T^{k}_{0}}\mathcal{E}(d^{k}(t))-\mathcal{E}(d^{k}(0))
		+\int_{0}^{T^{k}_{0}}(v^{k}(s)\cdot \nabla d^{k}(s),\partial_sd^{k}(s))ds\notag\\
		&\le\mathcal{E}(v_{0}^{k},d_{0}^{k})-\mathcal{E}(d^{k}(0))+\int_{0}^{T^{k}_{0}}(v^{k}(s)\cdot \nabla d^{k}(s),\partial_sd^{k}(s))\,ds\notag\\
		&\le \frac{1}{2}|v_{0}^{k}|^{2}_{L^{2}}+\int_{0}^{T^{k}_{0}}(v^{k}(s)\cdot \nabla d^{k}(s),\partial_sd^{k}(s))\,ds\notag\\
		&\le E_{0}+\int_{0}^{T^{k}_{0}}(v^{k}\cdot \nabla d^{k},\partial_sd^{k})\,ds.\notag
		\end{align}
		Now we estimate the integral on the right hand side of the last line as follow:
		\begin{align}
		\int_{0}^{T^{k}_{0}}(v^{k}(s)\cdot \nabla d^{k}(s),\partial_{s}d^{k}(s)) ds
		&\le \int_{0}^{T^{k}_{0}}|v^{k}(s)\cdot \nabla d^{k}(s)|_{L^{2}}|\partial_{s}d^{k}(s)|_{L^{2}} ds \notag\\
		&\le\frac{1}{2}\int_{0}^{T^{k}_{0}}|\partial_{s}d^{k}(s)|^{2}_{L^{2}}\,ds+\frac{1}{2}\int_{0}^{T^{k}_{0}}|v^{k}(s)\cdot \nabla d^{k}(s)|^{2}_{L^{2}}\,ds\notag\\
		&\le\frac{1}{2}\int_{0}^{T^{k}_{0}}|\partial_{s}d^{k}(s)|^{2}_{L^{2}}\,ds+
		\frac{1}{2}\left(\int_{0}^{T^{k}_{0}}|v^{k}(s)|^{4}_{L^{4}}\,ds\right)^{\frac{1}{2}}\left(\int_{0}^{T^{k}_{0}}|\nabla d^{k}(s)|^{4}_{L^{4}}\,ds\right)^{\frac{1}{2}}.\notag\\
		\end{align}
		By employing  \eqref{Eq:UnifEstL4} in the last line we get
		\begin{equation}\notag
		\int_{0}^{T^{k}_{0}}(v^{k}\cdot \nabla d^{k},\partial_{t}d^{k})	 dt\le\frac{1}{2}\int_{0}^{T^{k}_{0}}|\partial_{t}d^{k}|^{2}_{L^{2}}+\frac{C}{2}\varepsilon^{2}_{1}\Xi_{0}.
		\end{equation}
		Summing up  these discussion, we get
		\begin{equation}
		\sup_{k \in \mathbb{N}}\int_{0}^{T^{k}_{0}}|\partial_{t}d^{k}|^{2}_{L^{2}}dt \le E_{0}+\frac{1}{2}\sup_{k \in \mathbb{N}}\int_{0}^{T^{k}_{0}}|\partial_{t}d^{k}|^{2}_{L^{2}}\,dt+ C\varepsilon^{2}_{1}\Xi_{0}.\notag
		\end{equation}
		Thus, we obtain the following uniform estimate of $\partial_t d$
		\begin{equation}\notag
		\sup_{k \in \mathbb{N}}\int_{0}^{T^{k}_{0}}|\partial_{t}d^{k}|^{2}_{L^{2}} \,dt\le 2E_{0} + 2C\varepsilon_{1}^{2} \Xi_{0}.
		\end{equation}
		We now estimate the time derivative $\partial_{t}v^{k}$.
		Let $\varphi\in$ V. We have
		\begin{equation}\notag
		(\partial_{t}v^{k},\varphi)=-(\nabla v^{k},\nabla\varphi)-\int_{\Omega}v^{k}\cdot \nabla v^{k}\varphi \;dx-\int_{\Omega}Div(\nabla d^{k}\odot\nabla d^{k})\varphi \;dx +\int_{D}f(t) \;dx.
		\end{equation}
		Hence
		\begin{align}
		|(\partial_{t}v^{k},\varphi)|
		&\le |\nabla v^{k}|_{L^{2}}|\nabla\varphi|_{L^{2}}+\int_{\Omega}|v^{k}|^{2}|\nabla\varphi|\;dx+\int_{\Omega}|\nabla d^{k}|^{2}|\nabla\varphi|\;dx\notag\\
		&\le |\nabla\varphi|_{L^{2}}\left[|\nabla v^{k}|_{L^{2}}+|v^{k}|^{2}_{L^{4}}+|\nabla d^{k}|^{2}_{L^{4}}+|f(t)|_{L^{2}}\right]\notag
		\end{align}
		This altogether with \eqref{Eq:UnifEstL4} and \eqref{Eq:UnifEstL2}  imply that there exists $C>0$ such that for all $k\in \mathbb{N}$
		\begin{align}
		\sup_{k\in \mathbb{N}}\int_{0}^{T^{k}_{0}}|\partial_{t}v^{k}|^{2}_{\ve^{\ast}}
		&\le 4\left[\int_{0}^{T^{k}_{0}}|\nabla v^{k}|^{2}_{L^{2}} \,ds+\int_{0}^{T^{k}_{0}}|v^{k}|^{4}_{L^{4}}\,ds+\int_{0}^{T^{k}_{0}}|\nabla d^{k}|^{4}_{L^{4}}\,ds+\int_{0}^{T^{k}_{0}}|f(t)|_{L^{2}}^{2} dt\right]\notag\\
		&\le C.\notag
		\end{align}
		
		Now, let us set $$T_{0}=\inf_{k\ge 1}T^{k}_{0}.$$ Since for all $k\ge 1$ $$T^{k}_{0}\ge \theta_{0}(\eps_1, E_0) \frac{\varrho^2_0}{(\varrho_0^\frac12 + 1)^4} ,$$ then by the definition of $T_{0}$, we get $$T_{0}\ge\theta_{0}(\eps_1, E_0) \frac{\varrho^2_0}{(\varrho_0^\frac12 + 1)^4} .$$
		It then follows from the previous analysis that the sequence $\left((v^{k},d^{k})\right)_{k \in \mathbb{N}}$ is bounded in $C([0,T_{0}];\rH\times \rH^{1})\cap L^{2}(0,T_{0};\ve \times \rH^{2})$ and the sequence $\left((\partial_{t}v^{k},\partial_{t}d^{k})\right)_{k \in \mathbb{N}}$ is bounded in $L^{2}(0,T_{0};\ve^{\ast}\times L^{2})$.
		Hence by Aubin-Lions compactness lemma and Banach-Alaoglu theorem, one can extract a subsequence $\left((v^{k_{j}},d^{k_{j}})\right)_{j \in \mathbb{N}}$ from $\left((v^{k},d^{k})\right)_{k \in \mathbb{N}}$ and find $(v,d)$ such that as $j \to \infty$
		\begin{equation}
	\begin{split}
		&(v^{k_{j}},d^{k_{j}})\to (v,d)~ \text{weakly in}~ L^{2}(0,T_{0};\ve\times \rH^{2}),\\
		&(v^{k_{j}},d^{k_{j}})\to (v,d)~ \text{weakly star}~ \text{in}~ L^{\infty}(0,T_{0}; \rH\times \rH^{1}),\\
		&(v^{k_{j}},d^{k_{j}})\to (v,d)~ \text{strongly in}~ L^{2}(0,T_{0};D(\rA^{\frac{\theta}{2}})\times \rH^{1+\theta})
		~\text{for any}~ \theta\in [0,1)\label{fiu}.
	\end{split}
		\end{equation}
		Let $t\in [0,T_0]$. Then, the sequence $(v^{k}(t), d^k(t))_{k \in \mathbb{N}}$ is bounded in $\rH\times \rH^1$. Hence, thanks to the compact embedding $\rH^1\hookrightarrow L^2$ we can and we will assume that the subsequence $\left((v^{k_{j}},d^{k_{j}})\right)_{j \in \mathbb{N}}$ satisfies, for all $t\in [0,T_0]$
		\begin{equation}
		d^{k_j}(t) \to d(t) \text{ strongly in } L^2.\notag
		\end{equation}
		This and the fact $d^{k_j}(t)\in \mathcal{M}$ for all $j \in \mathbb{N}$, $t \in [0,T_0]$ implies that  there exists a constant $C>0$
		such that for all $j \in \mathbb{N}$, $t \in [0,T_0]$
		\begin{align}
		\int_\Omega \lvert 1-\rvert d(t,x) \rvert^2\rvert \;dx =& \int_\Omega \lvert \rvert d^{k_j}(t,x)\rvert^2 -\lvert d(t,x)\rvert^2\rvert \;dx\notag\\
		\le & \lvert d^{k_j}(t,x) -d(t,x)\rvert_{L^2} \lvert d^{k_j}(t,x) + d(t,x)\rvert_{L^2}\notag\\
		\le & C \lvert d^{k_j}(t,x) - d(t,x)\rvert_{L^2} .\notag
		\end{align}
		Passing to the limit as $j\to \infty$ yields
		\begin{equation*}
		\int_\Omega \lvert 1-\rvert d(t,x) \rvert^2\rvert \;dx=0, \text{  for all $t\in [0,T_0]$}.\notag
		\end{equation*}
		Hence, for all $t\in [0,T_0]$
		\begin{equation}\label{Eq:SphereCondition}
	d(t)\in \mathcal{M}.
		\end{equation}
		
		Our next step is to show that the limit $(v,d)$ satisfies the system \eqref{1b}. Hence, we need to pass to the limit in the nonlinear terms. In order to do this, we firstly observe that the
		convergences
		\begin{align}
		&v^{k_{j}}\cdot \nabla v^{k_{j}}\rightharpoonup v\cdot \nabla v~ \text{in}~ L^{2}(0,T_{0};\ve^{\ast})\notag\\
		&-\Div(\nabla d^{k_{j}}\odot\nabla d^{k_{j}})\rightharpoonup -\Div(\nabla d\odot\nabla d)~ \text{in}~ L^{2}(0,T_{0};\ve^{\ast})\notag\\
		&	v^{k_{j}}\cdot \nabla d^{k_{j}}\rightharpoonup v\cdot \nabla d ~\text{in}~ L^{2}(0,T_{0}; L^{2}).\notag
		\end{align}
		are now well-known, see, for instance, \cite{Temam_2001} or \cite{ZB+EH+PR-SPDE_2019}, hence we omit their proof.
		
		Secondly, the most difficult point is the convergence
		\begin{equation}
		\lim_{k\to\infty}\int_{0}^{T_{0}}||\nabla d^{k}|^{2}d^{k}-|\nabla d|^{2}d|_{L^{2}}dt=0, \label{fiu1}
		\end{equation}
		and hence we prove it here. For this quest we notice that  there exists a constant $C>0$ such that  for all $k\ge 1$
		\begin{align}
		\int_{0}^{T_{0}}||\nabla d^{k}|^{2}d^{k}-|\nabla d|^{2}d|^{2}_{L^{2}}dt
		&\le C\int_{0}^{T_0}|(\nabla d^{k}-\nabla d):(\nabla d^{k}+\nabla d)d^{k}|_{L^{2}} dt \notag\\
		&+ C\int_{0}^{T_0}||\nabla d|(d^{k}-d)|_{L^{2}}dt.\notag
		\end{align}
		By the H\"older  inequality, we have
		\begin{align}
		\int_{0}^{T_{0}}||\nabla d^{k}|^{2}d^{k}-|\nabla d|^{2}d|_{L^{2}}\,ds
		&\le C\left(\int_{0}^{T_{0}}|\nabla d^{k}-\nabla d|^{2}_{L^{4}}\,ds\right)^{\frac{1}{2}}
		\left(\int_{0}^{T_{0}}(|\nabla d^{k}|^{2}_{L^{4}}+|\nabla d|^{2}_{L^{4}})\,ds\right)^{\frac{1}{2}}\sup_{t\in [0,T_0]}|d^{k}(t)|_{L^{\infty}} \notag\\
		&+ \left(\int_{0}^{T_{0}}|\nabla d|^{2}_{L^{4}}\,ds\right)^{\frac{1}{2}}\left(\int_{0}^{T_{0}}|d^{k}-d|^{2}_{L^{\infty}}\,ds\right)^{\frac{1}{2}}\notag
		\end{align}
	By the Ladyzhenskaya inequality (\cite[Lemma III.3.3]{Temam_2001}) and the Sobolev embedding $\rH^{1+\theta}\hookrightarrow L^{\infty} (\theta\in (0,1))$, we arrive at
		\begin{align}
		\int_{0}^{T_{0}}||\nabla d^{k}|^{2}d^{k}-|\nabla d|^{2}d|_{L^{2}}\,ds
		&\le C\left(\int_{0}^{T_{0}}|\nabla d^{k}-\nabla d|_{L^{2}}|\Delta d^{k}-\Delta d|_{L^{2}}\,ds\right)^{\frac{1}{2}}\notag\\
		&\qquad +C\left(\int_{0}^{T_{0}}|d^{k}-d|^{2}_{\rH^{1+\theta}}\,ds\right)^{\frac{1}{2}}\notag\\
		&\le \left(\int_{0}^{T_{0}}|\nabla d^{k}-\nabla d|^{2}_{L^{2}}\,ds\right)^{\frac{1}{2}}\left(\int_{0}^{T_{0}}|\Delta d^{k}-\Delta d|^{2}_{L^{2}}\,ds\right)^{\frac{1}{2}} \notag \\
		&\qquad + C\left(\int_{0}^{T_{0}}|d^{k}-d|^{2}_{\rH^{1+\theta}}\,ds\right)^{\frac{1}{2}}\notag
		\end{align}
		where we have also used the fact that
		\[
		\sup_{k}\int_{0}^{T}|\nabla d^{k}|^{2}_{L^{4}}\,ds<\infty~ \text{and}~ d_{k}(t)\in \mathcal{M},\, t\in [0,T_0).
		\]
		The strong convergence (\ref{fiu}) and the  fact that
		\[
		\sup_{k}\left(\int_{0}^{T_{0}}|\Delta d^{k}-\Delta d|^{2}_{L^{2}}\,ds\right)^{\frac{1}{2}}<\infty
		\]
		completes the proof of  (\ref{fiu1}).
		
		\noindent We now study the convergence of the term $\phi^\prime (d^{k})$.
		Since $d^{k_{j}}$ $\to$ d strongly in $L^{2}(0,T_{0};\rH^{1})$, we can assume that
		$d^{k_{j}}\to$ $d$ ~a.e.~ $(t,x)\in [0,T_{0}]\times\Omega$. Thus,
		by the continuity of $\phi^\prime(.)$, we get
		\begin{equation}
		\phi^\prime(d^{k_{j}})\to \phi^\prime(d)~ a.e.~ (t,x)\in [0,T_{0})\times\Omega.\notag
		\end{equation}
		Since $d^{k_{j}}(t)\in \mathcal{M}, t\in [0,T_0)$  and, by assumption,  $|\phi^\prime(d^{k_{j}})|\le M$, the Lebesgue dominated convergence theorem implies that
		\begin{equation}
		\phi^\prime(d^{k_{j}})\to \phi^\prime(d)~ \text{in}~ L^{2}([0,T_{0})\times\Omega)=L^{2}(0,T_{0};L^{2}).\notag
		\end{equation}
	
		Since $d^{k_{j}}(t) \in \mathcal{M}, \,t\in [0,T_0)$, $|\phi^\prime(d^{k_{j}})|\le M$, we have
		\begin{align}
		&\int_{0}^{T_{0}}|(d^{k_{j}},\phi^\prime(d^{k_{j}}))d^{k_{j}}-(d,\phi^\prime(d))d|^{2}_{L^{2}}\notag\\
		&\le \int_{0}^{T_{0}}|(d^{k_{j}}-d,\phi^\prime(d^{k_{j}}))d^{k_{j}}+ (d,\phi^\prime(d^{k_{j}})-\phi^\prime(d))d^{k_{j}}|^{2}_{L^{2}}\notag\\
		&+\int_{0}^{T_{0}}|(d,\phi^\prime(d))(d^{k_{j}}-d)|^{2}_{L^{2}}\notag\\
		&\le M\int_{0}^{T_{0}}|d^{k_{j}}-d|^{2}_{L^{2}}dt+\int_{0}^{T_{0}}|\phi^\prime(d^{k_{j}})-\phi^\prime(d)|^{2}_{L^{2}}dt\notag\\
		& +\int_{0}^{T_{0}}|d^{k_{j}}-d|^{2}_{L^{2}}dt.\notag
		\end{align}
		By the convergence $\phi^\prime(d^{k_{j}})\to\phi^\prime(d)$ strongly in $L^{2}(0,T_{0}; L^{2})$ and $d^{k_{j}}\to d$ strongly in $L^{2}(0,T_{0};L^{\infty})$, we obtain
		\begin{equation}
		\alpha(d^{k_{j}})d^{k_{j}}\to \alpha(d)d ~\text{in}~ L^{2}(0,T_{0};L^{2}).\notag
		\end{equation}
		Since
		\begin{align}
		&\int_{0}^{T_{0}}|d^{k_{j}}\times g(t)-d\times g(t)|_{L^{2}}dt\notag\\
		&=\int_{0}^{T_{0}}|(d^{k_{j}}-d)\times g(t)|_{L^{2}}dt\notag\\
		&\le\int_{0}^{T_{0}}|d^{k_{j}}-d|_{L^{\infty}}|g(t)|_{L^{2}}dt\notag\\
		&\le\int_{0}^{T_{0}}|d^{k^{j}}-d|^{2}_{L^{\infty}}dt\int_{0}^{T_{0}}|g(t)|^{2}_{L^{2}}dt,\notag
		\end{align}
		and  $d^{k_{j}}\to d$ strongly in $L^{2}(0,T_{0};L^{\infty})$, we get
		\begin{equation}
		d^{k_{j}}\times g \to d\times g~ \text{in}~ L^{1}(0,T_{0};L^{\infty}).\notag
		\end{equation}
		
		We will now prove that $(v,d)$ satisfies the initial conditions and that $(v,d)\in C([0,T_0]; \rH\times \rH^1)$. Towards these goals, we first observe that
		since $(v,d)\in L^{\infty}(0,T_{0};\rH\times \rH^{1})$ and $$(\partial_{t}v^{k_{j}},\partial_{t}d^{k_{j}}) \rightharpoonup (\partial_{t}v,\partial_{t}d)\text{ in } L^{2}(0,T_{0};\ve^{\ast}\times L^{2}),$$  then by the Strauss theorem, see \cite[Lemma III.1.2]{Temam_2001},  we get
		\[
		(v,d)\in C([0,T_{0}];\rH_{w}\times \rH^{1}_{w}),
		\]
		
		and $d\in C([0,T_{0}];L^{2})$.  Hence,
		\begin{align}
		\lim_{t\to 0}(v(t),\varphi)=(v_{0},\varphi), \forall \varphi\in \rH,\label{Eq:Weak-ConvInitDatavelo}\\
		\lim_{t\to 0}(\nabla d(t),\Psi)=(\nabla d_{0},\Psi), \forall \Psi\in L^{2},\label{Eq:Weak-ConvInitDataOptDir}\\
		\lim_{t\to 0}\phi(d(t))=\phi(d_{0})~ \text{in}~ L^{2}.\label{Eq:StrongConve}
		\end{align}
		From all these passages to the limits we see that the limit $v$ and $d$ satisfy the equations \eqref{eq:LocVelo} and \eqref{eq:LocVelo} in $\ve^\ast$ and $L^2$, respectively.
		
		The estimates \eqref{Eq:UnifEstStrongSol-1} and \eqref{Eq:UnifEstStrongSol-2} are established by passing to the limit and  using the weak lower semicontinuity of the norms in
		the estimates \eqref{Eq:UnifEstNRG}, \eqref{Eq:UnifEstL2} and \eqref{Eq:UnifEstLaplace}.
		
		What remains to prove is the continuity of $(v,d):[0,T_0] \to \rH\times D(\hA^\frac12)$.
		For this we will firstly establish that
		\begin{equation}\label{Eq:EstDerrivativeintime-1}
		(\partial_tv, \partial d) \in L^2(0,T; \ve^\ast \times L^2).
		\end{equation}
		Towards this aim we recall that it was proved in \cite[Proofs of  Eqs (3.27), (3.28), (3.31) and (3.32)]{ZB+EH+PR-SPDE_2019}  that there exists a constant $C>0$ such that for $(v,d)\in C([0,T_0);\rH \times D(\hA^\frac12) )\cap L^2(0,T_0; \ve\times D(\hA))$
		\begin{align}
		\lvert -\rA v -B(v,v)-\Pi(\Div [\nabla d \odot \nabla d]) \rvert_{L^2(0,T_0; \ve^\ast)} \le C,\notag\\
		\lvert -\hA d -v \cdot \nabla d + d\times g \rvert_{L^2(0,T_0; L^2)} \le C.\notag
		\end{align}
		Note that the estimates in \cite[Eqs (3.27), (3.28), (3.31) and (3.32)]{ZB+EH+PR-SPDE_2019} are for $L^2(0,T_0; \ve^\ast)$- and $L^2(0,T; L^2)$-valued random variables, but they remain valid for deterministic $L^2(0,T_0; \ve^\ast)$ and $L^2(0,T; L^2)$ functions.
		Since $d(t)\in \mathcal{M}$, $t\in [0,T_0)$,  we easily infer that there exists a constant $C>0$ such that
		\begin{align}
		\lvert \lvert \nabla d \rvert^2 d \rvert^2_{L^2(0,T_0;L^2)} \le \lvert \nabla d \rvert^4_{L^4(0,T_0; L^4)} \le \sup_{t\in [0,T_0]} \lvert \nabla d(t) \rvert^2 \int_0^{T_0} \lvert d(s) \rvert^2_{\rH^2} ds< C.\notag
		\end{align}
		Using \eqref{Eq:LInearGrowthPhiprime} and the constraint $d(t)\in \mathcal{M}$, $t\in [0,T_0)$,  easily implies that there exists a constant $C>0$ such that
		\begin{equation}
		\lvert -\phi^\prime(d) + (\phi^\prime(d) \cdot d) d \rvert_{L^2(0,T_0; L^2)}\le C.\notag
		\end{equation}
		These estimates completes the proof of  \eqref{Eq:EstDerrivativeintime-1}.
		
		Secondly, from \eqref{Eq:EstDerrivativeintime-1}, the fact $(v,d)\in L^2(0,T_0; \ve\times D(\hA))$ and \cite[Lemma 3.1.2]{Temam_2001} we infer that
		$$ (v,d) \in C([0,T_0]; \rH\times D(\hA^\frac12).$$
		This completes the proof of Proposition \ref{Ma2}.

	\end{proof}
	Now, we proceed to the proof of Theorem \ref{Ma1}.
	\begin{proof}[Proof of Theorem \ref{Ma1}]
		In order to prove the theorem, let us denote by $\Sigma$ the set of local  solutions to problem \eqref{1b}.  By Proposition \ref{Ma2}, the set $\Sigma$ is non empty and we can and will assume that the time of existence $T_0$ of any local solution $(v,d)\in \Sigma$ satisfies the property \eqref{eqn-t_o-2} stated in Proposition \ref{Ma2}.
		Let us define the relation $\lesssim$ on $\Sigma$ by
		$$(y_{1};\sigma_{1})\lesssim (y_{2};\sigma_{2})\text{ if } \sigma_{1}\le\sigma_{2} \text{ and } y_{2}=y_{1} \text{ on } [0,\sigma_{1}] , \text{ for all }
		(y_{i};\sigma_{i}):=((u_{i},d_{i});\sigma_{i})\in\Sigma, i=1,2.$$
		We briefly show below that the relation $\lesssim$ is reflexive, antisymmetric and transitive.
		
		\noindent	\textbf{Reflexivity}. Let $(y_{1};\sigma_{1})\in\Sigma$. Then $(y_{1};\sigma_{1})\lesssim (y_{1};\sigma_{1})$ by definition.
		
		\noindent	\textbf{Antisymmetry}. Let $(y_{i};\sigma_{i})\in\Sigma$, $i=1,2$. Suppose that $(y_{1};\sigma_{1})\lesssim (y_{2};\sigma_{2})$ and $(y_{2};\sigma_{2})\lesssim (y_{1};\sigma_{1})$. This implies that
		$y_{2}=y_{1}$ on $[0,\sigma_{1}]$, $y_{1}=y_{2}$ on $[0,\sigma_{2}]$ and $\sigma_{1}=\sigma_{2}$. This proves the antisymmetric property of $\lesssim$.
		
		\noindent	\textbf{Transitivity}. Let $(y_{i},\sigma_{i})\in\Sigma$, $i=1,2,3$. Suppose that $(y_{1};\sigma_{1})\lesssim (y_{2};\sigma_{2})$ and $(y_{2};\sigma_{2})\lesssim (y_{3},\sigma_{3})$. We will prove that $(y_{1};\sigma_{1})\lesssim (y_{3};\sigma_{3})$. For this purpose, we observe that
		\begin{align}
		&\sigma_{1}\le\sigma_{2}~ \text{and} ~y_{2}=y_{1}~ \text{on}~ [0,\sigma_{1}],\notag\\
		&\sigma_{2}\le\sigma_{3}~ \text{and}~ y_{3}=y_{2}~ \text{on}~ [0,\sigma_{2}].\notag
		\end{align}
		Hence
		\begin{equation}
		\sigma_{1}\le \sigma_{3},\label{Ma5}
		\end{equation}
		\begin{equation}
		y_{3}=y_{2}=y_{1}~ \text{on}~ [0,\sigma_{1}].\label{Ma6}
		\end{equation}
		The facts (\ref{Ma5}) and (\ref{Ma6}) prove the transitivity property of $\lesssim$.
		
		In order to prove the existence of a maximal element in $\Sigma$ we shall use the Kuratowski-Zorn Lemma. Hence, we need to prove that all increasing chain in $\Sigma$ has an upper bound. For this purpose, let $((v_k, d_k);\sigma_{k})_{k \in \mathbb{N}}$ be an increasing chain in $\Sigma$.
		We will show that this sequence has an upper bound.  In order to do that we set
		$$ \sigma=\sup_{k \in \mathbb{N} } \sigma_k$$ and define $(v,d):[0,\sigma) \to \rH\times \rH^1$ by
		\begin{equation}
		(v,d)_{\vert_{[0,\sigma_k)}}=(v_k,d_k).\notag
		\end{equation}
		From these definitions it is clear that for all $k$ $\sigma_k\le\sigma $ and $(v,d)=(v_k,d_k)$ on $[0,\sigma_k)$ for all $k \in \mathbb{N}$.  Hence, $((v,d);\sigma)\in \Sigma$ and it is an upper bound of the increasing chain $((v_k, d_k);\sigma_{k})_{k \in \mathbb{N}}$.
		Therefore, it now follows from the Kuratowski-Zorn lemma that $\Sigma$ has a maximal element which is a maximal local solution to \eqref{1b}.  This completes the proof of Theorem \ref{Ma1}.
	\end{proof}
	The following result  gives an important property of the energy of the maximal solution $((v,d); T_{\ast})$ near the point $T_\ast$.
	\begin{Prop}
		Let $\eps_0>0$, $\varrho_0>0$ and $\theta_0$ be the constants and the function from  Proposition \ref{Ma2} and
		$$ \eps_1^2= 2\mathcal{E}_{2\varrho_0}(v_0,d_0).$$ Let   $((v,d);T_{\ast})$  be the maximal solution defined in Theorem \ref{Ma1}. Then,
		\begin{equation}\label{Eq:LossNRJ}
		\limsup_{t\toup T_{\ast}}\sup_{x\in\Omega}\int_{B_{R}(x)}\left[|v(t)|^{2}+|\nabla d(t)|^{2}+\phi(d(t))\right]dy\ge\varepsilon^{2}_{1},
		\end{equation}
		for all $R\in (0, \varrho_0]$.
	\end{Prop}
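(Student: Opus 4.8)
The statement asserts that the maximal local strong solution $((v,d);T_\ast)$ constructed in Theorem \ref{Ma1} cannot be continued past $T_\ast$ precisely because the localized energy concentrates to at least $\eps_1^2$ somewhere as $t\toup T_\ast$. I would argue by contradiction: suppose \eqref{Eq:LossNRJ} fails for some $R\in(0,\varrho_0]$, i.e.
\[
\limsup_{t\toup T_\ast}\sup_{x\in\Omega}\int_{B_R(x)}\bigl[|v(t)|^2+|\nabla d(t)|^2+\phi(d(t))\bigr]\,dy<\eps_1^2 .
\]
Since $\eps_1^2=2\mathcal{E}_{2\varrho_0}(v_0,d_0)<\eps_0^2$, this says that $\mathcal{E}_{R/2}(v(t),d(t))<\tfrac12\eps_1^2<\eps_1^2$ (after adjusting the ball radius by the factor of $2$ in the definition \eqref{eqn-energy-loc}) for all $t$ sufficiently close to $T_\ast$, say $t\in[T_\ast-\delta,T_\ast)$ for some $\delta>0$. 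The goal is to conclude that $(v,d)$ extends to a strong solution on a strictly larger interval, contradicting maximality.

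First I would fix a time $t_0\in[T_\ast-\delta,T_\ast)$ close enough to $T_\ast$ that $\mathcal{E}_{\widetilde R}(v(t_0),d(t_0))<\eps_0^2$ for an appropriate radius $\widetilde R\le\varrho_0$ (such a radius exists by the absolute continuity argument used in the proof of Proposition \ref{Ma2}, applied to the measure with density $|v(t_0)|^2+|\nabla d(t_0)|^2+\phi(d(t_0))$, whose total mass is finite by the energy inequality \eqref{Eq:NRJ-Inequality}). One must be slightly careful here: $\mathcal{E}(v(t_0),d(t_0))$ is controlled by $E_0+\Psi(T)$ via \eqref{Eq:NRJ-Inequality}, so the small-energy radius can be chosen uniformly. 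Then I would apply Proposition \ref{Prop:LocalSolwithSmallEnergy} (equivalently Proposition \ref{Ma2}) with initial data $(v(t_0),d(t_0))$ at time $t_0$: this produces a local strong solution $(\tilde v,\tilde d)$ on $[t_0,t_0+\tau]$ with
\[
\tau\ge \frac{\widetilde R^2}{(\widetilde R^{1/2}+1)^4}\,\theta_0(\tilde\eps_1,\widetilde E_0)>0,
\]
where $\widetilde E_0=\mathcal{E}(v(t_0),d(t_0))\le E_0+\Psi(T)$ and $\tilde\eps_1^2=\mathcal{E}_{2\widetilde R}(v(t_0),d(t_0))$. Crucially, because $\theta_0$ is non-increasing in its second argument and $\widetilde E_0$ is bounded uniformly in $t_0$, the lower bound on $\tau$ is a fixed positive number, independent of how close $t_0$ is to $T_\ast$.

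Next, by the uniqueness part of Theorem \ref{th} (or Proposition \ref{Prop:Uniq}), $(\tilde v,\tilde d)$ coincides with $(v,d)$ on $[t_0,T_\ast)\cap[t_0,t_0+\tau]$, so gluing $(v,d)$ on $[0,t_0]$ with $(\tilde v,\tilde d)$ on $[t_0,t_0+\tau]$ yields a local strong solution on $[0,t_0+\tau]$. Choosing $t_0$ within distance $\tau/2$ of $T_\ast$ gives $t_0+\tau>T_\ast$, contradicting the maximality of $T_\ast$. The main obstacle — and the step deserving the most care — is verifying that the restarted solution $(v(t_0),d(t_0))$ genuinely satisfies the hypotheses of Proposition \ref{Ma2} with constants uniform in $t_0$: one needs $(v(t_0),d(t_0))\in\rH\times(\rH^1\cap\mathcal{M})$ (which holds since $(v,d)\in C([0,T_\ast);\rH\times\rH^1)$ and $d(t)\in\mathcal{M}$ for all $t$), the small-local-energy condition (which is exactly the negation of \eqref{Eq:LossNRJ} after adjusting radii), and the uniform bound on $\widetilde E_0$ (from \eqref{Eq:NRJ-Inequality}); assembling these so that $\tau$ has a uniform positive lower bound is what makes the contradiction work.
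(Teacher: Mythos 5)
Your argument is correct and is essentially the paper's own proof: both argue by contradiction, restart the solution at a time $t_0$ (resp.\ $t_m$) close to $T_\ast$ where the local energy is below the threshold, invoke Proposition \ref{Ma2} together with the monotonicity of $\theta_0$ in the energy and the uniform bound $E_\ast=\sup_{t<T_\ast}\mathcal{E}(v(t),d(t))<\infty$ from \eqref{Eq:NRJ-Inequality} to get a restart existence time bounded below uniformly in $t_0$, and then choose $t_0$ close enough to $T_\ast$ to extend past $T_\ast$ and contradict maximality. The only cosmetic difference is that the paper extracts a sequence $t_n\toup T_\ast$ realizing the negated limsup rather than working on a whole left neighbourhood, and your explicit appeal to uniqueness for the gluing step is a point the paper glosses over.
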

	\begin{proof}
		We prove the proposition by contradiction. Suppose that there exists $R>0$ such that
		\[
		\limsup_{t\toup T_{\ast}}\sup_{x\in\Omega}\int_{B_{R}(x)}\left[|v(t)|^{2}+|\nabla d(t)|^{2}+\phi(d(t))\right]dy<\varepsilon^{2}_{1}.
		\]
		Thus, there exists an increasing sequence $(t_{n})_{n\in\mathbb{N}}$ such that
		\begin{equation}
		t_{n}\toup T_{\ast}~ \text{as}~ n\to\infty,\label{Ma10-0}
		\end{equation}
		and
		\begin{equation}\label{Ma10}
		\lim_{n\to\infty}\sup_{x\in\Omega}\int_{B_{R}(x)}\left[|v(t_{n})|^{2}+
		|\nabla d(t_{n})|^{2}+\phi(d(t_{n}))\right]dy<\varepsilon^{2}_{1}.
		\end{equation}
		From (\ref{Ma10-0}) and \eqref{Ma10} we deduce that
		\begin{itemize}
			\item[(a)] $\forall$ $\delta>0$, there exists $m\in\mathbb{N}$ such that $0<T_{\ast}-t_{m}<\delta$,
			\item[(b)] we can and will  assume that  for all $n\in\mathbb{N}$
			\begin{equation}
			\sup_{x\in\Omega}\int_{B_{R}(x)}\left[|v(t_{n})|^{2}+|\nabla d(t_{n})|^{2}+\phi(d(t_{n}))\right]dy<\varepsilon^{2}_{1}.\notag
			\end{equation}
		\end{itemize}
		By the global energy inequality \eqref{Eq:NRJ-Inequality}, we get
		\begin{equation}
		E_{\ast}=\sup_{t<T_{\ast}}\mathcal{E}(v(t),d(t))<\infty.\label{Ma10-1}
		\end{equation}
		By (a), for $\delta=\frac{1}{2}\theta_{0}(\varepsilon^{2}_{1},E_{\ast})R^{2}$, there exists $m\in\mathbb{N}$ such that
		\begin{equation}
		0<T_{\ast}-t_{m}<\frac{1}{2}\theta_{0}(\varepsilon^{2}_{1},E_{\ast})R^{2}_{0}.\label{Ma11}
		\end{equation}
		By (b), (\ref{Ma10-1}) and \eqref{Ma10} we get
		\begin{equation}
		\mathcal{E}(v(t_{m}),d(t_{m}))\le E_{\ast},\notag
		\end{equation}
		and
		\begin{equation}
		\sup_{x\in\Omega}\int_{B_{R}(x)}\left[|v(t_{m})|^{2}+|\nabla d(t_{m})|^{2}+\phi(d(t_{m}))\right]dy<\varepsilon^{2}_{1}.\notag
		\end{equation}
		Hence,  by  Proposition \ref{Ma2}, there exists a solution $(\tilde{v},\tilde{d})$ defined on $[t_{m},\tau +t_{m}]$ with
		$$\tau\ge\theta_{0}(\varepsilon^{2}_{1},\mathcal{E}(v(t_{m}),d(t_{m}))R^{2}_{0}.$$ But observe that $\theta_{0}(\varepsilon^{2}_{1},E_{0})$ is a non-increasing function of the initial energy $E_{0}$. Hence, by (\ref{Ma11})
		\begin{equation}
		\tau\ge \theta_{0}(\varepsilon^{2}_{1},\mathcal{E}(v(t_{m}),d(t_{m}))R^{2}_{0}\ge\theta_{0}(\varepsilon^{2}_{1},E_{\ast})R^{2}_{0}>2(T_{\ast}-t_{m}).\notag
		\end{equation}
		By doing elementary calculation we obtain
		\begin{align}
		t_{m}+\tau
		&\ge 2(T_{\ast}-t_{m})+t_{m}\notag\\
		&\ge T_{\ast}+T_{\ast}-t_{m}  \notag\\
		&>T_{\ast}~ (~\text{because}~  T_{\ast}-t_{m}>0, \text{ see  \eqref{Ma11}}).\notag
		\end{align}
		Hence, we get the existence of a local solution $(\tilde{v},\tilde{d}):[0,\tau+t_{m})\to \rH\times \rH^{1}$ with $\tau+t_{m}>T_{\ast}$ and
		$(\tilde{v},\tilde{d})=(v,d)~ \text{on}~ [0,T_{\ast})$. This contradicts the fact that $((v,d);T_{\ast})$ is a maximal solution.
	\end{proof}
	
	We now give the promised proof of Theorem \ref{thm-main}.
	\begin{proof}[Proof of Theorem \ref{thm-main}]
		Let $((v,d),T_{\ast})$ be the maximal local strong solution to \eqref{1b} constructed from Theorem \ref{Ma1}. Firstly, we set $T_\ast=T_1$ and prove the following result.
		\begin{Lem}\label{Lem:Continuity}
			The maximal local strong solution $((v,d),T_1)$ satisfies
			\begin{align}
			(\partial_t v, \partial_td) \in L^2(0,T_1; D(\rA^{-\frac32}) \times D(\hA)^{\ast} ),\notag\\
			(v,d)\in C([0,T_1 ];\rH \times L^{2}(\Omega)).\notag
			\end{align}
		\end{Lem}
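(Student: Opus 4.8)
The plan is to prove the two assertions separately: the time--derivative bound is almost immediate from the equations and the energy inequality \eqref{Eq:NRJ-Inequality}, whereas the continuity up to $T_1$ rests on the fact that the maximal solution keeps its local energy below the critical level on the whole of $[0,T_1)$. First I would fix the a priori information coming from \eqref{Eq:NRJ-Inequality}: since $((v,d);T_1)$ is a maximal local strong solution, that inequality holds on every $[0,t]$ with $t<T_1$, so (letting $t\uparrow T_1$) $(v,d)$ is bounded in $L^\infty(0,T_1;\rH\times\rH^1)$, $v\in L^2(0,T_1;\ve)$, $R(d)\in L^2(0,T_1;L^2)$, and $\lvert d(t)\rvert=1$ a.e.\ for $t<T_1$. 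Then, working directly with \eqref{1b}, I would estimate the right--hand sides by duality. Testing the velocity equation against $\psi\in D(\rA^{3/2})$ and using $\Div v=0$ together with the two--dimensional embedding $D(\rA^{3/2})\hookrightarrow W^{1,\infty}$ gives $\lvert\langle B(v,v),\psi\rangle\rvert\le\lvert v\rvert_{L^2}^2\lvert\nabla\psi\rvert_{L^\infty}$ and $\lvert\langle\Pi\Div(\nabla d\odot\nabla d),\psi\rangle\rvert\le\lvert\nabla d\rvert_{L^2}^2\lvert\nabla\psi\rvert_{L^\infty}$; with $\rA v$ and $\Pi f$ treated by $v\in L^2(0,T_1;\ve)$ and $f\in L^2(0,T;\rH^{-1})$ this yields $\partial_t v\in L^2(0,T_1;D(\rA^{-\frac32}))$. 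Testing the director equation against $\psi\in D(\hA)\hookrightarrow L^\infty$ (again dimension two) one bounds $\lvert\langle v\cdot\nabla d,\psi\rangle\rvert=\lvert\langle d,v\cdot\nabla\psi\rangle\rvert\le\lvert v\rvert_{L^2}\lvert\nabla\psi\rvert_{L^2}$ and $\lvert\langle\lvert\nabla d\rvert^2 d,\psi\rangle\rvert\le\lvert\nabla d\rvert_{L^2}^2\lvert\psi\rvert_{L^\infty}$, while $\hA d$, $d\times g$, $\phi'(d)$ and $\alpha(d)d$ are handled by $d\in L^2(0,T_1;D(\hA^{1/2}))$, $g\in L^2(0,T;L^2)$ and $\lvert d\rvert=1$; this gives $\partial_t d\in L^2(0,T_1;D(\hA)^\ast)$, which with the previous bound is the first assertion.

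The harder — and really the only delicate — point is to improve the interior regularity up to $T_1$. The crucial observation is that the maximal solution of Theorem \ref{Ma1} was extracted from a class of local solutions each satisfying \eqref{eqn-t_o-2}, so that $\mathcal{E}_{\varrho_0}(v(t),d(t))\le 2\eps_1^2$ for \emph{every} $t\in[0,T_1)$; in the notation of \eqref{Eq:DefStoppingTime} this says $T(\eps_1,\varrho_0)\ge T_1$. Consequently the Struwe--type estimates of Lemma \ref{Lem:EstimateofDeltaD+L4norms}, equivalently the bounds \eqref{Eq:UnifEstStrongSol-1}--\eqref{Eq:UnifEstStrongSol-2} of Proposition \ref{Ma2}, are available on every $[0,t]$, $t<T_1$, with constants that do not depend on $t$; letting $t\uparrow T_1$ I would obtain
\[
\nabla v\in L^2(0,T_1;L^2),\qquad \Delta d\in L^2(0,T_1;L^2),\qquad v,\ \nabla d\in L^4(0,T_1;L^4),
\]
and in particular $d\in L^2(0,T_1;D(\hA))$. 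This is the step I expect to be the main obstacle, since all of it hinges on knowing that the local energy of the maximal solution never reaches the threshold $2\eps_1^2$ on $[0,T_1)$ — a property built into the construction in Theorem \ref{Ma1} but which must be invoked carefully.

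Finally, with the bounds just obtained I would revisit \eqref{1b} once more: $v\otimes v$ and $\nabla d\odot\nabla d$ now lie in $L^2(0,T_1;L^2)$, so $B(v,v)=\Pi\Div(v\otimes v)$ and $\Pi\Div(\nabla d\odot\nabla d)$ belong to $L^2(0,T_1;\ve^\ast)$, whence $\partial_t v\in L^2(0,T_1;\ve^\ast)$; similarly $\hA d\in L^2(0,T_1;L^2)$, $v\cdot\nabla d\in L^2(0,T_1;L^2)$ by H\"older (using $v,\nabla d\in L^4$), $\lvert\nabla d\rvert^2 d\in L^2(0,T_1;L^2)$, and the remaining terms are bounded, so $\partial_t d\in L^2(0,T_1;L^2)$. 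It then remains to apply the Lions--Magenes lemma (\cite[Lemma III.1.2]{Temam_2001}): to the Gelfand triple $\ve\hookrightarrow\rH\hookrightarrow\ve^\ast$, with $v\in L^2(0,T_1;\ve)$ and $\partial_t v\in L^2(0,T_1;\ve^\ast)$, to get $v\in C([0,T_1];\rH)$; and to $\rH^1\hookrightarrow L^2\hookrightarrow(\rH^1)^\ast$, with $d\in L^2(0,T_1;\rH^1)$ and $\partial_t d\in L^2(0,T_1;L^2)\hookrightarrow L^2(0,T_1;(\rH^1)^\ast)$, to get $d\in C([0,T_1];L^2)$. This would complete the proof of both assertions; everything beyond Step two above is routine.
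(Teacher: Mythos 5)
Your first assertion is proved essentially as in the paper: test the two equations against $\varphi\in D(\rA^{3/2})$ and $\Psi\in D(\hA)$, use $D(\rA^{3/2})\hookrightarrow\rH^{3}\hookrightarrow C^{0}$ and $D(\hA)\hookrightarrow\rH^{2}\hookrightarrow L^{\infty}$, and conclude from $(v,d)\in L^{\infty}(0,T_{1};\rH\times\rH^{1})$, $v\in L^{2}(0,T_{1};\ve)$ and $\lvert d\rvert=1$ (the paper bounds $\langle v\cdot\nabla d,\Psi\rangle$ directly by $\lvert v\rvert_{L^{2}}\lvert\nabla d\rvert_{L^{2}}\lvert\Psi\rvert_{L^{\infty}}$ rather than integrating by parts, but this is cosmetic). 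For the continuity, however, you take a genuinely different and heavier route. The paper stops at the weak duality bounds just obtained and invokes the Lions--Magenes/Strauss-type lemma with the mismatched pairs $\bigl(L^{2}(0,T_{1};\ve),\,L^{2}(0,T_{1};D(\rA^{-3/2}))\bigr)$ and $\bigl(L^{2}(0,T_{1};\rH^{1}),\,L^{2}(0,T_{1};D(\hA)^{\ast})\bigr)$ to conclude $(v,d)\in C([0,T_{1}];\rH\times L^{2})$; it uses neither the small local energy of the maximal solution nor any bound on $\int_{0}^{T_{1}}\lvert\Delta d\rvert_{L^{2}}^{2}$. You instead first upgrade the integrability up to $T_{1}$ --- $\Delta d\in L^{2}(0,T_{1};L^{2})$ and $v,\nabla d\in L^{4}(0,T_{1};L^{4})$ --- which requires invoking $\sup_{t<T_{1}}\mathcal{E}_{\varrho_{0}}(v(t),d(t))\le 2\eps_{1}^{2}$ (this is indeed built into the construction, since every element of the set $\Sigma$ in the proof of Theorem \ref{Ma1} is required to satisfy \eqref{eqn-t_o-2}, so the maximal element inherits the bound) and rerunning the Struwe estimates of Lemma \ref{Lem:EstimateofDeltaD+L4norms} on arbitrary $[0,t]$ with $t<T_{1}$; only then do you apply Lions--Magenes in the matched triples $\ve\subset\rH\subset\ve^{\ast}$ and $\rH^{1}\subset L^{2}\subset(\rH^{1})^{\ast}$. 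Your version costs more --- the extension of \eqref{Eq:UnifEstStrongSol-1}--\eqref{Eq:UnifEstStrongSol-2} from $[0,T_{0}]$ to the whole of $[0,T_{1})$ must be justified by approximation, as you yourself flag --- but it buys correctly matched Gelfand triples: the paper's pairs are not dual to one another ($D(\rA^{-3/2})\ne\ve^{\ast}$ and $D(\hA)^{\ast}\ne(\rH^{1})^{\ast}$), so its citation of the Lions--Magenes lemma literally yields continuity only into the interpolation midpoints $\ve^{\ast}$ and $D(\hA^{-1/4})$, and strong $L^{2}$-continuity then needs an additional (omitted) step. Your longer route is therefore sound and, if anything, tightens a point the paper glosses over.
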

	\begin{proof}[Proof of Lemma \ref{Lem:Continuity}]
		We recall that
		\begin{equation*}
		\partial_{t}v+Av + \Pi(v\cdot\nabla v)=-\Pi(\divv (\nabla d\odot\nabla d)) + f.\notag
		\end{equation*}
		Firstly, let us prove that $\partial_{t}v\in L^{2}(0,T_{1};D(\rA^{-\frac32}))$. For this aim, let $\varphi\in D(\rA^\frac32)$  be fixed. Then,
		\begin{align*}
		|\langle {\partial_t } v ,\varphi\rangle|
		&=\left|\int_{\Omega}(\nabla v\cdot \nabla\varphi+v\cdot \nabla v.\varphi)\;dx-\int_{\Omega}\nabla d\odot\nabla d:\nabla\varphi \;dx +\int_{\Omega}f\varphi \;dx\right|\notag\\
		&\le \|\nabla v\|_{L^{2}}\|\nabla\varphi\|_{L^{2}}+\|v\|_{L^{2}}\|\nabla v\|_{L^{2}}\|\varphi\|_{C^{0}(\Omega)}+\|\nabla d\|^{2}_{L^{2}}\|\nabla\varphi\|_{L^{2}}+\|f(t)\|_{\rH^{-1}}\|\nabla \varphi\|_{L^{2}}\notag\\
		&\le\left(\|\nabla v\|_{L^{2}}+\|v\|_{L^{2}}\|\nabla v\|_{L^{2}}+\|\nabla d\|^{2}_{L^{2}}+\|f(t)\|_{\rH^{-1}}\right)\|\varphi\|_{\rH^{3}}\notag\\
		&\le\left(\|\nabla v\|_{L^{2}}+\|v\|_{L^{2}}\|\nabla v\|_{L^{2}}+\|\nabla d\|^{2}_{L^{2}}+\|f(t)\|_{\rH^{-1}}\right)\|\varphi\|_{D(\rA^{\frac32})},\notag
		\end{align*}
		where we  used the fact $\rH^{3}(\Omega)\subset C^{0}(\Omega)$ and $D(\rA^\frac32)\hookrightarrow \rH^3$.
		Then, we deduce that
		\begin{equation*}
		|\partial_tv|_{D(\rA^{-\frac32})}\le \|\nabla v\|_{L^{2}}+\|v\|_{L^{2}}\|\nabla v\|_{L^{2}}+\|\nabla d\|^{2}_{L^{2}}+\|f \|_{\rH^{-1}}.
		\end{equation*}
		The last line, the Assumption \ref{AssumptionMain} and the facts $(v,d)\in L^\infty(0,T_1;\rH\times \rH^1)$ and $v\in L^2(0,T_1; \ve)$ imply that $\partial_t v \in L^{2}(0,T_{1};D(\rA^{-\frac32}))$. Hence, since $v\in L^{2}(0,T_{1};\ve)$ we have  $v\in C([0,T_{1}];L^{2})$.
		
		Secondly, we estimate $\partial_{t}d$. For this purpose, let $\Psi$ $\in$ $D(\hA)$ be fixed. Recall that
		\begin{equation}
		\partial_{t}d=\Delta d + |\nabla d|^{2}d+v\cdot \nabla d-\phi^\prime(d) +\alpha(d)d+d\times g.\notag
		\end{equation}
		Then,
		\begin{align}
		&\left|\langle \Delta d+|\nabla d|^{2}d-v\cdot \nabla,\Psi\rangle\right|\notag\\
		&=\left|\int_{\Omega}(\nabla d\cdot \nabla\Psi + v\cdot \nabla d.\Psi)\;dx-\int_{\Omega}|\nabla d|^{2}d.\Psi \;dx\right|\notag\\
		&\le \left(\|\nabla d\|_{L^{2}}+\|v\|_{L^{2}}\|\nabla d\|_{L^{2}}+\|\nabla d\|^{2}_{L^{2}}\right)\|\Psi\|_{\rH^{2}}.\label{F1}
		\end{align}
		By the boundedness of $\phi^\prime(d)$ on $\mathbb{S}^{2}$, we have
		\begin{equation}
		\left|\langle -\phi^\prime(d)+\alpha(d)d, \Psi\rangle\right|\le C\|\Psi\|_{\rH^{2}}.\label{F2}
		\end{equation}
		We also get
		\begin{equation}
		\left|\langle d\times g,\Psi\rangle\right|\le C|g(t)|_{L^{2}}\|\Psi\|_{\rH^{2}}. \label{F3}
		\end{equation}
			The estimates (\ref{F1})-(\ref{F3}) imply that $\partial_{t}d\in L^{2}(0,T_{1};D(\hA)^\ast)$ because $d\in L^{\infty}(0,T_{1};\rH^{1})$ and $v\in L^{\infty}(0,T_{1};L^{2})$ and  $D(\hA)  \hookrightarrow \rH^2$. The fact $\partial_{t}d\in L^{2}(0,T_{1};D(\hA)^\ast)$ combined  with $d\in L^{2}(0,T_{1};\rH^{1})$ imply that $d\in C^{0}([0,T_{1}];L^{2}(\Omega))$, see \cite[Lemma III.1.4 ]{Temam_2001}.  This ends the proof of the lemma.
			

\end{proof}
		We can now continue with the proof of Theorem \ref{Ma1}.  From Lemma \ref{Lem:Continuity}, we can define
		\begin{equation}
		(v(T_{1}),d(T_{1}))=\lim_{t\toup  T_{1}}\left(v(t),d(t)\right)~ \text{in}~ L^{2}(\Omega)\times L^{2}(\Omega).\notag
		\end{equation}
		Since $\nabla d\in L^{\infty}(0,T_{1};L^{2}(\Omega))$, then $\nabla d(t)\rightharpoonup \nabla d(T_{1})$ weakly in $L^{2}(\Omega)$. This and Theorem \ref{Thm:StraussThm} implies that
		$v(T_{1})\in \rH$ and $d(T_{1})\in \rH^{1}$ and $(v,d)\in C_w([0,T_1]; \rH\times \rH^1)$. Moreover,
		thanks to the strong convergence $d(t)\to d(T_{1})$ in $L^{2}(\Omega)$, we show  using the same idea as in the proof of \eqref{Eq:SphereCondition} that
		\begin{equation}
		d(T_{1})\in \mathcal{M}.\label{F5}
		\end{equation}
	Also, it is not difficult to prove that
		\begin{align}
		\mathcal{E}(v(T_{1}),d(T_{1}))
		&\le \lim_{t\toup  T_{1}}\mathcal{E}(v(t),d(t))\notag\\
		&\le \mathcal{E}(v_{0},d_{0}) + C\left(|f|^{2}_{L^{2}(0,T;\rH^{-1})}+|g|^{2}_{L^{2}(0,T_{1};L^{2})}\right)<\infty. \label{F4}
		\end{align}
		We also prove that
		\begin{equation}\label{F8}
		\mathcal{E}(v(T_{1}),d(T_{1}))\le  \mathcal{E}(v_{0},d_{0}) + \frac12 \left(\int_{0}^{T_{1}}|f(t)|^{2}_{\rH^{-1}}dt+\int_{0}^{T_{1}}|g(t)|^{2}_{L^2}dt\right)-\eps_{1}^{2}.
		\end{equation}
		In fact, the inequality \eqref{Eq:LossOfNRJ} implies that there exists a sequence $t_{n}\toup  T_{1}$ and $x_{0}\in {\Omega}$ such that
		\begin{equation}
		\limsup_{t_{n}\toup  T_{1}}\int_{B_{R}(x_{0})}\left[|v(t_{n})|^{2}+|\nabla d(t_{n})|^{2}+\phi(d(t_{n}))\right]\;dx\ge \eps^{2}_{1}, \forall R\in (0, \varrho_0].\notag
		\end{equation}
		Therefore,
		\begin{align}
		\mathcal{E}(v(T_{1}),d(T_{1}))
		&=\lim_{R\todown 0}\int_{\Omega \setminus B_{R}(x_{0})}\mathcal{E}(v(T_{1}),d(T_{1}))dy\notag\\
		&\le \lim_{R\todown 0}\liminf_{t_{n}\toup  T_{1}}\int_{\Omega \setminus B_{R}(x_{0})}\mathcal{E}(v(t_{n}),d(t_{n}))dy\notag\\
		&\le \lim_{R\todown 0}\left[\liminf_{t_{n}\toup  T_{1}}\int_{\Omega}\mathcal{E}(v(t_{n}),d(t_{n}))dy-\limsup_{t_{n}\toup  T_{1}}\int_{B_{R}(x_{0})}\mathcal{E}(v(t_{n}),d(t_{n}))dy\right]\notag\\
		&\le \liminf_{t_{n}\toup  T_{1}}\mathcal{E}(v(t_{n}),d(t_{n}))-\eps_{1}^{2}\notag\\
		&\le \mathcal{E}(v_0,d_0)+ \frac12 \left(\int_{0}^{T_{1}}|f(t)|^{2}dt +\int_{0}^{T_{1}}|g(t)|^{2}dt\right)-\eps_{1}^{2}.\notag
		\end{align}
		This completes the proof of (\ref{F8}).
		
		With (\ref{F4}) and (\ref{F5}) at hand, one can apply Proposition \ref{Ma2} to construct a local strong solution $(\tilde{v},\tilde{d}):[T_{1}.T_{2}]\to \rH\times \rH^{1}$ with the initial data $(\tilde{v}(T_{1}),\tilde{d}(T_{1}))=(v(T_{1}),d(T_{1}))\in \rH\times \rH^{1}$.
		Moreover, there are constants $\varrho_0>0$,  $\eps_2\in (0,\eps_0)$ such that
		\begin{equation}
		\limsup_{t\toup  T_{2}}\sup_{x\in{\Omega}}\int_{B_{R}(x)}\mathcal{E}(\tilde{v}(t),\tilde{d}(t))\ge \eps^{2}_{2},~~ \forall R\in (0, \varrho_0].\label{F6}
		\end{equation}
		Furthermore,
		\begin{equation}\label{F8-b}
		\mathcal{E}(\tilde{v}(T_{2}),\tilde{d}(T_{2}))< \mathcal{E}(v(T_1),d(T_1)) + \frac12 \left(\int_{T_1}^{T_{2}}|f(t)|^{2}_{\rH^{-1}}dt+\int_{T_1}^{T_{2}}|g(t)|^{2}_{L^2}dt\right)-\eps_{2}^{2}.
		\end{equation}
		Hence, we can construct a sequence of maximal local strong solutions $((v_i, d_i);T_i)_{i=1}^L$ satisfying:  there are constants\footnote{In fact, $ \tilde{\eps}_1=\min\{\eps_i;\,1\le i\le L \}$} $\varrho_0>0$,  $\tilde{\eps}_1\in (0,\eps_0)$ such that for all $1\le i\le L$
		\begin{align}
		\limsup_{t\toup  T_{i}}\sup_{x\in{\Omega}}\int_{B_{R}(x)}\mathcal{E}(v_i(t),d_i(t))\ge \tilde{\eps}^{2}_{1},~~ \forall R\in (0, \varrho_0].\label{F6-b}\\
		\mathcal{E}(v_i(T_{i}),d_i(T_{i}))\le \mathcal(v_i(T_{i-1}),d_i(T_{i-1}))+ \frac12\left( \int_{T_{i-1}}^{T_{i}}|f(t)|^{2}_{\rH^{-1}}dt +\int_{T_{i-1}}^{T_{i}}|g(t)|^{2}_{L^2}dt\right)-\tilde{\eps}_1^2.\label{Eq:LossOfNRJ-0}
		\end{align}
		In order to construct the global solution we consider a function $(\mathbf{v},\mathbf{d})$ defined
		\begin{equation}\label{Eq:DefGlobalWeakSol}
		(\mathbf{v},\mathbf{d})_{\lvert_{[T_{i-1}, T_i)} } =(v_i,d_i), \;\; 1\le i \le L,
		\end{equation}
		and  prove that $L<\infty$. Towards this aim, we first deduce from \eqref{Eq:LossOfNRJ-0} that
		\begin{equation}\label{Eq:LossOfNRJ}
		\mathcal{E}(\mathbf{v}(T_{i}),\mathbf{d}(T_{i}))\le \mathcal(\mathbf{v}(T_{i-1}),\mathbf{d}(T_{i-1}))+ \frac12\left( \int_{T_{i-1}}^{T_{i}}|f(t)|^{2}_{\rH^{-1}}dt +\int_{T_{i-1}}^{T_{i}}|g(t)|^{2}_{L^2}dt\right)-\tilde{\eps}_1^2.
		\end{equation}
		for $i=1,\ldots ,L$. Here $T_{0}=0$.
		Iterating the estimate \eqref{Eq:LossOfNRJ} yields
		\begin{equation}
		\mathcal{E}(\mathbf{v}(T_{L}), \mathbf{d}(T_{L}))\le \mathcal{E}(v_{0},d_{0})+ C\left(\int_{0}^{T_{L}}(|f(t)|^{2}_{\rH^{-1}}+|g(t)|^{2}_{L^{2}})dt\right) -L\tilde{\eps}^{2}_{1}.
		\end{equation}
		This implies that
		\begin{equation}
		L\le\frac{1}{\tilde{\eps}_{1}^{2}}\left[\mathcal{E}(v_{0},d_{0})+ C\left(\int_{0}^{T}(|f(t)|^{2}_{\rH^{-1}}+|g(t)|^{2}_{L^{2}})dt\right)\right]<\infty.\notag
		\end{equation}
		In order to  complete the proof we need to check that $(\mathbf{v},\mathbf{d})$ is indeed a global weak solution. But this follows from the definition \eqref{Eq:DefGlobalWeakSol},
		the fact that each $((v_i,d_i);T_i)$ are maximal local strong solution defined on $[T_{i-1}, T_i)$ and satisfying
		\begin{align}
		&	(v_i,d_i)\in C([T_{i-1}, T_i);\rH\times \rH^1  ),\notag\\
		&	(\partial_tv_i, \partial_t d_i) \in L^2(T_{i-1}, T_i; D(\rA^{-\frac32})\times D(\hA)^\ast),\notag\\
		&	 d_i(t) \in \mathcal{M} \text{ for all } t \in [ T_{i-1}, T_i).\notag
		\end{align}
		
	\end{proof}
	\section{On the regularity and the set of singular times of the solutions when the data is small}\label{Sec:RegCompactSmallData}
	In this section we prove that the set of singular time reduces to the final time horizon $T$ when the data are small enough.
	Let us start with the following conditional regularity of a strong solution $((v,d);T_\ast)$.
	
	\begin{Prop}\label{prop-reg-zb}
		Let $(v_0,d_0)\in \ve\times D(\hA)$, $T>0$,  $f\in L^2(0,T; L^2)$, $g\in L^2(0,T; \rH^1)$ and $(v,d)$ be a  strong solution to \eqref{1b}  such that
		\begin{equation}\notag
		\begin{split}
		v\in C([0,T];\rH) \cap L^2(0,T;\rV)\\
		d\in C([0,T];\rH^1) \cap L^2(0,T;D(\hA)).
		\end{split}
		\end{equation}
		Then,
		\begin{equation}\notag
		\begin{split}
		v\in C([0,T];\rV) \cap L^2(0,T;D(\rA))\\
		d\in C([0,T]; D(\hA)) \cap L^2(0,T;D(\hA^{3/2})).
		\end{split}
		\end{equation}
	\end{Prop}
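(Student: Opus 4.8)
The plan is to avoid estimating the given strong solution directly — since we only know $v\in L^2(0,T;\ve)$ and $d\in L^2(0,T;D(\hA))$, the formal differentiation of $\lvert\nabla v\rvert_{L^2}^2$ and $\lvert\Delta d\rvert_{L^2}^2$ in time is not yet licit — and instead to run the argument on the genuinely regular solution supplied by Theorem \ref{th}, identifying the two afterwards by uniqueness. Since $(v,d)$ is a strong solution we have $d(0)=d_0\in\mathcal{M}$, hence $(v_0,d_0)\in\ve\times(D(\hA)\cap\mathcal{M})$, while $f\in L^2(0,T;L^2)$ and $g\in L^2(0,T;\rH^1)=L^2(0,T;D(\hA^{1/2}))$; thus Theorem \ref{th} together with the usual continuation procedure produces a maximal local regular solution $((\tilde v,\tilde d);T_\ast)$ in $C([0,T_\ast);\ve\times D(\hA))\cap L^2_{\mathrm{loc}}(0,T_\ast;D(\rA)\times D(\hA^{3/2}))$. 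Because a regular solution is in particular a strong solution (Definition \ref{Def:Strong-Sol}), and because for the given $(v,d)$ the two-dimensional Ladyzhenskaya inequality \cite[Lemma III.3.3]{Temam_2001} (applied to $v$ and to the components of $\nabla d$, using $v\in L^\infty(0,T;\rH)\cap L^2(0,T;\ve)$ and $\nabla d\in L^\infty(0,T;L^2)\cap L^2(0,T;\rH^1)$) yields
\[
\int_0^T\bigl(\lvert v(r)\rvert_{L^4}^4+\lvert\nabla d(r)\rvert_{L^4}^4\bigr)\,dr<\infty ,
\]
the argument in the proof of Proposition \ref{Prop:Uniq}, which uses only the regularity enjoyed by strong solutions, applies verbatim and gives $(v,d)=(\tilde v,\tilde d)$ on $[0,T_\ast)$.

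The heart of the proof is to show $T_\ast\ge T$. I would argue by contradiction: if $T_\ast<T$, then for every $\tau<T_\ast$ Lemma \ref{Lem:EstinHigherorderNorms}, applied to the regular solution $(\tilde v,\tilde d)=(v,d)$, gives
\[
\sup_{0\le s\le\tau}\bigl(\lvert\nabla v(s)\rvert_{L^2}^2+\lvert\Delta d(s)\rvert_{L^2}^2\bigr)+2\int_0^\tau\bigl(\lvert\nabla\Delta d(s)\rvert_{L^2}^2+\lvert\rA v(s)\rvert_{L^2}^2\bigr)\,ds\le K_3\Sigma_0\,e^{K_4\Xi(T)}\,e^{K_4\int_0^\tau[\lvert\nabla d\rvert_{L^4}^4+\lvert v\rvert_{L^4}^4+\lvert\nabla d\rvert_{L^4}^2+\lvert v\rvert_{L^4}^2]\,dr}.
\]
By the previous paragraph the exponent on the right is bounded, uniformly in $\tau<T_\ast$, by its value at $\tau=T$, which is finite (Ladyzhenskaya again, plus H\"older in time for the lower-order terms), while $\Xi(T)<\infty$ and $\Sigma_0<\infty$ by the standing hypotheses on $(v_0,d_0)$ and $(f,g)$. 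Hence $(v(t),d(t))$ stays in a bounded subset of $\ve\times D(\hA)$ as $t\uparrow T_\ast$, and then, exactly as in the proof of Corollary \ref{Cor:SmallNRJBeforemaxTime}, one re-solves \eqref{1b} via Theorem \ref{th} from the datum $(v(s),d(s))$ with $s<T_\ast$ chosen close enough to $T_\ast$ and glues the two solutions, producing a regular solution living strictly past $T_\ast$ — contradicting maximality. Therefore $T_\ast\ge T$, and the displayed bound shows moreover that $(v,d)$ is a regular solution on the \emph{closed} interval $[0,T]$.

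The asserted continuity is then immediate from Definition \ref{Def:Local-Regular-Sol}(1): a regular solution on $[0,T]$ lies in $C([0,T];\ve\times D(\hA))\cap L^2(0,T;D(\rA)\times D(\hA^{3/2}))$, which is precisely the conclusion. (Alternatively, one may re-derive the closed-interval continuity directly: $\partial_t v=-\rA v-B(v,v)-\Pi\,\divv(\nabla d\odot\nabla d)+\Pi f\in L^2(0,T;\rH)$ with $v\in L^2(0,T;D(\rA))$, so the Lions--Magenes lemma \cite[Lemma III.1.2]{Temam_2001} gives $v\in C([0,T];\ve)$, and likewise $\partial_t d\in L^2(0,T;D(\hA^{1/2}))$ with $d\in L^2(0,T;D(\hA^{3/2}))$ gives $d\in C([0,T];D(\hA))$, the relevant right-hand-side memberships being those of Lemma \ref{Lem:RighthandSideinL2VxH1}.) The one genuinely delicate point is keeping the exponential factor in Lemma \ref{Lem:EstinHigherorderNorms} finite up to $T_\ast$: this is exactly where the planarity of $\Omega$ enters, through the embedding $L^\infty_tL^2_x\cap L^2_tH^1_x\hookrightarrow L^4_tL^4_x$, and it is what dispenses with the small-energy hypothesis used in the earlier continuation arguments of the paper.
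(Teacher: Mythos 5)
Your proposal is correct and follows essentially the same route as the paper: both arguments rest on the observation that the 2D Ladyzhenskaya inequality makes $\int_0^T(\lvert v\rvert_{L^4}^4+\lvert\nabla d\rvert_{L^4}^4)\,dt$ finite for the given strong solution, feed this into Lemma \ref{Lem:EstinHigherorderNorms} to obtain uniform $\ve\times D(\hA)$ bounds, and then combine Theorem \ref{th} with Proposition \ref{Prop:Uniq} to identify and continue the regular solution up to time $T$. The only difference is organizational --- you run a maximal-solution/contradiction argument in the style of Corollary \ref{Cor:SmallNRJBeforemaxTime}, whereas the paper iterates the local existence theorem finitely many times with a uniform time step --- and this is immaterial.
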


	\begin{proof}[Proof of Proposition \ref{prop-reg-zb}]
		We start the proof with the following claim.
		\begin{Clm}\label{Clm:Regular-Sol}
			There exist constants $K>0$ and $K_7>0$  depending only on the norms of $(v_0,d_0)\in \ve \times D(\hA^\frac12)$ and the norms of $(f,g)\in L^2([0,T]; \rH\times \rH^1)$  such that the following holds.
		If  $(v,d)$ is a  regular solution on
		some interval $[0,T_\ast] \subset [0,T]$ such  that
			\begin{equation}\label{EQ:AssumL4Norms}
			\int_0^T \lvert \nabla d(s)\rvert^4_{L^4} ds \le K_7 \text{ and } \int_0^T \lvert v(s)\rvert^4_{L^4} ds \le K_7,
			\end{equation}
		then,
		\begin{equation}\label{Eq:Derivativehigheroerdernorms-Fin-04}
		\begin{split}
		\lvert \Delta d (t) \rvert^2_{L^2} + \lvert \nabla v(t) \rvert^2_{L^2}   \le K,\;\;  t \in [0,T_\ast],\\
		\int_0^{T_\ast}\left(\lvert \rA v (r) \rvert^2_{L^2} + \lvert \nabla \Delta d(r) \rvert^2_{L^2}\right)dr \le K.
		\end{split}
		\end{equation}
		\end{Clm}
	\begin{proof}[Proof of Claim \ref{Clm:Regular-Sol}]
			Let $(v,d)$ be a regular solution to \eqref{1b} on $[0,T_\ast]\subset [0,T]$. Then,  we infer from Lemma \ref{Lem:EstinHigherorderNorms} that there exists a constant $K_3>0$ which depends on the norms of $(v_0,d_0)\in \ve\times \rH^1$ and $(f,g)\in L^2(0,T; H\times \rH^1)$  such that
		\begin{equation}\label{Eq:EstinHigherorderNorms-6}
		\begin{split}
		\sup_{0\le s\le \tau} \left(\lvert \nabla v(s) \rvert^2_{L^2} + \lvert \Delta d(s) \rvert^2_{L^2} \right) + 2 \int_0^{\tau} \left( \lvert \nabla \Delta d(s) \rvert^2_{L^2} + \lvert A v(s) \rvert^2_{L^2}\right)ds \\
		\le K_3e^{K_3 \int_0^{\tau}[\lvert \nabla d(r) \rvert^4_{L^4}+ \lvert v(r) \rvert^4_{L^4}+ \lvert \nabla d(r) \rvert^2_{L^4}+ \lvert v(r) \rvert^2_{L^4}]dr   }.
		\end{split}
		\end{equation}
		This and the assumption \eqref{EQ:AssumL4Norms}  implies the desired inequality \eqref{Eq:Derivativehigheroerdernorms-Fin-04}. This proves the claim.
	\end{proof}

		Now we give the proof of Proposition.
		Since $(v_0,d_0)\in \ve\times D(\hA)$, $f\in L^2(0,T; L^2)$, $g\in L^2(0,T; \rH^1)$, we can apply Theorem \ref{th} to assert that there exists a time $T_0 \leq T$ which depends on $K$ and
		the norms of $(f,g)$ (on [0,T])  and a unique solution $(\tilde{v},\tilde{d}):[0,T_0] \to \ve\times D(\hA)$ to \eqref{1b} such that
		\begin{equation}\notag
		\begin{split}
		\tilde v\in C([0,T_0],\rV) \cap L^2(0,T_0;D(\rA))\\
		\tilde d\in C([0,T_0],D(\hA)) \cap L^2(0,T_0;D(\hA^{3/2})).
		\end{split}
		\end{equation}
		
		
		Hence, by Proposition \ref{Prop:Uniq} we infer that
		$$ (\tilde{v}, \tilde{d})=(v,d) \text{ on } [0,T_\ast].$$
		Thus, since $(v,d)$ is a strong solution on $[0,T_0]$, it follows from the estimates \eqref{Eq:UnifEstStrongSol-1} and \eqref{Eq:UnifEstStrongSol-2} that the regular solution $(v,d)$ on $[0,T_0]$ satisfies \eqref{EQ:AssumL4Norms} on $[0,T_0]$. Hence, the above claim implies that
		\begin{equation}\notag
		(v,d)\in C([0,T_0]; \ve \times D(\hat{\rA} )) \cap L^2(0,T_0; D(\rA) \times D(\hat{\rA}^\frac32).
		\end{equation}
		Thanks to the claim again, we can repeat the above procedure finitely many times, say, on $[T_0, T_1]$, \ldots $[T_N, T_\ast]$ to assert that
		\begin{equation}\notag
		(v,d)\in C([T_i,T_{i+1}]; \ve \times D(\hat{\rA} )) \cap L^2(0,T_0; D(\rA) \times D(\hat{\rA}^\frac32),\;\; i\in \{0, \ldots, N  \}.
		\end{equation}
		With this we conclude the proof of the proposition.
		%
	\end{proof}
	\begin{Thm}\label{Thm:NoSingSmall}
		Let $\eps_0>0$ and $\tilde{\eps}_1\in (0,\eps_0)$ be the constants  from Proposition \ref{Ma2} and Theorem \ref{thm-main}, respectively. If  the data $(v_0,d_0)\in \rH\times \rH^1$ and $(f,g)\in L^2(0,T; L^2\times \rH^1)$ satisfy
		\begin{equation}\label{Eq:AsummSmallData}
		\mathcal{E}(v_0,d_0) +\frac12\int_0^T\left(\lvert f \rvert^2_{\rH^{-1}} + \lvert g \rvert^2_{L^2}\right)ds\le 2 \tilde{\eps}_1^2,
		\end{equation}
		then \eqref{1b} has a unique global strong solution $(v,d):[0,T) \to \rh \times \rH^1$ satisfying
		\begin{equation}\label{Eq:MaxRegSmallData}
		(v,d)\in C([0,T), \rH\times (\rH^1\cap\mathcal{M}))\cap C([0,T]; \rH\times L^2) \cap L^2(0,T; \ve\times D(\hA)).
		\end{equation}	
		That is, the global strong solution does not have any singular times.
		
		Moreover, if $(v_0, d_0)\in V\times D(\hat{\rA})$ and $(f,g)\in L^2(0, T; \rh \times \rH^1)$, then
		\begin{equation}\label{Eq:MaxRegSmallData-2}
		(v,d)\in C([0,T), \ve\times (D(\hat{\rA})\cap\mathcal{M}) )\cap L^2(0,T; D{\rA}\times D(\hA^\frac32)).
		\end{equation}	
	\end{Thm}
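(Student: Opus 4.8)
The plan is to bootstrap from the global existence theory already in place: Theorem \ref{thm-main} delivers a global weak solution together with the finite list of its singular times and, crucially, the quantized energy-loss bound \eqref{Eq:LossOfNRJ} at each of them, while Proposition \ref{prop-reg-zb} converts a strong solution into a regular one. First one observes that the hypothesis \eqref{Eq:AsummSmallData} implies Assumption \ref{AssumptionMain}: $\mathcal{E}(v_0,d_0)\le 2\tilde{\eps}_1^2<\infty$, $d_0\in\mathcal{M}$ by hypothesis, and the continuous dense embeddings $L^2\hookrightarrow\rH^{-1}$, $\rH^1\hookrightarrow L^2$ put $(f,g)$ in $L^2(0,T;\rH^{-1}\times L^2)$. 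Thus Theorem \ref{thm-main} produces a global weak solution $(\mathbf{u},\mathbf{d})$, singular times $0=T_0<T_1<\cdots<T_L\le T$, a maximal local strong solution on each $[T_{i-1},T_i)$, and — through the estimates of Lemma \ref{Lem:Continuity} — the additional properties $(\mathbf{u},\mathbf{d})\in C([0,T];\rH\times L^2)$ and $\mathbf{d}(t)\in\mathcal{M}$ for every $t$.

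\textbf{Step 1: emptiness of the set of singular times.} I claim $L=0$. Suppose $T_1\le T$ is a singular time. Iterating \eqref{Eq:LossOfNRJ} over $i=1,\dots,j$ and using the global energy inequality \eqref{Eq:NRJ-Inequality} gives
\[
0\le \mathcal{E}(\mathbf{u}(T_j),\mathbf{d}(T_j)) \le \mathcal{E}(v_0,d_0)+\frac12\int_0^{T}\bigl(\lvert f\rvert^2_{\rH^{-1}}+\lvert g\rvert^2_{L^2}\bigr)\,ds-j\,\tilde{\eps}_1^2 .
\]
Moreover a singular time $T_i$ can occur only while $\mathcal{E}(\mathbf{u}(T_{i-1}),\mathbf{d}(T_{i-1}))+\frac12\int_{T_{i-1}}^{T_i}(\lvert f\rvert^2_{\rH^{-1}}+\lvert g\rvert^2_{L^2})\ge\tilde{\eps}_1^2$, since otherwise the right-hand side of \eqref{Eq:LossOfNRJ} would be negative; consequently the smallness assumption \eqref{Eq:AsummSmallData}, controlling the whole energy budget by a multiple of $\tilde{\eps}_1^2$, leaves no admissible first singular time in $(0,T]$. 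Hence $L=0$, $(\mathbf{u},\mathbf{d})$ is a global strong solution, and it belongs to $C([0,T);\rH\times(\rH^1\cap\mathcal{M}))\cap C([0,T];\rH\times L^2)\cap L^2(0,T;\ve\times D(\hA))$. Uniqueness among strong solutions is then obtained by the same Gronwall estimate for the difference $(v_1-v_2,\,d_1-d_2)$ as in the proof of Proposition \ref{Prop:Uniq}. This establishes \eqref{Eq:MaxRegSmallData}.

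\textbf{Step 2: regularity for small and regular data.} Assume in addition $(v_0,d_0)\in\ve\times D(\hA)$ and $(f,g)\in L^2(0,T;\rH\times\rH^1)$. By Step 1 there is a unique global strong solution $(v,d)$ with $v\in C([0,T);\rH)\cap L^2(0,T;\rV)$ and $d\in C([0,T);\rH^1)\cap L^2(0,T;D(\hA))$. Fix $T'<T$; the restriction of $(v,d)$ to $[0,T']$ satisfies all the hypotheses of Proposition \ref{prop-reg-zb} (strong solution, regular initial data, $f\in L^2(0,T';L^2)$, $g\in L^2(0,T';\rH^1)$), which yields $v\in C([0,T'];\rV)\cap L^2(0,T';D(\rA))$ and $d\in C([0,T'];D(\hA))\cap L^2(0,T';D(\hA^{3/2}))$. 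Since $T'<T$ was arbitrary and, by Step 1, no singular time intervenes on $[0,T)$, this upgrades to \eqref{Eq:MaxRegSmallData-2}; the pointwise constraint $d(t)\in D(\hA)\cap\mathcal{M}$ is inherited from Theorem \ref{th}, on which Proposition \ref{prop-reg-zb} rests (via Claim \ref{Clm:Regular-Sol}).

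\textbf{Main obstacle.} The one genuinely delicate step is Step 1: one must carry out the bookkeeping of the energy budget $\mathcal{E}(v_0,d_0)+\frac12\int_0^T(\lvert f\rvert^2_{\rH^{-1}}+\lvert g\rvert^2_{L^2})$ against the forced loss of size $\tilde{\eps}_1^2$ at a putative singular time carefully enough — using the precise form of \eqref{Eq:LossOfNRJ} (equivalently the concentration bound \eqref{Eq:LossNRJ} / property~(3) of Theorem \ref{thm-main}), the nonnegativity of $\mathcal{E}$, and the one-sided continuity of $t\mapsto(\mathbf{u}(t),\mathbf{d}(t))$ up to a singular time from Lemma \ref{Lem:Continuity} — to conclude that the smallness threshold in \eqref{Eq:AsummSmallData} is exactly the one that rules out every singular time in $(0,T]$. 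Everything else is a direct appeal to Theorems \ref{thm-main} and \ref{th}, Propositions \ref{prop-reg-zb}, \ref{Prop:Uniq}, \ref{Ma2} and Lemma \ref{Lem:Continuity}.
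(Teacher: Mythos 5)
Your overall architecture --- invoke Theorem \ref{thm-main} for the global weak solution with quantized energy loss at the singular times, do the energy bookkeeping, then upgrade regularity via Proposition \ref{prop-reg-zb} --- is the paper's route, and your Step 2 and the uniqueness remark (Proposition \ref{Prop:Uniq}) are fine. The gap is in Step 1, where you assert $L=0$. The hypothesis \eqref{Eq:AsummSmallData} bounds the total energy budget by $2\tilde{\eps}_1^2$, while the necessary condition you correctly extract for a singular time is that the energy available just before it be at least $\tilde{\eps}_1^2$. Since $2\tilde{\eps}_1^2\ge\tilde{\eps}_1^2$, this does \emph{not} ``leave no admissible first singular time'': one concentration of size $\tilde{\eps}_1^2$ fits inside the budget, and your own displayed inequality only yields $j\le 2$. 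What the bookkeeping actually rules out is a \emph{second} singular time: iterating \eqref{Eq:LossOfNRJ} twice gives $\mathcal{E}(\mathbf{u}(T_2),\mathbf{d}(T_2))\le 2\tilde{\eps}_1^2-2\tilde{\eps}_1^2=0$, which contradicts the concentration property (3) of Theorem \ref{thm-main}, namely $\limsup_{t\toup T_2}\sup_{x}\int_{B_R(x)}(\cdots)\,dy\ge\tilde{\eps}_1^2>0$. This two-loss contradiction (the paper assumes $T_1<T$ and $T_2=T$ and derives exactly this) is the argument you need; the single-loss scenario cannot be excluded at this threshold.

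This also explains why your conclusion overshoots the statement: $L=0$ would give $(v,d)\in C([0,T];\rH\times\rH^1)$ on the \emph{closed} interval, whereas \eqref{Eq:MaxRegSmallData} deliberately asserts continuity into $\rH\times\rH^1$ only on $[0,T)$, and only into $\rH\times L^2$ up to $T$ --- room is left for an energy concentration at the terminal time itself. So the correct conclusion of Step 1 is that at most one singular time can occur and that it must coincide with $T$ (so that no singular time lies in the open interval $(0,T)$); the solution is then a strong solution on $[0,T)$, and the $C([0,T];\rH\times L^2)$ continuity comes from Lemma \ref{Lem:Continuity}, not from the absence of singular times. Replace ``hence $L=0$'' by this argument and Step 1 closes; Step 2 then goes through exactly as you wrote it.
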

	\begin{proof}
		Let $(v_0,d_0)\in \rH\times \rH^1$ and $(f,g)\in L^2(0,T; \rH^{-1} \times \rH^1)$. Then, by Theorem \ref{thm-main} there exist a finite set $S=\{0< T_1< \ldots<T_L< T\}$ and a global weak solution $(v,d)$ to \eqref{1b} satisfying the properties  (1)-(3) of Theorem \ref{thm-main}. We will show that $T_1=T$ if  the smallness condition \eqref{Eq:AsummSmallData} holds.  For this purpose, we argue by contradiction. Assume that $T_1<T$ and that $T_2=T$. Then, by  part (2) of Theorem \ref{thm-main}
		\begin{equation}\notag
		\mathcal{E}(v(T_2), d(T_2)) \le \mathcal{E}(v_0,d_0) + \frac12\int_{0}^{T} \left(\lvert f\rvert^2_{\ve^\ast}+ \lvert g\rvert^2_{L^2}   \right) ds -2\tilde{\eps}_1^2,
		\end{equation}
		which altogether with the assumption \eqref{Eq:AsummSmallData} yields
		\begin{equation}\notag
		\limsup_{t\to T_{2}}\sup_{x\in\Omega}\int_{B_{R}(x)}\left[|v(t,y)|^{2}+|\nabla d(t,y)|^{2}+\phi(d(t,y))\right]dy\le \mathcal{E}(v(T_2), d(T_2)) \le 0.
		\end{equation}
		This clearly contradicts the fact that
		\begin{equation}\notag
		\limsup_{t\to T_{2}}\sup_{x\in\Omega}\int_{B_{R}(x)}\left[|v(t,y)|^{2}+|\nabla d(t,y)|^{2}+\phi(d(t,y))\right]dy\ge\tilde{\varepsilon}^{2}_{1}>0,
		\end{equation}
		for all $R\in (0, \varrho_0]$. This completes the proof of the first part of the theorem.
		
		The second part of the theorem follows easily from the Proposition \ref{prop-reg-zb}. Hence, the proof of the theorem is completed.
	\end{proof}
	
	The last, but not the least, result of this section is about the precompactness of the orbit $(v(t), d(t))$, $t\in [0,T)$, in $\rH\times D(\hA)$. This result will require the following set of conditions on the map $\phi$.
	\begin{assume}\label{Assum:PhiSpecial}
		Let $\xi\in \mathbb{R}^3$ be a constant vector and $\phi: \mathbb{R}^3 \to [0,\infty)$ be the map defined by
		\begin{equation}\notag
		\phi(d)= \frac12 \lvert d -\xi \rvert^2, d\in \mathbb{R}^3.
		\end{equation}
	\end{assume}
	It is clear that if $\phi$ satisfies this assumption, the it also satisfies Assumption \ref{Assum:EnergyPotential}.
	\begin{Thm}\label{Thm:PrecompactInHH1}
		
		Let $\eps_0>0$ and $\eps_1\in (0,\eps_0)$ be the constants  from Proposition \ref{Ma2} and Theorem \ref{thm-main}, respectively.
		Then, there exists a constant $\kappa_1\in (0,\eps_1]$ such that the following hold.
		
		If $(v_0,d_0)\in \rH\times \rH^1$ and $(f,g)\in L^2(0,T; \ve^\ast \times \rH^1)$ satisfy
		\begin{equation}\label{Eq:SmallAssumSmooth}
		\mathcal{E}(v_0,d_0) +\frac12\int_0^T\left(\lvert f \rvert^2_{\rH^{-1}}+ \lvert g \rvert^2_{L^2}\right)ds\le 2 \kappa_1^2,
		\end{equation}
		then \eqref{1b} has a unique global strong solution $(v,d):[0,T) \to \rh \times \rH^1$ satisfying:
		\begin{enumerate}
			\item $(v,d)$ does not have any singular point,
			\begin{equation}\label{Eq:MaxRegSmallData-2-B}
			(v,d)\in C([0,T), \ve\times D(\hat{\rA}) )\cap L^2(0,T; D({\rA})\times D(\hA^\frac32)).
			\end{equation}	
			\item There exists a constant $K_1>0$ such that for all $T>0$
			\begin{equation}\label{Eq:EstVeloOptDirInVH2-0}
			\sup_{t\in [0,T)} (\lvert v(t) \rvert^2_{\ve} + \lvert d(t)\rvert^2_{D(\hA)} ) + \int_0^T \left( \lvert \rA v (s) \rvert^2_{L^2}  +\lvert \nabla \Delta d(s) \rvert^2_{L^2} \right) ds \le K_1.
			\end{equation}
			In particular, the orbit of $(v,d)$ is precompact in $\rH\times D(\hA^\frac12)$.
		\end{enumerate}
		%
	\end{Thm}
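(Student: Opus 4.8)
Here is how I would organize the argument.

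\textbf{Overall plan and Step 1 (reduction).} The plan is to get assertion (1) almost for free from Theorem~\ref{Thm:NoSingSmall}, and then to prove the time–uniform bound \eqref{Eq:EstVeloOptDirInVH2-0} by upgrading the basic energy inequality \eqref{Eq:NRJ-Inequality} to a genuinely \emph{dissipative} one — the extra coercivity being exactly what Assumption~\ref{Assum:PhiSpecial} buys (then $\phi''\equiv I$). First I would fix $\eps_1\in(0,\eps_0)$ as in Theorem~\ref{thm-main} and pick $\kappa_1\in(0,\eps_1]$ so small that a universal absorption constant $C_\star$ (coming from Ladyzhenskaya's and Gagliardo--Nirenberg's inequalities, used in Step 2) satisfies $C_\star\kappa_1^2<\tfrac14$. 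Since $\kappa_1\le\eps_1$, condition \eqref{Eq:SmallAssumSmooth} is stronger than \eqref{Eq:AsummSmallData}, so Theorem~\ref{Thm:NoSingSmall} immediately gives a unique global strong solution $(v,d)$ on $[0,T)$ with no singular times, together with the regularity \eqref{Eq:MaxRegSmallData-2-B} (at the level of regularity of the data required there); uniqueness is Proposition~\ref{Prop:Uniq}. This settles (1), and only the $T$–independent bound in (2) remains.

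\textbf{Step 2 (coercive energy estimate).} Write $\alpha(n)=\phi'(n)\cdot n$ and $R(d)=\Delta d+\lvert\nabla d\rvert^2 d-\phi'(d)+\alpha(d)d$. For $\lvert d\rvert=1$ the special potential yields the pointwise identity $2\phi(d)=(1-d\cdot\xi)^2+\lvert\phi'(d)-\alpha(d)d\rvert^2$, hence $\lvert\phi'(d)-\alpha(d)d\rvert^2\le 2\phi(d)$; equivalently, after the shift $w=d-\xi$ (take $\lvert\xi\rvert=1$ for concreteness, otherwise shift by $\xi/\lvert\xi\rvert$) one has $R(d)=-(\hA+I)w+Q(d)$, where $Q(d)$ collects terms quadratic in $(w,\nabla d)$. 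Starting from the differential identity behind \eqref{Eq:NRJ-Inequality}, namely $\tfrac{d}{dt}\mathcal{E}(v,d)+\lvert\nabla v\rvert_{L^2}^2+\lvert R(d)\rvert_{L^2}^2=\langle f,v\rangle+\langle d\times g,R(d)\rangle$, I would bound the right–hand side by Young's inequality keeping a definite fraction of $\lvert\nabla v\rvert_{L^2}^2,\lvert R(d)\rvert_{L^2}^2$ on the left, and then use Poincar\'e ($\lvert\nabla v\rvert_{L^2}^2\ge\lambda_1\lvert v\rvert_{L^2}^2$) together with the coercivity $\lvert(\hA+I)w\rvert_{L^2}^2\ge\lvert\nabla w\rvert_{L^2}^2+\lvert w\rvert_{L^2}^2$, absorbing the quadratic contributions of $Q(d)$ by the a priori smallness $\mathcal{E}(v(t),d(t))\le 2\kappa_1^2$ (which is \eqref{Eq:NRJ-Inequality} with $s=0$ and \eqref{Eq:SmallAssumSmooth}). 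This should produce, for some $\beta>0$ depending only on $\Omega$ and $\xi$,
\begin{equation}\notag
\frac{d}{dt}\mathcal{E}(v(t),d(t))+\beta\,\mathcal{E}(v(t),d(t))+\tfrac14\lvert\nabla v(t)\rvert_{L^2}^2+\tfrac14\lvert R(d(t))\rvert_{L^2}^2\le\lvert f(t)\rvert_{\rH^{-1}}^2+\lvert g(t)\rvert_{L^2}^2 .
\end{equation}
Integrating and invoking \eqref{Eq:SmallAssumSmooth} gives $\sup_{[0,t]}\mathcal{E}(v,d)+\int_0^t\bigl(\mathcal{E}(v,d)+\lvert\nabla v\rvert_{L^2}^2+\lvert R(d)\rvert_{L^2}^2\bigr)ds\le C\kappa_1^2$ for all $t\in[0,T)$, with $C$ universal; in particular $\int_0^t\lvert\nabla d\rvert_{L^2}^2 ds$ and $\int_0^t\!\int_\Omega\phi(d)\,dx\,ds$ are $\le C\kappa_1^2$, \emph{independently of $T$}. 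Combining this with $\Delta d=R(d)-\lvert\nabla d\rvert^2 d+\phi'(d)-\alpha(d)d$, the bound $\lvert\phi'(d)-\alpha(d)d\rvert^2\le 2\phi(d)$, Ladyzhenskaya's inequality $\lvert v\rvert_{L^4}^4\le C\lvert v\rvert_{L^2}^2\lvert\nabla v\rvert_{L^2}^2$ and Gagliardo--Nirenberg $\lvert\nabla d\rvert_{L^4}^4\le C\lvert\nabla d\rvert_{L^2}^2(\lvert\nabla d\rvert_{L^2}^2+\lvert\Delta d\rvert_{L^2}^2)$, one further absorption (this is where $C_\star\kappa_1^2<\tfrac14$ enters) yields $\int_0^t\bigl(\lvert\Delta d\rvert_{L^2}^2+\lvert v\rvert_{L^4}^4+\lvert\nabla d\rvert_{L^4}^4\bigr)ds\le C\kappa_1^2$, again uniformly in $T$.

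\textbf{Steps 3--4 (bootstrap and precompactness).} Since $(v,d)$ is regular on every $[0,\tau]\subset[0,T)$ by Step 1, I would feed the bounds of Step 2 into Lemma~\ref{Lem:EstinHigherorderNorms} (equivalently Claim~\ref{Clm:Regular-Sol}): its exponential factor $\exp\bigl(K_4\int_0^\tau[\lvert\nabla d\rvert_{L^4}^4+\lvert v\rvert_{L^4}^4+\lvert\nabla d\rvert_{L^4}^2+\lvert v\rvert_{L^4}^2]ds\bigr)$ is then $\le e^{CK_4\kappa_1^2}$ and the prefactor $\Sigma_0$ of \eqref{Eq:DefSigma} depends only on the data norms, so \eqref{Eq:EstVeloOptDirInVH2-0} follows with $K_1$ independent of the length of the interval. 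Assertion (2) is then immediate: by \eqref{Eq:EstVeloOptDirInVH2-0} the orbit $\{(v(t),d(t)):t\in[0,T)\}$ lies in a bounded subset of $\ve\times D(\hA)$, and since $\Omega$ is bounded with smooth boundary the embeddings $\ve\hookrightarrow\rH$ and $D(\hA)\hookrightarrow D(\hA^{1/2})=\rH^1$ are compact, whence the orbit is precompact in $\rH\times D(\hA^{1/2})$. The main obstacle is the coercive inequality of Step~2: extracting the term $\beta\,\mathcal{E}(v,d)$ from the dissipation. It uses essentially the strict convexity $\phi''\equiv I$ of Assumption~\ref{Assum:PhiSpecial} (the general Assumption~\ref{Assum:EnergyPotential} does not suffice, since then $\lvert\phi'(n)-\alpha(n)n\rvert$ need not be controlled by $\phi(n)$) and requires carefully absorbing the cubic convection term and the quadratic director terms into the dissipation through the smallness parameter $\kappa_1$; once this is in hand, the rest is a routine iteration of the higher order estimates already established.
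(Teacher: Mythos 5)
Your overall architecture coincides with the paper's: assertion (1) is reduced to Theorem \ref{Thm:NoSingSmall} and Proposition \ref{prop-reg-zb}, the uniform‑in‑$T$ bound (2) is obtained by first proving $T$‑independent bounds on $\int_0^T(\lvert\nabla v\rvert^2_{L^2}+\lvert\nabla d\rvert^2_{L^2}+\lvert\Delta d\rvert^2_{L^2})\,ds$ and on $\int_0^T(\lvert v\rvert^4_{L^4}+\lvert\nabla d\rvert^4_{L^4})\,ds$ using the special structure of $\phi$ from Assumption \ref{Assum:PhiSpecial} together with the smallness of $\kappa_1$ (this is the paper's Proposition \ref{Thm:PrecompactinVastL2}), and then feeding these into the higher‑order estimate of Lemma \ref{Lem:EstinHigherorderNorms} (the paper's Proposition \ref{Prop:EstVeloOptDirInVH2}); precompactness then follows from the compact embeddings. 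However, your Step 2 — which you correctly identify as the crux — contains a genuine gap.

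The problem is the claim that the dissipation $\lvert\nabla v\rvert^2_{L^2}+\lvert R(d)\rvert^2_{L^2}$ in the energy identity can be made coercive, i.e. bounded below by $\beta\,\mathcal{E}(v,d)$ minus absorbable terms, via the decomposition $R(d)=-(\hA+I)w+Q(d)$ with $w=d-\xi$ and $Q(d)=(\lvert\nabla d\rvert^2+\alpha(d))d$. Using $\lvert d\rvert=1$, $\lvert\xi\rvert=1$, one has $\alpha(d)=1-\xi\cdot d=\phi(d)$ and $-\Delta d\cdot d=\lvert\nabla d\rvert^2$ pointwise, whence the \emph{exact} identity
\begin{equation*}
\lvert R(d)\rvert^2_{L^2}=\lvert(\hA+I)w\rvert^2_{L^2}-\int_\Omega\bigl(\lvert\nabla d\rvert^2+\phi(d)\bigr)^2\,dx
=\lvert\Delta d\rvert^2_{L^2}-\lvert\nabla d\rvert^4_{L^4}+2\int_\Omega\lvert\nabla d\rvert^2(\xi\cdot d)\,dx+\int_\Omega\phi(d)\bigl(2-\phi(d)\bigr)\,dx .
\end{equation*}
Since $\xi\cdot d$ may be negative and $\phi(2-\phi)$ vanishes where $d=-\xi$, the terms that would have to produce $\beta(\lvert\nabla d\rvert^2_{L^2}+\int_\Omega\phi\,dx)$ are not sign‑definite: the cross term $-2\langle(\hA+I)w,Q(d)\rangle$ cancels the coercive contributions $2\lvert\nabla d\rvert^2_{L^2}+\lvert w\rvert^2_{L^2}$ of $\lvert(\hA+I)w\rvert^2_{L^2}$ exactly up to the quantities above, and $\lvert Q(d)\rvert^2_{L^2}\geq\int_\Omega\phi^2\,dx$ is of the \emph{same order} as $\lvert w\rvert^2_{L^2}=2\int_\Omega\phi\,dx$ (with a constant that can exceed it, since $\phi\le 2$), so it cannot be "absorbed by smallness" — the smallness of $\mathcal{E}$ makes both sides small but does not make their difference positive. (Indeed $R(d)$ is pointwise orthogonal to $d$, so $\lvert R(d)\rvert$ only sees the tangential part of $\Delta d-\phi'(d)$; for a constant map $d\equiv-\xi$ one has $R(d)=0$ while $\int_\Omega\phi\,dx=2\lvert\Omega\rvert>0$.) Consequently the differential inequality $\frac{d}{dt}\mathcal{E}+\beta\mathcal{E}+\ldots\le\lvert f\rvert^2_{\rH^{-1}}+\lvert g\rvert^2_{L^2}$ is not established, and without it you do not obtain the $T$‑independent bounds on $\int_0^T\lvert\nabla d\rvert^2_{L^2}\,ds$, $\int_0^T\int_\Omega\phi\,dx\,ds$ and hence on $\int_0^T\lvert\Delta d\rvert^2_{L^2}\,ds$ that your Steps 3–4 require.

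The paper's route avoids this: instead of pairing the director equation with $R(d)$, pair it with $-\Delta d$ (and the velocity equation with $v$, using the cancellation \eqref{Eq:Dissip}). Then the anisotropy contributes $\langle\phi'(d),-\Delta d\rangle=\langle d-\xi,-\Delta d\rangle=+\lvert\nabla d\rvert^2_{L^2}$ \emph{linearly and exactly}, yielding the coercive term on the left, while the competing terms $\langle\lvert\nabla d\rvert^2 d+(\phi'(d)\cdot d)d,\Delta d\rangle$ are bounded via Gagliardo–Nirenberg by $C\,\mathbf{E}_0(\lvert\Delta d\rvert^2_{L^2}+\lvert\nabla d\rvert^2_{L^2})$ and absorbed by choosing $\kappa_1^2\le\frac{1}{4(C_0^2\lvert\Omega\rvert+C_1)}$. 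This gives $\int_0^T(\lvert\nabla v\rvert^2+K_0\lvert\nabla d\rvert^2+K_0\lvert\Delta d\rvert^2)\,ds\le\mathbf{E}_0$ uniformly in $T$, after which your Steps 3–4 go through as written (note that $\int_0^T\lvert v\rvert^2_{L^4}\,ds$ and $\int_0^T\lvert\nabla d\rvert^2_{L^4}\,ds$ should be controlled via $\ve\hookrightarrow L^4$ and $\lvert\nabla d\rvert^2_{L^4}\le C(\lvert\nabla d\rvert^2_{L^2}+\lvert\nabla d\rvert_{L^2}\lvert\Delta d\rvert_{L^2})$ rather than via a bound on $\int_0^T\int_\Omega\phi\,dx\,ds$, which is neither available nor needed).
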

	\begin{proof}
		Let $(v_0,d_0)\in \ve\times D(\hA)$, $f\in L^2(0,T; L^2)$, $g\in L^2(0,T; \rH^1)$.
		Let $\eps_0>0$ and ${\eps}_1\in (0,\eps_0)$ be the constants  from Proposition \ref{Ma2} and Theorem \ref{thm-main}, respectively. Let $\kappa_0\in (0,\eps_1]$ be a number to be chosen in \eqref{Eq:ChoiceOfKappa} (see the proof of  Proposition  \ref{Thm:PrecompactinVastL2}) such that \eqref{Eq:SmallAssumSmooth} holds. Then, by
		%
		Theorem \ref{Thm:NoSingSmall} we deduce that the problem \eqref{1b}  has a solution $(v,d)$ which does not have singular times and satisfying
		\eqref{Eq:MaxRegSmallData-2-B}. The proof of \eqref{Eq:EstVeloOptDirInVH2-0} will be proved in Proposition \ref{Prop:EstVeloOptDirInVH2} below.
	\end{proof}
	\begin{Prop} \label{Thm:PrecompactinVastL2}
		Let $\eps_0>0$ and ${\eps}_1\in (0,\eps_0)$ be the constants  from Proposition \ref{Ma2} and Theorem \ref{thm-main}, respectively. Then, there exists $\kappa_1\in (0, \eps_0)$ and $K_4, K_0>0$  such that the following hold.
		
		If  $(v_0,d_0)\in \ve\times D(\hA)$, $f\in L^2(0,T; L^2)$, $g\in L^2(0,T; \rH^1)$ satisfy
		\begin{equation}\notag
		\mathbf{E}_0= \mathcal{E}(v_0,d_0)+\frac12 \int_0^\infty \left[\lvert f \rvert^2_{\rH^{-1}}+ \lvert g \rvert^2_{L^2}\right] dt<2 \kappa_1^2,
		\end{equation}
		then any regular solution  $(v,d)$  to \eqref{1b} defined on $[0,T)$ satisfies
		\begin{equation}\label{Eq:EstVeloOptDirInHH1}
		\begin{split}
		\frac12\sup_{t\in [0,T)} \left( \lvert v(t) \rvert^2_{L^2} + \lvert \nabla d(t)  \rvert^2 \right) + \int_0^{T}\left( \lvert \nabla v (r)\rvert^2_{L^2} + K_0 \lvert \nabla d(r)\rvert^2 + K_0 \lvert \Delta d (r)\rvert^2_{L^2}  \right)dr
		\le \mathbf{E}_0.
		\end{split}
		\end{equation}
		Furthermore,
		\begin{equation}\label{Eq:EstOptDirInL4}
		\int_0^{T} [\lvert \nabla d\rvert^4_{L^4} + \lvert \nabla d\rvert^2_{L^4} ]dt \le \frac{K_4}{K_0}(1+\mathbf{E}_0^2)
		\end{equation}
	\end{Prop}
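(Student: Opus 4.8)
The plan is to turn the global energy inequality \eqref{Eq:NRJ-Inequality} into the sharper estimate \eqref{Eq:EstVeloOptDirInHH1} by a bootstrap argument powered by the smallness of the data and the Ladyzhenskaya--Struwe inequality. First I would apply \eqref{Eq:NRJ-Inequality} with $s=0$: using $\phi\ge 0$, the identity $\lvert d(t)\times g(t)\rvert\le \lvert g(t)\rvert$ (valid since $d(t)\in\mathcal{M}$), and the definition of $\mathbf{E}_0$, this gives for every $t\in[0,T)$
\begin{equation*}
\tfrac12\bigl(\lvert v(t)\rvert^2_{L^2}+\lvert \nabla d(t)\rvert^2_{L^2}\bigr)+\tfrac12\int_0^t\bigl(\lvert \nabla v(r)\rvert^2_{L^2}+\lvert R(d(r))\rvert^2_{L^2}\bigr)\,dr\le \mathcal{E}(v(t),d(t))+\tfrac12\int_0^t(\cdots)\,dr\le \mathbf{E}_0 .
\end{equation*}
This already produces the $\sup$-term and the $\int\lvert \nabla v\rvert^2$-term of \eqref{Eq:EstVeloOptDirInHH1}, and, since $\mathcal{E}_R(u,n)\le\mathcal{E}(u,n)$ for every ball, it gives the crucial uniform local-energy bound $\sup_{t<T}\sup_{x\in\Omega}\int_{B(x,r_0)}\lvert \nabla d(t)\rvert^2\,dy\le 2\mathbf{E}_0<4\kappa_1^2$.

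Next I would reconstruct $\Delta d$ from the dissipative quantity $R(d)=\Delta d+\lvert \nabla d\rvert^2 d-\phi'(d)+\alpha(d)d$. Since $\lvert d\rvert=1$ a.e., the pointwise identity $d\cdot\Delta d=\tfrac12\Delta\lvert d\rvert^2-\lvert \nabla d\rvert^2=-\lvert \nabla d\rvert^2$ shows that the normal component of $\Delta d$ is $-\lvert \nabla d\rvert^2 d$, hence $\Delta d=R(d)-\lvert \nabla d\rvert^2 d+(\phi'(d)-\alpha(d)d)$ and therefore $\lvert \Delta d\rvert^2\le 3\lvert R(d)\rvert^2+3\lvert \nabla d\rvert^4+3\lvert \phi'(d)-\alpha(d)d\rvert^2$; by Assumption \ref{Assum:EnergyPotential} and $\lvert d\rvert=1$ the last term is bounded by a constant. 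Integrating and inserting the Step~1 bound gives $\int_0^t\lvert \Delta d\rvert^2_{L^2}\le C\mathbf{E}_0+3\int_0^t\lvert \nabla d\rvert^4_{L^4}+C_\Omega t$. I would then control the $L^4$-term by the gradient Ladyzhenskaya--Struwe inequality \eqref{Eq:Ladyzhenskaya-Struwe-2} applied to $h=d$: using the uniform local-energy bound from Step~1,
\begin{equation*}
\int_0^t\lvert \nabla d\rvert^4_{L^4}\le c_2\bigl(2\mathbf{E}_0\bigr)\Bigl(\int_0^t\lvert \Delta d\rvert^2_{L^2}+\tfrac1{r_0^2}\int_0^t\lvert \nabla d\rvert^2_{L^2}\Bigr)\le 2c_2\mathbf{E}_0\int_0^t\lvert \Delta d\rvert^2_{L^2}+\tfrac{4c_2\mathbf{E}_0^2 t}{r_0^2}.
\end{equation*}
Choosing $\kappa_1\in(0,\eps_0)$ so small that $6c_2\,(2\kappa_1^2)<\tfrac12$ (hence $6c_2\mathbf{E}_0<\tfrac12$) lets me absorb $\int_0^t\lvert \Delta d\rvert^2$ into the left-hand side, which yields a bound $\int_0^t\lvert \Delta d\rvert^2_{L^2}\le K_4(1+\mathbf{E}_0^2)$ (the remaining low-order contributions, of size $\propto\mathbf{E}_0$, $\mathbf{E}_0^2$ and $C_\Omega$, absorbed into $K_4$); feeding this back into the displayed inequality also gives $\int_0^t\lvert \nabla d\rvert^4_{L^4}\le \tfrac{K_4}{K_0}(1+\mathbf{E}_0^2)$, and $\int_0^t\lvert \nabla d\rvert^2_{L^4}\le T^{1/2}\bigl(\int_0^t\lvert \nabla d\rvert^4_{L^4}\bigr)^{1/2}$ by Cauchy--Schwarz, which is \eqref{Eq:EstOptDirInL4}. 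Finally, choosing $K_0>0$ small enough (depending only on $c_2$, $r_0$, $\lvert\Omega\rvert$ and the absorbed constants) so that $K_0\int_0^t(\lvert \nabla d\rvert^2+\lvert \Delta d\rvert^2)$ is dominated by the $\tfrac12\int\lvert R(d)\rvert^2$ already present in the Step~1 inequality, I recover \eqref{Eq:EstVeloOptDirInHH1}.

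The main obstacle is the absorption step: the quantity $R(d)$ controls only the \emph{tangential} part of $\Delta d$, the normal part being exactly $\lvert \nabla d\rvert^2 d$, whose $L^2$-norm squared is $\lvert \nabla d\rvert^4_{L^4}$ --- a genuinely supercritical term in two dimensions. The whole argument hinges on the fact that the \emph{global} energy is small, so that the \emph{local} energy appearing as the prefactor in \eqref{Eq:Ladyzhenskaya-Struwe-2} is uniformly small (here the inequality $\mathcal{E}_R\le\mathcal{E}$ is used), making the Struwe coefficient strictly less than the margin available for absorption; this is precisely where the smallness threshold $\kappa_1$ must be fixed, and one must be careful that $\kappa_1$ is chosen \emph{before} $K_0$ and depends only on the universal constants $c_2,r_0$ (and $\eps_0$), not on the solution.
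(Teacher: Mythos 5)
Your Step 1 is fine as far as it goes: the energy inequality \eqref{Eq:NRJ-Inequality} with $s=0$ does yield the $\sup$-term, $\int_0^T\lvert\nabla v\rvert^2$ and $\int_0^T\lvert R(d)\rvert^2\le 2\mathbf{E}_0$. But the rest of the argument has a genuine gap, and it is exactly at the point you flag as "the main obstacle". The left-hand side of \eqref{Eq:EstVeloOptDirInHH1} contains $K_0\int_0^T\lvert\nabla d(r)\rvert^2\,dr$ and $K_0\int_0^T\lvert\Delta d(r)\rvert^2\,dr$ with a right-hand side equal to $\mathbf{E}_0$, \emph{uniformly in $T$} (this uniformity is the whole point: $\mathbf{E}_0$ involves $\int_0^\infty$ and the proposition feeds the precompactness statement of Theorem \ref{Thm:PrecompactInHH1}, which is claimed "for all $T>0$"). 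Your final absorption step, "choose $K_0$ so that $K_0\int_0^t(\lvert\nabla d\rvert^2+\lvert\Delta d\rvert^2)$ is dominated by the $\tfrac12\int\lvert R(d)\rvert^2$ already present", is not a valid inequality: $R(d)$ vanishes on harmonic-map-type configurations with nonzero Dirichlet energy, so $\lvert\nabla d\rvert^2_{L^2}$ is not controlled by $\lvert R(d)\rvert^2_{L^2}$ in any pointwise-in-time or integrated sense. The generic energy inequality provides no damping of $\lvert\nabla d\rvert^2_{L^2}$ whatsoever, so $\int_0^T\lvert\nabla d\rvert^2\,dr$ can only be bounded by $2\mathbf{E}_0T$. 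The same $T$-growth infects your bound on $\int_0^T\lvert\Delta d\rvert^2$: your decomposition produces the additive terms $C_\Omega t$ (from $\lvert\phi'(d)-\alpha(d)d\rvert^2_{L^2}\le C\lvert\Omega\rvert$, which does \emph{not} vanish even for $\phi(d)=\tfrac12\lvert d-\xi\rvert^2$) and $\tfrac{1}{r_0^2}\int_0^t\lvert\nabla d\rvert^2$ from \eqref{Eq:Ladyzhenskaya-Struwe-2}, both of order $t$. Hence your conclusion $\int_0^T\lvert\Delta d\rvert^2\le K_4(1+\mathbf{E}_0^2)$ and estimate \eqref{Eq:EstOptDirInL4} (note also that your Cauchy--Schwarz step introduces an explicit $T^{1/2}$) cannot be reached this way.

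The missing idea is that this proposition lives under Assumption \ref{Assum:PhiSpecial}, $\phi(d)=\tfrac12\lvert d-\xi\rvert^2$, which you never use. The paper tests the director equation with $-\Delta d$ and integrates the anisotropy term by parts: $-\langle\phi'(d),\Delta d\rangle=-\langle d-\xi,\Delta d\rangle=+\lvert\nabla d\rvert^2_{L^2}$ (the Neumann condition kills the boundary term). This manufactures the damping term $+\lvert\nabla d\rvert^2_{L^2}$ on the left-hand side of \eqref{Eq:EstOptDirInH1-1}, which is what makes $\int_0^T\lvert\nabla d\rvert^2$ finite uniformly in $T$. The remaining cubic terms $\langle\lvert\nabla d\rvert^2d+(\phi'(d)\cdot d)d,\Delta d\rangle$ are then estimated by the \emph{global} Gagliardo--Nirenberg inequality (not the local Struwe lemma), producing coefficients $\tfrac12-(2C_0^2+C_1)\mathbf{E}_0$ in front of $\lvert\Delta d\rvert^2_{L^2}$ and $\lvert\nabla d\rvert^2_{L^2}$; the smallness threshold $\kappa_1$ and the constant $K_0$ are chosen precisely to keep these positive, and the convective/elastic coupling is cancelled via \eqref{Eq:Dissip} when the $v$- and $d$-estimates are added. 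Your local-energy/Struwe route is the right tool elsewhere in the paper (Lemma \ref{Lem:EstimateofDeltaD+L4norms}), where $T$-linear terms are acceptable, but it cannot deliver the $T$-uniform dissipation of $\lvert\nabla d\rvert^2$ that this proposition asserts.
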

	\begin{proof}
		Let $(v_0,d_0)\in \ve\times D(\hA)$, $f\in L^2(0,T; L^2)$, $g\in L^2(0,T; \rH^1)$.  Let $\eps_0>0$ and ${\eps}_1\in (0,\eps_0)$ be the constants  from Proposition \ref{Ma2} and Theorem \ref{thm-main}, respectively.  	Let $\kappa_0\in (0,{\eps}_1]$ be a number to be chosen later such that \eqref{Eq:SmallAssumSmooth} holds.
		Let $(v,d)$ be a regular solution  to \eqref{1b} defined on $[0,T)$.
		%
		Then, multiplying the velocity equation by $v$ and using the Cauchy-Schwarz and  Young inequalities  imply
		\begin{equation}\label{Eq:EstVeloinL2-1}
		\begin{split}
		\frac12 \frac{d}{dt} \lvert v\rvert^2_{L^2} + \frac12 \lvert \nabla v\rvert^2_{L^2} \le -\langle \Pi\Div(\nabla d \odot \nabla d), u \rangle + \frac12 \lvert f\rvert^2_{\rh^{-1}}.
		\end{split}
		\end{equation}
		We multiply the optical director equation by $-\Delta d$, then use Cauchy-Schwarz and Young inequalities and the constraint $d(t)\in \mathcal{M}$, $t\in [0,T]$,  and obtain
		\begin{equation*}
		\begin{split}
		\frac12 \frac{d}{dt} \lvert \nabla d \rvert^2_{L^2} + \frac12 \lvert \Delta d \rvert^2_{L^2} \le \langle v \cdot \nabla d, \Delta d \rangle - \langle \lvert \nabla d \rvert^2 d -\phi^\prime(d) +(\phi^\prime(d) \cdot d)d, \Delta d\rangle +\frac12 \lvert g \rvert^2_{L^2}.
		\end{split}
		\end{equation*}
		Using the fact $\phi^\prime(d)= d-\xi$ and the integration-by-parts on the torus we find
		\begin{equation}\label{Eq:EstOptDirInH1-1}
		\begin{split}
		\frac12 \frac{d}{dt} \lvert \nabla d \rvert^2_{L^2} + \frac12 \lvert \Delta d \rvert^2_{L^2} + \lvert\nabla d \rvert^2_{L^2} \le \langle v \cdot \nabla d, \Delta d \rangle +\frac12 \lvert g \rvert^2_{L^2}- \langle \lvert \nabla d \rvert^2 d  +(\phi^\prime(d) \cdot d)d, \Delta d\rangle.
		\end{split}
		\end{equation}
		Before proceeding further, let us estimate the  last term of the above inequality. Toward this end we divide the task into two parts. Firstly, we use fact that $\Delta d\cdot d = -\lvert \nabla d \rvert^2$, the H\"older, the Gagliardo-Nirenberg (\cite[Section 9.8, Example C.3]{Brezis}) , the Young inequalities and \cite[Theorem 3.4]{Simader} to get the following chain of inequalities
		\begin{align}
		\lvert \langle (\phi^\prime(d)\cdot d)d, \Delta d \rangle \rvert & = -\int_{\Omega} (\phi^\prime(d(x)) \cdot d(x) ) \lvert \nabla d(x)\rvert^2 \;dx  \notag\\
		& \le \lvert \phi^\prime(d) \rvert_{L^2} \lvert d \rvert_{L^\infty} \lvert \nabla d \rvert^2_{L^4}\notag\\
		&\le C_0\lvert d- \xi \rvert_{L^2} \left(\lvert \nabla d \rvert_{L^2} \lvert \nabla^2 d \rvert_{L^2}+ \lvert \nabla d \rvert^2_{L^2}   \right)\notag\\
		& \le C_0 \lvert d-\xi \rvert_{L^2} \left(\lvert \nabla d \rvert_{L^2} \lvert \Delta d \rvert_{L^2}+ \lvert \nabla d \rvert^2_{L^2}   \right)\notag\\
		&\le \frac12 \lvert \nabla d \rvert^2_{L^2} + 2C^2_0 \frac12\lvert d-\xi \rvert^2_{L^2} \left(\lvert \Delta d \rvert^2_{L^2} +   \lvert \nabla d \rvert^2_{L^2}\right).
		\label{Eq:EstAnisopNRJ}
		\end{align}
		Secondly, using the  Gagliardo-Nirenberg inequality (\cite[Section 9.8, Example C.3]{Brezis})  and \cite[Theorem 3.4]{Simader}    we obtain
		\begin{align}
		-\langle \lvert \nabla d\rvert^2 d, \Delta d \rangle&= \lvert \nabla d \rvert^4_{L^4}\notag\\
		&\le C_1 \lvert \nabla d \rvert^2_{L^2} (\lvert \Delta d \rvert^2_{L^2}+ \lvert \nabla d \rvert^2_{L^2} ).\label{Eq:EstTransportTerm-1}
		\end{align}
		Plugging \eqref{Eq:EstAnisopNRJ} and \eqref{Eq:EstTransportTerm-1} into the inequality \eqref{Eq:EstOptDirInH1-1} yields
		\begin{equation}\label{Eq:EstOptDirInH1-2}
		\begin{split}
		\frac12 \frac{d}{dt} \lvert \nabla d \rvert^2_{L^2}  + \lvert\nabla d \rvert^2_{L^2} \le \langle v \cdot \nabla d, \Delta d \rangle +\frac12 \lvert g \rvert^2_{L^2}-\left(\frac12-2C_0^2 \frac12  \lvert d-\xi \rvert^2_{L^2}-C_1 \lvert \nabla d\rvert^2_{L^2} \right)\lvert \Delta d\rvert^2_{L^2}\\
		-\left(\frac12 -2C_0^2 \frac12 \lvert d-\xi \rvert^2_{L^2}-C_1 \lvert \nabla d\rvert^2_{L^2} \right)\lvert \nabla d \rvert^2_{L^2}.
		\end{split}
		\end{equation}
		Now adding the inequalities \eqref{Eq:EstVeloinL2-1} and \eqref{Eq:EstOptDirInH1-2} side by side, and using \eqref{Eq:Dissip} imply
		\begin{equation}\notag
		\begin{split}
		\frac12 \frac{d}{dt} \left(\lvert \nabla d \rvert^2_{L^2} + \lvert v \rvert^2_{L^2}\right) \frac12 \lvert \nabla v \rvert^2_{L^2} \le-\left(\frac12-2C_0^2 \frac12  \lvert d-\xi \rvert^2_{L^2}-C_1 \lvert \nabla d\rvert^2_{L^2} \right)\lvert \Delta d\rvert^2_{L^2}\\
		-\left(\frac12 -2C_0^2 \frac12  \lvert d-\xi \rvert^2_{L^2}-C_1 \lvert \nabla d\rvert^2_{L^2} \right)\lvert \nabla d \rvert^2_{L^2}\\
		+ \frac12 \left(\lvert g \rvert^2_{L^2}+\lvert f \rvert_{\rH^{-1}}\right).
		\end{split}
		\end{equation}
		Thanks to the energy inequality \eqref{Eq:NRJ-Inequality} we obtain
		\begin{equation}\label{Eq:EstVeloOptDirInL2H1}
		\begin{split}
		\frac12 \frac{d}{dt} \left(\lvert \nabla d \rvert^2_{L^2} + \lvert v \rvert^2_{L^2}\right)  + \lvert\nabla d \rvert^2_{L^2}+ \frac12 \lvert \nabla v \rvert^2_{L^2} \le-\left(\frac12-2C_0^2 \mathbf{E}_0-C_1 \mathbf{E}_0 \right)\lvert \Delta d\rvert^2_{L^2}\\
		-\left(\frac12 -2C_0^2 \mathbf{E}_0-C_1\mathbf{E}_0 \right)\lvert \nabla d \rvert^2_{L^2}
		+ \frac12 \left(\lvert g \rvert^2_{L^2}+\lvert f \rvert_{\rH^{-1}}\right).
		\end{split}
		\end{equation}
		We now easily conclude the proof of \eqref{Eq:EstVeloOptDirInHH1} in the proposition by integrating   \eqref{Eq:EstVeloOptDirInL2H1}, taking the supremum over $t\in [0,T_0]$ and choosing
		\begin{equation}\label{Eq:ChoiceOfKappa}
		\kappa_1^2= \min\{\eps_1^2, \frac{1}{4(C_0^2 \lvert \Omega \rvert+ C_1)} \} \text{ and } K_0= \frac12- \mathbf{E}_0\left(C_0^2 \lvert \Omega \rvert +C_1\right).
		\end{equation}
		
		In order to prove the second estimate, we use the Gagliardo-Nirenberg inequality (see \cite[Section 9.8, Example C.3]{Brezis}) and \eqref{Eq:EstVeloOptDirInHH1} to obtain
		\begin{align*}
		\int_0^T( \lvert \nabla d \rvert^4_{L^4} +\lvert \lvert \nabla d\rvert^2_{L^4} )dt &\le C \sup_{t\in [0,T]} \lvert \nabla d(t)\rvert^2_{L^2} \int_0^T \lvert \Delta d(t) \rvert^2 dt +C \int_0^{T} \lvert \nabla d (s) \rvert^2_{L^2} ds \\
		& + C \int_0^T (\lvert \nabla d(s) \rvert^2+ \lvert \Delta d(s) \rvert^2_{L^2} ds ) \\
		&\le \frac{C}{K_0}[\mathbf{E}_0^2+\mathbf{E}_0 ].
		\end{align*}
		This completes the proof of the proposition.
	\end{proof}
	The next result is crucial for the proof of Theorem \ref{Thm:PrecompactInHH1}.
	\begin{Prop}\label{Prop:EstVeloOptDirInVH2}
		Let $\eps_0$ be as in Proposition \ref{Ma2} and $(v_0,d_0)\in \ve\times D(\hA)$, $f\in L^2(0,T; L^2)$, $g\in L^2(0,T; \rH^1)$. Assume that
		\begin{equation}\notag
		\mathbf{E}_0= \mathcal{E}(v_0,d_0)+\frac12 \int_0^\infty \left[\lvert f \rvert^2_{\rH^{-1}}+ \lvert g \rvert^2_{L^2}\right] dt< \kappa^2_1,
		\end{equation}
		where  $\kappa_1\in (0,\eps_0)$ is defined in Proposition \ref{Thm:PrecompactinVastL2}. Then, there exist a constant  $K_2>0$, which depends only on the norms of $(v_0,d_0)\in \ve\times D(\hA)$ and $(f,g)\in L^2(0,T; \rh\times \rH^1)$,  such that for a regular solution  $(v,d)$  to \eqref{1b} we have
		\begin{equation}\label{Eq:EstVeloOptDirInVH2}
		\begin{split}
		\frac12\sup_{t\in [0,T)} \left( \lvert \rA^\frac12 v(t) \rvert^2_{L^2} + \lvert \Delta d(t)  \rvert^2 \right) + \int_0^{T_\ast}\left( \lvert \rA v (r)\rvert^2_{L^2} + \lvert \Delta d(r)\rvert^2 +  \lvert \Delta d (r)\rvert^2_{L^2}  \right)dr
		\le K_2.
		\end{split}
		\end{equation}
	\end{Prop}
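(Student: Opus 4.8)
\textbf{Proof strategy for Proposition \ref{Prop:EstVeloOptDirInVH2}.} The plan is to feed the global-in-time smallness estimates of Proposition \ref{Thm:PrecompactinVastL2} into the higher-order energy estimate of Lemma \ref{Lem:EstinHigherorderNorms}. First I would invoke Proposition \ref{Thm:PrecompactinVastL2}: since $\mathbf{E}_0<\kappa_1^2$, the regular solution $(v,d)$ satisfies \eqref{Eq:EstVeloOptDirInHH1} and \eqref{Eq:EstOptDirInL4}. In particular one reads off
\[
\sup_{t\in[0,T)}\bigl(\lvert v(t)\rvert^2_{L^2}+\lvert\nabla d(t)\rvert^2_{L^2}\bigr)\le 2\mathbf{E}_0,\qquad
\int_0^{T}\bigl(\lvert\nabla v\rvert^2_{L^2}+K_0\lvert\Delta d\rvert^2_{L^2}\bigr)\,dr\le 2\mathbf{E}_0,
\]
as well as $\int_0^{T}\bigl(\lvert\nabla d\rvert^4_{L^4}+\lvert\nabla d\rvert^2_{L^4}\bigr)\,dr\le \tfrac{K_4}{K_0}(1+\mathbf{E}_0^2)$. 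The middle bound already controls $\int_0^{T}\lvert\Delta d\rvert^2_{L^2}\,dr$, which is one of the terms on the left-hand side of \eqref{Eq:EstVeloOptDirInVH2}.

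\textbf{Interpolating the velocity.} The quantity $\int_0^{T}\lvert v\rvert^4_{L^4}\,dr$ is not directly provided by Proposition \ref{Thm:PrecompactinVastL2}, so the next step is to recover it from the bounds just obtained. By the Ladyzhenskaya inequality (\cite[Lemma III.3.3]{Temam_2001}) and the Gagliardo--Nirenberg inequality (\cite[Section 9.8, Example C.3]{Brezis}),
\[
\lvert v\rvert^4_{L^4}\le C\,\lvert v\rvert^2_{L^2}\lvert\nabla v\rvert^2_{L^2},\qquad
\lvert v\rvert^2_{L^4}\le C\,\lvert v\rvert_{L^2}\bigl(\lvert v\rvert_{L^2}+\lvert\nabla v\rvert_{L^2}\bigr),
\]
so integrating in time and using $\sup_{t}\lvert v(t)\rvert_{L^2}\le C\mathbf{E}_0^{1/2}$ together with $\int_0^{T}\lvert\nabla v\rvert^2_{L^2}\,dr\le 2\mathbf{E}_0$ yields $\int_0^{T}\bigl(\lvert v\rvert^4_{L^4}+\lvert v\rvert^2_{L^4}\bigr)\,dr\le C(1+\mathbf{E}_0+\mathbf{E}_0^2)$. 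Combining with the $\nabla d$-bound above, there is a constant $C_\ast=C_\ast(\mathbf{E}_0)$ with
\[
\int_0^{T}\bigl[\lvert\nabla d\rvert^4_{L^4}+\lvert v\rvert^4_{L^4}+\lvert\nabla d\rvert^2_{L^4}+\lvert v\rvert^2_{L^4}\bigr]\,dr\le C_\ast.
\]

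\textbf{Applying the higher-order estimate.} Since $(v_0,d_0)\in\ve\times D(\hA)$ and $(f,g)\in L^2(0,T;L^2\times\rH^1)$, the quantity $\Sigma_0$ defined in \eqref{Eq:DefSigma} is finite and so is $\Xi(T)$; hence Lemma \ref{Lem:EstinHigherorderNorms} applies to the regular solution $(v,d)$ (cf. the proof of Claim \ref{Clm:Regular-Sol}) and, taking $\tau=T$ in \eqref{Eq:EstinHigherorderNorms}, gives
\[
\sup_{0\le s\le T}\bigl(\lvert\nabla v(s)\rvert^2_{L^2}+\lvert\Delta d(s)\rvert^2_{L^2}\bigr)+2\int_0^{T}\bigl(\lvert\nabla\Delta d(s)\rvert^2_{L^2}+\lvert\rA v(s)\rvert^2_{L^2}\bigr)\,ds\le K_3\Sigma_0\,e^{K_4\Xi(T)}\,e^{K_4 C_\ast}.
\]
Setting $K_2$ equal to the right-hand side (enlarged to absorb also $2\mathbf{E}_0/K_0$), and using $\lvert\rA^{1/2}v\rvert_{L^2}=\lvert\nabla v\rvert_{L^2}$ together with the bound on $\int_0^{T}\lvert\Delta d\rvert^2_{L^2}\,dr$ from the first step, delivers \eqref{Eq:EstVeloOptDirInVH2}. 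Finally, \eqref{Eq:EstVeloOptDirInVH2} combined with the compact embeddings $\ve\hookrightarrow\rH$ and $D(\hA)\hookrightarrow D(\hA^{1/2})$ gives the precompactness of the orbit of $(v,d)$ in $\rH\times D(\hA^{1/2})$ asserted in Theorem \ref{Thm:PrecompactInHH1}.

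\textbf{Main obstacle.} The delicate point is not any individual estimate but the requirement that the Gronwall exponent $K_4\int_0^{T}[\lvert\nabla d\rvert^4_{L^4}+\cdots]\,dr$ be bounded by a constant that is uniform over the length of the existence interval. This is exactly where the smallness hypothesis $\mathbf{E}_0<\kappa_1^2$ is indispensable: smallness of $\mathbf{E}_0$ is what makes the coefficients $\tfrac12-(C_0^2\lvert\Omega\rvert+C_1)\mathbf{E}_0$ in the proof of Proposition \ref{Thm:PrecompactinVastL2} strictly positive and hence yields the global-in-time control of $\int_0^{T}\lvert\nabla d\rvert^4_{L^4}\,dr$; without it, Lemma \ref{Lem:EstinHigherorderNorms} would furnish only a short-time bound.
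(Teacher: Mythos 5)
Your proposal is correct and follows essentially the same strategy as the paper: feed the global-in-time small-data bounds of Proposition \ref{Thm:PrecompactinVastL2} (together with a Ladyzhenskaya interpolation for $\int_0^T\lvert v\rvert^4_{L^4}\,dr$) into the Gronwall-type higher-order estimate, exactly as the paper does via \eqref{Eq:EstinHigherorderNorms-A}. The one point where you diverge is the term $\int_0^T\lvert\Delta d\rvert^2_{L^2}\,dr$ on the left of \eqref{Eq:EstVeloOptDirInVH2}: the paper regenerates the higher-order differential inequality and uses Assumption \ref{Assum:PhiSpecial} to compute ${}_{\rH^1}\langle\phi^\prime(d),-\hA^2 d\rangle_{(\rH^1)^\ast}=-\lvert\Delta d\rvert^2_{L^2}$, which places that term directly on the left-hand side, whereas you harvest it from \eqref{Eq:EstVeloOptDirInHH1} via $K_0\int_0^T\lvert\Delta d\rvert^2_{L^2}\,dr\le\mathbf{E}_0$; both are legitimate since $K_0>0$ under the same smallness hypothesis, and your variant spares the recomputation at the cost of a constant depending on $K_0^{-1}$.
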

	
	\begin{proof}
		The proof of the estimate \eqref{Eq:EstVeloOptDirInVH2} is very similar to the proof of    \eqref{Eq:EstinHigherorderNorms}, hence we only sketch the proof.
		We start with the same idea as in the proof of \eqref{Eq:EstinHigherorderNorms}, i.e., we multiply the velocity and optical director equations by $Av$ and $\rA^2 d$, respectively. This procedure implies the equation \eqref{Eq:Derivativehigheroerdernorms}. In the present proof, we estimate the term ${}_{\rH^1}\langle\phi^\prime(d), -\hA^2 d\rangle_{(\rH^1)^\ast} $ as follows:
		\begin{align}\notag
		 \;{}_{\rH^1}\langle\phi^\prime(d), -\hA^2 d\rangle_{(\rH^1)^\ast}  =-\langle \Delta\phi^\prime(d), \Delta d \rangle = -\lvert \Delta d \rvert^2_{L^2}.
		\end{align}
		This gives raise to the term $\int_0^T \lvert \Delta d(r)\rvert^2  dr$ in the left hand side of \eqref{Eq:EstVeloOptDirInVH2}.
		Now, the remaining terms are estimated with exactly the same way as in the proof of  \eqref{Eq:Derivativehigheroerdernorms}. In particular, we infer that
		there exist constants  $K_3>0$, which depends on the norms of $(u_0,d_0)\in \ve\times D(\hA)$ and $(f,g)\in L^2(0,T; \rH\times D(\hA^{1/2}))$, and $K_4>0$ such that  for all $\tau\in [0,T)$ we have
		\begin{equation}\label{Eq:EstinHigherorderNorms-A}
		\begin{split}
		\sup_{0\le s\le \tau} \left(\lvert \nabla
		v(s) \rvert^2_{L^2} + \lvert \Delta d(s) \rvert^2_{L^2} \right) + 2 \int_0^{\tau} \left( \lvert \nabla \Delta d(s) \rvert^2_{L^2} +\lvert \Delta d(s)\rvert^2_{L^2} +\lvert A v(s) \rvert^2_{L^2}\right)ds \\
		\le K_3e^{K_4 \int_0^{\tau}[\lvert \nabla d(r) \rvert^4_{L^4}+ \lvert v(r) \rvert^4_{L^4}+\lvert \nabla d(r) \rvert^2_{L^4}+ \lvert v(r) \rvert^2_{L^4}  ]dr   }.
		\end{split}
		\end{equation}
	Since $L^4\cap \rH \subset \ve $, by using \eqref{Eq:EstVeloOptDirInHH1} and the Ladyzhenskaya inequality (\cite[Lemma III.3.3]{Temam_2001}) we infer that there exists $C>0$ such that
		\begin{equation}\notag
		\int_0^T[ \lvert v(s) \rvert^4_{L^4} +\lvert v(s) \rvert^2_{L^4}] ds\le  \int_0^T[ \lvert v(s) \rvert^4_{L^4} +\lvert \nabla v(s) \rvert^2] ds \le C (1+\mathbf{E}_0^2).
		\end{equation}
		Thus, plugging the latter estimate and \eqref{Eq:EstOptDirInL4} into \eqref{Eq:EstinHigherorderNorms-A} imply  \eqref{Eq:EstVeloOptDirInVH2}. This completes the proof of the proposition.
	\end{proof}
	\appendix
\section{Proof of Lemma \ref{Lem:RighthandSideinL2VxH1}}
In this section we will prove Lemma \ref{Lem:RighthandSideinL2VxH1}.
\begin{proof}[Proof of Lemma \ref{Lem:RighthandSideinL2VxH1}]
	Firstly, we infer from
	\cite[Lemma 2.5]{ZB+EH+PR-RIMS}	
	\footnote{ Note that in
		\cite[Lemma 2.5]{ZB+EH+PR-RIMS}
		the authors used $B_1(v_i,v_i)$  and $M(n_i,n_i)$ in places of $B(v_i,v_i)$ and $\Pi\left(\nabla n_i \Delta  n_i\right)$, respectively. }  that there exists a  constant $C>0$ such that for all $(v_i,d_i)\in D(\rA)\times D(\hA^\frac32)$
	\begin{equation*}
	\begin{split}
	\lvert F(v_1,n_1)-F(v_2, n_2) \rvert_{\rH} \le C \lvert\rA^\frac12[ v_1 -v_2]\rvert^\frac12_{L^2} \left(\lvert \rA[v_1 -v_2] \rvert^\frac12_{L^2} \lvert \rA^\frac12 u_2 \rvert_{L^2} + \lvert \rA^\frac12[v_1-v_2] \rvert^\frac12_{L^2} \lvert \rA^\frac12 u_1 \rvert^\frac12_{L^2}  \lvert \rA u_1 \rvert^\frac12_{L^2}  \right)\\
	+ \lvert n_1 -n_2 \rvert_{\rH^2} \lvert n_1 \rvert^\frac12_{\rH^2} \lvert n_1\rvert^\frac12_{\rH^3} + \lvert n_1-n_2 \rvert^\frac12_{\rH^2} \lvert n_1-n_2 \rvert^\frac12_{\rH^3} \lvert n_2 \rvert^\frac12_{\rH^2}
	\end{split}
	\end{equation*}
	This clearly implies that there exists a constant $C>0$ such that for all $(v_i,d_i)\in \mathbf{X}_T$, $i=1,2$,
	\begin{equation*}
	\begin{split}
	\lvert F(v_1,n_1)-F(v_2, n_2) \rvert^2_{L^2(0,T; \rH)} \le & C T^\frac12 \lvert v_1 -v_2\rvert_{C([0,T];\ve)}  \lvert  u_2 \rvert_{C([0,T];\ve)} \lvert \rA[v_1 -v_2] \rvert_{L^2(0,T;D(\rA))}  \\
	&+ C T^\frac12 \lvert \rA^\frac12[v_1-v_2] \rvert^2_{C([0,T]; \ve)} \lvert  u_1 \rvert_{C([0,T];\ve)}  \lvert \rA u_1 \rvert_{L^2(0,T;D(\rA))}  \\
	&+  C T^\frac12  \lvert n_1 -n_2 \rvert^2_{C([0,T];\rH^2)} \lvert n_1 \rvert_{C([0,T];\rH^2)} \lvert n_1\rvert_{L^2(0,T;\rH^3)} \\
	&+ CT^\frac12\lvert n_1-n_2 \rvert_{C([0,T]; \rH^2)} \lvert n_1-n_2 \rvert_{L^2(0,T;\rH^3)} \lvert n_2 \rvert_{C([0,T];\rH^2)}.
	\end{split}
	\end{equation*}
	Hence, we infer that there exists a constant $C>0$ such that for all $(v_i,d_i)\in \mathbf{X}_T$, $i=1,2$,
	\begin{equation*}
	\begin{split}
	\lvert F(v_1,n_1)-F(v_2, n_2) \rvert^2_{L^2(0,T; \rH)} \le C T^\frac12 \lvert v_1-v_2\rvert^2_{\mathbf{X}^1_T}[\lvert u_1\rvert^2_{\mathbf{X}^1_T}+\lvert u_2\rvert^2_{\mathbf{X}^1_T}  ]\\
	C T^\frac12 \lvert n_1-n_2\rvert^2_{\mathbf{X}^1_T}[\lvert d_1\rvert^2_{\mathbf{X}^2_T}+\lvert d_2\rvert^2_{\mathbf{X}^2_T}  ],
	\end{split}
	\end{equation*}
	from which we easily deduce the inequality   \eqref{Eq:FPT-ForcingVelo}.
	
	We will now proceed to the proof of \eqref{Eq:FPT-ForcingOptDir-1} which will be divided into four steps.  Firstly, because of the assumption  \eqref{Eq:BoundednessPhibis} the map $\phi^\prime:\mathbb{R}^3 \to \mathbb{R}^3$ is Lipschitz continuous. Hence, for all $n_i\in \rH^2$, $i=1,2$,
	\begin{align*}
	\lvert \phi^\prime(n_1)- \phi^\prime(n_2) \rvert_{\rH^1}=& \lvert \phi^\prime(n_1)- \phi^\prime(n_2) \rvert_{L^2}+\lvert \nabla \phi^\prime(n_1)- \nabla \phi^\prime(n_2) \rvert_{L^2}\\
	\le & M_2 \lvert n_1 -n_2\lvert_{L^2} + \lvert \phi^{\prime\prime}(n_1)\nabla n_1-\phi^{\prime\prime}(n_2) \nabla n_2\rvert_{L^2}\\
	\le & M_2 \lvert n_1 -n_2\lvert_{L^2} + \lvert [\phi^{\prime\prime}(n_1)-\phi^{\prime\prime}(n_2)]\nabla n_1+\phi^{\prime\prime}(n_2) \nabla (n_1-n_2)\lvert_{L^2}.
	\end{align*}
	Using \eqref{Eq:LipSchitzPhibis}, \eqref{Eq:BoundednessPhibis} and the Sobolev emebedding $\rH^1\subset L^{4}$ 	we obtain
	
	%
	
	\begin{align*}
	\lvert \phi^\prime(n_1)- \phi^\prime(n_2) \rvert_{\rH^1} \le & M_2\lvert n_1 -n_2\lvert_{L^2}+ \lvert \lvert n_1 -n_2\lvert \lvert \nabla n_1\rvert\rvert_{L^2}+ M_2 \lvert \nabla (n_1-n_2) \rvert_{L^2}\\
	\le & M_2 \lvert n_1 -n_2 \rvert_{\rH^1}+ \lvert n_1 -n_2 \rvert_{L^4} \lvert \nabla n_1 \rvert_{L^4}\\
	\le & M_2 \lvert n_1- n_2 \rvert_{\rH^1} + \lvert n_1 - n_2 \rvert_{\rH^1} \lvert n_1 \rvert_{\rH^2}.
	\end{align*}
	Hence, there exists a constant $C>0$ such that for all $n_i\in \mathbf{X}^2_T$, $i=1,2$
	\begin{equation} \label{Eq:EstimateL2H1PhiPrime}
	\lvert \phi^\prime(n_1)- \phi^\prime(n_2) \rvert_{L^2(0,T;\rH^1)}^2 \le M_2 T  \lvert n_1 -n_2\rvert^2_{\mathbf{X}^2_T} [1 + \lvert n_1 \rvert^2_{\mathbf{X}^2_T} ].
	\end{equation}
	Secondly, we estimate the term involving $(\lvert (\phi^\prime(n_i) \cdot n_i)n_i)$, $i=1,2$, as follows. Let  $n_i\in \rH^2$, $i=1,2$. Then,  we have
	\begin{align*}
	\lvert (\phi^\prime(n_1) \cdot n_1)n_1 -(\phi^\prime(n_2) \cdot n_2)n_2\rvert_{\rH^1} =& \lvert ([\phi^\prime(n_1) -\phi^\prime(n_2)] \cdot n_1)n_1+ (\phi^\prime(n_2)\cdot n_1)n_1-(\phi^\prime(n_2) \cdot n_2)n_2 \rvert_{\rH^1}\\
	\le & \lvert ([\phi^\prime(n_1) -\phi^\prime(n_2)] \cdot n_1)n_1\rvert_{\rH^1} +\lvert  (\phi^\prime(n_2)\cdot [n_1-n_2])n_1\rvert_{\rH^1} \\
	&\qquad + \lvert (\phi^\prime(n_2)\cdot n_2)[n_1-n_2] \rvert_{\rH^1}.
	\end{align*}
	{We recall the following classical fact whose proof is easy and omitted. }
	\begin{equation}\label{Eq:ProductH1Linfty}
	\lvert fg\rvert_{\rH^1}\le 2 \left( \lvert f\rvert_{L^\infty} \lvert g \rvert_{\rH^1} + \lvert f\rvert_{\rH^1}\lvert g \rvert_{L^\infty}\right),\;\; \text{ for all } f,g\in \rh^1\cap L^\infty.
	\end{equation}
	Since, $n_i\in \rh^2$ and $\rH^2\subset L^\infty$, it follows from  \eqref{Eq:LInearGrowthPhiprime} and \eqref{Eq:BoundednessPhibis} that $\phi^\prime(n_i)\in \rh^1\cap L^\infty$. Hence, we can  repeatedly use \eqref{Eq:ProductH1Linfty}, the Lipschitz property of $\phi^\prime$  to infer that for all $n_i\in \rH^2$, $i=1,2$,
	\begin{align*}
	\lvert \left(\left[\phi^\prime(n_1)-\phi^\prime(n_2)\right]\cdot n_1\right)n_1 \rvert_{\rH^1}\le C \lvert \phi^\prime(n_1) -\phi^\prime(n_2)\rvert_{L^\infty}  \lvert n_1\rvert_{L^\infty} \lvert n_1\rvert_{\rH^1} + \lvert \phi^\prime(n_1)-\phi^\prime(n_2) \rvert_{\rH^1} \lvert n_1\rvert_{L^\infty}^2.\\
	\le \lvert n_1 -n_2\rvert_{L^\infty} \lvert n_1 \rvert^2_{\rH^2} + \lvert \phi^\prime(n_1) -\phi^\prime(n_2)\rvert_{\rH^1} \lvert n_1\rvert^2_{\rH^2}\\
	\le \lvert n_1-n_2\rvert_{\rH^2} \lvert n_1\rvert^2_{\rH^2}+  \lvert \phi^\prime(n_1) -\phi^\prime(n_2)\rvert_{\rH^1} \lvert n_1\rvert^2_{\rH^2}.
	\end{align*}
	Thanks to the last line and \eqref{Eq:EstimateL2H1PhiPrime} we deduce that there exists $C>0$ such that for all $n_1,n_2\in \mathbf{X}^2_T$
	\begin{align*}
	\lvert \left(\left[\phi^\prime(n_1)-\phi^\prime(n_2)\right]\cdot n_1\right)n_1 \rvert^2_{L^2(0,T;\rH^1)}\le CT  \lvert n_1-n_2\rvert^2_{\mathbf{X}^2_T } \lvert n_1\rvert^4_{\mathbf{X}^2_T}\left[ 1 + \lvert n_1 \rvert^2_{\mathbf{X}^2_T} \right],
	\end{align*}

	\noindent Using \eqref{Eq:ProductH1Linfty} we infer that there exists a constant $C>0$ such that for all $a,b,c\in \rH^2$
	\begin{align*}
	\lvert (\phi^\prime(a)\cdot b)c\rvert_{\rH^1} \le & C \lvert\phi^\prime(a)  \rvert_{L^\infty}[ \lvert b \rvert_{L^\infty} \lvert c \rvert_{\rH^1}+\lvert b \rvert_{\rH^1} \lvert c \rvert_{L^\infty}   ]+ \lvert \phi^\prime(a)\rvert_{\rH^1} \lvert b \rvert_{L^\infty} \lvert a \rvert_{L^\infty}.\\
	\end{align*}
	This altogether with \eqref{Eq:LInearGrowthPhiprime}, \eqref{Eq:BoundednessPhibis} and  the continuous Sobolev embeddings $\rH^2\hookrightarrow L^\infty$and $\rH^2\subset \rH^1$ implies that 	for all $a,b,c\in \rH^2$
	\begin{align*}
	\lvert (\phi^\prime(a)\cdot b)c\rvert_{\rH^1} \le & C (1+\lvert a  \rvert_{\rH^2}) \lvert b \rvert_{\rH^2} \lvert c \rvert_{\rH^2},
	\end{align*}
	Thus, there exists a constant $C>0$ such that $n_1,n_2\in \rH^2$
	\begin{align*}
	\lvert  (\phi^\prime(n_2)\cdot [n_1-n_2])n_1\rvert_{\rH^1} \le C (1+\lvert n_2  \rvert_{\rH^2}) \lvert n_1-n_2 \rvert_{\rH^2} \lvert n_1 \rvert_{\rH^2}\\
	\lvert (\phi^\prime(n_2)\cdot n_2)[n_1-n_2] \rvert_{\rH^1}\le C (1+\lvert n_2 \rvert_{\rH^2}) \lvert n_2 \rvert_{\rH^2} \lvert n_1-n_2 \rvert_{\rH^2}.
	\end{align*}
	Thus, we deduce that there exists  constant $C>0$ such that for all
	$n_1,n_2\in \mathbf{X}^2_T$
	\begin{align}
	\lvert  (\phi^\prime(n_2)\cdot [n_1-n_2])n_1\rvert^2_{L^2(0,T;\rH^1)} \le C T (1+\lvert n_2  \rvert_{\mathbf{X}^2_T}^2) \lvert n_1-n_2 \rvert^2_{\mathbf{X}^2_T} \lvert n_1 \rvert^2_{\mathbf{X}^2_T}\label{Eq:alphaphiprime-1}\\
	\lvert (\phi^\prime(n_2)\cdot n_2)[n_1-n_2] \rvert^2_{L^2(0,T;\rH^1)}\le C T (1+\lvert n_2 \rvert^2_{\mathbf{X}^2_T}) \lvert n_2 \rvert_{\mathbf{X}^2_T}^2 \lvert n_1-n_2 \rvert^2_{\mathbf{X}^2_T}.\label{Eq:alphaphiprime-2}
	\end{align}
	Therefore, there exists a constant $C>0$ such that for all $n_i\in \mathbf{X}^2_T$ , $i=1,2$,
	\begin{align} \label{Eq:alphaphiprime-3}
	\lvert \left(\left[\phi^\prime(n_1)-\phi^\prime(n_2)\right]\cdot n_1\right)n_1 \rvert_{L^2(0,T; \rH^1)} \le C T \lvert n_1 -n_2\rvert^2_{\mathbf{X}^2_T} \left(1 + \lvert n_1 \rvert^4_{\mathbf{X}^2_T} + \lvert n_1\rvert^6_{\mathbf{X}^2_T} + \lvert n_2 \rvert^2_{\mathbf{X}^2_T} \right).
	\end{align}
	From \eqref{Eq:alphaphiprime-1}-\eqref{Eq:alphaphiprime-3} we infer that there exists a constant $C>0$ such that for all $n_i\in \mathbf{X}^2_T$, $i=1,2$,
	\begin{equation*}
	\lvert (\phi^\prime(n_1)\cdot n_1)n_1-(\phi^\prime(n_2)\cdot n_2)n_2 \rvert_{L^2(0,T; \rH^1)} \le C T \lvert n_1 -n_2\rvert^2_{\mathbf{X}^2_T} \left(1 + \sum_{i=1}^2  \lvert n_i \rvert^6_{\mathbf{X}^2_T}\right).
	\end{equation*}

	\noindent Thirdly, it was proved in \cite[Lemma 2.6]{ZB+EH+PR-RIMS}, where the notation $B_2(u_i,n_i)$ was used in place of $u_i\cdot \nabla n_i$,  that there exists  a constant $C>0$ such that for all $u_i\in \ve $ and $n_i\in D(\hA^\frac32)$, $i=1,2$, we have
	\begin{align*}
	\lvert u_1\cdot \nabla n_1 - u_2\cdot\nabla n_2\rvert_{\rH^1}\le  C
	\biggl(&\lvert \nabla(u_1-u_2)\rvert_{L^2} \lvert
	n_1\rvert_{\rH^2}^{\frac12}\lvert n_1\rvert_{\rH^3}^\frac12\\
	&+ \lvert n_1-n_2\lvert^{\frac12}_{\rH^2} \lvert n_1-n_2\lvert^{\frac12}_{\rH^3}\lvert
	\nabla u_2\lvert_{L^2}\biggr).
	\end{align*}
	From this inequality we easily infer that there exists a constant $C>0$ such that for all {$(u_i,n_i)\in \mathbf{X}^1_T\times \mathbf{X}^2_T $}, $i=1,2$
	\begin{align*}
	\lvert u_1\cdot \nabla n_1 - u_2\cdot\nabla n_2\rvert^2_{L^2(0,T; \rH^1)}\le  C
	\biggl(& \lvert u_1-u_2\rvert_{C([0,T]; \ve)}^2 \lvert
	n_1\rvert_{C([0,T];D(\hA))}\int_0^T \lvert n_1(s)\rvert_{\rH^3} ds \\
	&+ \lvert
	\nabla u_2\lvert^2_{C([0,T];\ve)} \lvert n_1-n_2\lvert_{C([0,T];D(\hA))}\int_0^T \lvert n_1(s)-n_2(s)\lvert_{\rH^3}ds \biggr)\\
	\le & C T^\frac12
	\biggl(\lvert u_1-u_2\rvert_{\mathbf{X}^1_T}^2 \lvert
	n_1\rvert_{\mathbf{X}^2_T}^2+ \lvert u_2\lvert^2_{\mathbf{X}^1_T} \lvert n_1-n_2\lvert_{\mathbf{X}^2_T}^2 \biggr)
	\end{align*}
	Fourthly, we prove that there exists a constant $C>0$ such that for all $n_i\in \mathbf{X}^2_T$, $i=1,2$,
	\begin{equation}\label{Eq:GradientNonlinearity-Lipschitz}
	\lvert \lvert \nabla n_1\rvert^2n_1-\lvert \nabla n_2 \rvert^2n_2 \rvert^2_{L^2(0,T; \rH^1)} \le C (T\vee T^\frac12)    \lvert n_1-n_2\rvert^2_{\mathbf{X}^2_T} \left(\lvert n_1\rvert^4_{\mathbf{X}^2_T}+\lvert n_2\rvert^4_{\mathbf{X}^2_T}  \right).
	\end{equation}
	We need the following claim to prove this.
	\begin{Clm}\label{Claim:Projection-Of-Laplacian}
		There exists $C>0$ such that for $a,b\in D(\hA^\frac32)$ and $c\in D(\hA)$ the following holds
		\begin{equation*}
		\lvert [\nabla a : \nabla b ] c \rvert_{\rH^1}^2 \le \lvert a \rvert_{\rH^2}^2 \lvert b\rvert_{\rH^2}^2\lvert c \rvert_{\rH^2}^2+ \lvert c \rvert_{\rH^2}^2[ \lvert a \rvert_{\rH^2}\lvert a \rvert_{\rH^3}\lvert b \rvert^2_{\rH^2}+\lvert a \rvert^2_{\rH^2}\lvert b \rvert_{\rH^2}\lvert b \rvert_{\rH^3}  ]
		\end{equation*}
	\end{Clm}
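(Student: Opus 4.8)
The plan is to regard $[\nabla a : \nabla b]\,c$ as the product of the scalar function $f := \nabla a : \nabla b$ and the $\mathbb{R}^3$-valued function $g := c$, and to bound $\lvert fg\rvert_{\rH^1}^2 \le C\bigl(\lvert fg\rvert_{L^2}^2 + \lvert (\nabla f)\,g\rvert_{L^2}^2 + \lvert f\,\nabla g\rvert_{L^2}^2\bigr)$ by means of the Leibniz rule $\nabla(fg)=(\nabla f)g+f\nabla g$. Throughout I would use the two-dimensional Ladyzhenskaya/Gagliardo--Nirenberg inequality $\lvert w\rvert_{L^4}^2\le C\lvert w\rvert_{L^2}\lvert w\rvert_{\rH^1}$ (\cite[Lemma III.3.3]{Temam_2001}), the Sobolev embeddings $\rH^2\hookrightarrow L^\infty$ and $\rH^1\hookrightarrow L^q$ for every finite $q$ (valid since $\Omega\subset\mathbb{R}^2$), and, wherever a third derivative is forced to appear, the interpolation $\lvert\nabla^2 a\rvert_{L^4}\le C\lvert a\rvert_{\rH^2}^{1/2}\lvert a\rvert_{\rH^3}^{1/2}$, which is legitimate because $a\in D(\hA^{3/2})\subset\rH^3$ (and likewise for $b$); smoothness of $a,b,c$ also makes the Leibniz rule itself harmless.

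For the $L^2$-term I would write $\lvert fg\rvert_{L^2}\le\lvert\nabla a:\nabla b\rvert_{L^2}\lvert c\rvert_{L^\infty}\le\lvert\nabla a\rvert_{L^4}\lvert\nabla b\rvert_{L^4}\lvert c\rvert_{L^\infty}\le C\lvert a\rvert_{\rH^2}\lvert b\rvert_{\rH^2}\lvert c\rvert_{\rH^2}$, whose square is the first term on the right-hand side of the Claim. For $f\nabla g=[\nabla a:\nabla b]\nabla c$ I would apply the H\"older inequality with exponents $(8,8,4)$, getting $\lvert[\nabla a:\nabla b]\nabla c\rvert_{L^2}\le\lvert\nabla a\rvert_{L^8}\lvert\nabla b\rvert_{L^8}\lvert\nabla c\rvert_{L^4}\le C\lvert a\rvert_{\rH^2}\lvert b\rvert_{\rH^2}\lvert c\rvert_{\rH^2}$, again absorbed into the first term. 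The terms requiring a third derivative all come from $(\nabla f)g$: since $\nabla f$ is schematically $(\nabla^2 a:\nabla b)+(\nabla a:\nabla^2 b)$, I would estimate $\lvert(\nabla^2 a:\nabla b)c\rvert_{L^2}\le\lvert\nabla^2 a\rvert_{L^4}\lvert\nabla b\rvert_{L^4}\lvert c\rvert_{L^\infty}\le C\lvert a\rvert_{\rH^2}^{1/2}\lvert a\rvert_{\rH^3}^{1/2}\lvert b\rvert_{\rH^2}\lvert c\rvert_{\rH^2}$, whose square is $C\lvert c\rvert_{\rH^2}^2\lvert a\rvert_{\rH^2}\lvert a\rvert_{\rH^3}\lvert b\rvert_{\rH^2}^2$, and symmetrically $\lvert(\nabla a:\nabla^2 b)c\rvert_{L^2}^2\le C\lvert c\rvert_{\rH^2}^2\lvert a\rvert_{\rH^2}^2\lvert b\rvert_{\rH^2}\lvert b\rvert_{\rH^3}$. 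Adding the four contributions and relabeling the constant gives the announced inequality.

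The step that needs the most care is the bookkeeping rather than any individual estimate: one must check that every term produced by the two Leibniz expansions is routed either into the clean trilinear term $\lvert a\rvert_{\rH^2}^2\lvert b\rvert_{\rH^2}^2\lvert c\rvert_{\rH^2}^2$ or into one of the two terms carrying a single $\rH^3$ factor, and in particular that no $\rH^3$ norm is ever squared. This is precisely where the two-dimensional setting is exploited: the embedding $\rH^1\hookrightarrow L^8$ allows the first-derivative factors to be placed in $L^8$ at the price of only $\rH^2$ norms, so that only a genuine second-derivative factor ($\nabla^2 a$ or $\nabla^2 b$) must be interpolated, and then only to the power $\tfrac12$ against $\lvert a\rvert_{\rH^3}^{1/2}$ (resp. $\lvert b\rvert_{\rH^3}^{1/2}$); the same scheme would fail in three dimensions. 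This Claim will then be fed into the proof of \eqref{Eq:GradientNonlinearity-Lipschitz}, the Lipschitz bound for $\lvert\nabla n\rvert^2 n$ in $L^2(0,T;\rH^1)$.
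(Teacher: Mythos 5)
Your proposal is correct and follows essentially the same route as the paper's proof: the same Leibniz decomposition of the $\rH^1$-norm, the same H\"older splittings ($(4,4,\infty)$ for the zeroth-order and second-derivative terms, $(8,8,4)$ for the $\nabla c$ term), and the same Ladyzhenskaya interpolation $\lvert \nabla^2 a\rvert_{L^4}^2\le C\lvert \nabla^2 a\rvert_{L^2}\lvert \nabla^2 a\rvert_{\rH^1}$ to place exactly one half-power of each $\rH^3$ norm. No gaps.
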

	\begin{proof}[Proof of the Claim \ref{Claim:Projection-Of-Laplacian}]
		We infer from  the Cauchy-Schwarz inequality and the Gagliardo-Nirenberg inequality (\cite[Section 9.8, Example C.3]{Brezis})  that there exists a constant $C>0$ such that for all $a,b\in D(\hA^\frac32)$ and $c\in D(\hA)$ we have
		\begin{align*}
		\lvert [\nabla a : \nabla b]c \rvert_{\rH^1}^2 =& \lvert [\nabla a : \nabla b]c \rvert_{L^2}^2 + \sum_{i=1}^2 \lvert \partial_i \left(  [\nabla a : \nabla b]c  \right) \lvert^2_{L^2}\\
		\le & C \lvert\nabla a \rvert_{L^4}^2 \lvert \nabla b \rvert_{L^4}^2 \lvert c \rvert_{L^\infty}^2 +C \lvert c \rvert^2_{L^\infty} \sum_{i=1}^2\left( \lvert \nabla \partial_i a \rvert^2_{L^4} \lvert \nabla b \rvert^2_{L^4}  + \lvert \nabla a \rvert^2_{L^4} \lvert \nabla \partial_i b \rvert^2_{L^4}\right) \\
		& \qquad \qquad +C \sum_{i=1}^2 \lvert \nabla a\rvert^2_{L^8} \lvert \nabla b   \rvert^2_{L^8 } \lvert \partial_i c \rvert^2_{L^4}  \\
		\le & C \lvert a \rvert_{\rH^2}^2 \lvert  b \rvert_{\rH^2}^2 \lvert c \rvert_{\rH^2}^2+C \lvert c \rvert^2_{\rH^2} \sum_{i=1}^2\left( \lvert \nabla \partial_i a \rvert_{L^2} \lvert \nabla \partial_i a \rvert_{\rH^1 }\lvert b \rvert^2_{\rH^2}  + \lvert \nabla a \rvert^2_{\rH^2} \lvert \nabla \partial_i b \rvert_{L^2} \lvert \nabla \partial_i b \rvert_{\rH^1} \right) \\
		& \qquad \qquad +C \sum_{i=1}^2 \lvert \nabla a\rvert^2_{\rH^1} \lvert \nabla b   \rvert^2_{\rH^1} \lvert \partial_i c \rvert^2_{L^4}  \\
		\le &  C \lvert a \rvert_{\rH^2}^2 \lvert  b \rvert_{\rH^2}^2 \lvert c \rvert_{\rH^2}^2 +  \lvert c \rvert^2_{\rH^2}\left(\lvert a \rvert_{\rH^2} \lvert a \rvert_{\rH^3} \lvert b \rvert^2_{\rH^2} + \lvert a \rvert^2_{\rH^2} \lvert b \rvert_{\rH^2} \lvert b\rvert_{\rH^3}   \right).
		\end{align*}
		Thus, the proof of Claim \ref{Claim:Projection-Of-Laplacian} is complete.
	\end{proof}
	Let us resume the proof of \eqref{Eq:GradientNonlinearity-Lipschitz}.  Applying the claim and integrating over $[0,T]$ yields
	\begin{align*}
	\lvert [ \nabla n_2: \nabla n_2 ] (n_1-n_2)\rvert^2_{L^2(0,T;\rH^1)} \le  C T \lvert n_2\rvert^4_{C([0,T;D(\hA)])}  \lvert n_1-n_2\rvert^2_{C([0,T;D(\hA)])} \\
	+ C \lvert n_1-n_2\rvert^2_{C([0,T;D(\hA)])}\Bigl(\lvert n_2\rvert^3_{C([0,T;D(\hA)])}\int_0^T \lvert n_2(s) \rvert_{\rH^3} ds \Bigr)\\
	\le
	C T^\frac12 \lvert n_1-n_2\rvert^2_{\mathbf{X}^2_T} \Bigl(\lvert n_2\rvert^2_{\mathbf{X}^2_T}  \lvert n_2 \rvert_{C([0,T;D(\hA)])}\left(\int_0^T \lvert n_2(s) \rvert^2_{\rH^3} ds\right)^\frac12 \Bigr)\\
	+ C T   \lvert n_2\rvert^4_{\mathbf{X}^2_T }  \lvert n_1-n_2\rvert^2_{\mathbf{X}^2_T},
	\end{align*}
	for some constant $C>0$ and for all $n_i\in \mathbf{X}^2_T$, $i=1,2$.
	The last line of the above chain of inequalities yields that there exists a constant $C>0$ such that for all $n_i\in \mathbf{X}^2_T$, $i=1,2$,
	\begin{align}\label{Eq:GradientNonlinearity-1}
	\lvert [ \nabla n_2: \nabla n_2 ] (n_1-n_2)\rvert^2_{L^2(0,T;\rH^1)} \le  C( T \vee T^\frac12) \lvert n_2\rvert^4_{\mathbf{X}^2_T}\lvert n_1-n_2\rvert^2_{\mathbf{X}^2_T}.
	\end{align}
	In a similar way, one can show that there exists a constant $C>0$ such that for all $n_i\in \mathbf{X}^2_T$, $i=1,2$,
	\begin{align}\label{Eq:GradientNonlinearity-2}
	\lvert [ \nabla (n_1-n_2): \nabla(n_1+n_2)] n_1\rvert_{L^2(0,T; \rH^1)}^2 \le  C(T\vee T^\frac12 )\lvert n_1-n_2\rvert^2_{\mathbf{X}^2_T}\left( \lvert n_2\rvert^4_{\mathbf{X}^2_T}+ \lvert n_1\rvert^4_{\mathbf{X}^2_T}\right).
	\end{align}
	We easily complete the proof of \eqref{Eq:GradientNonlinearity-Lipschitz} by using \eqref{Eq:GradientNonlinearity-1} and \eqref{Eq:GradientNonlinearity-2} in the following inequality
	\begin{align*}
	\lvert \lvert \nabla n_1\rvert^2n_1 -  \lvert \nabla n_2\rvert^2n_2\rvert^2_{L^2(0,T; \rH^1)} \le &
	2 \lvert [ \nabla (n_1-n_2): \nabla(n_1+n_2)] n_1\rvert_{L^2(0,T; \rH^1)} ^2 \\
	&+ 2 \lvert [\nabla n_2 : \nabla n_2] (n_2-n_1 ) \rvert^2_{L^2(0,T; \rH^1)} .
	\end{align*}
	In order to complete the proof of Lemma \ref{Lem:RighthandSideinL2VxH1} we need to establish the inequality \eqref{Eq:FPT-ForcingOptDir-2}. For this purpose, we firstly observe that it is not difficult to prove that there exists a constant $C>0$ such that for all $n_i$, $i=1,2$,
	\begin{align*}
	\lvert (n_1-n_2) \times g \rvert^2_{L^2(0,T; \rH^1)} \le & \lvert n_1- n_2 \rvert^2_{C([0,T]; D(\hA))} \lvert g \rvert^2_{L^2(0,T; \rh^1)} + \lvert \nabla (n_1-n_2) \rvert^2_{C([0,T]; L^4)} \int_0^T\lvert g(s)\rvert^2_{L^4}ds.
	\end{align*}
	Hence, there exists a constant $C>0$ such that for all $n_i\in \mathbf{X}^2_T$, $i=1,2$, we have
	\begin{equation*}
	\lvert (n_1-n_2) \times g \rvert^2_{L^2(0,T; \rH^1)} \le  \lvert n_1- n_2 \rvert^2_{\mathbf{X}^2_T} \lvert g \rvert^2_{L^2(0,T;\rH^1)},
	\end{equation*}
	which is the  inequality \eqref{Eq:FPT-ForcingOptDir-2}. Hence, the proof of Lemma \ref{Lem:RighthandSideinL2VxH1} is complete.
\end{proof}
\section{Weak solution of a modified viscous transport equation}
Let $d\in C([0,T];\rH^{1})\cap L^{2}(0,T;\rH^{2})$ and $u \in C([0,T]; \rH)\cap L^2(0,T;\ve )$. Consider the following problem
\begin{equation}
\begin{cases}
\partial_{t}z-\Delta z= 2|\nabla d|^{2}z-2(\phi^\prime(d)\cdot d)z-u\cdot \nabla z\\
{\frac{\partial z}{\partial \nu}}{\Big\lvert_{\partial\Omega}}=0,\\
z(0)=z_{0}.\label{tu1}
\end{cases}
\end{equation}
We introduce the definition of weak solution of problem \ref{tu1}.
\begin{Def}\label{Def:WeakSol-ViscousTransport}
	Let $z_0\in L^2$.  A function $z: [0,T] \to L^2$ is a  weak solution to (\ref{tu1}) iff
	\begin{enumerate}
		\item \label{Item:DefWeakSol-1} $z\in C([0,T]; L^2)\cap L^2(0,T; \rH^1)$;
		\item for all $t\in [0,T]$ and $\varphi \in \rH^1$
		\begin{equation*}
		(z(t) -z_0, \varphi)+ \int_0^t (\nabla z(s), \nabla \varphi) ds = \int_0^t \left(2 |\nabla d(s)|^{2}z(s)-2 (\phi^\prime(d(s))\cdot d(s) )z(s)-u(s)\cdot \nabla z(s), \varphi\right) ds
		\end{equation*}
	\end{enumerate}
\end{Def}
\begin{Rem}
	Let $d\in C([0,T];\rH^{1})\cap L^{2}(0,T;\rH^{2})$, $u \in C([0,T]; \rH)\cap L^2(0,T;\ve )$ and $z\in C([0,T]; L^2)\cap L^2(0,T; \rH^1)$ be a weak solution to \eqref{Eq:ViscousTransport-z}. Then, by using the H\"older inequality and Sobolev embeddings such as $\rH^1\hookrightarrow L^4$ and $\rH^2\hookrightarrow\infty$ we can easily shows that
	\begin{align*}
	\lvert \partial_t z\rvert_{(\rH^1)^\ast}=&\sup_{\varphi \in \rH^1 : \lvert \varphi \rvert_{\rH^1}=1 } \lvert\langle \partial_t z, \varphi \rangle \rvert\\
	\le & \lvert \nabla z \rvert_{L^2}+ \lvert \nabla d \rvert^2_{L^4} \lvert z\rvert_{L^4}+ \lvert u \rvert_{L^4} \lvert z\rvert_{L^4}+ (1+ \lvert d \rvert_{\rH^2})\lvert d \rvert_{L^4} \lvert z \rvert_{L^2}.
	\end{align*}
	Hence, since $u, \nabla d ,z\in C([0,T];L^2)\cap L^2(0,T; \rH^1)\subset L^4(0,T; L^4)$, we can show by using the H\"older  inequality and \eqref{tu3} that
	\begin{equation}\label{Item:DefWeakSol-3}
	\lvert \partial_t z\rvert_{L^2(0,T; (\rH^1)^\ast)}\le c,
	\end{equation}
	for a universal constant which depends only on $\Omega$ and $T$. 	
\end{Rem}
The following result gives the existence and uniqueness of problem (\ref{tu1}).
\begin{Prop}\label{tu4}
	Let $d\in C([0,T];\rH^{1})\cap L^{2}(0,T;\rH^{2})$, $u \in C([0,T]; \rH)\cap L^2(0,T;\ve )$ and $z_{0}\in L^{2}$. Let also  $\phi:\mathbb{R}^{3}\to \mathbb{R}^{+}$ be of class $C^{1}$ such that
	\begin{equation*}
	|\phi^\prime(d)|_{\mathbb{R}^{3}}\le c(1+|d|).
	\end{equation*}
	Then problem (\ref{tu1}) has a unique weak solution $z$. Moreover, there exists a constant $c>0$ such that
	\begin{equation}
	\sup_{0\le t\le T}|z(t)|^{2}_{L^{2}}+\int_{0}^{T}|\nabla z(t)|^{2}dt\le
	|z(0)\rvert^{2}_{L^{2}}e^{c\int_{0}^{T}\left[|\nabla d|^{4}_{L^{4}}+(1+|d|^{2}_{\rH^{2}})\right]dt}\label{tu3}
	\end{equation}
\end{Prop}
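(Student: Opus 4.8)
The plan is to prove Proposition \ref{tu4} by the standard Faedo–Galerkin method combined with a careful energy estimate that yields the Gronwall-type bound \eqref{tu3}, and then to derive uniqueness from the linearity of the equation. First I would set up the Galerkin scheme: since the natural spatial operator here is the Neumann Laplacian, I would use the orthonormal basis $(e_k)_{k\ge1}$ of $L^2$ consisting of eigenfunctions of $\hA$ (which lie in $D(\hA)\subset \rH^2$), let $L_m=\mathrm{span}\{e_1,\dots,e_m\}$ with projection $P_m$, and look for $z_m(t)=\sum_{k=1}^m c_k^m(t)e_k$ solving the finite-dimensional ODE system obtained by testing \eqref{tu1} against each $e_k$, with $z_m(0)=P_mz_0$. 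Because the coefficients $2|\nabla d|^2$, $2(\phi'(d)\cdot d)$ and $u$ are merely time-dependent $L^1_t$ (in the appropriate norms, using $d\in L^2(0,T;\rH^2)$, $\phi'$ of linear growth, $u\in L^2(0,T;\ve)$ and the embeddings $\rH^1\hookrightarrow L^4$, $\rH^2\hookrightarrow L^\infty$), the ODE system has measurable, locally integrable coefficients and Carathéodory existence applies on $[0,T]$; the a priori bound below rules out blow-up.

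The central step is the energy estimate. Testing the Galerkin equation with $z_m$ and using $\langle u\cdot\nabla z_m,z_m\rangle=0$ (valid since $\Div u=0$, $u\cdot\nu=0$ on $\partial\Omega$, after integration by parts — this is exactly the Neumann analogue of \eqref{eqn:b01}), I obtain
\begin{equation*}
\frac12\frac{d}{dt}|z_m(t)|_{L^2}^2 + |\nabla z_m(t)|_{L^2}^2 = 2\int_\Omega |\nabla d|^2 z_m^2\,dx - 2\int_\Omega (\phi'(d)\cdot d)z_m^2\,dx.
\end{equation*}
For the first term I estimate $\int_\Omega|\nabla d|^2 z_m^2\,dx\le |\nabla d|_{L^4}^2|z_m|_{L^4}^2$ and then use the Ladyzhenskaya/Gagliardo–Nirenberg inequality $|z_m|_{L^4}^2\le C|z_m|_{L^2}|\nabla z_m|_{L^2}+C|z_m|_{L^2}^2$ (in 2D), followed by Young's inequality to absorb a small multiple of $|\nabla z_m|_{L^2}^2$ into the left-hand side; the residual is controlled by $C(|\nabla d|_{L^4}^4+1)|z_m|_{L^2}^2$. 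For the second term, the linear growth of $\phi'$ and $|\phi'(d)\cdot d|\le |\phi'(d)||d|\le C(1+|d|)|d|$ give $\int_\Omega|(\phi'(d)\cdot d)|z_m^2\,dx\le C(1+|d|_{L^\infty}^2)|z_m|_{L^2}^2\le C(1+|d|_{\rH^2}^2)|z_m|_{L^2}^2$ via $\rH^2\hookrightarrow L^\infty$. Integrating and applying Gronwall's lemma with the integrable weight $h(t)=C(|\nabla d(t)|_{L^4}^4+1+|d(t)|_{\rH^2}^2)$ — which is in $L^1(0,T)$ precisely because $d\in C([0,T];\rH^1)\cap L^2(0,T;\rH^2)$ and hence $\nabla d\in L^4(0,T;L^4)$ by Ladyzhenskaya — yields, uniformly in $m$,
\begin{equation*}
\sup_{0\le t\le T}|z_m(t)|_{L^2}^2 + \int_0^T|\nabla z_m(s)|_{L^2}^2\,ds \le |z_0|_{L^2}^2\, e^{c\int_0^T[|\nabla d|_{L^4}^4+(1+|d|_{\rH^2}^2)]\,dt}.
\end{equation*}

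With these uniform bounds, $z_m$ is bounded in $L^\infty(0,T;L^2)\cap L^2(0,T;\rH^1)$; estimating $\partial_t z_m$ in $L^2(0,T;(\rH^1)^\ast)$ exactly as in the Remark preceding the proposition (Hölder plus $\rH^1\hookrightarrow L^4$, $\rH^2\hookrightarrow L^\infty$), I extract a subsequence converging weakly-$\ast$ in $L^\infty(0,T;L^2)$, weakly in $L^2(0,T;\rH^1)$, with $\partial_t z_m$ weakly in $L^2(0,T;(\rH^1)^\ast)$, and — by Aubin–Lions — strongly in $L^2(0,T;L^2)$. The strong convergence, together with the fact that the coefficients $2|\nabla d|^2$ and $2(\phi'(d)\cdot d)$ are fixed $L^2(0,T;L^2)$ (indeed better) functions and $u\in L^2(0,T;L^4)$, suffices to pass to the limit in every term of the weak formulation, giving a weak solution $z$ in the sense of Definition \ref{Def:WeakSol-ViscousTransport}; the Lions–Magenes lemma (\cite[Lemma III.1.2]{Temam_2001}) upgrades $z$ to $C([0,T];L^2)$ and legitimizes the a priori estimate at the level of the limit, establishing \eqref{tu3}, and also shows $z(0)=z_0$. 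For uniqueness, if $z_1,z_2$ are two weak solutions then $w=z_1-z_2$ solves the same equation with $z_0=0$; applying the identical energy estimate to $w$ gives $\sup_{[0,T]}|w|_{L^2}^2\le 0$, hence $w\equiv0$. The main obstacle is purely technical bookkeeping: ensuring that the weight $h$ is genuinely in $L^1(0,T)$ — which hinges on the Ladyzhenskaya inequality converting $d\in C([0,T];\rH^1)\cap L^2(0,T;\rH^2)$ into $\nabla d\in L^4(0,T;L^4)$ — and carrying the $\epsilon$-absorption of $|\nabla z|_{L^2}^2$ cleanly through the Galerkin approximation and the limit; none of this is deep, but it must be done with care so that all constants are independent of $m$.
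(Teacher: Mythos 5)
Your proposal is correct and follows essentially the same route as the paper: the paper likewise declares the existence and uniqueness standard for this linear problem (via Galerkin and compactness) and omits them, and its proof of the estimate \eqref{tu3} uses exactly your energy computation — $\langle u\cdot\nabla z,z\rangle=0$, Gagliardo--Nirenberg/Ladyzhenskaya plus Young to absorb $\varepsilon\lvert\nabla z\rvert_{L^2}^2$ from the $\lvert\nabla d\rvert^2 z^2$ term, the embedding $\rH^2\hookrightarrow L^\infty$ for the $\phi'(d)\cdot d$ term, Lions--Magenes to justify $\frac{d}{dt}\lvert z\rvert_{L^2}^2$, and Gronwall. Your write-up merely fills in the Galerkin details the paper leaves out.
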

\begin{proof}
	Throughout this proof $c>0$ will denote an universal constant which depends only on $\Omega$ and may change from one term to the other. For the sake of simplicity we will omit the dependence on the space variable inside any integral over $\Omega$.
	
	Let $d\in C([0,T];\rH^{1})\cap L^{2}(0,T;\rH^{2})$, $u \in C([0,T]; \rH)\cap L^2(0,T;\ve )$ and $z_{0}\in L^{2}$.
	
	Since the problem \eqref{tu1} is linear, the proofs of the existence, which can be done via Galerkin and compactness methods, and the uniqueness are easy and omitted.
	So we only prove \eqref{tu3}.
	For this purpose,  $z\in C([0,T]; L^2)\cap L^2(0,T; \rH^1)$ be a weak solution to \eqref{Eq:ViscousTransport-z}. We firstly observe that since $\rH^2 \subset L^\infty$
	\begin{align*}
	\int_{\Omega}(\phi^\prime(d)\cdot d)|z|^{2}\;dx \le &
	c|d|_{L^{\infty}}(1+|d|_{L^\infty})\int_{\Omega}|z|^{2}\;dx\notag\\
	&\le c(1+|d|^{2}_{\rH^{2}})|z|^{2}_{L^{2}}.
	\end{align*}
	Also, since $u\in \rH^1_0$ and $\Div u =0$ we can prove that
	\begin{equation*}
	\langle u \cdot\nabla  z, z \rangle =\frac12 \int_\Omega u\cdot \nabla \lvert z\rvert^2 dx =0.
	\end{equation*}
	Hence, by the Gagliardo-Nirenberg inequality (\cite[Section 9.8, Example C.3]{Brezis})  and the Lions-Magenes lemma (\cite[Lemma III.1.2]{Temam_2001}), which is applicable because of Definition \ref{Def:WeakSol-ViscousTransport}\eqref{Item:DefWeakSol-1} and \eqref{Item:DefWeakSol-3},   we have the following inequalities
	\begin{align*}
	\langle \partial_t z, z\rangle=& \frac12 \frac{d}{dt}\lvert z\rvert^2_{L^2}\\
	=& - \frac{1}{2}|\nabla z|^{2}_{L^{2}}+ 2\int_\Omega \lvert \nabla d \rvert^2 \lvert z\rvert^2\; dx -2 \int_\Omega (\phi^\prime(d).d)|z|^{2}\;dx \\
	&\le - \frac{1}{2}|\nabla z|^{2}_{L^{2}}+ \int_{\Omega}|\nabla d|^{2}|z|^{2}\;dx + C(1+|d|^{2}_{\rH^{2}})|z|^{2}_{L^{2}}\notag\\
	&\le - \frac{1}{2}|\nabla z|^{2}_{L^{2}}+ c|\nabla d|^{2}_{L^{4}}|z|^{2}_{L^{4}}+ c(1+|d|^{2}_{\rH^{2}})|z|^{2}_{L^{2}}\notag\\
	&\le - \frac{1}{2}|\nabla z|^{2}_{L^{2}}+ c |\nabla d|^{2}_{L^{4}}|z|_{L^{2}}|\nabla z|_{L^{2}}+ \lvert \nabla d \rvert^2_{L^4} \lvert z\rvert^2_{L^2} + c(1+ |d|^{2}_{\rH^{2}})|z|^{2}_{L^{2}}\notag\\
	&\le- \frac{1}{2}|\nabla z|^{2}_{L^{2}}+\frac{1}{4}|\nabla z|^{2}_{L^{2}}+ c|\nabla d|^{4}_{L^{4}}|z|^{2}_{L^{2}}+ c(1+|d|^{2}_{\rH^{2}})|z|^{2}_{L^{2}}.
	\end{align*}
	Thus,
	\begin{align*}
	\frac12 \frac{d}{dt}\lvert z\rvert^2_{L^2}+\frac14 |\nabla z|^{2}_{L^{2}}\le c|\nabla d|^{4}_{L^{4}}|z|^{2}_{L^{2}}+ c(1+|d|^{2}_{\rH^{2}})|z|^{2}_{L^{2}},
	\end{align*}
	By applying the Gronwall lemma  we infer (\ref{tu3}).
	This also completes the proof of  the Proposition \ref{tu4}.
\end{proof}
\section{Proof of Claim \ref{Clm:EstimateHigherorder}}\label{App:EstHighOrder}
In this section we will the estimates \eqref{Eq:EstHighOrd-1}-\eqref{Eq:EstHighOrd-3} in Claim \ref{Clm:EstimateHigherorder}.
Throughout this section $C>0$ will denote an universal constant which may change from one term to the other.
%
%
\begin{proof}[Proof of inequality \eqref{Eq:EstHighOrd-1}]
	Let us choose and fix $v \in D(\rA)$ and $n \in D(\hA^\frac32)$.
	
	Since $D(\hA^\frac32)\subset \rH^3$ and $\rH^2$ is an algebra, it is not difficult to show that $\Div (\nabla n\odot \nabla n)\in L^2$. Thus, using the fact $\Pi:L^2 \to \rH$ is bounded, the H\"older inequality and Gagliardo-Nirenberg inequality (\cite[Section 9.8, Example C.3]{Brezis})  we infer that
	\begin{align*}
	\lvert \langle \rA v, \Pi [\Div(\nabla n \odot \nabla n)  ] \rangle \rvert=&\lvert \langle \rA v, \Pi[\nabla n)^{\mathrm{T}} \Delta n  ]\rangle \rvert\\
	& \le C  \lvert \rA v\rvert_{L^2} \lvert \nabla n \rvert_{L^4} \lvert \Delta n \rvert_{L^4} \\
	&	\le C \lvert \rA v \rvert_{L^2} \lvert \nabla n \rvert_{L^4} \lvert \Delta n \rvert_{L^2}^\frac12 \lvert \nabla \Delta n \rvert^\frac12_{L^2}.
	\end{align*}
	Now, applying the Young inequality twice  implies
	\begin{align*}
	\lvert \langle \rA v, \Pi_L [\Div(\nabla n \odot \nabla n)  ] \rangle \rvert
	\le \frac12( \lvert \rA v \rvert_{L^2}^2+  \lvert \nabla \Delta n \rvert^2_{L^2} )+ \lvert \nabla n \rvert_{L^4}^4\lvert \Delta n \rvert_{L^2}^2.
	\end{align*}
	This complete the proof of the inequality \eqref{Eq:EstHighOrd-1}.
\end{proof}
\begin{proof}[Proof of the inequality \eqref{Eq:EstHighOrd-2}]
	Let $v\in D(\rA)$ and $n \in D(\hA^\frac32)$ be fixed.
	
	Because $D(\rA)\subset \rH^2$, $D(\hA^\frac32)\subset \rH^3$ and $\rH^\theta$, $\theta>1$, is an algebra,  it is not difficult to show that $v\cdot \nabla n \in \rH^2\subset \rH^1$. Hence,  by using the Cauchy-Schwarz and Young inequalities we infer
	\begin{align*}
	{}_{(\rH^1)^\ast}\langle \rA^2 n, v\cdot \nabla n \rangle_{\rH^1} = &\langle \nabla \Delta n, \nabla (v\cdot \nabla n) \rangle \\
	&\le \frac1{24} \lvert \nabla \Delta n\rvert^2_{L^2} + C \lvert \nabla (v\cdot \nabla n) \rvert^2_{L^2}.
	\end{align*}
	Using the H\"older inequality in the last line  we infer that
	\begin{align*}
	{}_{(\rH^1)^\ast}\langle \rA^2 n, v\cdot \nabla n \rangle_{\rH^1} \le \frac1{24} \lvert \nabla \Delta n\rvert^2_{L^2} + C \lvert (\nabla v) (\nabla n) \rvert^2_{L^2}+C \lvert v\cdot \nabla(\nabla n)\rvert^2_{L^2}\\
	\le 	\frac1{24} \lvert \nabla \Delta n\rvert^2_{L^2} + C \lvert (\nabla v)\rvert^2_{L^4 } \rvert(\nabla n) \rvert^2_{L^4}+C \lvert v\rvert^2_{L^4} \lvert \nabla^2 n\rvert^2_{L^4}
	\end{align*}
	Now, by using \cite[Theorem 3.4]{Simader}  to estimate $\lvert \nabla^2 n \rvert_{L^4}$ by $\lvert \Delta n \rvert_{L^4}$, then by applying the Gagliardo-Nirenberg (\cite[Section 9.8, Example C.3]{Brezis}) , the Young inequalities we infer that
	\begin{align}
	{}_{(\rH^1)^\ast}\langle \rA^2 n, v\cdot \nabla n \rangle_{\rH^1}
	\le 	\frac1{24} \lvert \nabla \Delta n\rvert^2_{L^2} + C \lvert \nabla v\rvert_{L^2 }\lvert \rA v \rvert_{L^2} \rvert(\nabla n) \rvert^2_{L^4}+C \lvert v\rvert^2_{L^4} \lvert \Delta n\rvert^2_{L^4}\nonumber\\
	\le 	\frac1{24} \lvert \nabla \Delta n\rvert^2_{L^2} +\frac1{12} \lvert \rA v\rvert^2_{L^2}+ C \lvert \nabla v\rvert^2_{L^2 } \rvert(\nabla n) \rvert^4_{L^4}+C \lvert v\rvert^2_{L^4} [\lvert \Delta n\rvert_{L^2} \lvert \nabla \Delta n \rvert_{L^2} +\lvert \Delta n \rvert^2_{L^2} ]\nonumber \\
	\le 	\frac1{12} \lvert \nabla \Delta n\rvert^2_{L^2} +\frac1{12} \lvert \rA v\rvert^2_{L^2}+ C \lvert \nabla v\rvert^2_{L^2 } \rvert(\nabla n) \rvert^2_{L^4}+C [ \lvert v \rvert^2_{L^4}+\lvert v\rvert^4_{L^4} ]\lvert \Delta n\rvert_{L^2}.\label{Eq:EstHighOrd-1b}
	\end{align}
	This completes the proof of the inequality \eqref{Eq:EstHighOrd-2}.
\end{proof}
\begin{proof}[Proof of the inequality \eqref{Eq:EstHighOrd-3}]
	Let $v \in D(\rA)$ and $n \in D(\hA^\frac32)$. As in the proof of \eqref{Eq:EstHighOrd-1}, we can show that $\lvert \nabla n \rvert^2n \in \rH^1$. Hence, using  the Cauchy-Schwarz, the Young and the Gagliardo-Nirenberg inequalities (see \cite[Section 9.8, Example C.3]{Brezis}) we infer that
	\begin{align*}
	{}_{(\rH^1)^\ast}\langle \hA^2 n, \lvert \nabla n\rvert^2n \rangle_{\rH^1} = & \langle \nabla \Delta n, \nabla (\lvert \nabla n \rvert^2 n) \rangle \\
	&\le \frac1{24} \lvert \nabla \Delta  n \rvert^2_{L^2} + C \lvert \nabla (\lvert \nabla n \rvert^2 n) \rvert^2_{L^2}\\
	& \le \frac1{24} \lvert \nabla \Delta  n \rvert^2_{L^2} + C \lvert \nabla (\lvert \nabla n \rvert^2 n) \rvert^2_{L^2}\\
	&\le \frac{1}{24}\lvert \nabla \Delta n \rvert^2_{L^2} +C \lvert (\nabla n)(\nabla^2 n ) \rvert^2_{L^2}+ C \lvert \lvert \nabla n \rvert^2 \nabla n \rvert^2_{L^2}\\
	&\le \frac{1}{24}\lvert \nabla \Delta n \rvert^2_{L^2} +C \lvert (\nabla n)\rvert^2_{L^4} \lvert (\nabla^2 n ) \rvert^2_{L^4}+ C \lvert \nabla n  \rvert^6_{L^6}\\
	&\le \frac{1}{24}\lvert \nabla \Delta n \rvert^2_{L^2} +C \lvert (\nabla n)\rvert^2_{L^4} \lvert (\nabla^2 n ) \rvert^2_{L^4}+ C \lvert \nabla n  \rvert^4_{L^4}(\lvert \nabla n\rvert^2_{L^2} + \lvert \nabla^2 n \rvert^2_{L^2}).
	\end{align*}
	Proceeding as in the proof of \eqref{Eq:EstHighOrd-1b}, we show that
	\begin{align*}
	{}_{(\rH^1)^\ast}\langle \hA^2 n, \lvert \nabla n\rvert^2n \rangle_{\rH^1}
	&\le \frac{1}{12}\lvert \nabla \Delta n \rvert^2_{L^2} +C [\lvert (\nabla n)\rvert^4_{L^4}+ \lvert (\nabla n)\rvert^2_{L^4}]\lvert (\Delta n ) \rvert^2_{L^2}+ C \lvert \nabla n  \rvert^4_{L^4}(\lvert \nabla n\rvert^2_{L^2} + \lvert \Delta n \rvert^2_{L^2}).
	\end{align*}
	This completes the proof of \eqref{Eq:EstHighOrd-3}.
\end{proof}
	\section{A weak continuity of  Banach space valued functions}
	In this section we will state and prove of continuity theorem for Banach-valued map similar to \cite{Strauss}. This theorem was initially proven in the work in progress \cite{ZB+AP+UM}, but for the sake of completeness we repeat the proof here.
	\begin{Thm}\label{Thm:StraussThm}
		Let $X$ and $Y$ be two Banach spaces such that $X$ is reflexive, $X\subset Y$ and the canonical injection $i:X\to Y$ is dense and continuous.
		Let $T>0$ be fixed and $u \in L^\infty([0,T); X)$. Let also $b \in Y$ and $v:[0,T] \to Y$, defined by
		\begin{equation*}
		v(t)=
		\begin{cases}
		i(u(t)) \text{ if } t\in [0,T),\\
		b \text{ if } t=T,
		\end{cases}
		\end{equation*}
		be weakly continuous.
		
		Then, $b\in X$ and  the map $\tilde{u}: [0,T] \to X$ defined by
			\begin{equation}\label{Eq:ModifiedMap}
		\tilde{u}(t)=
		\begin{cases}
		u(t) \text{ if } t\in [0,T),\\
		b \text{ if } t=T,
		\end{cases}
		\end{equation}
		is weakly continuous.
	\end{Thm}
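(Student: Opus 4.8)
\textbf{Proof plan for Theorem \ref{Thm:StraussThm}.}
The strategy is the classical Lions--Magenes / Strauss argument: exploit the boundedness of $u$ in the reflexive space $X$ to show that the limit point $b$ is in fact in $X$, and then upgrade weak continuity in $Y$ to weak continuity in $X$ by a density argument. First I would fix $M=\operatorname*{ess\,sup}_{t\in[0,T)}\lVert u(t)\rVert_X<\infty$ and choose a sequence $t_n\uparrow T$ with $\lVert u(t_n)\rVert_X\le M$ for every $n$ (such a sequence exists since the bound holds for a.e.\ $t$). Since $X$ is reflexive and bounded sets are weakly precompact, after passing to a subsequence there is $\beta\in X$ with $u(t_n)\rightharpoonup \beta$ weakly in $X$, and moreover $\lVert\beta\rVert_X\le M$ by weak lower semicontinuity of the norm. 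The continuity of $i:X\to Y$ implies that weak convergence in $X$ is preserved under $i$, so $i(u(t_n))\rightharpoonup i(\beta)$ weakly in $Y$; on the other hand $v$ is weakly continuous in $Y$ and $t_n\to T$, so $v(t_n)=i(u(t_n))\rightharpoonup b$ weakly in $Y$. By uniqueness of weak limits in $Y$ we get $i(\beta)=b$, i.e.\ $b$ lies in (the image of) $X$ and $\lVert b\rVert_X\le M$. This is the first claim.

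Next I would show $\tilde u:[0,T]\to X$ is weakly continuous. Weak continuity on the open interval $[0,T)$ is already part of the statement about $v$ once one notes that for $t,t_0\in[0,T)$ and $\varphi\in X^*$, the functional $\varphi$ need not extend to $Y^*$, so the interior weak continuity must also be argued; however the hypotheses only assert weak continuity of $v$ as a $Y$-valued map, so the genuine content is to deduce $X$-weak continuity. The key point is the standard lemma: if a family $\{w_t\}\subset X$ is bounded in the reflexive space $X$ and $w_t\to w$ weakly in $Y$ as $t\to t_0$, then $w_t\rightharpoonup w$ weakly in $X$ (and in particular $w\in X$). I would prove this by a subsequence argument: any sequence $t_k\to t_0$ has, by reflexivity, a further subsequence along which $w_{t_k}\rightharpoonup \tilde w$ weakly in $X$ for some $\tilde w\in X$; then $i(w_{t_k})\rightharpoonup i(\tilde w)$ in $Y$, forcing $i(\tilde w)=i(w)=w$, hence $\tilde w=w$ by injectivity of $i$; since the limit is independent of the subsequence, the full family converges weakly in $X$ to $w$. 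Applying this lemma with $w_t=\tilde u(t)$, $t_0$ arbitrary in $[0,T]$, and using that $\{\tilde u(t):t\in[0,T]\}$ is bounded in $X$ (by $M$, using the first part at the endpoint) while $\tilde u(t)=v(t)\to v(t_0)=\tilde u(t_0)$ weakly in $Y$, yields the weak continuity of $\tilde u$ on all of $[0,T]$.

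The density hypothesis on $i:X\to Y$ is what guarantees that $Y^*$ separates points of (the image of) $X$ sufficiently and, combined with reflexivity of $X$, is what makes the subsequence-limit identification legitimate; I would make sure to use it exactly at the step where I identify $i(\tilde w)=i(w)\implies \tilde w=w$ and where I transfer functionals. The main obstacle, and the only genuinely delicate point, is precisely this transfer: elements of $X^*$ do not directly act on $Y$-limits, so one cannot test the $Y$-weak convergence against arbitrary $\varphi\in X^*$. The resolution is the reflexivity-plus-subsequence device described above, which converts $Y$-weak information into $X$-weak information without ever needing to extend functionals; everything else is routine. I would close by remarking that reflexivity of $X$ is essential (it is what makes bounded sequences weakly precompact) and that no separability is needed.

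%
% (End of proof plan.)
%
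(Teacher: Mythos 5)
Your proposal is correct, and the first half coincides with the paper's argument: pick $t_n\uparrow T$, use boundedness in the reflexive space $X$ to extract a weak limit, push it through $i$ and identify it with $b$ by uniqueness of weak limits in $Y$. For the second half you take a genuinely different route. The paper follows the classical Temam/Strauss scheme: it splits into interior points and the endpoint $T$, and at interior points it transfers test functionals, using that $Y^{\ast}$ embeds densely into $X^{\ast}$ (a consequence of the dense embedding $X\hookrightarrow Y$ together with reflexivity of $X$) to approximate $\eta\in X^{\ast}$ by $\eta_{\varepsilon}\in Y^{\ast}$ and control the error by $2M\lvert\eta-\eta_{\varepsilon}\rvert_{X^{\ast}}$. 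You instead prove one lemma --- a family bounded in reflexive $X$ that converges weakly in $Y$ converges weakly in $X$ to the same ($X$-valued) limit --- via the subsequence-uniqueness device, and apply it uniformly at every $t_0\in[0,T]$, so no case split is needed. Both arguments are valid; yours is arguably cleaner and uses strictly weaker hypotheses, since it only needs injectivity and continuity of $i$, not density (your side remark that density is what makes $Y^{\ast}$ separate points of $i(X)$ is not quite right --- Hahn--Banach already gives separation; density is what the \emph{paper} needs so that $Y^{\ast}$ is dense in $X^{\ast}$). What the paper's approach buys is a quantitative estimate on the pairing and no appeal to Eberlein--\v{S}mulian at interior points. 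One shared loose end: $u\in L^{\infty}([0,T);X)$ bounds $\lvert u(t)\rvert_{X}$ only a.e.; you handle this at $t=T$ by choosing $t_n$ in the full-measure set, and the same trick plus weak lower semicontinuity of the norm upgrades the bound to every $t$, which is what your lemma needs at interior points --- worth a sentence, though the paper silently assumes the pointwise bound as well.
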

\begin{proof}
	Let $X$, $Y$, $b\in Y$ , $T>0$, $u\in L^\infty([0,T);X)$, $v:[0,T]\to Y$ be as in the statement of the theorem.  Let also $(t_n)_{n\in \mathbb{N}}\subset [0,T)$ be a sequence such that $t_n \toup T$.
	
	Let us prove the first part of the theorem, \text{i.e.}, that $b\in X$. For this aim we observe that by assumption there exists $M>0$ such that
	\begin{equation*}
	\lvert u(t) \rvert_X \le M \text{ for all } t\in [0,T).
	\end{equation*}
	Hence, by the Banach-Alaoglu theorem we can extract from $(t_n)_{n \in \mathbb{N}}$ a subsequence,   which is still denoted by $(t_n)_{n\in \mathbb{N}}$, such that $t_n\toup T$ and
	\begin{equation*}
	u(t_n) \to x \text{ weakly in } X.
	\end{equation*}
	Since, by assumption,  $X\subset Y$ we infer that $v(t) = i(u(t)) =u(t)$ for all $t\in [0,T]$. Hence, by the weak continuity of $v$ we infer that
	\begin{equation}\notag
	v(t_n)=u(t_n) \to b \text{ weakly in } Y.
	\end{equation}
	By the uniqueness of weak limit we infer that $x=b \in X$.
	
	It  remains to prove the second part of the theorem, \textit{i.e.}, we shall show that the map $\tilde{u}:[0,T] \to X$ defined in \eqref{Eq:ModifiedMap} is weakly continuous. For this purpose, we will closely follow the proof of \cite[Lemma III.1.4 ]{Temam_2001}.  We will divide the task into two cases.
	\begin{trivlist}
		\item[\textbf{Case 1}] Let $t, t_0 \in [0,T)$. Let $X^\ast$ and $Y^\ast$ be the dual spaces of $X$ and $Y$, respectively.
	Recall that since the embedding $X\subset Y$ is dense and continuous, the embedding $Y^\ast\subset X^\ast$ is dense and continuous, too. 	 Let us choose and fix $\eps>0$ and $\eta\in X^\ast$. Then, there exists $\eta_\eps\in Y^\ast$ such that
	\begin{equation}\notag
	\lvert \eta -\eta_\eps \rvert_{X^\ast}<\eps.
	\end{equation}
Thus, using the boundedness of $u=\tilde{u}\Big\vert_{[0,T)}$ we infer that
	\begin{align}
	\lvert {}_{X}\langle \tilde{u}(t) -\tilde{u}(t_0), \eta \rangle_{X^\ast}\rvert\le &\lvert {}_{X}\langle \tilde{u}(t) -\tilde{u}(t_0), \eta-\eta_\eps \rangle_{ X^\ast}\rvert+ 	\lvert {}_{Y}\langle \tilde{u}(t) -\tilde{u}(t_0), \eta \rangle_{Y^\ast}\rvert\notag\\
	\le & 2 M \lvert \eta -\eta_\eps\rvert_{X^\ast} + 	\lvert {}_{Y}\langle \tilde{u}(t) -\tilde{u}(t_0), \eta \rangle_{Y^\ast}\rvert.\notag
	\end{align}
	\end{trivlist}
	By the weak continuity of $v\Big\vert_{[0,T)}=u=\tilde{u}$ we have
	\begin{align}
	\lim_{t\to t_0}  \lvert {}_{X}\langle \tilde{u}(t) -\tilde{u}(t_0), \eta \rangle_{X^\ast}\rvert\le 2M \lvert \eta- \eta_\eps\rvert_{X^\ast} + \lim_{t\to t_0} 	\lvert {}_{Y}\langle v(t) -v(t_0), \eta \rangle_{Y^\ast} \rvert
	\le 2M \eps.
	\end{align}
	Since $\eps>0$ is arbitrary we infer that
	\begin{equation}
	\lim_{t\to t_0} \lvert {}_{X}\langle \tilde{u}(t) -\tilde{u}(t_0), \eta \rangle_{X^\ast}\rvert=0.\notag
	\end{equation}
	Hence, $\tilde{u}\Big\vert_{[0,T)}$ is weakly continuous.
	\item[\textbf{Case 2}.] We will prove that $\tilde{u}$ is weakly continuous at $t=T$. Towards this aims let $\eta\in X^\ast$, $\eps>0$ be fixed.
	From the proof of the first part of the theorem we infer that there exists $\delta>0$ such that for $t>0$ with $0<T-t<\delta$
	\begin{equation*}
	\lvert{}_{X}\langle u(t) -b, \eta \rangle_{X^\ast}  \rvert= \lvert{}_{X}\langle u(t) -\tilde{u}(T), \eta \rangle_{X^\ast} \rvert<\eps.
	\end{equation*}
	But for $t\in (0,T)$ we have $u(t)=\tilde{u}(t)$, hence for $t>0$ with $0<T-t<\delta$
	\begin{equation*}
	\lvert{}_{X}\langle \tilde{u}(t) -\tilde{u}(T), \eta \rangle_{X^\ast}  \rvert \le  	\lvert{}_{X}\langle \tilde{u}(t) -u(t), \eta \rangle_{X^\ast} \rvert+ 	\lvert{}_{X}\langle u(t) -\tilde{u}(T), \eta \rangle_{ X^\ast} \rvert
	< \eps.
	\end{equation*}
	Thus, $\tilde{u}$ is weakly continuous at $t=T$.  This completes the proof of Case 2, the second part of the theorem and hence the whole theorem.
\end{proof}

\end{document}